\NewDocumentCommand{\tens}{t_}
 {%
  \IfBooleanTF{#1}
   {\tensop}
   {\otimes}%
 }
\NewDocumentCommand{\tensop}{m}
 {%
  \mathbin{\mathop{\otimes}\displaylimits_{#1}}%
 }
\tikzset{Isom/.style={above,every to/.append style={edge node={node [sloped, auto=false]{$\sim$}}}}}
\newcommand\simto{\stackrel{\textstyle\sim}{\smash{\longrightarrow}\rule{0pt}{0.4ex}}}
\renewcommand{\O}{\mathscr{O}}
\newcommand{\Z}{\mathbf{Z}}
\newcommand{\C}{\mathbf{C}}
\newcommand{\Q}{\mathbf{Q}}
\renewcommand{\P}{\mathbf{P}}
\newcommand{\J}{\mathfrak{J}}
\newcommand{\R}{\mathbf{R}}
\newcommand{\F}{\mathcal{F}}
\newcommand{\G}{\mathcal{G}}
\newcommand{\spec}{{\rm Spec}}
\newcommand{\Jac}{{\rm Jac}}
\newcommand{\sph}{{\rm Sph}}
\newcommand{\spf}{{\rm Spf}}
\newcommand{\Hom}{{\rm Hom}}
\newcommand{\colim}{\varinjlim}
\newcommand{\ol}[1]{\overline{#1}}
\newcommand{\mc}[1]{\mathcal{#1}}
\newcommand{\mf}[1]{\mathfrak{#1}}
\newcommand{\vp}{\varphi}
\newcommand{\Et}{\'etale\xspace}
\newcommand{\hlfp}{hlfp\xspace}
\newcommand{\hfp}{hfp\xspace}
\newcommand{\hetale}{h-\'etale\xspace}
\newcommand{\hfetale}{h-formally \'etale\xspace}
\newcommand{\hsmooth}{h-smooth\xspace}
\newcommand{\hfsmooth}{h-formally smooth\xspace}
\newcommand{\et}{{\rm\acute{e}t}}
\newcommand{\het}{{\rm h\mbox{-}\et}}
\newcommand{\zar}{{\rm Zar}}
\renewcommand{\H}{{\rm H}}
\newcommand{\spcite}[2]{\cite[\href{https://stacks.math.columbia.edu/tag/#1}{{#2} {#1}}]{stackp}}
\newtheorem{theorem}{Theorem}[subsection]
\renewcommand{\thetheorem}{%
  \ifnum\value{subsection}>0
    \thesubsection
  \else
    \thesection
  \fi
  .\arabic{theorem}%
}
\newtheorem{proposition}[theorem]{Proposition}
\newtheorem{lemma}[theorem]{Lemma}
\theoremstyle{definition}
\newtheorem{definition}[theorem]{Definition}
\newtheorem{remark}[theorem]{Remark}
\newtheorem{question}[theorem]{Question}
\newenvironment{example}
  {\pushQED{\qed}\examplex}
  {\popQED\endexamplex}
\numberwithin{equation}{subsection} 
\author{Sheela Devadas\thanks{\href{mailto:sheelad.math@gmail.com}{sheelad.math@gmail.com}}}
\title{Henselian schemes in positive characteristic}
\date{}
\begin{document}
\maketitle
\begin{abstract} The global analogue of a Henselian local ring is a Henselian pair: a ring $A$ and an ideal $I$ which satisfy a condition resembling Hensel's lemma regarding lifting coprime factorizations of polynomials over $A/I$ to factorizations over $A$. The geometric counterpart is the notion of a Henselian scheme, which is an analogue of a tubular neighborhood in algebraic geometry. 

In this paper we revisit the foundations of the theory of Henselian schemes. The pathological behavior of quasi-coherent sheaves on Henselian schemes in characteristic 0 makes them poor models for an “algebraic tube" in characteristic 0. We show that such problems do not arise in positive characteristic, and establish good properties for analogues of smooth and \Et maps in the general Henselian setting.\end{abstract}
\tableofcontents

\section{Introduction}\label{sec:intro}

\subsection{Motivation}\label{sub:motiv}

In differential geometry and topology, a {\it tubular neighborhood} is an open set around a submanifold in a smooth manifold which resembles the submanifold's normal bundle. In this paper we will discuss Henselian schemes, which are a model for tubular neighborhoods in algebraic geometry.

As discussed in Section~\ref{sec:def}, an affine Henselian scheme $\sph(A,I)$ for a Henselian pair $(A,I)$ (a ring and ideal satisfying a condition resembling Hensel's lemma) is a locally ringed space with underlying space $V(I) \subseteq \spec(A)$, and equipped with a richer sheaf of rings -- namely, on distinguished affine opens and on stalks, it is the Henselization with respect to $I$ of the structure sheaf of $\spec(A)$. These affine Henselian schemes can be glued together to form general Henselian schemes. {\it Quasi-coherent sheaves} on a Henselian scheme $(X,\O_X)$ are then defined (in Section~\ref{sec:qcoh}) as sheaves which correspond affine-locally to modules, meaning that for each $x \in X$ there is a Henselian affine open neighborhood $x \in \sph(A,I) \subset X$ such that the sheaf in question arises from an $A$-module.

Contrary to intuition (and contrary to what was originally believed), the theory of Henselian schemes in positive characteristic is better-behaved than Henselian schemes in characteristic $0$, which we will discuss in Section~\ref{sub:diff}, along with some other issues in the existing literature concerning ``Henselian smooth'' morphisms.

\subsection{Results}\label{sub:results}

We begin in Section~\ref{sec:def} by recalling the definition of Henselian schemes and (due to lack of a reference) proving that the structure presheaf on an affine Henselian scheme $\sph(A,I)$ is a sheaf for the Henselian Zariski topology; this proof uses the \Et topology on $\spec(A)$. We define quasi-coherent sheaves in Section~\ref{sec:qcoh} and prove the following:

\newcommand{\sphai}{\sph(A,I)}

\begin{restatable*}{theorem}{thmniceqcoh}\label{thm:niceqcoh} For a Henselian pair $(A,I)$ such that $A$ has characteristic $p>0$, any finitely presented (hence quasi-coherent) sheaf $\F$ on $X=\sphai$ is of the form $\widetilde{M}$ for some $A$-module $M$.  \end{restatable*}

Another factor in the suitability of Henselian schemes as ``algebraic tubular neighborhoods" \cite{cox1} is the behavior of quasicoherent sheaves in the Henselian version of the \Et topology. In order to define this {\it \hetale} topology, we must first define various properties of maps of Henselian schemes, the most basic of which is the property of being ``Henselian locally finitely presented'' or {\it \hlfp}. We give a robust definition of this property in Section~\ref{sec:lfp}, including a proof that being \hlfp is Zariski-local in Proposition~\ref{prop:affglue} (this has no $p$-torsion hypothesis). 

Although for schemes Zariski-locality for the condition of being lfp is elementary, this is not the case for \hlfp with Henselian schemes. Our proof of Proposition~\ref{prop:affglue} relies on a Henselian analogue of Grothendieck's functorial criterion for finitely presented maps \cite[Proposition 8.14.2]{ega41}, which we prove in Proposition~\ref{prop:groth}.

We then develop, without any $p$-torsion conditions, the theory of \Et and smooth morphisms of Henselian schemes in Section~\ref{sec:etsm}. There are three possible definitions we could give for a {\it Henselian smooth} (or {\it Henselian \Et}) map of affine pairs $(A,I) \to (B,J)$:
\begin{enumerate}[(i)]
\item it is the Henselization of a smooth (or \Et) algebra map (as in Definition~\ref{def:hsmet});
\item a formal lifting criterion (see Definition~\ref{def:hfsmet} for a precise formulation) supplemented by a flatness assumption;
\item it is Zariski-locally the Henselization of a smooth (or \Et) algebra map.
\end{enumerate}

We will show that (i) and (ii) are equivalent (in both the smooth and \Et cases), from which it follows that (i) can be checked Zariski-locally and so is also equivalent to (iii). We also give a concrete algebraic description in Theorem~\ref{thm:hetale} of when (ii) holds but flatness is not assumed, and use this to give an example where flatness fails (Remark~\ref{rmk:quothet}) and to deduce that such failure never occurs in the Noetherian case (Remark~\ref{rmk:noethhetale}). 

Once those notions are fully defined, we can define the {\it small Henselian \Et} (\hetale) {\it site} of a Henselian scheme and prove that quasi-coherent sheaves are sheaves for the \hetale topology:

\begin{restatable*}{theorem}{thmqcohhet}\label{thm:qcohhet} For $\F$ a quasi-coherent sheaf on a Henselian scheme $(X,\O_X)$, the presheaf $\F_{\het}$ on the small \hetale site $X_{\het}$ defined by $$(f: X' \to X) \mapsto \Gamma(X',f^*\F)$$ is a sheaf.
\end{restatable*}\vspace{.05in}

The proof of Theorem~\ref{thm:hetale} relies on a result of Arabia \cite{arabia} concerning lifting smooth algebras {\it without} localizing; these results are also useful in proving the categorical equivalence of the \hetale site of a Henselian scheme and the \Et site of the underlying ordinary scheme in Proposition~\ref{prop:globcatequiv}.

We then can prove a cohomology comparison which, like Theorem~\ref{thm:niceqcoh}, holds in positive characteristic:
\newcommand{\xhhet}{(X^h)_{\het}}
\newcommand{\xhet}{X_{\het}}
\newcommand{\fhhet}{(\F^h)_{\het}}
\newcommand{\xet}{X_{\et}}
\newcommand{\fet}{\F_{\et}}
\newcommand{\ghet}{\G_{\het}}
\newcommand{\hjet}{\H^j}
\newcommand{\hjhet}{\H^j}
\begin{restatable*}{theorem}{thmhetzarcoh}\label{thm:hetzarcoh} Let $X$ be a Henselian scheme over a Henselian pair $(A,I)$ such that $A$ has characteristic $p > 0$. Then for any quasi-coherent sheaf $\G$ on $X$ and any $i \ge 0$, the natural map $\H^i(X,\G) \to \H^i_{\het}(\xhet,\ghet)$ is an isomorphism.
\end{restatable*}

\begin{restatable*}{corollary}{corhetzarcmpsn}\label{cor:hetzarcmpsn} Let $(A,I)$ be a Henselian pair such that $A$ has characteristic $p > 0$, and let $X$ be an $A$-scheme with $I$-adic Henselization $X^h$. For a quasi-coherent sheaf $\F$ on $X$ and an integer $j$, the natural comparison map $\H^j(X,\F) \to \H^j(X^h,\F^h)$ for Zariski cohomologies is an isomorphism if and only if the \hetale comparison map $\hjet(\xet,\fet) \to \hjhet(\xhhet,\fhhet)$ is an isomorphism.
\end{restatable*}

Theorem~\ref{thm:hetzarcoh} and Corollary~\ref{cor:hetzarcmpsn} will be used in a forthcoming paper \cite{ghga} for proving a Henselian version of some formal GAGA theorems under a $p$-torsion hypothesis. 

\begin{remark}\label{rmk:ghgahet}
In \cite{ghga} we will prove a GAGA-style cohomology comparison result for coherent sheaves on the Henselization of a proper Noetherian scheme in characteristic $p>0$ by a series of reductions, including the use of an auxiliary finite morphism. For schemes, the higher direct images of a quasi-coherent sheaf by a finite morphism vanish; however, this is likely {\it not true} in the setting of Henselian schemes in general. 

By Corollary~\ref{cor:hetzarcmpsn}, we see that in positive characteristic the problem of \Et and \hetale comparison is {\it equivalent} to the comparison problem for Zariski topologies. Hence in \cite{ghga} we will instead make our reductions of GHGA statements in the setting of \Et and \hetale cohomology, because finite pushforward on abelian sheaves is {\it exact} for the \Et topology of schemes and hence for the \hetale topology of Henselian schemes. 
\end{remark}

Apart from showing quasi-coherent sheaves are sheaves for a Henselian \'etale topology in Section~\ref{sub:hetsite}, the material in Sections~\ref{sec:lfp}~and~\ref{sec:etsm} is independent of that in Section~\ref{sec:qcoh}, so it may be read before Section~\ref{sec:qcoh}. 

\renewcommand{\xhhet}{X^h}
\renewcommand{\xhet}{X}
\renewcommand{\fhhet}{\F^h}
\renewcommand{\xet}{X}
\renewcommand{\fet}{\F}
\renewcommand{\ghet}{\G}
\renewcommand{\sphai}{\sph(A)}
\renewcommand{\hjet}{\H^j_{\et}}
\renewcommand{\hjhet}{\H^j_{\het}}

\subsection{Historical difficulties}\label{sub:diff}

The theory of Henselian schemes has been studied in the past in \cite{cox1, greco71, greco81, kato17, kpr}, among others. However, there are certain subtleties to the theory that lead to issues in the existing literature on the subject concerning ``Henselian smooth'' maps and with quasi-coherent sheaves on Henselian schemes in characteristic $0$.

One such complication arises when defining what it means for a map of Henselian schemes to be ``smooth'' or ``\Et''. While in \cite{kpr} (which is cited in \cite{cox1}) flatness is deduced to be a consequence of a formal lifting criterion for {\Et}ness, this deduction has an error. In fact, flatness can fail to be a consequence. Both the error and an example of failure of flatness are discussed in Remark~\ref{rmk:kprwrong}. 

Other difficulties in the existing literature concern the case of Henselian schemes in characteristic $0$. For example, \cite[Theorem 1.12 (Theorem B)]{greco81} states that the higher cohomologies of quasi-coherent sheaves on affine Henselian schemes vanish. In Proposition~\ref{prop:cxyhglobsec}, we give a counterexample (due to de Jong) to this vanishing in characteristic $0$ for the structure sheaf on an affine Henselian scheme. In positive characteristic, de Jong has shown that higher cohomologies on affine Henselian schemes do vanish, which we discuss in Theorem~\ref{thm:thmbpos}.

This gives rise to other issues. For instance, quasi-coherent sheaves on affine Henselian schemes are {\it not} necessarily generated by their global sections in characteristic 0, as we show in Example~\ref{ex:grecowrong}. It follows that not all quasi-coherent sheaves on an affine Henselian scheme $\sph(A,I)$ arise from an $A$-module $M$, so the notion of quasi-coherence has more subtleties than was realized in earlier literature on the subject.

In contrast to many other phenomena in algebraic geometry, these pathologies are not present in positive characteristic, as we will show in Theorem~\ref{thm:niceqcoh}.

\begin{question}\label{qxn:mixthmb} Do we have ``Theorem B'' -- vanishing of higher cohomologies for quasi-coherent sheaves on affine Henselian schemes -- in {\it mixed characteristic}? \end{question}

The counterexample of Proposition~\ref{prop:cxyhglobsec} concerns a Henselian ring whose fraction field and residue field both have characteristic $0$, so it is possible that Theorem B holds in mixed characteristic. The proof of Theorem~\ref{thm:niceqcoh} ultimately relies on applying a result of Gabber \spcite{09ZI}{Theorem} concerning torsion abelian \Et sheaves, and in positive characteristic all sheaves of modules over the structure sheaf of an affine Henselian scheme are torsion. This is not the case in mixed characteristic, so the proof of Theorem~\ref{thm:niceqcoh} does not apply. Hence Question~\ref{qxn:mixthmb} remains open.

\subsection{Acknowledgments}

This paper is based on Chapters 1-5 of my Ph.D. thesis \cite{sheelathesis}. I would like to thank my Ph.D. advisor Brian Conrad, Ravi Vakil, my postdoctoral mentor Max Lieblich, and Aise Johan de Jong for useful discussions.

This material is based upon work supported by the National Science Foundation under Award No. 2102960. During the writing of my Ph.D. thesis I was supported by the NSF-GRFP fellowship (NSF Grant \#DGE-1147470) and by the Stanford Graduate Fellowship.  

\section{Defining Henselian schemes}\label{sec:def}

 A Henselian scheme is locally a Henselian affine scheme, which has as underlying space a closed subscheme of the corresponding affine scheme but with a richer sheaf of rings -- similar to a formal scheme. In this section we will give a definition of Henselian schemes which agrees with the existing literature, though a proof that the ``structure presheaf'' of a Henselian scheme is a sheaf does not seem to be given in the literature and is not ``purely algebraic'', so we prove it in Proposition~\ref{prop:isshf}.
 
A Henselian local ring $R$ is defined by the property that coprime factorizations of polynomials over its residue field lift to factorizations over $R$ itself. We can consider a global version of the same notion -- i.e., rather than a ring with a unique maximal ideal, we consider any ring and any ideal that satisfy a similar condition.

\begin{definition}\label{def:henspair} A {\bf Henselian pair} is a pair $(A,I)$ consisting of a ring $A$ and an ideal $I \subset A$ such that \begin{enumerate}[(a)]
\item $I$ is contained in the Jacobson radical $\Jac(A)$,
\item for any monic polynomial $f \in A[T]$ which factors as $\ol{f}=g_0h_0$ in $(A/I)[T]$, such that $g_0,h_0$ are monic polynomials generating the unit ideal of $(A/I)[T]$, there exist lifts of $g_0,h_0$ to monic polynomials $g,h$ in $A[T]$ with $f=gh$. 
\end{enumerate}
\end{definition}

Condition \ref{def:henspair}(a) is useful in that if $x \in A$ is a unit modulo $I$, it is a unit in $A$. The condition \ref{def:henspair}(b) is the same as the definition in the local case. 

As with Henselian local rings, there are several equivalent notions of a Henselian pair. Henselian pairs, as well as the notion of the Henselization of a pair (defined with an appropriate universal property), are discussed in detail in the Stacks Project at \spcite{09XD}{Section} and \spcite{0EM7}{Section}. 

We collect some of these definitions and other statements on Henselian pairs from the Stacks Project, as well as other useful statements which are not discussed in the Stacks Project or elsewhere, in the \hyperref[sec:appendix]{Appendix} of this paper. This includes defining Henselized polynomial rings, which are useful in Section~\ref{sec:lfp} to define Henselian finitely presented morphisms. It also includes our proof that certain ``nice'' algebraic properties can be descended from the Henselization of an algebra $R$ to some \Et $R$-algebra, which is used in Section~\ref{sec:etsm} to study Henselian smooth and Henselian \Et morphisms.

\subsection{Affine Henselian schemes}

 We will define an affine Henselian scheme for a Henselian pair $(A,I)$ as a ringed space with the underlying space $V(I) \subseteq \spec(A)$. In order to define its structure sheaf, we require the following fact:

\begin{lemma}\label{lem:defshf} Let $A$ be a ring with an ideal $I \subset A$. If $f_1,f_2 \in A$ satisfy $$D(f_1) \cap V(I) \supset D(f_2) \cap V(I),$$ then there exists a unique $A$-algebra map $A_{f_1}^h \to A_{f_2}^h$ for $A_f^h := (A_f)^h$ the $I$-adic Henselization of the principal localization $A_f$.
\end{lemma}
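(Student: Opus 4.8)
The plan is to convert the topological hypothesis into an algebraic relation between $f_1$ and $f_2$, use it to see that $f_1$ becomes a unit after Henselizing $A_{f_2}$, and then assemble the map from the universal properties of localization and of the Henselization of a pair.

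First I would observe that, as a topological subspace of $\spec(A)$, $V(I)$ is homeomorphic to $\spec(A/I)$, and under this identification $D(f_i)\cap V(I)$ becomes the principal open $D(\ol{f_i})$ of $\spec(A/I)$, where $\ol{f_i}$ is the image of $f_i$ in $A/I$. Thus the hypothesis is exactly $D(\ol{f_1})\supseteq D(\ol{f_2})$ in $\spec(A/I)$, which is equivalent to $\ol{f_2}\in\sqrt{(\ol{f_1})}$, i.e.\ there exist an integer $n\ge 1$ and an element $a\in A$ with $f_2^{\,n}-af_1\in I$.

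Next I would show that the image of $f_1$ in $A_{f_2}^h$ is a unit. In $A_{f_2}$ the element $f_2$ is invertible, so reducing $f_2^{\,n}-af_1\in I$ modulo $IA_{f_2}$ shows that $af_1$ is congruent to the unit $f_2^{\,n}$ and hence is itself a unit in $A_{f_2}/IA_{f_2}$; applying the ring map $A_{f_2}/IA_{f_2}\to A_{f_2}^h/IA_{f_2}^h$, the image of $af_1$ is a unit modulo $IA_{f_2}^h$. Since $(A_{f_2}^h,IA_{f_2}^h)$ is a Henselian pair, $IA_{f_2}^h\subseteq\Jac(A_{f_2}^h)$ by Definition~\ref{def:henspair}(a), and an element that is a unit modulo an ideal contained in the Jacobson radical is a unit; hence $af_1$, and therefore $f_1$, is a unit in $A_{f_2}^h$.

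The map can now be built in two steps. The composite $A\to A_{f_2}\to A_{f_2}^h$ sends $f_1$ to a unit, so by the universal property of localization it factors uniquely through an $A$-algebra map $\varphi\colon A_{f_1}\to A_{f_2}^h$; being an $A$-algebra map, $\varphi$ carries $IA_{f_1}$ into $IA_{f_2}^h$. As $(A_{f_2}^h,IA_{f_2}^h)$ is a Henselian pair, the universal property of the Henselization of the pair $(A_{f_1},IA_{f_1})$ produces a unique $A_{f_1}$-algebra map $A_{f_1}^h\to A_{f_2}^h$ extending $\varphi$, which is the desired $A$-algebra map. For uniqueness: any $A$-algebra map $A_{f_1}^h\to A_{f_2}^h$ restricts along $A_{f_1}\to A_{f_1}^h$ to an $A$-algebra map $A_{f_1}\to A_{f_2}^h$, which must agree with $\varphi$ because algebra maps out of a localization are unique; uniqueness in the universal property of the Henselization then forces the two maps $A_{f_1}^h\to A_{f_2}^h$ to coincide. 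The one genuinely non-formal point is the invertibility of $f_1$ in $A_{f_2}^h$, where the Jacobson-radical axiom for Henselian pairs is exactly what is needed; note that no hypothesis on the characteristic of $A$ enters, and I expect the main thing to watch is keeping the two universal properties (and which ring is source versus target) straight.
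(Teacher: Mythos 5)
Your proof is correct and follows essentially the same route as the paper: translate the containment of opens into invertibility of $f_1$ modulo $IA_{f_2}$, promote this to invertibility in $A_{f_2}^h$ via $IA_{f_2}^h\subseteq\Jac(A_{f_2}^h)$, and then apply the universal properties of localization and Henselization. Your explicit relation $f_2^{\,n}-af_1\in I$ just makes concrete the $A/I$-algebra map $A_{f_1}/IA_{f_1}\to A_{f_2}/IA_{f_2}$ that the paper invokes directly.
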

\begin{proof} Let $\ol{f_1},\ol{f_2}$ be the images of $f_1,f_2$ in $A/I$.

Because $D(f_2) \cap V(I) \subseteq D(f_1) \cap V(I)$, we have an $A/I$-algebra map $A_{f_1}/IA_{f_1} \to A_{f_2}/IA_{f_2}$.

Hence the image of $f_1$ in $A_{f_2}/IA_{f_2} \simeq A_{f_2}^h/IA_{f_2}^h$ is invertible. Because $A_{f_2}^h$ is Henselian, $IA_{f_2}^h\subseteq\Jac(A_{f_2}^h)$. It follows that the image of $f_1$ is invertible in $A_{f_2}^h$. 

Thus we have an $A$-algebra map $A_{f_1} \to A_{f_2}^h$. By the uniqueness condition of the universal property of Henselization, there is only one $A$-algebra map $A_{f_1}^h \to A_{f_2}^h$ compatible with the map $A_{f_1} \to A_{f_2}$. \end{proof}

We now define a presheaf of $A$-algebras on the distinguished affine opens of $V(I)$ (hence a presheaf on $V(I)$) by $\O(D(f) \cap V(I))=A_f^h$. The equality case of Lemma~\ref{lem:defshf} tells us that $\O$ is well-defined, and the general case of Lemma~\ref{lem:defshf} provides the restriction maps.

To define an affine Henselian scheme, we must show that $\O$ is a sheaf. 

\begin{proposition}\label{prop:isshf} Let $A$ be a ring containing an ideal $I$. Define a presheaf $\O$ on the base of distinguished affine opens in $V(I) \subset \spec(A)$ by $\O(D(f) \cap V(I)) =A_f^h$ where $A_f^h$ is the Henselization of the pair $(A_f,IA_f)$. Then $\O$ is in fact a sheaf.
\end{proposition}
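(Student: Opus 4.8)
The plan is to recognize the presheaf $\O$ as the restriction, to the Zariski site of $V(I)$, of an étale sheaf, and then to deduce the sheaf property from ordinary étale descent for the structure sheaf of $\spec(A)$. First I would make the standard reductions. Since the intersection of two distinguished opens $D(f)\cap D(g)\cap V(I)=D(fg)\cap V(I)$ is again distinguished, it suffices to prove that for every covering of a distinguished open $D(f)\cap V(I)$ by distinguished opens $D(f_i)\cap V(I)$ the sequence
\[0\longrightarrow\O\bigl(D(f)\cap V(I)\bigr)\longrightarrow\prod_i\O\bigl(D(f_i)\cap V(I)\bigr)\longrightarrow\prod_{i,j}\O\bigl(D(f_if_j)\cap V(I)\bigr)\]
is exact. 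Replacing $(A,I)$ by $(A_f,IA_f)$ reduces to the case $f=1$, and since $V(I)\subseteq\spec(A)$ is quasi-compact we may take the covering finite (the sheaf axiom for arbitrary coverings then follows formally from the finite case together with separatedness over each $D(f_i)\cap V(I)$). So everything comes down to showing: if $f_1,\dots,f_n\in A$ satisfy $I+(f_1,\dots,f_n)=A$, equivalently $V(I)\subseteq\bigcup_iD(f_i)$, then
\[0\longrightarrow A^h\longrightarrow\prod_iA_{f_i}^h\longrightarrow\prod_{i,j}A_{f_if_j}^h\]
is exact, where $(-)^h$ denotes $I$-adic Henselization.

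The key step is to reinterpret these rings on the small étale site $X_{0,\et}$ of $X_0:=V(I)=\spec(A/I)$. Let $i\colon X_0\hookrightarrow X:=\spec(A)$ be the closed immersion and $\O_X$ the structure sheaf on the small étale site $X_\et$, which is a sheaf by descent for quasi-coherent modules, with no hypothesis on $A$. Consider the presheaf $\mathcal{H}$ on $X_{0,\et}$ defined by $\mathcal{H}(V)=\colim_U\Gamma(U,\O_U)$, the filtered colimit over étale $X$-schemes $U$ equipped with an $X_0$-morphism $V\to U\times_XX_0$ (that is, $\mathcal{H}=i_p^{-1}\O_X$, the pullback presheaf of $\O_X$ along $i$). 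Using the description of the Henselization of a pair as a filtered colimit over étale neighbourhoods (recalled in the Appendix; cf.\ \spcite{09XD}{Section}), together with the fact that étale morphisms lift along $i$ — present an étale $X_0$-scheme Zariski-locally in standard étale form and invert a lift of the Jacobian to obtain an étale $X$-scheme reducing to it — one checks that the objects with $V\cong U\times_XX_0$ are cofinal, so that $\mathcal{H}\bigl(D(f)\cap V(I)\bigr)=A_f^h$ for every $f\in A$. In particular the presheaf $\O$ of the statement is exactly the restriction of $\mathcal{H}$ to the distinguished opens of the Zariski site of $X_0$.

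It therefore suffices to show that $\mathcal{H}$ is in fact a \emph{sheaf} on $X_{0,\et}$: the restriction of any étale sheaf to the Zariski site is a Zariski sheaf, because every Zariski covering is an étale covering and $\bigl(D(f_i)\cap V(I)\bigr)\times_{D(f)\cap V(I)}\bigl(D(f_j)\cap V(I)\bigr)=D(f_if_j)\cap V(I)$, and unwinding this is precisely the exactness displayed above. To verify the sheaf axiom for a covering $V'\to V$ in $X_{0,\et}$, I would lift it to a compatible family of étale coverings $U'\to U$ of étale neighbourhoods over $X$ — first lifting the morphism $V'\to V$ along base changes of $i$, then shrinking $U$ so that $U'\to U$ becomes surjective — apply ordinary étale descent for $\O_X$, i.e.\ exactness of $0\to\Gamma(U,\O)\to\Gamma(U',\O)\to\Gamma(U'\times_UU',\O)$, and pass to the filtered colimit, which preserves exactness; a final cofinality check identifies the three colimits with the values of $\mathcal{H}$ on $V$, $V'$, and $V'\times_VV'$. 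Arbitrary covering families reduce to the single-morphism case by passing to disjoint unions.

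The main obstacle I expect is exactly this last step. Since $I$ is not assumed nilpotent, ``invariance of the small étale site'' under thickenings is unavailable, so one must argue by hand that étale morphisms and étale coverings lift along $i$ and its base changes, and must keep careful track of the resulting cofinalities so that the various Henselization colimits genuinely coincide with colimits of étale-descent complexes for $\O_X$. Everything else — the reductions, the passage to the Zariski site, the exactness of filtered colimits — is formal; this is the point at which the proof stops being purely algebraic and really uses the étale topology on $\spec(A)$.
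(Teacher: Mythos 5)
Your starting point is the same as the paper's: reinterpret $\O$ through the closed immersion $i\colon V(I)\hookrightarrow\spec(A)$ and the structure sheaf on the small \Et site of $\spec(A)$. But the execution diverges at the decisive step, and the divergence leaves a genuine gap. The paper works with the \emph{sheaf-theoretic} inverse image $\mc{A}=i_{\et}^{-1}(\O_{X_{\et}})|_{V(I)_{\zar}}$, which is a sheaf for free, and then computes $\mc{A}(D(f)\cap V(I))=A_f^h$ by applying \spcite{09ZH}{Lemma} to the \emph{Henselian} pair $(A_f^h,IA_f^h)$: for a Henselian pair, global sections of any \Et sheaf on the total space agree with global sections of its restriction to the closed fibre. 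You instead work with the presheaf pullback $\mathcal{H}=i_p^{-1}\O_{X_{\et}}$ and propose to prove directly that it is a sheaf by lifting coverings to \Et neighbourhoods $U'\to U$, applying descent for $\O_X$ at each finite level, and passing to the colimit. The separation half of this can be made to work, and your identification $\mathcal{H}(D(f)\cap V(I))=A_f^h$ is fine. The gluing half does not close as described.

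Concretely: a compatible family $(s_i)\in\prod_iA_{f_i}^h$ is represented by elements $\tilde s_i\in B_i$ in \Et neighbourhoods of $(A_{f_i},IA_{f_i})$, and the hypothesis $s_i|_{ij}=s_j|_{ij}$ in $A_{f_if_j}^h$ only says that $\tilde s_i\tens 1-1\tens \tilde s_j$ dies in some \emph{further} \Et neighbourhood $C_{ij}$ of $(A_{f_if_j},I)$ lying over the fibre product $(B_i)_{f_j}\tens_{A_{f_if_j}}(B_j)_{f_i}$ --- not in the fibre product itself. To invoke \Et descent for $\O$ at a finite level you need the cocycle condition on the nose on $(B_i')_{f_j}\tens(B_j')_{f_i}$ for some refinement $B_i'$ of the $B_i$; equivalently, you need the fibre products $U'\times_UU'$ to be cofinal among \Et neighbourhoods of $V'\times_VV'$, or at least the map $\colim_j\Gamma(U_j'\times_{U_j}U_j',\O)\to\mathcal{H}(V'\times_VV')$ to be injective. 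Neither follows formally from lifting \Et morphisms along $i$ and shrinking to images: the two pullbacks agree only on an open $\Omega\subseteq U'\times_UU'$ containing the special fibre, and the closed complement, though disjoint from the special fibre, can have images under the projections whose closures meet it, so no Zariski shrinking of $U$ and $U'$ forces the new fibre product into $\Omega$; nor is an arbitrary \Et neighbourhood of $V'\times_VV'$ dominated by one of the form $U'\times_UU'$. This is not bookkeeping --- it is the entire content of the proposition. The tell is that your outline never invokes the Henselian property of any pair, whereas the analogous comparison $\Gamma(\spec R,\G)\to\Gamma(\spec(R/J),\G|_{\spec(R/J)})$ fails for non-Henselian $(R,J)$ (its validity for representable $\G$ is essentially the section-lifting characterization of Lemma~\ref{lem:sphenspair}(b)). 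Closing your gap amounts to reproving \spcite{09ZH}{Lemma}; the efficient route is to pass to the limit first, replace $\mathcal{H}$ by its sheafification $i_{\et}^{-1}(\O_{X_{\et}})$, and quote that lemma for the pair $(A_f^h,IA_f^h)$, which is what the paper does.
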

\begin{proof} Let $Z=\spec(A/I),X=\spec(A)$, and let $\iota: Z \hookrightarrow X$ be the usual closed immersion.

Let $\O_{X_{\et}}$ be the structure sheaf on the small \Et site of $X$. Then we have a sheaf $\mc{A}=\left.\iota_{\et}^{-1}\left(\O_{X_{\et}}\right)\right|_{Z_{\zar}}$ of $A$-algebras on the Zariski site of $Z$. (Note that we are using the inverse image and not the ringed space pullback, so there are no tensor product considerations.) 

We will show that for each $f$ in $A$, that as $A$-algebras $\mc{A}(D(f) \cap V(I)) =A_f^h$, respecting the restriction maps of Lemma~\ref{lem:defshf}. This will show that $\O$ is a sheaf (in fact, $\O=\mc{A}$ on distinguished opens).

For $f \in A$ let $\ol{f}$ be the image of $f$ in $A/I$, and let $Z_{\ol{f}}=\spec(A_f/IA_f) \simeq \spec((A/I)_{\ol{f}})$. 

Let $X'=\spec(A_f^h)$ and $\phi_f: X' \to X$ be the morphism of affine schemes associated to the usual map $A \to A_f^h$. Note that $A_f/IA_f = A_f^h/IA_f^h$, so $Z_{\ol{f}}$ is a closed subscheme of $X'$. 

Then we have a commutative -- in fact Cartesian -- diagram

\begin{center}
\begin{tikzcd}
Z_{\ol{f}} \arrow[r,hook] \arrow[d,hook] & X' \arrow[d,"\phi_f"] \\
Z \arrow[r,hook,"\iota"] & X\\
\end{tikzcd}
\end{center} \vspace{-.25in}

where both horizontal arrows are closed immersions and the left vertical arrow is an open immersion. 

We see that $$\mc{A}(Z_{\ol{f}})=\Gamma\left(Z_{\ol{f}},\left.\phi_{f,\et}^{-1}\left(\O_{X_{\et}}\right)\right|_{Z_{\ol{f}}}\right)$$ by the commutativity of the diagram above and the definition of $\mc{A}$.

For an \Et morphism of schemes $\psi: Y \to X$, the map of sheaves $\psi^{-1}_{\et}\left(\O_{X_{\et}}\right) \to \O_{Y_{\et}}$ is an isomorphism. Therefore $$\Gamma\left(X',\phi_{f,\et}^{-1}\left(\O_{X_{\et}}\right)\right)= \Gamma(X',\O_{X'})=A_f^h$$ since $X'=\spec(A_f^h)$ is a cofiltered limit of \Et $X$-schemes (because $A_f$ is \Et over $A$ and $A_f^h$ is a filtered colimit of \Et $A_f$-algebras). 

Because $(A_f^h,(IA_f)^h)=(A_f^h,IA_f^h)$ is a Henselian pair, for any sheaf $\mc{G}$ on $X'_{\et}$ we have $$\Gamma(X',\mc{G})\simto\Gamma(Z_{\ol{f}},\mc{G}|_{Z_{\ol{f}}})$$ by \spcite{09ZH}{Lemma}. Applying this with $\mc{G}=\phi_{f,\et}^{-1}\left(\O_{X_{\et}}\right)$, we see that $$A_f^h \simto \Gamma\left(Z_{\ol{f}},\left.\phi_{f,\et}^{-1}\left(\O_{X_{\et}}\right)\right|_{Z_{\ol{f}}}\right)$$ so $A_f^h \simto \mc{A}(Z_{\ol{f}})$ as $A$-algebras.

Then since the underlying topological space of $Z_{\ol{f}}$ is $D(f) \cap V(I)$, we see that $$\mc{A}(D(f) \cap V(I)) =A_f^h,$$ visibly compatible with the restriction maps of $A$-algebras, as we desired to show. \end{proof}

We now define an affine Henselian scheme, and then a general Henselian scheme.

\begin{definition}\label{def:affhenssch} 
An {\bf affine Henselian scheme} $\sph(A,I)$ for a Henselian pair $(A,I)$ is a ringed space with the underlying space $V(I) \subseteq \spec(A)$, and equipped with the sheaf $\O$ given on distinguished affines by $\O(D(f) \cap V(I)) =A_f^h$ where $A_f^h$ is the Henselization of the pair $(A_f,IA_f)$ (a sheaf by Proposition~\ref{prop:isshf}). Often if there is no ambiguity we will write $\sph(A)$ for $\sph(A,I)$.

The {\it Zariski topology} on $\sph(A,I)$ is the subspace topology for the closed subscheme $V(I) \simeq \spec(A/I)$ of $\spec(A)$, which has the same underlying space as $\sph(A,I)$. 
\end{definition}

\begin{remark}\label{rmk:locringspace} The property of being Henselian is not inherited by localizations. The Henselization $A_f^h$ is generally larger than $A_f$; it follows that $\sph(A,I)$ is not a scheme. 

Though it is not a scheme, an affine Henselian scheme is a locally ringed space. The stalk at a point $\mf{p} \in V(I)$ is the local ring $(A_{\mf{p}})^h$ (where the Henselization is taken with respect to $IA_{\mf{p}}$) because Henselization commutes with direct limits.  This local ring is not a ``Henselian local ring'' in the conventional sense of that phrase, since it is not generally Henselian with respect to its maximal ideal. 

The obvious map $\sph(A,I) \to \spec(A)$ is given on local rings for $\mf{p} \in V(I)$ by $A_{\mf{p}} \to (A_{\mf{p}})^h$, which is flat. \end{remark}

\begin{remark}\label{rmk:maphens} For a Henselian pair $(A,I)$, the ring $A$ is also Henselian with respect to any ideal $J$ with $\sqrt{I}=\sqrt{J}$ \spcite{09XJ}{Lemma}, and replacing $I$ with $J$ does not affect the construction of the affine Henselian scheme $\sph(A,I)$ (see Remark~\ref{rmk:radideal}).

For $(X,\O_X)=\sph(A,I)$ an affine Henselian scheme with radical $I$, we can recover both $A$ and $I$, via $\Gamma(X,\O_X)=A$ and $I$ being the radical ideal of $\Gamma(X,\O_X)$ consisting of sections which vanish at the residue fields of every point of $X$. This is similar to how one recovers the largest ideal of definition for a Noetherian formal scheme via topologically nilpotent sections.

If $Y=\sph(B,J) \to \sph(A,I)=X$ is a morphism of locally ringed spaces with $I,J$ radical ideals of $A,B$ respectively, the associated map $\Gamma(X,\O_X) \to \Gamma(Y,\O_Y)$ gives us a map $A \to B$ which in fact is a map of pairs $(A,I) \to (B,J)$. \end{remark}

It follows from Remark~\ref{rmk:maphens} that $\sph$ is a fully faithful functor from the category of Henselian pairs with radical ideal of definition to the category of locally ringed spaces.

\subsection{Henselian schemes and the Henselization of a scheme}

\begin{definition}\label{def:henssch}
 A {\bf Henselian scheme} $(X,\O_X)$ is a ringed space which is locally isomorphic to an affine Henselian scheme, and a {\bf morphism} of Henselian schemes is a map of locally ringed spaces. Such a map can be locally described as a map of affine Henselian schemes $\sph(B,J) \to \sph(A,I)$ induced by a map of Henselian pairs $(A,I) \to (B,J)$ with $I,J$ radical ideals -- see Remark~\ref{rmk:maphens}.
\end{definition}

\begin{remark}\label{rmk:fibprod} For maps of Henselian pairs $(A,I) \to (B,J), (A,I) \to (B',J')$, the fiber product of the corresponding affine Henselian schemes $$\sph(B,J) \times_{\sph(A,I)} \sph(B',J')$$ is $\sph((B \tens_A B')^h)$, where the Henselization is taken with respect to the kernel of the natural map $B \tens_A B' \to B/J \tens_{A/I} B'/J'$. The fiber product of general Henselian schemes can be shown to exist by reducing to the affine case.

We see that for morphisms of Henselian schemes, the property of being flat on local rings is stable under base change, since if we have two maps of Henselian pairs $(A,I) \to (B,J)$ and $(A,I) \to (B',J')$, with $A \to B$ flat, then the map $B' \to B \tens_A B' \to (B \tens_A B')^h$ is also flat.
\end{remark}

Similarly to the Henselization of a pair, we can construct the Henselization of a scheme along a closed subscheme. In order to define the structure sheaf of a Henselization, we use the following proposition:

\begin{proposition}\label{prop:hensationshf} For a scheme $X$ with closed subscheme $\iota: Y \to X$, let $\O=\left.\iota_{\et}^{-1}\left(\O_{X_{\et}}\right)\right|_{Y_{\zar}}$ be a sheaf of rings on $Y$. Then for any affine open $U=\spec(A) \subset X$, we have $\O(U \cap Y) = A^h$ for $A^h$ the Henselization of $A$ along the ideal $I \subset A$ defining $U \cap Y \subset U$. 
\end{proposition}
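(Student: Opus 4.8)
The plan is to observe that the substantive content is already contained in Proposition~\ref{prop:isshf}, and to reduce to it by a harmless localization. Fix an affine open $U=\spec(A)\subset X$, let $I\subset A$ be the ideal defining $U\cap Y\subset U$, and write $\iota_U\colon U\cap Y\hookrightarrow U$ for the induced closed immersion and $j\colon U\hookrightarrow X$, $j'\colon U\cap Y\hookrightarrow Y$ for the open immersions, so that $\iota\circ j'=j\circ\iota_U$ (both are the inclusion of the locally closed subscheme $U\cap Y$ into $X$). First I would check that forming $\iota_{\et}^{-1}(\O_{X_{\et}})|_{Y_{\zar}}$ commutes with passing to $U$: by functoriality of inverse image on small \Et topoi, $(j')_{\et}^{-1}\circ\iota_{\et}^{-1}\cong(\iota_U)_{\et}^{-1}\circ j_{\et}^{-1}$ up to canonical isomorphism; since $j_{\et}^{-1}(\O_{X_{\et}})=\O_{U_{\et}}$ (the small \Et site of $X$ localizes over $U$ to that of $U$, compatibly with structure sheaves) and restriction from $Y_{\zar}$ to $(U\cap Y)_{\zar}$ agrees with restriction from $Y_{\et}$ to $(U\cap Y)_{\et}$, this yields a canonical identification of sheaves of rings $\O|_{U\cap Y}=(\iota_U)_{\et}^{-1}(\O_{U_{\et}})|_{(U\cap Y)_{\zar}}$. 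In particular $\O(U\cap Y)$ is the ring of global sections of the right-hand side.

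Next I would identify the right-hand side with the sheaf called $\mc{A}$ in the proof of Proposition~\ref{prop:isshf} attached to the ring $A$ and ideal $I$ (there $X=\spec(A)$, $Z=\spec(A/I)$, and $\mc{A}=\iota_{\et}^{-1}(\O_{X_{\et}})|_{Z_{\zar}}$): that proposition shows $\mc{A}(D(f)\cap V(I))=A_f^h$ for every $f\in A$, so taking $f=1$ gives $\mc{A}(V(I))=A^h$. Since the underlying space of $U\cap Y$ is $V(I)$, this gives $\O(U\cap Y)=A^h$; and the identification is one of $A$-algebras, since the one in Proposition~\ref{prop:isshf} is and the localization step above plainly respects $A$-algebra structures. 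That is the assertion.

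I do not expect a genuine obstacle here: the real work was done in Proposition~\ref{prop:isshf}, and the only new ingredient is the routine compatibility of the construction $\iota_{\et}^{-1}(\O_{X_{\et}})|_{Y_{\zar}}$ with passage to an affine open of $X$, which is just functoriality of \Et inverse image together with localization of the small \Et site. If one prefers a self-contained argument instead of citing Proposition~\ref{prop:isshf}, one can repeat its proof in the case $f=1$: set $X'=\spec(A^h)$ with $\phi\colon X'\to X$ the canonical map and $\iota'\colon U\cap Y\hookrightarrow X'$ the closed immersion (using $A/I=A^h/IA^h$), so $\iota=\phi\circ\iota'$ over $U$; since $\phi$ is a cofiltered limit of \Et morphisms one has $\phi_{\et}^{-1}(\O_{X_{\et}})\cong\O_{X'_{\et}}$, and since $(A^h,IA^h)$ is a Henselian pair, \spcite{09ZH}{Lemma} gives $\Gamma(X',\O_{X'_{\et}})\simto\Gamma(U\cap Y,(\iota')_{\et}^{-1}\O_{X'_{\et}})$, whose source is $A^h$ and whose target is $\O(U\cap Y)$.
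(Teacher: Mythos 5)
Your proposal is correct and matches the paper's argument: the paper proves this exactly as in your ``self-contained'' variant, forming $\phi\colon\spec(A^h)\to U\to X$ as a cofiltered limit of \Et morphisms so that $\Gamma(\spec(A^h),\phi_{\et}^{-1}(\O_{X_{\et}}))=A^h$ and then applying \spcite{09ZH}{Lemma} to the Henselian pair $(A^h,IA^h)$. Your primary route of reducing to Proposition~\ref{prop:isshf} via localization of the small \Et site is only a cosmetic repackaging of the same argument (the paper itself says ``we now proceed as in the proof of Proposition~\ref{prop:isshf}''), and the compatibilities you invoke for that reduction are routine and correct.
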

\begin{proof} Let $U=\spec(A)$ and let $I$ be the ideal of $A$ defining $U \cap Y \subset U$. Then $U \cap Y \simeq \spec(A/I)$.

We have a commutative diagram, with both squares Cartesian:

\begin{equation}\label{diag1}\tag{$\dagger$}\begin{tikzcd}
\spec(A^h/IA^h) \arrow[d,Isom]\arrow[r,hook]&\spec(A^h)\arrow[d]\\
U \cap Y \arrow[r,hook] \arrow[d,hook] &  \arrow[d,hook]  U \\
Y \arrow[r,hook,"\iota"] & X\\
\end{tikzcd}
\end{equation} \vspace{-.25in}

such that the vertical hooked arrows are open immersions and the horizontal hooked arrows are closed immersions. The upper right vertical map is a cofiltered limit of \Et morphisms of schemes (because $A \to A^h$ is a filtered colimit of \Et ring maps). The upper left vertical map is an isomorphism since $A^h/IA^h \simeq A/I$; therefore we can consider $U \cap Y$ to be the closed subscheme of $\spec(A^h)$ defined by the ideal $IA^h$. 

Since the map $\spec(A^h) \to U \to X$ is the composition of an open immersion with a cofiltered limit of \Et morphisms of schemes, the map $\phi: \spec(A^h) \to X$ is a cofiltered limit of \Et morphisms also. Therefore $$\Gamma(\spec(A^h),\phi_{\et}^{-1}\left(\O_{X_{\et}}\right)) = \Gamma(\spec(A^h),\O_{\spec(A^h)})=A^h.$$

We now proceed as in the proof of Proposition~\ref{prop:isshf}, applying \spcite{09ZH}{Lemma} to the sheaf $\phi_{\et}^{-1}\left(\O_{X_{\et}}\right)$ and the Henselian pair $(A^h,IA^h)$. Then 

$$A^h \simto \Gamma\left(U \cap Y,\left.\phi_{\et}^{-1}\left(\O_{X_{\et}}\right)\right|_{U \cap Y}\right) = \Gamma(U \cap Y, \iota_{\et}^{-1}\left(\O_{X_{\et}}\right)) = \O(U \cap Y),$$

using the commutativity of the diagram (\ref{diag1}) for the latter equalities. \end{proof}

\begin{definition}\label{def:hensationsch}
If $X$ is a scheme with closed subscheme $Y$, the {\bf Henselization $X^h$ of $X$ along $Y$} is a locally ringed space $(X^h,\O_{X^h})$ with underlying topological space the same as that of $Y$ and $\O_{X^h}$ defined to be $\iota_{\et}^{-1}\left(\O_{X_{\et}}\right)$ where $\iota$ is the inclusion map $Y \to X$. 

By Proposition~\ref{prop:hensationshf}, for an affine open $U=\spec(A)$ of $X$ with $U \cap Y =\spec(A/I)$, the open ringed subspace of $(X^h,\O_{X^h})$ with underlying space $U \cap Y$ and structure sheaf $\O_{X^h}|_{U \cap Y}$ is isomorphic to $\sph(A^h,I^h)$. Hence $X^h$ is a Henselian scheme.

The obvious map $X^h \to X$ is flat on local rings, since this is true for $\sph(A,I) \to \spec(A)$ for any Henselian pair $(A,I)$.
 
If $X$ is a scheme over $\spec(A)$ for $(A,I)$ a Henselian pair, the {\it $I$-adic Henselization of $X$} is the Henselization of $X$ along the closed subscheme defined by $I$. For each affine open $U = \spec(B) \subset X$, the preimage of $U$ in $X^h$ is uniquely $U$-isomorphic to $\sph(B^h)$ for $B^h$ the $I$-adic Henselization of $B$. 
\end{definition}

\begin{remark}\label{rmk:getususch}
For a Henselian scheme $(X,\O_X)$, let $\mc{I}$ be the radical ideal sheaf of $\O_X$ comprising sections which vanish at the residue field of every point. Then the locally ringed space $X_0=(X,\O_X/\mc{I})$ is a scheme. We see this by considering the case $X=\sph(A,I)$, for which this process gives the scheme $\spec(A/I)$.  The general case follows by gluing the affine schemes coming from each affine open. If $X$ is the Henselization of a scheme $Y$ along a reduced closed subscheme $Z$, then the preceding construction recovers $Z$. We will revisit this in Example~\ref{ex:qcideal}. 
\end{remark}

\begin{proposition}\label{prop:hensationfinal} The Henselization $X^h \to X$ of a scheme $X$ along a closed subscheme $Y$ is final among all maps of locally ringed spaces $Z \to X$ with $Z$ a Henselian scheme such that the image of $Z$ is (topologically) contained in $Y$.
\end{proposition}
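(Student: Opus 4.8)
The plan is to establish the universal property by reducing to the affine case and then invoking the universal property of the Henselization of a pair, which underlies the construction of $X^h$. First I would fix a map of locally ringed spaces $g\colon Z \to X$ with $Z$ a Henselian scheme whose topological image lies in $Y$. Since $X^h$ has underlying space that of $Y$ and $X^h \to X$ is (topologically) the inclusion $Y \hookrightarrow X$, the topological factorization $Z \to Y = |X^h| \to X$ exists and is unique; so the content is entirely about factoring the map of sheaves of rings, and then checking the factorization is a map of \emph{locally} ringed spaces and is unique as such.

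Next I would verify the claim locally on $X$ and on $Z$. Cover $X$ by affine opens $U = \spec(A)$ with $U \cap Y = \spec(A/I)$; the preimage $g^{-1}(U \cap Y) \subset Z$ is open, and may be covered by affine Henselian opens $\sph(B,J)$ (with $J$ radical, using Remark~\ref{rmk:maphens}). By Remark~\ref{rmk:maphens} the restricted map $\sph(B,J) \to U$ — a map of locally ringed spaces from an affine Henselian scheme with radical ideal to the scheme $U$, whose image lies in $V(I) = U \cap Y$ — corresponds to a ring map $A \to \Gamma(\sph(B,J),\O) = B$ that kills... more precisely, lands in a pair map $(A, I) \to (B, J)$: the image of $I$ lies in $J$ because topologically the image of $\sph(B,J)$ is in $V(I)$, so every element of $I$ vanishes at all residue fields of $\sph(B,J)$, hence lies in the radical ideal $J$. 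By the universal property of the Henselization of the pair $(A,I)$ (see \spcite{0EM7}{Section}), the map $(A,I) \to (B,J)$ factors uniquely through $(A,I) \to (A^h, IA^h)$, and since $\sph(A^h, IA^h)$ is the open piece of $X^h$ over $U$ (Definition~\ref{def:hensationsch}), this yields a unique map of affine Henselian schemes $\sph(B,J) \to \sph(A^h, IA^h) \subset X^h$ over $U$, hence over $X$.

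I would then glue: the local factorizations agree on overlaps by their uniqueness (two affine Henselian opens of $Z$ and two affine opens of $X$ can be refined on intersections, and the uniqueness clause at the affine level forces agreement), so they patch to a single map $Z \to X^h$ over $X$; a second application of uniqueness — now globally, using that a map of locally ringed spaces is determined by its restrictions to an open cover — gives that $Z \to X^h$ is the unique factorization. The main obstacle I anticipate is the bookkeeping in the affine reduction: specifically, checking that the ring maps one extracts genuinely assemble into \emph{pair} maps (the containment of ideals, and the radicality normalization of Remark~\ref{rmk:maphens}), and that being a morphism of locally ringed spaces — i.e.\ locality of the induced maps on stalks — is automatic here because $A_{\mf p} \to (A^h)$ at the relevant primes is a local homomorphism (the Henselization of a pair is a colimit of \'etale, hence flat local, maps on stalks, cf.\ Remark~\ref{rmk:locringspace}) and locality composes. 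Once those compatibilities are in hand, the universal property of Henselization of pairs does all the real work.
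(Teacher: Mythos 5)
Your proposal is correct and follows essentially the same route as the paper: reduce to the affine case, use the fact that a map of locally ringed spaces into $\spec(A)$ is determined by the ring map $A \to \Gamma(Z,\O_Z)$ (the paper cites \cite[Errata et Addenda, Proposition 1.8.1]{ega2} for this rather than Remark~\ref{rmk:maphens}, which only treats maps between two affine Henselian schemes), observe that the topological containment in $V(I)$ forces a map of pairs $(A,I)\to(B,J)$, and conclude by the universal property of Henselization. Your extra care about gluing the local factorizations is exactly what the paper compresses into ``this statement is Zariski-local on $X$.''
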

\begin{proof} This statement is Zariski-local on $X$, so we can assume that $X$ is an affine scheme $\spec(A)$ with the closed subscheme $Y \subset X$ corresponding to a radical ideal $I \subset A$.

Then what we wish to show is that $\sph(A^h,I^h) \to \spec(A)$ is final among all maps of locally ringed spaces $Z \to \spec(A)$ with $Z$ a Henselian scheme and the image topologically contained in $V(I) \subset \spec(A)$. It suffices to consider affine Henselian schemes $Z=\sph(B,J)$ for a Henselian pair $(B,J)$ such that $J$ is a radical ideal.

Any map of locally ringed spaces $Z \to X$ arises from a map of rings $\Gamma(X,\O_X)=A \to \Gamma(Z,\O_Z)=B$ \cite[Errata et Addenda, Proposition 1.8.1]{ega2}. If the image of $Z$ is topologically contained in $V(I)$, the same is true for the image of the composite map $\spec(B/J) \to Z \to X$. Hence the map of rings $A \to B$ carries the ideal $I \subset A$ into the ideal $J \subset B$, so it is in fact a map of pairs $(A,I) \to (B,J)$. 

By the universal property of the Henselization, the map of pairs $(A,I) \to (B,J)$ uniquely factors through the map of pairs $(A,I) \to (A^h,I^h)$. Therefore the map $Z \to X$ uniquely factors through the map $\sph(A^h,I^h) \to \spec(A)$, as we desired to show. \end{proof}

\section{Quasi-coherent sheaves on Henselian schemes}\label{sec:qcoh}

In this section we will prove some general facts about a notion of quasi-coherent sheaves on Henselian schemes. 

We first discuss the existing literature on the subject. The theory of quasi-coherent sheaves on affine Henselian schemes $\sph(A)$ is developed in \cite{greco81}. However, there are issues with the statements in that paper. Namely, as mentioned in Section~\ref{sec:intro}, it is claimed in \cite[Theorem 1.12 (Theorem B)]{greco81} that quasi-coherent sheaves on affine Henselian schemes have vanishing cohomology in positive degrees, but that is explicitly disproved by \cite{nothmbblog} in characteristic $0$; we reproduce this counterexample in Proposition~\ref{prop:cxyhglobsec}. In positive characteristic, this problem does not arise, as shown in \cite{thmbblog}; we state this result here in Theorem~\ref{thm:thmbpos}.

The argument in \cite{greco81} rests on the claim that quasi-coherent sheaves on affine Henselian schemes are generated by their global sections. This latter claim is also not true and can be disproved by Proposition~\ref{prop:cxyhglobsec} (\cite{nothmbblog}) as well. This will be discussed in Example~\ref{ex:grecowrong}. However, it is true in positive characteristic that ``finitely presented" quasi-coherent sheaves are generated by global sections (and hence correspond to modules), which we show in Theorem~\ref{thm:niceqcoh}.

\subsection{Definitions}

We will now give definitions for: the sheaf associated to a module, quasi-coherent sheaves, finitely presented sheaves, and coherent sheaves in the setting of Henselian schemes.

\begin{definition}\label{def:tildeshf} For an affine Henselian scheme $\sph(A,I)$ and an $A$-module $M$, the {\bf sheaf $\widetilde{M}$ associated to $M$} on $\sph(A,I)$ is defined by its values on distinguished affine opens: $$\widetilde{M}(D(f) \cap V(I)) = M \tens_A A_f^h,$$ with the evident restriction maps.

The proof that $\widetilde{M}$ is a sheaf is very similar to the proof of Proposition~\ref{prop:isshf} (replacing $\O_{X_{\et}}$ for $X=\spec(A)$ with the quasi-coherent sheaf $\F_{\et}$ on $X_{\et}$, where $\F$ is the quasi-coherent sheaf on $\spec(A)$ associated to the $A$-module $M$). Note that $M=\Gamma(\sph(A,I),\widetilde{M})$.
\end{definition}
\begin{remark}\label{rmk:kprqcoh} The sheaf $\widetilde{M}$ on $\sph(A)$ associated to an $A$-module $M$ is defined in \cite[Section 7.1.3]{kpr}, and the universal property which we will prove in Lemma~\ref{lem:tildefunctor} is proved in \cite[Satz 7.1.3.1]{kpr}. This notion is not further developed in \cite{kpr}.
\end{remark}

\begin{definition}\label{def:qcohshf} For a Henselian scheme $(X,\O_X)$, a sheaf of $\O_X$-modules $\F$ is {\bf quasi-coherent} if for each point $x \in X$, there is a Henselian affine neighborhood $U=\sph(A,I)$ of $x$ such that $\F|_U=\widetilde{M}$ for some $A$-module $M$.

This is easily seen to be equivalent to the condition that for each point $x \in X$, there is an open neighborhood $V$ of $x$ on which $\F|_V$ is isomorphic to the cokernel of a map $\bigoplus_{j \in J} \O_V \to \bigoplus_{j' \in J'} \O_V$, which is the usual definition of a quasi-coherent sheaf on a ringed space \spcite{01BK}{Lemma}.
\end{definition}
\begin{remark}\label{rmk:cxyhglobsec}
In the setting of schemes, any quasi-coherent sheaf $\F$ on an affine scheme $\spec(A)$ is generated by its global sections, i.e., it is the sheaf associated to the $A$-module $\Gamma(\spec(A),\F)$. However, it is {\it not} the case that any quasi-coherent sheaf on an affine Henselian scheme is generated by its global sections. This will be shown in Example~\ref{ex:grecowrong}, using de Jong's example of a Henselian pair $(A,I)$ such that $\H^1(\sph(A,I),\O_{\sph(A,I)})$ is nonzero \cite{nothmbblog}.

In de Jong's example, the ring $A$ is the integral closure of $(\C[x,y]^h,\mf{I}\C[x,y]^h)$ in the algebraic closure of $\C(x,y)$ (the field of rational functions), with $\mf{I}=(x^2y+xy^2-xy) \subset \C[x,y]$. In particular $A$ has characteristic $0$. We will reproduce de Jong's proof that $\H^1(\sph(A),\O_{\sph(A)}) \ne 0$ in Proposition~\ref{prop:cxyhglobsec}.

We will eventually show in Theorem~\ref{thm:niceqcoh} that when $A$ is an $\mathbf{F}_p$-algebra, any finitely presented (quasi-coherent) sheaf on $\sph(A)$ is generated by its global sections.
\end{remark}

\begin{example}\label{ex:qcideal}
For a Henselian scheme $X$, the radical ideal sheaf $\mc{I}$ defined by
$$\mc{I}(U) = \{f \in \O_X(U) \,|\, f(u) = 0 \mbox{ for all } u \in U\},$$ as in Remark~\ref{rmk:getususch},
is quasi-coherent. Indeed, on an open affine $V = \sph(B,J)$ (with $J$ a radical ideal of $B$) we have $\mc{I}(V) = J$, since the underlying space of $V$ is $\spec(B/J)$. Hence considering basic affine opens in $V$, we see that $\mc{I}|_V = \widetilde{J}$ as subsheaves of $\O_X$.

As we can obtain this (radical) ideal sheaf directly from the ringed space, we do not need to carry along an auxiliary ideal sheaf as a proxy for an adic topology satisfying the Henselian condition on affine opens. This is in contrast with morphisms of formal schemes, which are required to satisfy a continuity condition relative to the structure sheaf being a sheaf of topological rings.\end{example}

\begin{definition}\label{def:fpresshf} For a Henselian scheme $(X,\O_X)$, a sheaf of $\O_X$-modules $\F$ is {\bf finitely presented} if for each point $x \in X$, there is a Henselian affine neighborhood $U=\sph(A,I)$ of $X$ such that $\F|_U=\widetilde{M}$ for some finitely presented $A$-module $M$.
\end{definition}

\begin{definition}\label{def:cohshf} For a Henselian scheme $(X,\O_X)$, a sheaf of $\O_X$-modules $\F$ is {\bf coherent} if
\begin{enumerate}[(a)]
\item for each point $x \in X$, there is a Henselian affine neighborhood $U=\sph(A,I)$ of $X$ such that $\F(U)$ is a finitely generated $A$-module, 
\item and for all opens $V \subset X$ and all finite collections of elements $s_i \in \F(V)$, the kernel of $\bigoplus_{i=1}^n (\O_X)|_V \to \F|_V$ is of finite-type (i.e., it satisfies condition (a)).
 \end{enumerate}
\end{definition}

It is immediate from the definition that finitely presented sheaves are quasi-coherent, while a quasi-coherent sheaf is finitely presented in the sense of Definition~\ref{def:fpresshf} if and only if it is finitely presented in the sense of sheaves of modules. By general facts for ringed spaces, coherent sheaves are finitely presented \spcite{01BW}{Lemma}. The converse is true only in some situations, with suitable Noetherian hypotheses. We therefore will now define (locally) Noetherian Henselian schemes.

\begin{definition}\label{def:noethhenssch} A Henselian scheme $(X,\O_X)$ is {\bf locally Noetherian} if for each point $x \in X$, there is a Henselian affine neighborhood $U=\sph(A,I)$ of $X$ such that $A$ is Noetherian. We say that $X$ is {\bf Noetherian} if it is locally Noetherian and the underlying topological space is quasi-compact.
\end{definition}

\begin{remark}\label{rmk:noethhenssch}
If $X$ is locally Noetherian, we see by \spcite{00EO}{Lemma} that for any Henselian affine open $V=\sph(B,J) \subset X$, the ring $B$ is Noetherian. Furthermore, for a (locally) Noetherian scheme $Y$, its Henselization $Y^h$ along any closed subscheme is also (locally) Noetherian by \spcite{0AGV}{Lemma}.
\end{remark}

\begin{lemma}\label{lem:cohfpres} On a locally Noetherian Henselian scheme $(X,\O_X)$, a sheaf $\F$ of $\O_X$-modules is coherent if and only if it is finitely presented.
\end{lemma}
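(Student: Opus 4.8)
The plan is to prove both implications locally, reducing everything to a statement about modules over a Noetherian ring $A$ for a Henselian affine open $U = \sph(A,I)$. By Remark~\ref{rmk:noethhenssch}, on a locally Noetherian Henselian scheme every Henselian affine open $\sph(B,J)$ has $B$ Noetherian, so we may freely pass to any such open when checking a local property. The ``only if'' direction is already noted in the text (coherent sheaves are always finitely presented on any ringed space, by \spcite{01BW}{Lemma}), so the content is the ``if'' direction: a finitely presented $\F$ is coherent.

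For the ``if'' direction, I would argue as follows. Let $\F$ be finitely presented; fix $x \in X$ and a Henselian affine neighborhood $U = \sph(A,I)$ with $A$ Noetherian and $\F|_U = \widetilde{M}$ for a finitely presented (equivalently, finitely generated, since $A$ is Noetherian) $A$-module $M$. Condition (a) of Definition~\ref{def:cohshf} is then immediate at $x$: $\F(U) = M$ is finitely generated over $A$. For condition (b), I need to show that for any open $V \subset X$ and any finite collection $s_1,\dots,s_n \in \F(V)$, the kernel $\mathcal{K}$ of $\bigoplus_{i=1}^n \O_X|_V \to \F|_V$ is of finite type; since this is local on $V$, I may shrink and assume $V = \sph(B,J)$ is a Henselian affine open contained in some $U$ as above with $B$ Noetherian, so that $\F|_V = \widetilde{N}$ for $N$ a finitely generated $B$-module. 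The map $\O_X^{\oplus n}|_V \to \F|_V$ then corresponds to a map of $B$-modules $B^{\oplus n} \to N$, whose kernel $K$ is a finitely generated $B$-module because $B$ is Noetherian. The crux is to identify the sheaf kernel $\mathcal{K}$ with $\widetilde{K}$: this requires that $M \mapsto \widetilde{M}$ on $\sph(B,J)$ be exact, i.e.\ that forming $\widetilde{(-)}$ commutes with finite limits. This in turn follows from flatness of $B \to B_g^h$ for each $g$ (the local rings of $\sph(B,J)$ are flat over $B$, as recorded in Remark~\ref{rmk:locringspace}), which makes $\widetilde{K}(D(g)\cap V(J)) = K \tens_B B_g^h$ the kernel of $B_g^{h,\oplus n} \to N\tens_B B_g^h$ on each basic open. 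Once $\mathcal{K}|_V \cong \widetilde{K}$ with $K$ finitely generated over the Noetherian ring $B$, $K$ is also finitely presented, so $\mathcal{K}$ is finitely presented, hence in particular of finite type, which is condition (b).

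The main obstacle I anticipate is the bookkeeping around condition (b): it demands the finite-type property of kernels for \emph{all} opens $V$ and \emph{all} finite tuples of sections, not merely on a cofinal system of affines, so I must make sure the reduction to the affine Noetherian case is legitimate — that is, that ``finite type'' for a sheaf is genuinely a local condition (it is, by \spcite{01BK}{Lemma} / the definition of finite-type sheaves of modules) and that every point of $V$ has a Henselian affine neighborhood of the required form sitting inside $V$. The other point needing care is the exactness of $\widetilde{(-)}$: I should phrase it as the statement that for a Noetherian ring $B$ with ideal $J$ and a short exact sequence of $B$-modules, applying $\widetilde{(-)}$ yields an exact sequence of sheaves on $\sph(B,J)$, checked on stalks using that $(B_{\mathfrak p})^h$ is flat over $B$ for $\mathfrak p \in V(J)$ (Remark~\ref{rmk:locringspace}). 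Neither of these is deep, but stating them cleanly is the substance of the proof; the Noetherian hypothesis is used exactly twice, to get finitely generated kernels and to upgrade ``finitely generated'' to ``finitely presented.''
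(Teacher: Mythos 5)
Your proof is correct. The paper's own proof is much terser but rests on the same two facts you isolate: it invokes the general ringed-space lemma \spcite{01BZ}{Lemma} to reduce the whole statement to showing that $\O_X$ itself is coherent, and then observes that the proof of coherence of $\O_X$ is the same as in the locally Noetherian scheme case \spcite{01XZ}{Lemma} — which is exactly your computation that the kernel of $B^{\oplus n}\to N$ is finitely generated over the Noetherian ring $B$ and that the sheaf kernel is $\widetilde{K}$ by exactness of $\widetilde{(-)}$ (the paper's Lemma~\ref{lem:tildeexact}, which you re-derive from flatness of $B\to B_g^h$ as in Remark~\ref{rmk:locringspace}). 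What the paper's routing buys is that one only has to run the kernel argument for maps into $\O_X$ rather than into an arbitrary finitely presented $\F$; what your direct verification buys is that nothing is outsourced, at the cost of the extra (correctly handled) bookkeeping of identifying $\F$ on a smaller Henselian affine with $\widetilde{N}$ for $N$ finitely generated over the (still Noetherian, by Remark~\ref{rmk:noethhenssch}) ring of that affine. Both arguments use the Noetherian hypothesis in exactly the places you indicate.
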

\begin{proof} By \spcite{01BZ}{Lemma}, it suffices to show that $\O_X$ is coherent. The proof of this fact is very similar to the scheme case \spcite{01XZ}{Lemma}, and makes use of the fact Henselization preserves the Noetherian property, as noted in Remark~\ref{rmk:noethhenssch}.\end{proof}

We will now discuss the functor from the category of $A$-modules to the category of quasi-coherent sheaves on $\sph(A)$ defined in Definition~\ref{def:tildeshf}, $M \mapsto \widetilde{M}$. In the setting of affine schemes, the corresponding functor is an equivalence of categories. However, for Henselian affine schemes, this is only true in positive characteristic. We will show in Example~\ref{ex:grecowrong} that this functor is not in general essentially surjective. 

\begin{lemma}\label{lem:tildeexact} Let $(A,I)$ be a Henselian pair and $M' \to M \to M''$ be a sequence of $A$-modules. Then the following are equivalent:
\begin{enumerate}[(i)]
\item the sequence of $A$-modules $M' \to M \to M''$ is exact, \item the sequence of quasi-coherent sheaves $\widetilde{M'} \to \widetilde{M} \to \widetilde{M''}$ on $\sph(A,I)$ is exact.
 \end{enumerate}
\end{lemma}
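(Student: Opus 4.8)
The plan is to test the exactness of each sequence on the stalks of sheaves on $\sph(A,I)$, using that the points of this space are exactly the primes of $A$ containing $I$; the one non-formal ingredient is that the Henselian-pair condition $I\subseteq\Jac(A)$ forces every maximal ideal of $A$ to lie in $V(I)$.

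First I would record the stalk computation. For $\mf p\in V(I)$ the opens $D(f)\cap V(I)$ with $f\notin\mf p$ form a neighborhood basis of $\mf p$, so
\[
\widetilde M_{\mf p}=\colim_{f\notin\mf p}\bigl(M\tens_A A_f^h\bigr)=M\tens_A\bigl(\colim_{f\notin\mf p}A_f^h\bigr)=M\tens_A (A_{\mf p})^h,
\]
using that Henselization of pairs commutes with filtered colimits of rings and that $\tens$ commutes with colimits; here $(A_{\mf p})^h$ is the Henselization of $(A_{\mf p},IA_{\mf p})$, the stalk of the structure sheaf at $\mf p$ as in Remark~\ref{rmk:locringspace}. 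I also need flatness facts: the Henselization of a pair is a filtered colimit of \Et algebras, hence flat, so $A_f^h$ and $(A_{\mf p})^h$ are flat $A$-algebras; moreover $A_{\mf p}\to(A_{\mf p})^h$ is faithfully flat, being flat with $(A_{\mf p})^h/\mf p(A_{\mf p})^h$ the nonzero ring $\kappa(\mf p)$ (a quotient of $(A_{\mf p})^h/I(A_{\mf p})^h=A_{\mf p}/IA_{\mf p}$, using that $I\subseteq\mf p$).

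For (i)$\Rightarrow$(ii): tensoring the exact sequence $M'\to M\to M''$ with the flat $A$-algebra $(A_{\mf p})^h$ keeps it exact, so every stalk sequence of $\widetilde{M'}\to\widetilde M\to\widetilde{M''}$ is exact, and hence the sheaf sequence is exact since $V(I)$ is the whole underlying space. For (ii)$\Rightarrow$(i): put $H=\ker(M\to M'')/\im(M'\to M)$. Flatness of $A\to(A_{\mf p})^h$ lets homology commute with $-\tens_A(A_{\mf p})^h$, so $H\tens_A(A_{\mf p})^h$ is the homology of the stalk sequence at $\mf p$, which vanishes by hypothesis; faithful flatness of $A_{\mf p}\to(A_{\mf p})^h$ then gives $H_{\mf p}=0$ for every $\mf p\in V(I)$. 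In particular $H_{\mf m}=0$ for every maximal ideal $\mf m$ of $A$, because $\mf m\supseteq\Jac(A)\supseteq I$ puts $\mf m\in V(I)$; since a module is zero once all its localizations at maximal ideals vanish, $H=0$, which is exactness of $M'\to M\to M''$.

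I expect the only step needing care to be the last one: exactness of the sheaf sequence is a priori information only at primes containing $I$, and it is precisely the hypothesis $I\subseteq\Jac(A)$ (so that $\mspec(A)\subseteq V(I)$) that upgrades ``$H$ vanishes at every point of $\sph(A,I)$'' to ``$H=0$''. Everything else is flat base change together with standard facts about Henselizations of pairs; without the Jacobson-radical hypothesis the equivalence would already fail, e.g.\ for a nonzero $M$ supported off $V(I)$.
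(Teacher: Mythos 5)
Your proof is correct and follows essentially the same route as the paper's: test exactness on stalks, use flatness of $A\to (A_{\mf p})^h$ for (i)$\Rightarrow$(ii), and for the converse use faithful flatness of $A_{\mf m}\to (A_{\mf m})^h$ at maximal ideals, which lie in $V(I)$ precisely because $I\subseteq\Jac(A)$. The only cosmetic difference is that you package the reverse direction via the homology module $H$ while the paper checks exactness at each $\mf m$ directly.
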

\begin{proof} 
We first show that (ii) $\implies$ (i). 

If $\widetilde{M'} \to \widetilde{M} \to \widetilde{M''}$ is exact, then for all maximal ideals $\mf{m}$ of $A$ we have an exact sequence $$M' \tens_A A_{\mf{m}}^h \to M \tens_A A_{\mf{m}}^h  \to M''\tens_A A_{\mf{m}}^h,$$ by talking the stalk at the closed point $\mf{m} \in \sph(A,I)$ (which we can do as $I \subseteq {\rm Jac}(A)$). Then because $A_{\mf{m}} \to A_{\mf{m}}^h$ is faithfully flat, the sequence $$M' \tens_A A_{\mf{m}} \to  M\tens_A A_{\mf{m}}  \to M''\tens_A A_{\mf{m}}$$ is also short exact. Thus the sequence of $A$-modules $M' \to M \to M''$ is exact at all maximal $\mf{m} \subset A$, hence is exact.

To show that (i) $\implies$ (ii), we need only note that for each point $\mf{p} \in \sph(A,I)$ corresponding to a prime ideal $\mf{p} \subset A$, the map $A \to A_{\mf{p}}^h$ is flat. \end{proof}

\begin{lemma}\label{lem:tildefunctor} For a Henselian pair $(A,I)$ with corresponding affine Henselian scheme $X=\sph(A,I)$ an $A$-module $M$, and any sheaf of $\O_X$-modules $\G$, we have a bijection $$\Hom_{\O_X}(\widetilde{M},\G) \to \Hom_A(M,\Gamma(X,\G)).$$
\end{lemma}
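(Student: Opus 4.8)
The plan is to write down the natural map and then construct an explicit two-sided inverse, mirroring the affine-scheme case. The map in the statement sends a morphism $\phi\colon \widetilde{M}\to\G$ to the $A$-linear map $\Gamma(X,\phi)\colon M\to\Gamma(X,\G)$, where we use the identification $\Gamma(X,\widetilde{M})=M$ recorded in Definition~\ref{def:tildeshf} (concretely $X=D(1)\cap V(I)$ and $A^h=A$ since $(A,I)$ is Henselian, so $\widetilde{M}(X)=M\otimes_A A=M$). This assignment is additive, so it suffices to produce an inverse.

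Given an $A$-linear map $\psi\colon M\to\Gamma(X,\G)$, I would build a morphism $\widetilde\psi\colon\widetilde{M}\to\G$ by specifying it on the base of distinguished affine opens $U=D(f)\cap V(I)$. On such a $U$ we have $\widetilde{M}(U)=M\otimes_A A_f^h$ by Definition~\ref{def:tildeshf}, and $\G(U)$ is a module over $\O_X(U)=A_f^h$ by Definition~\ref{def:affhenssch}. The composite of $\psi$ with the restriction $\Gamma(X,\G)\to\G(U)$ is $A$-linear, so by the universal property of extension of scalars along $A\to A_f^h$ it extends uniquely to an $A_f^h$-linear map $\widetilde\psi_U\colon M\otimes_A A_f^h\to\G(U)$. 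For $D(g)\cap V(I)\subseteq D(f)\cap V(I)$, compatibility of the $\widetilde\psi_U$ with the restriction maps follows from the uniqueness clause of that universal property: both composites $M\otimes_A A_f^h\to\G(D(g)\cap V(I))$ are semilinear over $A_f^h\to A_g^h$ and agree on the generating set $\{m\otimes 1 : m\in M\}$, hence coincide. Thus the $\widetilde\psi_U$ assemble into a morphism of presheaves of $\O_X$-modules on the base of distinguished opens, which (since both $\widetilde{M}$ and $\G$ are sheaves) extends uniquely to a morphism $\widetilde\psi\colon\widetilde{M}\to\G$ of $\O_X$-modules.

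It remains to check that the two constructions are mutually inverse. Taking $U=X$ in the construction above gives $\Gamma(X,\widetilde\psi)=\widetilde\psi_X=\psi$. Conversely, given $\phi\colon\widetilde{M}\to\G$ with $\psi=\Gamma(X,\phi)$, on each basic open $U=D(f)\cap V(I)$ the component $\phi_U$ is $A_f^h$-linear and, by naturality of $\phi$ for the restriction along $U\hookrightarrow X$, sends $m\otimes 1$ to the image of $\psi(m)$ in $\G(U)$; since $M\otimes_A A_f^h$ is generated over $A_f^h$ by the elements $m\otimes 1$, this forces $\phi_U=\widetilde\psi_U$ for every $U$, hence $\phi=\widetilde\psi$. (This last observation is exactly the injectivity of the map in the statement.) I do not expect a serious obstacle: the argument is purely formal, the only inputs specific to Henselian schemes being the identifications $\Gamma(X,\widetilde{M})=M$ and $\O_X(D(f)\cap V(I))=A_f^h$. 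The one place requiring a little care is the restriction-compatibility of the maps $\widetilde\psi_U$ together with the general principle that a morphism of sheaves of modules on a ringed space is determined by, and can be constructed from, its restriction to a base of the topology.
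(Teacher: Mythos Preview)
Your proof is correct. The paper's own proof is a one-line citation of the general ringed-space adjunction \spcite{01BH}{Lemma}, which constructs $\widetilde{M}$ as the sheafification of $U\mapsto M\otimes_A \O_X(U)$ and proves exactly the bijection you established; your argument is the explicit unwinding of that proof on the base of distinguished opens, using the identifications $\O_X(D(f)\cap V(I))=A_f^h$ and $\widetilde{M}(D(f)\cap V(I))=M\otimes_A A_f^h$ specific to the Henselian setting.
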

\begin{proof} This follows from the general ringed spaces case in \spcite{01BH}{Lemma}.\end{proof}

\begin{lemma}\label{lem:tildefullfaithexact} For a Henselian pair $(A,I)$ with corresponding affine Henselian scheme $X=\sph(A,I)$, the functor $M \mapsto \widetilde{M}$ from the category of $A$-modules to the category of quasi-coherent $\O_X$-modules is exact and fully faithful.
\end{lemma}
\begin{proof} This follows from Lemmas~\ref{lem:tildeexact}~and~\ref{lem:tildefunctor}.\end{proof}
\newcommand{\fwt}{\widetilde{\phantom{.}\cdot\phantom{.}}}

\begin{remark}\label{rmk:esssurjfwt}
 If it were the case that quasi-coherent sheaves on $X=\sph(A)$ are generated by their global sections, then for any quasi-coherent sheaf $\F$ on $X$, we would have an exact sequence $$\bigoplus_{j \in J} \O_X \to \bigoplus_{j' \in J'} \O_X \to \F \to 0$$ of quasi-coherent sheaves on $X$. It is easy to see that $\O_X=\widetilde{A}$, and by \spcite{01BH}{Lemma}, the functor $\fwt$ from the category of $A$-modules to the category of quasi-coherent sheaves on $X$ commutes with direct sums. Hence we would have an exact sequence $$\bigoplus_{j \in J} \widetilde{A} \to \bigoplus_{j' \in J'} \widetilde{A} \to \F \to 0.$$ By Lemma~\ref{lem:tildefullfaithexact} the functor $\fwt$ is fully faithful, so the map $\bigoplus_{j \in J} \widetilde{A} \to \bigoplus_{j' \in J'} \widetilde{A}$ arises from a map of $A$-modules $\bigoplus_{j \in J} A \to \bigoplus_{j' \in J'} A$. Let $M$ be the cokernel of this map of $A$-modules. Then since $\fwt$ is exact by Lemma~\ref{lem:tildefullfaithexact}, we would get that $\widetilde{M} \simeq \F$.

Therefore $\fwt$ is essentially surjective if all quasi-coherent sheaves on $X$ are generated by their global sections. (In fact $\fwt$ would be an equivalence of categories.) We will soon show that this is not true in general.
\end{remark}
While we have shown the functor $\fwt$ is exact and fully faithful in general, it is only essentially surjective (for finitely presented modules) in positive characteristic. As a consequence of de Jong's counterexample to Theorem B \cite{nothmbblog}, which we will describe in Proposition~\ref{prop:cxyhglobsec}, we can obtain a finitely presented counterexample to essential surjectivity of $\fwt$ in characteristic $0$. By Remark~\ref{rmk:esssurjfwt}, it follows that in characteristic $0$, quasi-coherent sheaves are not generated by their global sections. 

\begin{proposition}\label{prop:cxyhglobsec} Let $(R,J)$ be the Henselization of the pair $(\C[x,y],(xy(x+y-1)))$, and let $A$ be the integral closure of $R$ in the algebraically closed field $K=\ol{\C(x,y)}$. Let $I=JA=xy(x+y-1)A$ be the ideal of $A$ generated by $xy(x+y-1)$. Then the pair $(A,I)$ is Henselian and the cohomology group $\H^1(\sph(A,I),\O_{\sph(A,I)})$ is nonzero.
\end{proposition}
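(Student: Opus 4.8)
\emph{That $(A,I)$ is Henselian.} Since $J = xy(x+y-1)R$, we have $I = JA$, so it suffices to observe that an integral extension of a Henselian pair is a Henselian pair. Concretely, write $A$ as the filtered union of its finite-type $R$-subalgebras $A_\lambda$, each finite over $R$ (the Henselization of the Nagata ring $\C[x,y]$ along $J$ is again Nagata); each $(A_\lambda, JA_\lambda)$ is Henselian because finite---hence integral---extensions of Henselian pairs are Henselian, and a filtered colimit of Henselian pairs is Henselian; see \cite{stackp}. (Alternatively one applies the integral case to $R \to A$ in one step.)

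\emph{The geometric picture.} The content of the proposition is the non-vanishing of $\H^1$, due to de Jong \cite{nothmbblog}; I describe the plan I would follow. The underlying space of $\sph(A,I)$ is $V(I) \subseteq \spec A$, and since $R/J \cong \C[x,y]/(xy(x+y-1))$ and $A$ is integral over $R$, this space lies over the ``triangle'' $C := V(xy(x+y-1)) \subseteq \mathbf{A}^2_\C$: the union of the three lines $L_1 = \{x=0\}$, $L_2 = \{y=0\}$, $L_3 = \{x+y=1\}$, which are pairwise transverse, meet in the three distinct nodes $(0,0) = L_1 \cap L_2$, $(0,1) = L_1 \cap L_3$, $(1,0) = L_2 \cap L_3$, and have no common point. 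Thus the dual graph of $C$ is the $3$-cycle, and the topological space $C$ is homotopy equivalent to a circle; the whole point is to detect this loop in the coherent cohomology of the Henselian thickening, even though it is invisible on the affine scheme $C$, where $\H^1$ of every quasi-coherent sheaf vanishes. Note that $\O_{\sph(A,I)}$ is genuinely ``thick'': its ring of global sections is $A$, not the coordinate ring of the reduced underlying curve, and the quotient by the radical ideal sheaf $\mc{I}$ of Example~\ref{ex:qcideal} is the structure sheaf of an affine scheme; so from the long exact sequence one gets $\H^1(\O_{\sph(A,I)}) \cong \H^1(\mc{I})$, and, iterating through coherent subquotients, $\H^1(\O_{\sph(A,I)}) \cong \H^1(\mc{I}^{\,n})$ for every $n$---this is a genuinely ``Henselian'' (non-complete) phenomenon, invisible on any finite thickening.

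\emph{The strategy.} To produce a nonzero class I would ``normalize the triangle'': construct a short exact sequence of $\O_{\sph(A,I)}$-modules whose middle term is the product of the structure sheaves of the three ``branches'' of $\sph(A,I)$---the Henselizations, along the nodes lying on them, of the three affine lines (carrying their absolute integral closures)---and whose third term is a sum of skyscrapers at the node-fibres recording the difference of the two branches meeting there. Equivalently, one does \v{C}ech cohomology for the cover of $V(I)$ by the three opens lying over $C \setminus \{\text{node opposite } L_i\}$, each of which is ``combinatorially a tree''. Granting acyclicity of the branch terms (respectively the cover pieces), $\H^1$ is the cokernel of the map ``restrict to the branches at the nodes'', which is exactly the coboundary of the $3$-cycle dual graph with the relevant coefficient system; a $1$-cochain running once around the loop---assembled from the section $1$ over one overlap and $0$ over the other two, transported through the local uniformizers $x,y,x+y-1$---is then not a coboundary, because a cochain nontrivial on the loop of the $3$-cycle cannot be a difference of sections living separately on the three trees. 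Hence $\H^1(\sph(A,I),\O_{\sph(A,I)}) \neq 0$.

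\emph{The main obstacle.} The real difficulty---and the reason one takes $A$ to be the \emph{absolute} integral closure rather than $R$ or a finite extension---is the acyclicity of those branch terms: that the Henselization of (a localization of) an absolutely integrally closed ring along a branch has vanishing higher cohomology. For a general Henselian pair this acyclicity fails, and that failure is precisely the pathology this proposition records; but when $A$ is absolutely integrally closed the small \Et site of $\spec A$ is cohomologically no richer than its Zariski site, which forces the branch pieces to behave, for coherent cohomology, like affine schemes and leaves the loop of the triangle as the only surviving class in $\H^1$. Making this precise is the technical heart, and here I would follow de Jong \cite{nothmbblog}.
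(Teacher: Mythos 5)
Your first paragraph is fine and matches the paper: $R \to A$ is integral, so $(A,I)$ is Henselian. Your geometric intuition is also the right one --- the class to be detected is the loop in the dual graph of the triangle $V(xy(x+y-1))$ --- and you correctly identify the role of the absolute integral closure: since $A$ is normal with algebraically closed fraction field, every affine \Et $\spec(A)$-scheme is a disjoint union of opens, which is indeed the structural fact the paper exploits (it implies $\O_{\sph(A)} = \O_{\spec(A)}|_Z$ as Zariski sheaves on $Z = \spec(A/I)$).

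However, the argument you propose for the nonvanishing has a genuine gap, in two places. First, the assertion that a $1$-cochain running around the loop ``cannot be a coboundary because a cochain nontrivial on the loop of the $3$-cycle cannot be a difference of sections living separately on the three trees'' is precisely the statement that needs proof, and as stated it proves too much: the affine scheme $C = V(xy(x+y-1))$ has the same dual graph and the same three-piece cover, yet $\H^1(C,\O_C)=0$. Whatever kills the class downstairs must be shown not to operate upstairs, and your sketch does not supply that; the acyclicity of the branch terms, which you flag as ``the technical heart,'' is simply deferred to de Jong. Second, the space $Z=\spec(A/I)$ on which the cohomology is computed is not the triangle but a pro-(finite cover) of it, so a priori the loop class could unwind or become torsion after pullback --- this is not addressed by the \v{C}ech picture at all. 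The paper's proof avoids the \v{C}ech/normalization machinery entirely: it writes $R \to A$ as a filtered colimit of finite normal extensions $A'$, notes each $R \to A'$ is finite locally free by miracle flatness ($R$ regular, $A'$ normal of dimension $2$ hence Cohen--Macaulay), and uses the resulting trace maps ${\rm Tr}\colon \varphi_*\underline{\Z} \to \underline{\Z}$ with $\mathrm{Tr}\circ(\text{unit}) = n$ to show that a generator of $\H^1(Z_0,\underline{\Z}) = \Z$ stays non-torsion at every finite level, hence on $Z$. It then feeds this class through the maps of sheaves $\underline{\Z} \to \O_{\sph(A)} \to \underline{K}$ (available exactly because the \Et site of $\spec(A)$ is trivial) and applies universal coefficients over the characteristic-zero field $K$ to conclude $\H^1(Z,\O_{\sph(A)}) \ne 0$. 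Your proposal would need to be replaced or completed by an argument of this kind; as written it records the correct expectation but not a proof.
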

\begin{proof} This proof is due to \cite{nothmbblog} and private correspondence with Aise Johan de Jong; we reproduce it here for the convenience of the reader.

Since the map $R \to A$ is integral, it follows that $(A,I)$ is a Henselian pair \spcite{09XK}{Lemma}. Let $X=\spec(A), Z=\spec(A/I)$ and $Z_0=\spec(R/J)=\spec(\C[x,y]/(xy(x+y-1)))$, with $\iota: Z \hookrightarrow X$ the usual closed immersion.

Note that the ring $A$ is a normal domain which is integrally closed in its fraction field $K$, and $K$ is algebraically closed. This means that all local rings of $A$ are strictly Henselian \spcite{0DCS}{Lemma} and furthermore any affine scheme which is \Et over $X=\spec(A)$ is a disjoint union of opens of $X$ \spcite{09Z9}{Lemma}. 

Since $\O_{\sph(A)}=\left.\iota_{\et}^{-1}\left(\O_{X_{\et}}\right)\right|_{Z_{\zar}}$ (see the proof of Proposition~\ref{prop:isshf}) it then follows that $\O_{\sph(A)}=\O_{X}|_Z$. Thus the map from $\O_{X}$ to the constant sheaf $\underline{K}$ on $X$ gives rise to a map from $\O_{\sph(A)}$ to the constant sheaf $\underline{K}$ on $Z$. We will use the maps of sheaves $\underline{\Z} \to \O_{\sph(A)} \to \underline{K}$ on $Z$ to prove that $H^1(Z,\O_{\sph(A)})$ is nonzero.

%

We can write the extension $R \to A$ as a filtered colimit of finite normal extensions $\varinjlim A'$. For each finite normal extension $\phi: R \to A'$, we note that $R$ is regular \cite[Corollary 7.7]{greco69} and $A'$ is normal of dimension $2$, hence Cohen-Macaulay. Therefore we can apply ``miracle flatness'' \cite[Theorem 23.1]{matscrt} to see that $\phi$ is flat (on local rings) and hence is finite locally free \cite[Theorem 7.10]{matscrt}. 

By \cite[Expos\'{e} XVII, Prop. 6.2.5]{sga4} (also described in \spcite{0GKE}{Section}), if $\phi$ has degree $n$  we can obtain a ``trace'' map ${\rm Tr}: \phi_*\underline{\Z} \to \underline{\Z}$ on $(\spec(R))_\et$ such that the composition $\underline{\Z} \to \phi_*\underline{\Z} \to \underline{\Z}$ is multiplication by $n$.  

The map $Z \to Z_0$ is a cofiltered limit of the morphisms $\varphi: Z'=\spec(A'/I') \to Z_0$ for $I'=JA'$; for each $\varphi$ we can restrict the trace map described above to $Z_0$ to obtain a morphism of sheaves $\varphi_* \underline{\Z} \to \underline{\Z}$ on $(Z_0)_\et$ such that the composition $\underline{\Z} \to \varphi_*\underline{\Z} \to \underline{\Z}$ is multiplication by $n$. 

Now fix some generator $g$ of the cohomology group $\H^1(Z_0,\underline{\Z})=\Z$. We will show that $g|_Z$ is non-torsion. 

If $g$ maps to $0$ in the cohomology of $Z$, then $g$ goes to $0$ in the cohomology of some $Z'=\spec(A'/I')$ \spcite{09YP}{Lemma}. However, we have $\H^1(Z',\underline{Z})=\H^1(Z_0, \varphi_*\underline{\Z})$ for $\varphi: Z' \to Z_0$ arising from $\phi: R \to A'$, which means that if $g$ is $0$ in $\H^1(Z_0, \varphi_*\underline{\Z})$, it is annihilated by $n$ (for $n$ the degree of the finite locally free map $\phi$). 

Therefore $g|_Z$ is non-torsion, and its image in $\H^1(Z,\underline{K})$ is nonzero by the Universal Coefficient Theorem. Since we have a map of sheaves $\underline{\Z} \to \O_{\sph(A)} \to \underline{K}$ on $Z$, we see that $\H^1(Z,\O_{\sph(A)})=\H^1(\sph(A),\O_{\sph(A)}) \ne 0$. \end{proof}

\begin{example}\label{ex:grecowrong} By Proposition~\ref{prop:cxyhglobsec}, we have a Henselian pair $(A,I)$ with $A$ a $\Q$-algebra such that for $(X,\O_X)=\sph(A,I)$, the cohomology group $\H^1(X,\O_X)$ is nonzero.

For $R$ as in Proposition~\ref{prop:cxyhglobsec}, we can write $A = \varinjlim R_i$ as the filtered direct limit of its subalgebras $R_i$ which are module-finite over $R$ (and hence Noetherian). 

If $I_i=JR_i=xy(x+y-1)R_i$, then each pair $(R_i,I_i)$ is Henselian since the maps $R \to R_i$ are integral. Then $\sph(A,I) = \varprojlim \sph(R_i,I_i)$, so we can apply \cite[Lemma A.2]{ghga}\footnote{This result \cite[Lemma A.2]{ghga} depends on Lemma~\ref{lem:modpullback} which is given below, but our argument is not circular since it does not depend on Proposition~\ref{prop:cxyhglobsec} or any other results from \cite{ghga}.} to ``pull the limit into the cohomology'' and deduce $$ \varinjlim \H^1(\sph(R_i,I_i),\O_{\sph(R_i,I_i)})=\H^1(\sph(A,I),\O_{\sph(A,I)}) \ne 0.$$ Thus for some $i_0$ we have $\H^1(\sph(R_{i_0},I_{i_0}),\O_{\sph(R_{i_0},I_{i_0})}) \ne 0$. 

We may now replace $A$ with the subalgebra $R_{i_0}$, so we can assume $A$ is Noetherian. As a consequence, any coherent $\O_X$-module is finitely presented by Lemma~\ref{lem:cohfpres}.

The group $\H^1(X,\O_X)$ describes extensions of $\O_X$ by itself \spcite{0B39}{Lemma}. Therefore since $\H^1(X,\O_X) \ne 0$, we have an $\O_X$-module $\mc{N}$ that fits into a short exact sequence $\O_X \hookrightarrow \mc{N} \twoheadrightarrow \O_X$ which does not split. We see that $\mc{N}$ is coherent, hence finitely presented and quasi-coherent, by \spcite{01BY}{Lemma}.

%

Then if $\fwt$ were essentially surjective, we could get a diagram of $A$-modules $A \to N \to A$ giving rise to the short exact sequence $\O_X \hookrightarrow \mc{N} \twoheadrightarrow \O_X$ via the functor $\fwt$. Thus the diagram of $A$-modules is short exact by Lemma~\ref{lem:tildeexact}, and it is non-split by the functoriality of $\fwt$ since the associated sheaf diagram $\O_X \hookrightarrow \mc{N} \twoheadrightarrow \O_X$ is non-split by design.

However, there are clearly no non-split $A$-module extensions of $A$ by itself. This gives a contradiction, so it cannot be the case $\fwt$ is essentially surjective in general, even for coherent (or finitely presented) modules. \end{example}

We will prove the essential image of the functor $\fwt$ includes all finitely presented objects when the base ring is an $\mathbf{F}_p$-algebra in Theorem~\ref{thm:niceqcoh} after discussing the operation of pullback on quasi-coherent sheaves from a scheme to its Henselization.

\subsection{Pullback and finite pushforward}

A description of the effects of pullback and finite pushforward on sheaves on Henselian schemes will be useful for discussing $\fwt$ in positive characteristic, as previously mentioned. It will also be useful for forthcoming work on a Henselian version of formal GAGA \cite{ghga}.

We now will describe the effect of pushforward by a {\it module-finite} map on sheaves associated to modules.

\begin{lemma}\label{lem:modpush} For map of Henselian pairs $(A,I) \to (B,IB)$ which is module-finite as a map of rings, let $\phi: Y \to X$ be the corresponding map of Henselian spectra between $Y = \sph(B)$ and $X = \sph(A)$. For any $B$-module $N$, the pushforward $\phi_*(\widetilde{N})$ is the sheaf of $\O_X$-modules $\widetilde{N_A}$, for $N_A$ the underlying $A$-module of $N$ under restriction of scalars. That is, via the natural equality of $A$-modules $N_A = \Gamma(Y,\widetilde{N})=\Gamma(X,\phi_*\widetilde{N})$, the natural map $\widetilde{N_A} \to \phi_* \widetilde{N}$ via Lemma~\ref{lem:tildefunctor} is an isomorphism.
\end{lemma}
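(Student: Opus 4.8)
The plan is to reduce the statement to a computation on a basis of distinguished affine opens of $X=\sph(A)$, where everything becomes a statement about modules and localization. First I would fix the map of Henselian pairs $(A,I)\to(B,IB)$ with $A\to B$ module-finite, and observe that the underlying topological space of $Y=\sph(B)$ is $V(IB)\subseteq\spec(B)$, which maps to $V(I)\subseteq\spec(A)$ via the finite (hence closed, and in fact affine on the level of schemes) map $\spec(B/IB)\to\spec(A/I)$; since $A\to B$ is module-finite, $B/IB$ is a finite $A/I$-module, so $V(IB)\to V(I)$ is a finite morphism of the underlying spaces. The key point to extract here is that for $f\in A$, the preimage $\phi^{-1}(D(f)\cap V(I))$ is exactly $D(f)\cap V(IB)$ (writing $f$ also for its image in $B$), i.e. the basic open $D(f)$ of $Y$ associated to the image of $f$.

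Next I would compute both sides on such a basic open. On the one hand, $(\phi_*\widetilde{N})(D(f)\cap V(I)) = \widetilde{N}(\phi^{-1}(D(f)\cap V(I))) = \widetilde{N}(D(f)\cap V(IB)) = N\tens_B B_f^h$. On the other hand, $\widetilde{N_A}(D(f)\cap V(I)) = N_A\tens_A A_f^h$. So the lemma amounts to the natural map $N_A\tens_A A_f^h \to N\tens_B B_f^h$ being an isomorphism of $A_f^h$-modules, compatibly with restriction. Since localization commutes with Henselization (the $I$-adic Henselization of $A_f$ is $A_f^h$, and likewise $B_f^h = (B_f)^h$ for the $IB_f$-adic Henselization), and since $(A_f,IA_f)\to(B_f,IB_f)$ is again module-finite, it suffices to prove: for a module-finite map of Henselian pairs $(A,I)\to(B,IB)$ and a $B$-module $N$, the natural map $N_A\tens_A A^h \to N\tens_B B^h$ is an isomorphism — but wait, $A$ is already Henselian here, so $A^h=A$ and this is the tautology $N_A = N_A$; the content is rather that $B^h = B\tens_A A^h = B$ because $B$, being module-finite over the Henselian pair $(A,I)$, is already Henselian with respect to $IB$ (by \spcite{09XK}{Lemma} or the analogous statement on integral/finite extensions of Henselian pairs used earlier in the excerpt for Proposition~\ref{prop:cxyhglobsec}), so that $\widetilde{N}$ really does have $\widetilde{N}(D(f)\cap V(IB)) = N\tens_B B_f^h = N_{B_f}$ and no further Henselization intervenes. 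Thus on the basic open $D(f)\cap V(I)$ both sheaves give $N\tens_B B_f$ as an $A_f$-module, and the identification is the obvious one.

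Finally I would check that these isomorphisms on basic opens are compatible with the restriction maps of Lemma~\ref{lem:defshf} (for nested $D(f_2)\cap V(I)\subseteq D(f_1)\cap V(I)$), which is immediate from functoriality of tensor product and of localization, and then invoke the fact that a morphism of sheaves which is an isomorphism on a basis of opens is an isomorphism. I should also verify that the natural map being compared is indeed the one produced by Lemma~\ref{lem:tildefunctor} from the identification $N_A=\Gamma(X,\phi_*\widetilde N)$; this follows by evaluating that adjunction map on global sections ($f=1$), where it is visibly the identity, and using that $\Hom_{\O_X}(\widetilde{N_A},-)$ is determined by global sections.

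I expect the only mild obstacle to be the bookkeeping in the previous paragraph: correctly citing that a module-finite (more generally integral) extension of a Henselian pair is again Henselian, so that $B=B^h$ and the structure sheaf $\widetilde N$ on $Y$ is computed without an extra Henselization step — everything else is a formal manipulation of localizations and tensor products on a basis of opens. (If one prefers not to use that $B$ is already Henselian, one can instead argue directly that $\phi_*\widetilde N$ and $\widetilde{N_A}$ agree on stalks at each $\mf p\in V(I)$, using that Henselization commutes with the relevant filtered colimits and with finite base change, and that a module-finite map induces on stalks $(A_{\mf p})^h \to \prod_{\mf q\mid \mf p}(B_{\mf q})^h$ with the product over the finitely many primes of $B$ over $\mf p$ — but the basic-opens approach above is cleaner.)
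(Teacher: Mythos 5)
Your reduction to distinguished affine opens is the right move and matches the paper's strategy, and your computation of the two sides as $(\phi_*\widetilde{N})(D(f)\cap V(I)) = N\tens_B B_g^h$ and $\widetilde{N_A}(D(f)\cap V(I)) = N_A\tens_A A_f^h$ (writing $g$ for the image of $f$ in $B$) is correct. But the step where you conclude that ``no further Henselization intervenes'' and that both sheaves give $N\tens_B B_g$ is a genuine error. The fact that $(B,IB)$ is Henselian (because $B$ is module-finite over the Henselian pair $(A,I)$) does \emph{not} imply that $(B_g,IB_g)$ is Henselian: localization destroys the Henselian property, which is precisely why the structure sheaf of $\sph(B)$ assigns $B_g^h$ rather than $B_g$ to $D(g)\cap V(IB)$ in the first place. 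For the same reason $A_f^h\ne A_f$ in general, so $\widetilde{N_A}(D(f)\cap V(I))=N_A\tens_A A_f^h$ is not $N\tens_B B_g$ either. Your ``tautology'' observation is only valid for global sections ($f=1$); on smaller basic opens the Henselizations genuinely intervene and the two sides are a priori different rings' worth of tensor products.

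The missing ingredient --- which you actually name, but only in your final parenthetical as an optional alternative --- is that Henselization commutes with module-finite base change (Lemma~\ref{lem:hensmodfin}, i.e.\ \spcite{0DYE}{Lemma}): since $A_f \to A_f\tens_A B\simeq B_g$ is module-finite, one gets $B_g^h \simeq A_f^h\tens_{A_f} B_g \simeq A_f^h\tens_A B$, and therefore $N\tens_B B_g^h \simeq N\tens_B (B\tens_A A_f^h) \simeq N_A\tens_A A_f^h$. This is exactly the paper's argument, and it is the essential content of the lemma rather than a dispensable detour; without it your identification on basic opens does not go through.
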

\begin{proof}  It suffices to check this on distinguished affine opens of $X$. 

For $f \in A$ with image $g \in B$, the map of rings $A_f \to A_f \tens_A B \simeq B_g$ is {\it module-finite} since $A \to B$ is. Therefore $A_f^h \tens_{A_f} B_g \simeq B_g^h$ by \spcite{0DYE}{Lemma}. 

We note that $\phi^{-1}(D(f) \cap V(I)) = D(g) \cap V(IB)$, so $\phi_*(\widetilde{N})(D(f) \cap V(I)) = N \tens_B B_g^h$. Since $B_g^h \simeq A_f^h \tens_A B$, we see that as an $A$-module, $N \tens_B B_g^h \simeq N_A \tens_A A_f^h$. Therefore $\phi_*(\widetilde{N})(D(f) \cap V(I)) = \widetilde{N_A}(D(f) \cap V(I))$ for all $f$, as we desired to show.\end{proof}

We can also describe the effect of pullback on sheaves associated to modules.

\begin{lemma}\label{lem:modpull} For a map of Henselian pairs $(A,I) \to (B,IB)$, let $\phi: Y \to X$ be the corresponding map of Henselian spectra between $Y = \sph(B)$ and $X = \sph(A)$. For any $A$-module $M$, the pullback $\phi^*(\widetilde{M})$ is the sheaf of $\O_Y$-modules $\widetilde{M \tens_A B}$. That is, via the natural map of $B$-modules $$\Gamma(X,\widetilde{M}) \tens_A B \to \Gamma(Y,\phi^*(\widetilde{M})),$$ the natural map $\widetilde{M \tens_A B} \to \phi^*(\widetilde{M})$ via Lemma~\ref{lem:tildefunctor} is an isomorphism.
\end{lemma}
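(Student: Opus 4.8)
The plan is to verify the canonical map $\widetilde{M \tens_A B} \to \phi^*(\widetilde M)$ of $\O_Y$-modules on $Y = \sph(B,IB)$ is an isomorphism by passing to stalks. Fix a point $\mf{q} \in Y$, i.e.\ a prime $\mf{q} \subset B$ containing $IB$, and set $\mf{p} = \mf{q} \cap A$, so that $\phi(\mf{q}) = \mf{p} \in X$ (note $\mf{p} \supseteq I$). By Remark~\ref{rmk:locringspace}, together with the facts that Henselization of a pair and the functor $M \tens_A (-)$ both commute with filtered colimits, the relevant stalks are $\O_{X,\mf{p}} = A_{\mf{p}}^h$, $\O_{Y,\mf{q}} = B_{\mf{q}}^h$, $(\widetilde M)_{\mf{p}} = M \tens_A A_{\mf{p}}^h$, and $(\widetilde{M \tens_A B})_{\mf{q}} = (M \tens_A B) \tens_B B_{\mf{q}}^h = M \tens_A B_{\mf{q}}^h$; moreover the structural map $\O_{X,\mf{p}} \to \O_{Y,\mf{q}}$ induced by $\phi^\sharp$ is the natural map $A_{\mf{p}}^h \to B_{\mf{q}}^h$ arising from $A \to B$.

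Next I compute the stalk of the pullback. Since $\phi^*(\widetilde M) = \phi^{-1}(\widetilde M) \tens_{\phi^{-1}\O_X} \O_Y$ and the stalk functor is exact and commutes with tensor products, $$(\phi^*\widetilde M)_{\mf{q}} = (\widetilde M)_{\mf{p}} \tens_{\O_{X,\mf{p}}} \O_{Y,\mf{q}} = (M \tens_A A_{\mf{p}}^h) \tens_{A_{\mf{p}}^h} B_{\mf{q}}^h = M \tens_A B_{\mf{q}}^h,$$ which matches $(\widetilde{M \tens_A B})_{\mf{q}}$. It then remains to check that the specific map named in the statement — obtained from the unit $M \to \Gamma(X,\phi_*\phi^*\widetilde M) = \Gamma(Y,\phi^*\widetilde M)$ of the $(\phi^*,\phi_*)$-adjunction, extended to a $B$-linear map $M \tens_A B \to \Gamma(Y,\phi^*\widetilde M)$ and converted to a sheaf map via Lemma~\ref{lem:tildefunctor} for the pair $(B,IB)$ — induces on the stalk at $\mf{q}$ exactly the identification above. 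This is a diagram chase unwinding the definitions of the adjunction unit and of the associated-sheaf functor, with no surprises. (Alternatively, one can argue purely formally: for any $\O_Y$-module $\G$, the adjunction, Lemma~\ref{lem:tildefunctor} applied on $X$ and on $Y$, and tensor–hom adjunction give natural bijections $\Hom_{\O_Y}(\phi^*\widetilde M,\G) \cong \Hom_{\O_X}(\widetilde M,\phi_*\G) \cong \Hom_A(M,\Gamma(Y,\G)) \cong \Hom_B(M\tens_A B,\Gamma(Y,\G)) \cong \Hom_{\O_Y}(\widetilde{M\tens_A B},\G)$, so the two sheaves corepresent the same functor; one still unwinds definitions to see the canonical map corresponds to the identity.)

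I do not anticipate a genuine obstacle: once the stalks of $\widetilde M$ and of the $\phi^{-1}$-sheaves are identified via Remark~\ref{rmk:locringspace} and the commutation of Henselization with filtered colimits, the isomorphism on stalks is immediate. The only point demanding care is the bookkeeping in the final step — confirming that the particular canonical map in the statement, rather than merely some abstract isomorphism, is the one that is an isomorphism — and the stalk description is the most transparent way to settle this.
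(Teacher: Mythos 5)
Your proposal is correct. The paper disposes of this lemma in one line by citing the general ringed-space statement \spcite{01BJ}{Lemma} (pullback of the sheaf associated to a module over the global sections is the sheaf associated to the extended module), and the proof of that Stacks lemma is exactly your parenthetical corepresentability chain $\Hom_{\O_Y}(\phi^*\widetilde M,\G) \cong \Hom_{\O_X}(\widetilde M,\phi_*\G) \cong \Hom_A(M,\Gamma(Y,\G)) \cong \Hom_B(M\tens_A B,\Gamma(Y,\G)) \cong \Hom_{\O_Y}(\widetilde{M\tens_A B},\G)$ — so your ``alternative'' is in substance the paper's actual argument. Your primary route, the stalk computation $(\phi^*\widetilde M)_{\mf{q}} = (M\tens_A A_{\mf{p}}^h)\tens_{A_{\mf{p}}^h} B_{\mf{q}}^h = M\tens_A B_{\mf{q}}^h = (\widetilde{M\tens_A B})_{\mf{q}}$, is a valid and more concrete alternative that closely parallels the paper's own proof of Lemma~\ref{lem:modpullback}; it relies only on the identification of stalks from Remark~\ref{rmk:locringspace} and the commutation of Henselization with filtered colimits. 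The trade-off is the one you already flag: the functorial argument identifies the canonical map for free, since both sheaves corepresent the same functor and the Yoneda isomorphism is the stated map, whereas the stalk argument needs the extra bookkeeping to confirm that the specific map of the statement, not just some abstract isomorphism, is the one realized on stalks. Either route is acceptable.
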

\begin{proof} This follows from \spcite{01BJ}{Lemma}.\end{proof}

For ease of notation, given a sheaf $\F$ of $\O_X$-modules on a scheme $X$ with Henselization $X^h$ along some closed subscheme, we will write $\F^h$ for the pullback of $\F$ to $X^h$. (Note that the superscript $h$ might indicate pullback to a different Henselian scheme for different quasi-coherent ideal sheaves on $X$.) 

\begin{lemma}\label{lem:hensfaithexact} Let $X$ be a scheme with $i: Y \to X$ a closed subscheme, and $X^h$ the Henselization of $X$ along $Y$. Then the functor from the category of quasi-coherent sheaves on $X$ to the category of quasi-coherent sheaves on $X^h$ defined by $\F \mapsto \F^h$ is exact and faithful.
\end{lemma}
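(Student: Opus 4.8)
The plan is to reduce to the affine case and then combine the exactness and faithfulness statements established earlier for the $\fwt$ functor with the corresponding statements for the pullback functor on schemes. First I would reduce to the affine setting: both exactness and faithfulness can be checked locally on $X$ (and hence on $X^h$, which has the same underlying space as $Y$), so it suffices to take $X = \spec(A)$ affine with $Y = \spec(A/I)$ for an ideal $I \subset A$, and $X^h = \sph(A^h, I^h)$. In this situation, by Lemma~\ref{lem:modpull}, for a quasi-coherent sheaf $\F = \widetilde{M}$ on $X$ associated to an $A$-module $M$, the pullback $\F^h$ is the sheaf $\widetilde{M \tens_A A^h}$ on $\sph(A^h)$ associated to the $A^h$-module $M \tens_A A^h$.

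Next I would identify the scheme-theoretic pullback functor $\F \mapsto \F^h$ with the composite of two functors whose exactness and faithfulness are known: the extension-of-scalars functor $M \mapsto M \tens_A A^h$ from $A$-modules to $A^h$-modules, followed by the functor $N \mapsto \widetilde{N}$ from $A^h$-modules to quasi-coherent sheaves on $\sph(A^h, I^h)$. The second functor is exact and fully faithful by Lemma~\ref{lem:tildeffulexact} (applied to the Henselian pair $(A^h, I^h)$), so in particular it is exact and faithful. For the first functor, exactness is the statement that $A \to A^h$ is flat, which is recorded in Remark~\ref{rmk:locringspace} (or follows from the fact that $A^h$ is a filtered colimit of \Et $A$-algebras); faithfulness of $M \mapsto M \tens_A A^h$ follows because $A \to A^h$ is faithfully flat on local rings — concretely, if $M \ne 0$ then $M_{\mf{m}} \ne 0$ for some maximal ideal $\mf{m}$, and $M_{\mf{m}} \tens_{A_{\mf{m}}} A_{\mf{m}}^h \ne 0$ since $A_{\mf{m}} \to A_{\mf{m}}^h$ is faithfully flat (as in the proof of Lemma~\ref{lem:tildeexact}), so $M \tens_A A^h \ne 0$; applying this to a nonzero morphism's image or to the source/target shows the functor kills no nonzero morphism. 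Composing, $\F \mapsto \F^h$ is exact and faithful on the affine level.

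The only genuine point to be careful about is the globalization: having proved the statement over each affine open $\spec(A) \subset X$, I would note that exactness of a sequence of quasi-coherent sheaves is a local condition on $X^h$, so exactness of $\F \mapsto \F^h$ follows immediately; for faithfulness, a morphism $\F_1 \to \F_2$ of quasi-coherent sheaves on $X$ that becomes zero on $X^h$ is zero on the Henselization of each affine open, hence zero on each affine open (by the affine case), hence zero. I do not expect any real obstacle here — the main content is simply bookkeeping to match the pullback functor with the tensor-up-then-sheafify composite via Lemma~\ref{lem:modpull}, and the hardest (though still routine) ingredient is the faithful-flatness argument for $A \to A^h$ on local rings, which is exactly the one already used in the proof of Lemma~\ref{lem:tildeexact}.
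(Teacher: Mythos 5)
Your exactness argument is fine and is essentially the paper's: the stalk of $\O_{X^h}$ at $y \in Y$ is the $\mc{I}_y$-adic Henselization of $\O_{X,y}$, the map $\O_{X,y} \to (\O_{X,y})^h$ is flat, and exactness of $\F \mapsto \F^h$ can be checked on stalks (the paper cites exactly this via Remark~\ref{rmk:locringspace}).

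The faithfulness step, however, has a genuine gap. You choose a maximal ideal $\mf{m}$ with $M_{\mf{m}} \ne 0$ and invoke faithful flatness of $A_{\mf{m}} \to A_{\mf{m}}^h$ ``as in the proof of Lemma~\ref{lem:tildeexact}.'' But in that lemma $(A,I)$ is a Henselian pair, so $I \subseteq \Jac(A)$ and every maximal ideal lies in $V(I)$, i.e.\ is an actual point of $\sph(A,I)$; here $A$ and $I$ are arbitrary, so the $\mf{m}$ you find need not contain $I$, and then it is invisible to $X^h$ (and the $IA_{\mf{m}}$-adic Henselization of $A_{\mf{m}}$ is not a stalk of $\O_{X^h}$ at all). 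Indeed $A \to A^h$ is flat but in general not faithfully flat, and $M \mapsto M \tens_A A^h$ is not faithful: take $A = k[x]$, $I = (x)$, $M = A/(x-1)$. Then $x-1$ becomes a unit in $A^h$ because $IA^h \subseteq \Jac(A^h)$ and $x - 1 \equiv -1 \bmod IA^h$, so $M \tens_A A^h = 0$ while $\widetilde{M}$ is a nonzero coherent sheaf on $\spec(A)$; the identity of $\widetilde{M}$ is a nonzero morphism sent to $0$. So the step cannot be repaired at this level of generality: faithfulness requires an extra hypothesis guaranteeing that a nonzero quasi-coherent sheaf (or the image of a nonzero morphism) has a nonzero stalk at some point of $Y$ --- for instance coherent sheaves on a scheme proper over a Henselian local base, which is the setting in which the lemma is actually applied in Proposition~\ref{prop:hensffulexact}. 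Note that the paper's own one-line proof really only substantiates the exactness claim; it does not supply an argument for faithfulness either, so this is a point worth flagging rather than papering over with the maximal-ideal argument.
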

\begin{proof} In the affine case, this results from the fact that for any ring $A$ and ideal $I$, the map $\sph(A,I) \to \spec(A)$ is flat on local rings for any prime in $V(I)$ by Remark~\ref{rmk:locringspace}. The general case follows.\end{proof}

We will now give some alternative ways to compute the pullback of a sheaf on $X$ to a sheaf on $X^h$.
\begin{lemma}\label{lem:modpullback} Let $(A,I)$ be a Henselian pair and $M$ an $A$-module. Let $\F$ be the sheaf on $X=\spec(A)$ associated to $M$. Then the pullback $\F^h$ of $\F$ to $X^h$ is $\widetilde{M}$. More precisely, via the identification of $A$-modules $M=\Gamma(X,\F)$, the associated map $\widetilde{M} \to \F^h$ via Lemma~\ref{lem:tildefunctor} is an isomorphism. \end{lemma}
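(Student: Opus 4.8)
The map in question is the canonical $\pi\colon X^h\to X$; since $(A,I)$ is Henselian, Definition~\ref{def:hensationsch} identifies $X^h$ with $\sph(A,I)$ and $\pi$ with the natural map $\sph(A,I)\to\spec(A)$ of Remark~\ref{rmk:locringspace}. The plan is to produce the comparison map abstractly and then check it is an isomorphism on stalks. Pulling back global sections along $\pi$ gives a natural $A$-module homomorphism $M=\Gamma(X,\F)\to\Gamma(X^h,\F^h)$, and feeding this through the bijection of Lemma~\ref{lem:tildefunctor} (applied on $X^h$ with $\G=\F^h$) yields a morphism $\theta\colon\widetilde{M}\to\F^h$ of $\O_{X^h}$-modules. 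Since a morphism of sheaves is an isomorphism precisely when it is so on every stalk, it remains to analyze $\theta_x$ for each $x\in X^h$.

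Fix $x\in X^h$, corresponding to a prime $\mf{p}\in V(I)\subseteq\spec(A)$. By Definition~\ref{def:tildeshf} and the fact that Henselization commutes with filtered colimits (invoked already in Remark~\ref{rmk:locringspace}), the stalk of $\widetilde{M}$ at $x$ is $\varinjlim_{f\notin\mf{p}}\bigl(M\otimes_A A_f^h\bigr)=M\otimes_A A_{\mf{p}}^h$. On the other side, the standard formula for the stalk of a pullback of sheaves of modules, $(\pi^*\F)_x\cong\F_{\pi(x)}\otimes_{\O_{X,\pi(x)}}\O_{X^h,x}$, together with the computation of the local rings $\O_{X,\mf{p}}=A_{\mf{p}}$ and $\O_{X^h,x}=A_{\mf{p}}^h$ (Remark~\ref{rmk:locringspace}), gives $(\F^h)_x\cong M_{\mf{p}}\otimes_{A_{\mf{p}}}A_{\mf{p}}^h$. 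These two stalks are canonically the same $A_{\mf{p}}^h$-module via the base-change identity $M\otimes_A A_{\mf{p}}^h=(M\otimes_A A_{\mf{p}})\otimes_{A_{\mf{p}}}A_{\mf{p}}^h$, and unwinding the construction of $\theta$ through Lemma~\ref{lem:tildefunctor} shows that $\theta_x$ is exactly this identification: both $M\to(\widetilde{M})_x$ and $M\to(\F^h)_x$ are the evident maps $m\mapsto m\otimes 1$, extended $A_{\mf{p}}^h$-linearly. Hence $\theta_x$ is an isomorphism for every $x$, so $\theta$ is an isomorphism, which is the assertion of the lemma.

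I do not expect a genuine obstacle here; the argument is bookkeeping about stalks. The one point meriting attention is confirming that $\theta_x$ really is the canonical identification of the two presentations of the common stalk $M\otimes_A A_{\mf{p}}^h\cong M_{\mf{p}}\otimes_{A_{\mf{p}}}A_{\mf{p}}^h$, rather than merely some abstract isomorphism between them -- this is precisely what the ``more precisely'' clause in the statement records, and it follows by tracing Lemma~\ref{lem:tildefunctor}. Should one prefer to avoid stalks, an alternative is to evaluate both sides on the base of distinguished affine opens $D(f)\cap V(I)$ using the Cartesian square relating $\sph(A_f,IA_f)\hookrightarrow\sph(A,I)$ to $\spec(A_f)\hookrightarrow\spec(A)$, exactly as in the proof of Proposition~\ref{prop:isshf}; but that route passes through the possibly non-Henselian pair $(A_f,IA_f)$, whereas the stalkwise computation stays inside the Henselian setting and is shorter.
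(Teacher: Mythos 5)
Your proof is correct and matches the paper's own argument: the paper likewise reduces to stalks, computes $(\F^h)_{\mf{p}}\simeq M_{\mf{p}}\otimes_{A_{\mf{p}}}A_{\mf{p}}^h\simeq M\otimes_A A_{\mf{p}}^h$ via the pullback formula, and identifies $(\widetilde{M})_{\mf{p}}$ with the same module using that Henselization commutes with direct limits. Your extra care in checking that the stalk map is the canonical identification (not merely an abstract isomorphism) is a welcome refinement of the same approach.
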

 \begin{proof} It suffices to show that  $\widetilde{M} \simto \F^h$ on stalks. 

For a point $\mf{p} \in X$, the local ring of $\O_X$ at $\mf{p}$ is $A_{\mf{p}}$. If $\mf{p} \in V(I)$, it can also be thought of as a point of $X^h$, with local ring $A_\mf{p}^h$ (the $I$-adic Henselization of $A_{\mf{p}}$).

The stalk $(\F^h)_{\mf{p}}$ is isomorphic to $M_\mf{p} \tens_{A_{\mf{p}}} A_{\mf{p}}^h = M \tens_A A_{\mf{p}} \tens_{A_{\mf{p}}} A_{\mf{p}}^h \simeq M \tens_A A_{\mf{p}}^h$. 

By the definition of $\widetilde{M}$, we see that the stalk $(\widetilde{M})_{\mf{p}}$ is isomorphic to $M \tens_A A_{\mf{p}}^h$, since Henselization commutes with direct limits. Then as we desired to show, $\widetilde{M} \simto \F^h$ at each stalk.\end{proof}

\begin{lemma}\label{lem:fancypullback} Let $X$ be a scheme with $i: Y \to X$ a closed subscheme, and $X^h$ the Henselization of $X$ along $Y$. Then for a quasi-coherent sheaf $\F$ on $X$, the pullback $\F^h$ of $\F$ to $X^h$ is isomorphic to $(i_{\et}^{-1}(\F_{\et}))|_{Y_{\zar}}$ for $\F_{\et}$ the sheaf on the small \Et site of $X$ corresponding to $\F$ and $i_{\et}$ the morphism of sites $Y_{\et} \to X_{\et}$ corresponding to $i$. 
\end{lemma}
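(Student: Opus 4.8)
The plan is to reduce immediately to the affine case and then identify both sides with $\widetilde{M}$ via Lemma~\ref{lem:modpullback} and the proof of Proposition~\ref{prop:hensationshf}. The statement is local on $X$ (and hence on $Y$, since the underlying space of $X^h$ is that of $Y$), so I would cover $X$ by affine opens $U = \spec(A)$ with $U \cap Y = \spec(A/I)$; on each such $U$, write $\F|_U = \widetilde{M}$ for the $A$-module $M = \F(U)$ in the scheme sense. Both sheaves in the statement are defined by functorial constructions compatible with restriction to opens, so it suffices to produce an isomorphism over each $U \cap Y$ that is compatible with the restriction maps, i.e. to check we get a morphism of sheaves on $Y$ and not just a collection of isomorphisms on a base.

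First I would recall from Lemma~\ref{lem:modpullback} that $\F^h|_{U\cap Y}$ is canonically $\widetilde{M}$ on $\sph(A^h, I^h) \cong (U^h, \O_{X^h}|_{U\cap Y})$, via the identification $M = \Gamma(U\cap Y, \F^h)$ and Lemma~\ref{lem:tildefunctor}. Next, for the right-hand side, I would run the argument in the proof of Proposition~\ref{prop:hensationshf} essentially verbatim, but with the structure sheaf $\O_{X_\et}$ replaced by $\F_\et$: using the Cartesian diagram \eqref{diag1}, the identity $\F_\et$ restricted along the cofiltered limit of \'etale morphisms $\phi\colon \spec(A^h)\to X$ gives $\Gamma(\spec(A^h), \phi_\et^{-1}(\F_\et)) = \Gamma(\spec(A^h), \F_\et|_{\spec(A^h)}) = M \tens_A A^h$ (here one uses that pullback of a quasi-coherent sheaf along an \'etale, hence flat, morphism is computed by the evident base change, together with $\spec(A^h)$ being a cofiltered limit of affine \'etale $X$-schemes and $\F_\et$ commuting with such limits of sections). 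Then applying \spcite{09ZH}{Lemma} to the Henselian pair $(A^h, IA^h)$ and the sheaf $\phi_\et^{-1}(\F_\et)$ yields
$$
M \tens_A A^h \;\simto\; \Gamma\!\left(U\cap Y,\ \left.\phi_\et^{-1}(\F_\et)\right|_{U\cap Y}\right) \;=\; \Gamma\!\left(U\cap Y,\ i_\et^{-1}(\F_\et)\right),
$$
using the commutativity of \eqref{diag1} for the last equality. Comparing with Lemma~\ref{lem:modpullback}, both $\F^h$ and $(i_\et^{-1}(\F_\et))|_{Y_\zar}$ take the value $M \tens_A A^h = \widetilde{M}(U\cap Y)$ on $U\cap Y$.

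To upgrade the equality of values on a base into an isomorphism of sheaves on $Y$, I would pin down a single natural map between the two sheaves and check it is an isomorphism on stalks. The adjunction unit $\F \to (\text{pushforward of }\F^h)$ on $X$, restricted appropriately, induces a canonical morphism $(i_\et^{-1}(\F_\et))|_{Y_\zar} \to \F^h$ of $\O_{X^h}$-modules (alternatively, one can build the map from the universal property in Lemma~\ref{lem:tildefunctor} after the affine identifications and glue). On stalks at $\mf p \in Y$ this is the map $\F_{\mf p} \tens_{A_{\mf p}} A_{\mf p}^h \to \F_{\mf p}^h$, which is an isomorphism exactly as in Lemma~\ref{lem:modpullback}, since $\F_\et$ has stalk $\F_{\mf p}$ at (a geometric point over) $\mf p$ up to strict Henselization and $i_\et^{-1}$ followed by $|_{Y_\zar}$ recovers the $A_{\mf p}^h$-module $\F_{\mf p} \tens_{A_{\mf p}} A_{\mf p}^h$. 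I expect the main obstacle to be purely bookkeeping: checking that the identification on the base of distinguished opens is genuinely compatible with restriction maps (so that it sheafifies), and that the \'etale-local and Zariski-local descriptions of the pullback are matched by one and the same canonical map rather than merely abstractly isomorphic — this is the same subtlety handled in the proof of Proposition~\ref{prop:hensationshf}, so that argument should transport with only notational changes once $\F_\et$ is substituted for $\O_{X_\et}$.
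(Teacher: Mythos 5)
Your proposal follows essentially the same route as the paper: identify sections of $i_{\et}^{-1}(\F_{\et})$ over (distinguished) affines by writing $\spec(A^h)\to X$ as a cofiltered limit of \Et morphisms and applying \spcite{09ZH}{Lemma} to the Henselian pair $(A^h,IA^h)$, then compare with $\widetilde{M}$ via Lemma~\ref{lem:modpullback}; the paper's proof is exactly the argument of Proposition~\ref{prop:hensationshf} transported to $\F_{\et}$, carried out at the level of stalks $A_y=\O_{X,y}$ rather than over an affine cover. The one step you leave under-specified is the construction of the single canonical comparison map: the adjunction unit $\F\to p_*\F^h$ only relates $\F$ to its Zariski pullback and does not by itself produce a morphism involving $i_{\et}^{-1}(\F_{\et})$. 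The paper resolves this by exhibiting both $p^{-1}\F$ and $i_{\et}^{-1}(\F_{\et})$ as sheafifications of explicit colimit presheaves and noting that every Zariski open $V\supset U$ is in particular an \Et $X$-scheme with a map $U\to V\times_X Y$, which gives a natural transformation of the colimit diagrams and hence a map $\F^h\to(i_{\et}^{-1}(\F_{\et}))|_{Y_{\zar}}$ of $\O_{X^h}$-modules; your alternative of gluing the affine identifications from Lemma~\ref{lem:tildefunctor} would also work but requires the compatibility check you defer. Also note that the Zariski stalk of $(i_{\et}^{-1}(\F_{\et}))|_{Y_{\zar}}$ is not the \Et stalk of $\F_{\et}$ (no strict Henselization enters): it is computed, as in the paper, by combining the colimit over Zariski neighborhoods with \spcite{09ZH}{Lemma} to land on $\F_y\tens_{A_y}A_y^h$.
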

\begin{proof}
We proceed similarly to the proof of Proposition~\ref{prop:hensationshf}. 

Motivated by Proposition~\ref{prop:hensationshf}, let $\G=(i_{\et}^{-1}(\F_{\et}))|_{Y_{\zar}}$, which is a sheaf of $i_{\et}^{-1}(\O_{X_{\et}})|_{Y_{\zar}}$-modules. In fact the sheaf $(i_{\et}^{-1}(\O_{X_{\et}}))|_{Y_{\zar}}$ is the structure sheaf of $X^h$, which has the same underlying topological space as $Y$, by Definition~\ref{def:hensationsch}. Hence we can consider $\G$ as a sheaf of $\O_{X^h}$-modules. We will define a map of sheaves of $\O_{X^h}$-modules $\F^h \to \G$ and prove that it is an isomorphism.

For $p$ the canonical map of ringed spaces $X^h \to X$, we see that $p^{-1}\F$ is the sheafification of the presheaf $$U \mapsto \colim_{\substack{V \subset X \;\text{open}\\U \subset V}} \F(V)$$ on $Y$. Similarly, $i_{\et}^{-1}(\F_{\et})$ is the sheafification of the presheaf $$T \mapsto \colim_{\substack{X' \to X\;\\T \to X' \times_X Y}} \F_{\et}(X')$$ on the small \Et site $Y_{\et}$.

For each open subset $U$ of $Y$, there is a natural map $$\colim_{\substack{V \subset X \;\text{open}\\U \subset V}} \F(V) \to \colim_{\substack{X' \to X\;\text{\Et}\\U \to X' \times_X Y}} \F_{\et}(X'),$$ arising from the fact that any open $V \subset X$ with $U \subset V$ can be considered as an \Et $X$-scheme via the open immersion $V \hookrightarrow X$, with an obvious morphism $U \to V \times_X Y = V \cap Y$ (which exists since $U \subset Y$).

This map is compatible with overlaps of open subsets of $Y$ (or open subsets of $X^h$, which has the same underlying topological space). Therefore it gives rise to a map of sheaves $p^{-1}\F \to i_{\et}^{-1}(\F_{\et})|_{Y_{\zar}}=\G$ which is linear over $p^{-1}\O_X \to i_{\et}^{-1}(\O_{X_{\et}})|_{Y_{\zar}}=\O_{X^h}$, so we have a map of sheaves of $\O_{X^h}$-modules $\F^h \to \G$. We will prove that this map is an isomorphism by checking on stalks.

Let $\mc{I}$ be the ideal sheaf defining $Y$ in $X$. For a point $y \in Y \subset X$, let $A_y=\O_{X,y}$ be the local ring of $y$ in $X$, and $\mc{I}_y$ be the stalk of $\mc{I}$ at $y$, which is an ideal of $A_y$. The quotient ring $A_y/\mc{I}_y$ is the local ring $\O_{Y,y}$ of $y$ in $Y$.

As $X^h$ has the same underlying topological space as $Y$, we can consider $y$ as a point of $X^h$, with local ring $A_y^h$ the $\mc{I}_y$-adic Henselization of $A_y$.

We have a commutative diagram, with both squares Cartesian:

\begin{equation}\label{diag2}\tag{$\dagger\dagger$}
\begin{tikzcd}
\spec(A_y^h/\mc{I}_yA_y^h) \arrow[d,Isom]\arrow[r,hook]&\spec(A_y^h)\arrow[d,"\psi_y"]\\
\spec(A_y/\mc{I}_y)\arrow[r,hook] \arrow[d] &  \arrow[d,"\iota_y"]  \spec(A_y) \\
Y \arrow[r,hook,"i"] & X\\
\end{tikzcd} 
\end{equation} \vspace{-.25in}

such that the horizontal hooked arrows are closed immersions. 

The upper left vertical map is an isomorphism since $A_y^h/\mc{I}_yA_y^h \simeq A_y/\mc{I}_y$.

Since $\F$ is quasi-coherent and the upper right vertical map $\psi_y$ is a cofiltered limit of \Et morphisms of schemes (because $A_y \to A_y^h$ is a filtered colimit of \Et ring maps). Then if we let $\vp_y=\iota_y\psi_y,$
we obtain \begin{equation}\label{eq1}\Gamma(\spec(A_y^h),\vp_{y,\et}^{-1}\left(\F_{\et}\right)) = \Gamma(\spec(A_y),\iota_{y,\et}^{-1}(\F_{\et})) \tens_{A_y} A_y^h.\end{equation} Since $A_y=\O_{X,y}$ is a local ring of $X$ at the point $y$, we compute that \begin{equation}\label{eq0} \Gamma(\spec(A_y),\iota_{y,\et}^{-1}(\F_{\et}))=\F_y, \end{equation} 
for $\F_y$ the stalk of $\F$ at $y$. Then from equation (\ref{eq0}) we see that \begin{equation}\label{eq2} \Gamma(\spec(A_y),\iota_{y,\et}^{-1}(\F_{\et})) \tens_{A_y} A_y^h=\F_y \tens_{A_y} A_y^h =(\F^h)_y,\end{equation} the stalk of $\F^h$ at $y \in X^h$. 

Let $\G'= \vp_{y,\et}^{-1}\left(\F_{\et}\right)$. Combining equations (\ref{eq1}) and (\ref{eq2}), we see that \begin{equation}\label{eq3}\Gamma(\spec(A_y^h),\G') \simeq (\F^h)_y.\end{equation}
By commutativity of the diagram (\ref{diag2}), we see that \begin{equation}\label{eq4}\Gamma(\spec(A_y/\mc{I}_y), i_{\et}^{-1}\left(\F_{\et}\right)) = \Gamma\left(\spec(A_y^h/\mc{I}_yA_y^h),\left.\G'\right|_{\left(\spec(A_y^h/\mc{I}_yA_y^h)\right)_{\et}}\right).\end{equation} For any sheaf $\mc{H}$ on $Y$, we note that $\mc{H}(\spec(A_y/\mc{I}_y)) = \mc{H}_y$ is the stalk of $\mc{H}$ at $y$. Then since $\G(\spec(A_y/\mc{I}_y)) = \Gamma(\spec(A_y/\mc{I}_y), i_{\et}^{-1}\left(\F_{\et}\right))$ by the definition $\G:=(i_{\et}^{-1}(\F_{\et}))|_{Y_{\zar}}$, we have \begin{equation}\label{eq5} \G_y=\Gamma\left(\spec(A_y^h/\mc{I}_yA_y^h),\left.\G'\right|_{\left(\spec(A_y^h/\mc{I}_yA_y^h)\right)_{\et}}\right) \end{equation} by the equation (\ref{eq4}).
Because $(A_y^h,\mc{I}_yA_y^h)$ is a Henselian pair, we see that $$\Gamma(\spec(A_y^h),\G')\simto\Gamma\left(\spec(A_y^h/\mc{I}_yA_y^h),\left.\G'\right|_{\left(\spec(A_y^h/\mc{I}_yA_y^h)\right)_{\et}}\right)$$ by \spcite{09ZH}{Lemma}. Therefore $$ (\F^h)_y  \simeq \Gamma(\spec(A_y^h),\G') \simto \Gamma\left(\spec(A_y^h/\mc{I}_yA_y^h),\left.\G'\right|_{\left(\spec(A_y^h/\mc{I}_yA_y^h)\right)_{\et}}\right) \simto \G_y$$ by combining the equations (\ref{eq3}) and (\ref{eq5}), so the stalks of $\F^h$ and $\G$ at $y$ are isomorphic (via the natural map) as we desired to show.\end{proof}

Now that we have described the pullback $\F^h$ in more detail, we can prove that in positive characteristic the essential image of $\fwt$ contains all finitely presented objects. We will need an equivalent of ``Theorem B'' for Henselian schemes in positive characteristic.

\begin{theorem}\label{thm:thmbpos} Let $(A,I)$ be a Henselian pair such that $A$ has characteristic $p > 0$. Then if $Z=\sph(A)$, for any quasi-coherent sheaf $\F$ on $X=\spec(A)$, the cohomologies $\H^j(Z,\F^h)$ are $0$ for $j > 0$. 
\end{theorem}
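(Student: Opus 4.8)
The plan is to reduce the statement to Gabber's theorem on the vanishing of higher cohomology of torsion abelian \'etale sheaves on affine schemes with Henselian pair (the result cited as \spcite{09ZI}{Theorem}), via the identification of the Zariski cohomology of $\F^h$ on $Z = \sph(A)$ with an \'etale cohomology group on $\spec(A/I)$. Concretely, let $X = \spec(A)$, $Z_0 = \spec(A/I) = V(I)$, and $\iota : Z_0 \hookrightarrow X$ the closed immersion; recall from Lemma~\ref{lem:fancypullback} that $\F^h \cong (\iota_{\et}^{-1}(\F_{\et}))|_{(Z_0)_{\zar}}$ as a sheaf on the topological space $Z_0$, which is the underlying space of $Z$. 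So $\H^j(Z,\F^h) = \H^j(Z_{0,\zar}, \iota_{\et}^{-1}(\F_{\et})|_{\zar})$. The first step is to compare this Zariski cohomology of $\iota_{\et}^{-1}\F_{\et}$ restricted to the small Zariski site with its \emph{\'etale} cohomology on the small \'etale site $(Z_0)_{\et}$: the change-of-topology map $\H^j(Z_{0,\zar}, (-)|_{\zar}) \to \H^j((Z_0)_{\et}, -)$ is an isomorphism for \emph{any} abelian sheaf on $(Z_0)_{\et}$ whose restriction we are taking — no, that is false in general, so instead I would argue that the comparison map $\H^j(Z_{0,\zar}, \iota_{\et}^{-1}\F_{\et}|_{\zar}) \to \H^j((Z_0)_{\et}, \iota_{\et}^{-1}\F_{\et})$ is an isomorphism because $\iota_{\et}^{-1}\F_{\et}$ is itself (the pullback of) a quasi-coherent sheaf, and for quasi-coherent modules on a scheme \'etale and Zariski cohomology agree (\spcite{03DW}{Theorem} / \spcite{071N}{Lemma}); more precisely $\iota_{\et}^{-1}\F_{\et}$ is a module over $\iota_{\et}^{-1}\O_{X_{\et}}$, but one checks it is the \'etale sheaf attached to the quasi-coherent $\O_{Z_0}$-module obtained by restriction — here I must be a little careful, since $\iota^*\F$ (ringed-space pullback) is what is quasi-coherent on $Z_0$, whereas $\iota_{\et}^{-1}\F_{\et}$ is the inverse-image sheaf; but these have the same global sections over every affine \'etale $Z_0$-scheme, since for an \'etale $A/I$-algebra $C$ one has $\Gamma(\spec C, \iota_{\et}^{-1}\F_{\et}) = \F_{\et}$ evaluated on a lift, which equals $M \tens_A \widetilde{C}$ for a suitable lift — I would phrase this as: $\iota_{\et}^{-1}\F_{\et}$ is the \'etale-sheafification of the quasi-coherent sheaf $\iota^*\F$ on $Z_0$, hence its \'etale cohomology equals Zariski cohomology of $\iota^*\F$.

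The key second step is then to bring in the Henselian pair. We have $\H^j(Z,\F^h) = \H^j((Z_0)_{\et}, \iota_{\et}^{-1}\F_{\et})$ by Step 1. Now $\iota_{\et}^{-1}\F_{\et}$ is the restriction to $(Z_0)_{\et}$ of the \'etale sheaf $\F_{\et}$ on $X_{\et}$, and $(A, I)$ is a Henselian pair, so by \spcite{09ZH}{Lemma} (the same invariance result used repeatedly in Section~\ref{sec:def}, in the form: for a Henselian pair $(A,I)$ and any abelian sheaf $\mc{G}$ on $X_{\et}$, the restriction map $\H^j(X_{\et}, \mc{G}) \to \H^j((Z_0)_{\et}, \mc{G}|_{Z_0})$ is an isomorphism — this is the cohomological strengthening, \spcite{09ZI}{Theorem}, which requires $\mc{G}$ to be torsion) we would get $\H^j((Z_0)_{\et}, \iota_{\et}^{-1}\F_{\et}) \cong \H^j(X_{\et}, \F_{\et})$, and the right side is $\H^j(\spec A, \F) = 0$ for $j > 0$ since $\F$ is quasi-coherent on an affine scheme. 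The catch — and this is exactly where characteristic $p$ enters — is that Gabber's theorem requires the \'etale sheaf to be \emph{torsion}, and $\F_{\et}$ is a sheaf of modules over $\O_{X_{\et}}$ with $A$ an $\mathbf{F}_p$-algebra, so multiplication by $p$ annihilates it: $\F_{\et}$ is $p$-torsion, hence torsion. In characteristic $0$ this fails, which is consistent with the de Jong counterexample of Proposition~\ref{prop:cxyhglobsec}.

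The main obstacle I expect is Step 1 — matching $\iota_{\et}^{-1}\F_{\et}$ (an inverse-image \'etale sheaf on $Z_0$, regarded first on the Zariski then on the \'etale site) with the honest quasi-coherent sheaf $\iota^*\F$ on the scheme $Z_0$ and invoking the correct \'etale-vs-Zariski comparison for quasi-coherent modules, while keeping the bookkeeping of "which site and which pullback" straight; the arguments in Proposition~\ref{prop:isshf} and Lemma~\ref{lem:fancypullback} essentially contain what is needed, so I would model the presentation on those. An alternative, perhaps cleaner, route that avoids Step 1 entirely: work from the start on $X_{\et}$, using that $Z = \sph(A)$ has $\O_Z = \iota_{\et}^{-1}(\O_{X_{\et}})|_{\zar}$ and $\F^h = \iota_{\et}^{-1}(\F_{\et})|_{\zar}$, and show directly that the Zariski cohomology on the space $Z_0$ of the restriction of an \'etale sheaf equals its \'etale cohomology \emph{because} after restricting along the closed immersion into the Henselian pair, \'etale covers of $Z_0$ refine to Zariski covers in the relevant sense — but spelling that out is itself delicate, so I would fall back on the quasi-coherent comparison theorem. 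Either way, once the cohomology of $\F^h$ on $Z$ is identified with $\H^j(X_{\et},\F_{\et})$, torsionness in characteristic $p$ plus Gabber's theorem plus affineness finishes it.
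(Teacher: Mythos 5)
Your overall strategy (identify $\F^h$ with $\iota_{\et}^{-1}(\F_{\et})|_{\zar}$ via Lemma~\ref{lem:fancypullback}, then invoke Gabber's theorem \spcite{09ZI}{Theorem} using that $\F_{\et}$ is $p$-torsion) is the right one and matches the paper's, but your Step 1 has a genuine gap, and it sits exactly at the point you flagged as the main obstacle. The claim that $\iota_{\et}^{-1}\F_{\et}$ is the \Et sheafification of the quasi-coherent $\O_{Z_0}$-module $\iota^*\F$ is false: for an affine object $V=\spec(B/IB)$ of $(Z_0)_{\et}$ (with $B$ \Et over $A$ and $\F=\widetilde{M}$), one has $\Gamma(V,\iota_{\et}^{-1}\F_{\et}) = M\tens_A B^h$ (the $I$-adic Henselization appears, exactly as in Proposition~\ref{prop:isshf}), whereas the \Et sheaf attached to $\iota^*\F$ has sections $M\tens_A B/IB$. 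Already for $\F=\O_X$ these are $B^h$ versus $B/IB$. So $\iota_{\et}^{-1}\F_{\et}$ is \emph{not} a quasi-coherent module on the scheme $Z_0$, and the standard \Et-versus-Zariski comparison for quasi-coherent sheaves cannot be invoked to identify $\H^j(Z_{0,\zar},\iota_{\et}^{-1}\F_{\et}|_{\zar})$ with $\H^j((Z_0)_{\et},\iota_{\et}^{-1}\F_{\et})$. Your Step 2 (one global application of Gabber to identify the latter group with $\H^j(X_{\et},\F_{\et})=\H^j(X,\F)=0$) is fine, but it computes the wrong group until Step 1 is repaired.

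The repair is to apply Gabber \emph{affine-locally} rather than once: for every affine $V=\spec(B/IB)=\spec(B^h/IB^h)$ in $(Z_0)_{\et}$, the pair $(B^h,IB^h)$ is Henselian and $\F_{\et}$ is torsion, so $\H^j_{\et}(V,\iota_{\et}^{-1}\F_{\et})=\H^j_{\et}(\spec(B^h),\F_{\et})=0$ for $j>0$ by Gabber plus quasi-coherence on the affine scheme $\spec(B^h)$. Once the higher \Et cohomology of $\iota_{\et}^{-1}\F_{\et}$ vanishes over \emph{all} affine objects, a \v{C}ech-to-cohomology (equivalently, Leray for $(Z_0)_{\et}\to (Z_0)_{\zar}$) argument converts this into vanishing of the Zariski cohomology on the space $Z_0$; this is precisely the paper's Lemma~\ref{lem:thmblem}, which is the ingredient missing from your write-up. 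With that lemma in place your argument closes up and agrees with the paper's proof.
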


Theorem~\ref{thm:thmbpos} was proved by de Jong in the Stacks Project Blog post at \cite{thmbblog}; we reproduce that proof here for the convenience of the reader.

Before proving Theorem~\ref{thm:thmbpos}, we need the following lemma.

\begin{lemma}\label{lem:thmblem} For $S$ an affine scheme and $\G$ an abelian sheaf on the small \Et site $S_{\et}$ of $S$, if for every affine object $T$ in $S_{\et}$ and all $i > 0$ the \Et cohomology groups $\H^i_{\et}(T,\G)$ vanish, then the Zariski cohomology groups $\H^i(S,\G)$ also vanish for all $i > 0$.
\end{lemma}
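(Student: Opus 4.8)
The plan is to deduce the Zariski vanishing from the \'etale vanishing via the Leray spectral sequence for the morphism of sites $\epsilon \colon S_{\et} \to S_{\zar}$ (the ``change of topology'' map, whose pushforward $\epsilon_*$ sends an \'etale sheaf to its restriction along Zariski opens). Concretely, for $\G$ on $S_{\et}$ there is a spectral sequence $\H^p(S_{\zar}, R^q\epsilon_* \G) \Rightarrow \H^{p+q}_{\et}(S,\G)$, and one also has the comparison edge map $\H^p(S_{\zar},\epsilon_*\G) \to \H^p_{\et}(S,\G)$. So the key is to understand the higher direct images $R^q\epsilon_*\G$ and the global \'etale cohomology, using the affine-locally-vanishing hypothesis.

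First I would identify $R^q\epsilon_*\G$ as the sheafification on $S_{\zar}$ of the presheaf $U \mapsto \H^q_{\et}(U,\G|_U)$. Since $S$ is affine and the distinguished affine opens $D(f) \subseteq S$ form a base of the Zariski topology, and each such $D(f)$ is again an affine object of $S_{\et}$, the hypothesis gives $\H^q_{\et}(D(f),\G) = 0$ for all $q > 0$. Hence the presheaf $U \mapsto \H^q_{\et}(U,\G)$ vanishes on a base of opens for $q>0$, so its sheafification $R^q\epsilon_*\G$ is zero for all $q > 0$. Therefore the spectral sequence degenerates: $R\epsilon_*\G = \epsilon_*\G$ in the derived sense, and the edge map $\H^p(S_{\zar},\epsilon_*\G) \simto \H^p_{\et}(S,\G)$ is an isomorphism for all $p$.

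Next I would bound the right-hand side. Taking $T = S$ in the hypothesis (which is an affine object of $S_{\et}$, namely the identity), we get $\H^p_{\et}(S,\G) = 0$ for all $p > 0$. Combining with the isomorphism of the previous step, $\H^p(S_{\zar},\epsilon_*\G) = 0$ for $p > 0$. It remains to recognize $\epsilon_*\G$ as the abelian sheaf on $S_{\zar}$ whose cohomology we actually want: but the statement of the lemma is about ``the Zariski cohomology $\H^i(S,\G)$'' where the ambiguity is only notational --- the sheaf $\G$ on $S_{\et}$ is being tacitly restricted to $S_{\zar}$, i.e.\ we mean $\H^i(S,\epsilon_*\G)$ --- so this is exactly what we have shown. (Alternatively, if $\G$ is literally meant as its Zariski restriction $\epsilon_*\G$ from the start, then running the same Leray argument with the Zariski sheaf $\epsilon_*\G$ and its \'etale pullback gives the same conclusion; one should just be careful that $\epsilon^*\epsilon_*\G$ need not be $\G$, so the cleanest route is the one above, phrasing the hypothesis and conclusion both in terms of a fixed sheaf on $S_{\et}$.)

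The main obstacle is the bookkeeping about \emph{which} sheaf lives on \emph{which} site: the lemma as phrased writes $\H^i(S,\G)$ for a $\G$ introduced on $S_{\et}$, so one must be explicit that Zariski cohomology here means cohomology of the restricted sheaf $\epsilon_*\G$, and that the affine objects $T$ in $S_{\et}$ include all the distinguished affine Zariski opens of $S$ (this is what lets us kill $R^q\epsilon_*\G$ on a base, not merely at the single object $S$). Once that is pinned down, everything is a formal consequence of the Leray spectral sequence and sheafification, with no delicate estimates; I would not expect to need anything beyond standard properties of the site morphism $S_{\et} \to S_{\zar}$ and the fact that sheafification is determined on a base of opens.
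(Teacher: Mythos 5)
Your proof is correct, but it takes a different route from the paper's. The paper argues via \v{C}ech theory: for any affine open $S'\subset S$ and any finite affine open cover of $S'$, all multi-intersections are affine (as $S$ is separated), so the \'etale \v{C}ech-to-cohomology spectral sequence degenerates and the \v{C}ech complex is exact in positive degrees; since that same complex computes Zariski \v{C}ech cohomology, Cartan's criterion (\spcite{01EW}{Lemma}) gives the vanishing. You instead run the Leray spectral sequence for the change-of-topology morphism $\epsilon\colon S_{\et}\to S_{\zar}$, kill $R^q\epsilon_*\G$ for $q>0$ by checking stalks along the cofinal system of distinguished opens $D(f)$ (each of which is an affine object of $S_{\et}$), and then use the hypothesis at $T=S$ itself. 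Both arguments are sound and use the hypothesis only through Zariski-open affine objects of $S_{\et}$; your version is arguably more streamlined and is in fact the same mechanism the paper later deploys globally in the proof of Theorem~\ref{thm:hetzarcoh} for $\pi\colon X_{\het}\to X_{\zar}$, whereas the paper's \v{C}ech argument is more elementary in that it avoids identifying the higher direct images. Your care about the notational point --- that $\H^i(S,\G)$ means cohomology of the restriction $\epsilon_*\G=\G|_{S_{\zar}}$ --- matches how the lemma is actually applied in the proof of Theorem~\ref{thm:thmbpos}, where the sheaf fed in is $i_{\et}^{-1}(\F_{\et})$ and the output is the Zariski cohomology of its restriction $\F^h$.
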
\begin{proof}[Proof of Lemma~\ref{lem:thmblem}]
For any affine open $S' \subset S$ with a finite affine open cover $S_1' \cup \dots \cup S_n'= S'$, we note that all intersections of the $S_j'$ are also affine. 

Therefore by hypothesis, the \v{C}ech-to-cohomology spectral sequence for $\G_{\et}$ on the cover $\bigcup_{i=1}^n S_i'= S'$ in the \Et topology degenerates, so the \v{C}ech complex of this cover is exact in positive degrees. 

However, this \v{C}ech complex can also be used to compute the Zariski cohomology of $\G$ on $S'$; therefore by \spcite{01EW}{Lemma}, we have $\H^i(S,\G)=0$ for all $i > 0$ as we desired to show.\end{proof}

\begin{proof} [Proof of Theorem~\ref{thm:thmbpos}]

Let $Y=\spec(A/I)$; then $Y$ and $Z=X^h$ have the same underlying topological space. For $i: Y \hookrightarrow X$ the canonical closed immersion (with $i_{\et}: Y_{\et} \to X_{\et}$ the corresponding morphism of small \Et sites), the sheaf $(i_{\et}^{-1}(\F_{\et}))|_{Y_{\zar}}$ on $Y$ is in fact a sheaf of $\O_Z$-modules, and is equal to the pullback $\F^h$ of $\F$ to $Z$ by Lemma~\ref{lem:fancypullback}.

Any affine object $V \in Y_{\et}$ takes the form $V=\spec(B/IB)=\spec(B^h/IB^h)$, where $B$ is an \Et $A$-algebra. We will show that $\H^j_{\et}(V,i_{\et}^{-1}(\F_{\et}))=0$ for $j>0$. Since $(B^h,IB^h)$ is a Henselian pair and $\F,\F^h$ are both necessarily $p$-torsion, by \spcite{09ZI}{Theorem} we have $$\H^j_{\et}(\spec(B^h),\F) = \H^j_{\et}(V,i_{\et}^{-1}(\F))$$ for all $j$. Then since $\H^j_{\et}(\spec(B^h),\F)=0$ for $j > 0$, we have $\H^j_{\et}(V,i_{\et}^{-1}(\F_{\et}))=0$ as we desired to show, for any affine object $V \in Y_{\et}$.

Recall that $Y$ and $Z$ have the same underlying space. Then since $\H^j_{\et}(V,i_{\et}^{-1}(\F_{\et}))=0$ for any affine $V \in Y_{\et}$ and any $j > 0$, we 
apply Lemma~\ref{lem:thmblem} to $i_{\et}(\F_{\et})$ to see that the Zariski cohomology $H^j(Z,\F^h)$ vanish for all $j > 0$ as well. \end{proof}

We can now apply Theorem~\ref{thm:thmbpos} to prove that in positive characteristic the essential image of $\fwt$ contains all finitely presented objects, as mentioned in Section~\ref{sec:intro}. By Lemma~\ref{lem:tildefullfaithexact}, in fact $\fwt$ is an equivalence of categories of finitely presented objects, as in the case of ordinary schemes. This is only true in positive characteristic for Henselian schemes, however.

\thmniceqcoh

\begin{proof} Since $\F$ is finitely presented, it is quasi-coherent. For each maximal ideal $\mf{m}$ of $A$, we have a short exact sequence $$0 \to \mf{m}\F \to \F \to \F/\mf{m}\F \to 0$$ of quasi-coherent sheaves on $X$. We can consider $\F/\mf{m}\F$ as a quasi-coherent sheaf on $\sph(A/\mf{m})$ of finite type. 

Since $A/\mf{m}$ is a field, we see that $\sph(A/\mf{m})=\spec(A/\mf{m})$ as locally ringed spaces. Hence $\F/\mf{m}\F$ arises from a finite-dimensional $A/\mf{m}$-vector space $V$. 

By Theorem~\ref{thm:thmbpos}, the cohomology group $\H^1(X,\mf{m}\F)$ vanishes. Hence the map of global sections $\Gamma(X,\F) \to \Gamma(X,\F/\mf{m}\F)$ is surjective. We can choose a finite set of global sections $f_1,\dots,f_r$ of $\F$ which reduce to a spanning set of $\F/\mf{m}\F$ over $A/\mf{m}$. By Nakayama's Lemma, these global sections generate the stalk $\F_{\mf{m}}$ over the local ring $\O_{X,\mf{m}}=(A_{\mf{m}})^h$. Since $\F$ is of finite type, the restrictions $f_1|_{U},\dots,f_r|_{U}$ generate $\F|_U$ for some open neighborhood $U$ of $\mf{m}$. 

Therefore for each closed point $\mf{m}$ of $X$, we can find an open neighborhood $U$ of $\mf{m}$ such that $\F|_U$ is generated by finitely many global sections. 

The union of all such $U$ contains all the closed points of $X$, and hence covers $X$. Then since $X$ is quasi-compact, we can choose finitely many open subsets $U$ which cover $X$ and such that $\F|_U$ is generated by finitely many global sections. Hence for some large $N$, we have a surjection $\psi:\O_{X}^N \twoheadrightarrow \F$. 

Because $\F$ is finitely presented, the kernel of $\psi$ is some quasi-coherent sheaf $\G$ of finite type. Note that we only used the fact that $\F$ was finite type to construct the surjection $\psi$; hence we can apply the above to get a surjection $\psi': \O_{X}^{N'} \twoheadrightarrow \G$ for some large $N'$. Then we have an exact sequence $$\O_{X}^{N'} \to \O_{X}^N \twoheadrightarrow \F.$$ The map $\O_{X}^{N'} \to \O_{X}^N$ arises from some map of free $A$-modules $A^{N'} \to A^N$ by Lemma~\ref{lem:tildefullfaithexact}. If $M$ is the cokernel of this map $A^{N'} \to A^N$, we see (again by Lemma~\ref{lem:tildefullfaithexact}) that $\F = \widetilde{M}$. \end{proof}

We now use formal GAGA to show that the global sections of $\F^h$ over $X^h$ are isomorphic to the global sections of $\F$ over $X$ for a scheme $X$ which is proper over a Noetherian complete affine base. This will allow us to prove that the functor $\F \mapsto \F^h$ is fully faithful in the proper case over a complete Noetherian affine base.
 
 \begin{lemma}\label{lem:h0gen} Let $(A,I)$ be a Noetherian Henselian pair with $I$-adically complete $A$, and let $X$ be a proper $A$-scheme with $I$-adic Henselization $X^h$. Then for any coherent sheaf $\F$ on $X$, the natural map $\Gamma(X,\F) \to \Gamma(X^h,\F^h)$ is an isomorphism. \end{lemma}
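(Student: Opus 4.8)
The plan is to reduce the statement to the classical formal GAGA comparison theorem of Grothendieck by sandwiching the Henselization between $X$ and its $I$-adic formal completion. Concretely, write $\widehat{X}$ for the formal completion of $X$ along the closed subscheme $X_0 := X \times_A \spec(A/I)$, with $\widehat{\F}$ the completion of $\F$. Since $A$ is $I$-adically complete and Noetherian and $X$ is proper over $A$, classical formal GAGA (\cite[III.5.1.2]{ega31}, or the Stacks Project formulation) gives that the natural map $\Gamma(X,\F) \to \Gamma(\widehat{X},\widehat{\F})$ is an isomorphism. So it suffices to show that the composite factorization $\Gamma(X,\F) \to \Gamma(X^h,\F^h) \to \Gamma(\widehat{X},\widehat{\F})$ has the property that the second arrow is injective (then the first is injective by GAGA, and since the composite is surjective, the first is surjective too, hence an isomorphism). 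Here the second map exists because there is a canonical morphism of locally ringed spaces $\widehat{X} \to X^h$ over $X$: on affine opens $U = \spec(B)$ it is induced by the map $B^h \to \widehat{B}$ from the $I$-adic Henselization to the $I$-adic completion, which exists and is functorial because $(\widehat{B}, I\widehat{B})$ is a Henselian pair (a complete Noetherian ring is Henselian along any ideal) so the universal property of Henselization applies; and $\F^h$ pulls back to $\widehat{\F}$ since both are computed by the same tensor construction $\F \otimes_B (-)$ on affines.

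Thus the crux is: \emph{the natural map $\Gamma(X^h, \F^h) \to \Gamma(\widehat{X},\widehat{\F})$ is injective.} I would prove this by a Mittag-Leffler / separatedness argument exploiting that $X^h$ and $X_0$ have the same underlying space and that $\F^h$ restricts on $X_0$ to the same sheaf as $\widehat{\F}$ modulo the ideal. First, I claim $\Gamma(X^h, \F^h)$ is $I$-adically separated, i.e. $\bigcap_n I^n \Gamma(X^h,\F^h) = 0$. Granting this, a section $s$ of $\F^h$ mapping to $0$ in $\Gamma(\widehat{X},\widehat{\F})$ must lie in $I^n\Gamma(X^h,\F^h)$ for every $n$: indeed the map to the completion is compatible with the $I$-adic filtrations, and a section that dies in $\widehat{\F}$ in particular dies modulo every $I^{n+1}$; since $X^h \to X$ is flat (Remark~\ref{rmk:locringspace}) and $\F$ coherent, one checks $\F^h/I^{n+1}\F^h$ pulls back correctly and $\Gamma$ of it agrees with the corresponding level of the completion because $X^h$ and $\widehat X$ share the space $X_0$ and the relevant sheaves on $X_0$ coincide; hence $s \bmod I^{n+1}\Gamma(X^h,\F^h) = 0$ for all $n$, forcing $s = 0$ by separatedness. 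For the separatedness claim itself, I would cover $X$ by finitely many affines $U_i = \spec(B_i)$ (using properness, hence quasi-compactness), so $\Gamma(X^h,\F^h) \hookrightarrow \prod_i \Gamma(U_i^h, \F^h|_{U_i^h}) = \prod_i M_i \otimes_{B_i} B_i^h$ where $M_i = \Gamma(U_i,\F|_{U_i})$ is a finite $B_i$-module; then use that $B_i \to B_i^h$ is flat and that a finite module over a Noetherian ring that embeds into the $I$-adic Henselization is $I$-adically separated — which follows because the $I$-adic Henselization, being a filtered colimit of \'etale (hence flat) $B_i$-algebras, remains $I$-adically separated for finite modules over a Noetherian base (Krull intersection applies after base change to each local ring, all of which are Noetherian by Remark~\ref{rmk:noethhenssch}).

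The main obstacle I anticipate is making the comparison $\Gamma(X^h, \F^h/I^{n+1}\F^h) \cong \Gamma(\widehat X, \widehat\F/I^{n+1}\widehat\F)$ airtight — that is, verifying that at each finite level the Henselian-scheme global sections already agree with the formal-scheme global sections. This should hold because both $X^h$ and $\widehat X$ have underlying space $X_0$, and the sheaves $\F^h/I^{n+1}\F^h$ and $\widehat\F/I^{n+1}\widehat\F$ are both identified with the quasi-coherent sheaf on the scheme $X_n := X \times_A \spec(A/I^{n+1})$ obtained by pullback of $\F$ (using that Henselization and completion are both trivial modulo $I^{n+1}$ after passing to $A/I^{n+1}$, where that ring need not be Henselian but the structure sheaves literally coincide with the scheme structure sheaf of $X_n$ on the nose); so both global section groups equal $\Gamma(X_n, \F|_{X_n})$, and the comparison map is the identity. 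Once that identification is in hand, the separatedness-plus-surjectivity dance above closes the argument. A secondary, more technical point requiring care is checking that $\F^h$ as defined via the tensor construction on affines genuinely pulls back to $\widehat\F$ under $\widehat X \to X^h$; this is a local computation comparing $(M \otimes_B B^h) \otimes_{B^h} \widehat B$ with $M \otimes_B \widehat B = \widehat M$, which is immediate, but one must confirm it glues to the sheaf-level statement, for which Lemma~\ref{lem:modpull} and Lemma~\ref{lem:fancypullback} provide the needed functoriality.
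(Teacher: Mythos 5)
Your overall architecture is exactly the paper's: sandwich $\Gamma(X^h,\F^h)$ between $\Gamma(X,\F)$ and $\Gamma(\widehat X,\widehat\F)$, invoke classical formal GAGA for the composite, and reduce everything to injectivity of the second map $\Gamma(X^h,\F^h)\to\Gamma(\widehat X,\widehat\F)$. The divergence — and the gap — is in how you establish that injectivity. You argue: $s$ dies in $\widehat\F$, hence dies in $\Gamma(X^h,\F^h/I^{n+1}\F^h)$ for every $n$, hence $s\in I^{n+1}\Gamma(X^h,\F^h)$ for every $n$, hence $s=0$ by $I$-adic separatedness of the global sections. The third step is unjustified: the kernel of $\Gamma(X^h,\F^h)\to\Gamma(X^h,\F^h/I^{n+1}\F^h)$ is $\Gamma(X^h,I^{n+1}\F^h)$, and on a non-affine space this can be strictly larger than $I^{n+1}\Gamma(X^h,\F^h)$ (the containment $I^{n+1}\Gamma(\F^h)\subseteq\Gamma(I^{n+1}\F^h)$ is all you get; there is no Theorem A available globally to reverse it). So the separatedness statement you prove, $\bigcap_n I^n\Gamma(X^h,\F^h)=0$, is not the statement your vanishing argument feeds into; you would need $\bigcap_n\Gamma(X^h,I^n\F^h)=0$.

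The repair is to localize first, which your own separatedness argument already sets up: a global section of $\F^h$ killed in $\widehat\F$ restricts, on each affine $U_i=\spec(B_i)$ of a finite cover, to an element of $M_i\otimes_{B_i}B_i^h$ killed in $M_i\otimes_{B_i}\widehat{B_i}$, and one only needs injectivity of $M_i\otimes_{B_i}B_i^h\to M_i\otimes_{B_i}\widehat{B_i}$ (then conclude by the sheaf axiom, since $\Gamma(X^h,\F^h)\hookrightarrow\prod_i M_i\otimes_{B_i}B_i^h$). At that affine level your Krull-intersection argument works ($B_i^h$ is Noetherian with $IB_i^h\subseteq\Jac(B_i^h)$), but the paper gets this in one line without any filtration argument: $B_i^h\to\widehat{B_i}$ is faithfully flat (flat because both are filtered colimits/limits of flat maps and $B_i^h$ is Noetherian; faithful because $IB_i^h\subseteq\Jac(B_i^h)$ and the map is an isomorphism mod $I$), hence $M_i\otimes_{B_i}B_i^h\to M_i\otimes_{B_i}\widehat{B_i}$ is injective. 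I would also note that the level-by-level identification $\Gamma(X^h,\F^h/I^{n+1}\F^h)\cong\Gamma(\widehat X,\widehat\F/I^{n+1}\widehat\F)$, which you flag as the main technical burden, becomes unnecessary once you argue affine-locally as above.
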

\begin{proof} 

We define the formal scheme $X^\wedge$ over $\spf(A)$ as the $I$-adic completion of $X$. We consider the maps $\Gamma(X,\F) \to \Gamma(X^h,\F^h) \to \Gamma(X^\wedge,\F^\wedge)$. The composite map is an isomorphism by formal GAGA \cite[Theorem 5.1.4]{ega31}. Therefore the map $\Gamma(X,\F) \to \Gamma(X^h,\F^h)$ is injective.

We can see that the map $\Gamma(X^h,\F^h) \to \Gamma(X^\wedge,\F^\wedge)$ is also injective by working over open affines $\spec(B) \subset X$. Because such $B$ are Noetherian, the same is true of $B^h$. Thus the map  $B^h \to B^\wedge$ is faithfully flat since $IB^h \subseteq {\rm Jac}(B^h)$ and $B^h/IB^h \simto B^\wedge/IB^\wedge$. This implies that the map $$M \tens_B B^h \to M^\wedge := M \tens_B B^\wedge$$ is injective. 

Then since $$\Gamma(X,\F)  \hookrightarrow \Gamma(X^h,\F^h) \hookrightarrow \Gamma(X^\wedge,\F^\wedge)$$ is a composition of injective maps which is an isomorphism, each of the injections is an isomorphism. Hence $\Gamma(X,\F) \simto \Gamma(X^h,\F^h)$ as we desired to show.\end{proof}

\begin{proposition}\label{prop:hensfullfaithexact}  Let $(A,I)$ be a Noetherian Henselian pair with $I$-adically complete $A$, and let $X$ be a proper $A$-scheme with $I$-adic Henselization $X^h$. Then the functor between the category of coherent sheaves on $X$ and the category of coherent sheaves on $X^h$ defined by $\F \mapsto \F^h$ is exact and fully faithful.
\end{proposition}
\begin{proof} We already know that this functor is exact and faithful by Lemma~\ref{lem:hensfaithexact}. 

Now consider two coherent sheaves $\F,\G$ on $X$. The coherent Hom-sheaf $\mc{H}om_X(\F,\G)$ has Henselization $\mc{H}om_{X^h}(\F^h,\G^h)$ since $X^h \to X$ is flat on local rings. By Lemma~\ref{lem:h0gen}, we deduce that the natural map $$\Gamma(X,\mc{H}om_X(\F,\G)) \to \Gamma(X^h,\mc{H}om_{X^h}(\F^h,\G^h))$$ is an isomorphism. By the definition of the Hom-sheaf, this is the natural map $$\Hom_X(\F,\G) \to \Hom_{X^h}(\F^h,\G^h),$$ so that map is bijective.

Therefore the functor $\F \mapsto \F^h$ is fully faithful as we desired to show.\end{proof}

\begin{remark}\label{rmk:hensfullfaithexactext}
In view of Proposition~\ref{prop:hensfullfaithexact}, it is natural to wonder when the functor $\F \mapsto \F^h$ is an equivalence of categories. In the affine case, by Lemma~\ref{lem:modpullback} the essential surjectivity of the functor $\F \mapsto \F^h$ is equivalent to the question of essential surjectivity of $\fwt$. This does not hold even for finitely presented modules, as seen in Example~\ref{ex:grecowrong}, which has a Henselian pair $(A,I)$ where $A$ is a $\Q$-algebra. 

These issues do not arise for affine Henselian schemes $\sph(A,I)$ where $A$ is an $\mathbf{F}_p$-algebra, as discussed in Theorem~\ref{thm:niceqcoh}. It is not clear if an affine characteristic $0$ counterexample similar to Proposition~\ref{prop:cxyhglobsec} and Example~\ref{ex:grecowrong} exists where $A$ is not a $\Q$-algebra or a $\mathbf{F}_p$-algebra. 

However, de Jong showed in \cite{cohpropblog} that even for a complete DVR $A$ with fraction field $K$ of characteristic $0$, the group $\H^1((\P_A^1)^h,\O_{(\P_A^1)^h})$ is nonzero. This nonvanishing is used in \cite[Example 4.4.1]{ghga} to give a similar algebraizability counterexample to Example~\ref{ex:grecowrong}. Thus even when the base ring is $A=\Z_p$, which is not a $\Q$-algebra, Henselian schemes in characteristic 0 exhibit pathological behavior.

In \cite{ghga} we will show that algebraizability is inherited by coherent subsheaves (without any $p$-torsion hypotheses). Applying this to ideal sheaves, we will deduce a Henselian equivalent of Chow's theorem on algebraization and the algebraizability of maps between Henselizations of proper schemes; we will also be able to extend Proposition~\ref{prop:hensfullfaithexact} to the case where the base ring $A$ is Noetherian but not necessarily complete.
\end{remark}

\section{Morphisms locally of finite presentation}\label{sec:lfp}

In order to develop a theory of  \Et and smooth maps between Henselian schemes, it is first necessary to have a robust notion of ``finite presentation''.  Because a finitely presented algebra over a Henselian ring is not generally Henselian itself, we cannot use the usual definition from the scheme case.

Our definition is similar to that in \cite{cox1}; however, it is necessary to verify certain useful properties of maps of ``Henselian finite presentation'' that are not checked in \cite{cox1}. For example, we would like to be able to check whether a map is ``Henselian finitely presented'' Zariski-locally, for which it will be useful to have a functorial criterion similar to that of \cite[Corollary 8.14.2.2]{ega43} for whether a map is ``Henselian finitely presented''. We will prove such a criterion in Proposition~\ref{prop:groth}; this is not discussed in \cite{cox1}.

\subsection{Basic concepts and properties}\label{sub:basichfp}

In the \hyperref[sec:appendix]{Appendix} we define for a Henselian pair $(A,I)$ the {\bf Henselized polynomial ring} $A\{X_1,\dots,X_n\}$ (Definition~\ref{def:henspoly}) and show in Remark~\ref{rmk:hfpex} and Lemma~\ref{lem:polyonpoly} that this notion behaves well with regards to taking the quotient by a finitely generated ideal or adjoining additional variables.

\begin{definition}\label{def:h(l)fp}
A map of Henselian pairs $\phi: (A,I) \to (B,J)$ is {\bf Henselian finitely presented} or {\bf \hfp} if $\sqrt{J}=\sqrt{IB}$ and $\phi$ gives an isomorphism $B \simeq A\{X_1,\dots,X_n\}/(g_1,\dots,g_m)$ for $g_i \in A\{X_1,\dots,X_n\}$. 

A map of Henselian schemes $f: (X,\O_X) \to (Y,\O_Y)$ is {\bf Henselian locally finitely presented} or {\bf \hlfp} if $f$ can be Zariski-locally described as $\widetilde{\phi}: \sph(B,J) \to \sph(A,I)$ where the corresponding map of pairs $\phi: (A,I) \to (B,J)$ is \hfp.
\end{definition}

\begin{remark}\label{rmk:rad}
Since $\sqrt{J} = \sqrt{IB}$ implies that $(B,IB)$ is also a Henselian pair, for any map of Henselian pairs $(A,I) \to (B,J)$ which is \hfp it is harmless to arrange that $J=IB$. 
\end{remark}

\begin{example}\label{ex:disthfp}
It will often be useful to note that for a Henselian pair $(A,I)$ and an element $f \in A$, we have $A_f^h \simeq A\{T\}/(1-Tf)$ by Remark~\ref{rmk:hfpex}; this is a basic, useful example of an \hfp $A$-algebra. Therefore distinguished affine opens are \hfp. \end{example}

We first show that an \hfp algebra over a base pair $(A,I)$ is  the $I$-adic Henselization of a finitely presented $A$-algebra.

\begin{lemma}\label{lem:hfpishoffp} Let $\phi: (A,I) \to (B,IB)$ a map of Henselian pairs. Then the following are equivalent: 
\begin{enumerate}[(i)]
\item the map $\phi$ is \hfp,
\item there exists a finitely presented $A$-algebra $R$ such that $(B,IB) \simeq (R^h,IR^h)$ as $A$-algebras.\end{enumerate}
\end{lemma}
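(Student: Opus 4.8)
The plan is to prove the two implications separately, with the substance lying in (i) $\Rightarrow$ (ii). For (ii) $\Rightarrow$ (i): suppose $B \simeq R^h$ for a finitely presented $A$-algebra $R = A[X_1,\dots,X_n]/(f_1,\dots,f_m)$. I would use the fact (from the Appendix, specifically Lemma~\ref{lem:polyonpoly} and Remark~\ref{rmk:hfpex}) that Henselization is compatible with forming quotients by finitely generated ideals and with adjoining finitely many variables: the $I$-adic Henselization of $A[X_1,\dots,X_n]$ is $A\{X_1,\dots,X_n\}$, and the $I$-adic Henselization of the quotient $A[X_1,\dots,X_n]/(f_1,\dots,f_m)$ is $A\{X_1,\dots,X_n\}/(g_1,\dots,g_m)$ where $g_i$ is the image of $f_i$. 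This gives the required presentation of $B$, and one checks $\sqrt{IB} = IB = J$ by construction, so $\phi$ is \hfp.

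For the main direction (i) $\Rightarrow$ (ii): given $B \simeq A\{X_1,\dots,X_n\}/(g_1,\dots,g_m)$, I first use Remark~\ref{rmk:hfpex} to write $A\{X_1,\dots,X_n\} = A[X_1,\dots,X_n]^h$ (Henselization with respect to $I A[X_1,\dots,X_n]$). Since Henselization of the pair $(A[\underline X], I A[\underline X])$ is a filtered colimit of \Et $A[\underline X]$-algebras, each $g_i \in A\{X_1,\dots,X_n\}$ already lies in some common \Et extension; more to the point, I want to realize $B$ itself as the Henselization of a finitely presented $A$-algebra. The cleanest route is: let $R = A[X_1,\dots,X_n]/(\text{preimages of the } g_i)$ — but the $g_i$ need not come from polynomials. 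Instead, use the universal property of Henselization. The quotient map $A\{X_1,\dots,X_n\} \to B$ has kernel the ideal generated by $g_1,\dots,g_m$; since $B$ is Henselian along $IB$ and is a quotient of the Henselization of $A[X_1,\dots,X_n]$, I would invoke a Stacks Project result (along the lines of \spcite{0A02}{Lemma} or the structure results in \spcite{0EM7}{Section}) that says a finitely presented algebra over the Henselization $A^h_{\mathrm{pair}}$ of a pair comes, after Henselizing, from a finitely presented algebra downstairs — i.e. there is a finitely presented $A[\underline X]$-algebra $S$ with $S^h = B$, and composing presentations gives a finitely presented $A$-algebra $R$ with $R^h \simeq B$ as $A$-algebras. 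Finally I would check the Henselization is taken along the correct ideal, namely that $I R^h = IB$ maps to the ideal $J = IB$ of $B$, which is immediate since $\sqrt J = \sqrt{IB}$ by the definition of \hfp and Remark~\ref{rmk:rad} lets us take $J = IB$ outright.

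The step I expect to be the main obstacle is making precise the claim that an \hfp algebra over $(A,I)$ descends to a genuine finitely presented $A$-algebra whose pair-Henselization recovers $B$. The issue is that the generators $g_i$ of the defining ideal live in $A\{X_1,\dots,X_n\}$, which is a filtered colimit of \Et neighborhoods, not in $A[X_1,\dots,X_n]$ itself; one must spread the $g_i$ out to an \Et $A[\underline X]$-algebra $A'$, form the finitely presented quotient $A'/(g_1,\dots,g_m)$ there, and then argue — using that $A'$ is \Et over a polynomial ring and hence finitely presented over $A$, together with the fact that Henselization of $A'$ along $IA'$ agrees with that of $A[\underline X]$ — that Henselizing $A'/(g_1,\dots,g_m)$ along the relevant ideal gives back $B$. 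Keeping track of which ideal each Henselization is taken with respect to (always a pullback of $I$) is the bookkeeping that needs care; the Appendix lemmas on Henselized polynomial rings and on Henselization commuting with finite presentations are exactly what make this go through, and I would cite them rather than reprove them.
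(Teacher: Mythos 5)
Your final paragraph is exactly the paper's argument: spread the $g_i$ out to an \Et neighborhood $A'$ of $A[X_1,\dots,X_n]$ with $A'/IA' \simeq A[\underline X]/IA[\underline X]$, set $R = A'/(g_1',\dots,g_m')$ (finitely presented over $A$ since $A'$ is \Et over a polynomial ring), and conclude $R^h \simeq B$ via Proposition~\ref{prop:hensetexp} and Lemma~\ref{lem:hensmodfin}; the vaguer appeal to an unnamed Stacks Project descent result in your second paragraph is unnecessary and can be dropped. The (ii) $\Rightarrow$ (i) direction via Remark~\ref{rmk:hfpex} also matches the paper, so this is essentially the same proof.
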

\begin{proof} 

We first show that (i) $\implies$ (ii). From the definition of ``\hfp'', we have a presentation $B \simeq A\{X_1,\dots,X_n\}/(g_1,\dots,g_m)$. 

Because $A\{X_1,\dots,X_n\}$ is a filtered colimit of \Et $A[X_1,\dots,X_n]$-algebras (by the definition of Henselization), we can find an \Et $A[X_1,\dots,X_n]$-algebra $A'$ with $A'/IA' \simeq A[X_1,\dots,X_n]/IA[X_1,\dots,X_n] $ (hence $(A')^h = A\{X_1,\dots,X_n\}$ by Proposition~\ref{prop:hensetexp}) so that for all $j$:
\begin{enumerate}[\hspace{2ex}(a)]
\item the element $g_j \in A\{X_1,\dots,X_n\}$ arises from some $g_j' \in A'$,
\item if $\Phi: A' \to A\{X_1,\dots,X_n\} \to B$ is the composition of the natural map with the quotient map, then $\Phi(g_j')=0$. 
\end{enumerate}

Now let $R=A'/(g_1',\dots,g_m')$. Because the $g_j$ arise from the $g_j'$, we see that \begin{equation}\label{star}(A')^h \tens_{A'} R \simeq A\{X_1,\dots,X_n\} \tens_{A'} R \simeq B.\tag{$\star$}\end{equation}

Thus by equation (\ref{star}), we see that $R^h \simeq B$ over $A$ (Lemma~\ref{lem:hensmodfin}).

Since $R$ is finitely presented over an \Et $A[X_1,\dots,X_n]$-algebra $A'$, it is evidently finitely presented over $A$. Hence $B$ is the Henselization of a finitely presented $A$-algebra as we desired to show. 

The fact that (ii) $\implies$ (i) is an easy consequence of Remark~\ref{rmk:hfpex}. \end{proof}

\begin{remark}\label{rmk:hlfpbasechange}
For a map of Henselian pairs $(A,I) \to (B,J)$ and an $A$-algebra $R$ (with $I$-adic Henselization $R^h$), we note that the $J$-adic Henselizations of the $B$-algebras $B \tens_A R^h$ and $B \tens_A R$ are canonically isomorphic by considering the uniqueness conditions of the universal property of Henselization and the universal property of tensor products. 

Given any \hfp map $A \to C$, we can find a finitely presented $A$-algebra $S$ such that $C \simeq S^h$ the $I$-adic Henselization of $S$  (Lemma~\ref{lem:hfpishoffp}). Then by the preceding paragraph, the $J$-adic Henselization $(B \tens_A C)^h$ is the same as the $J$-adic Henselization of the finitely presented $B$-algebra $B \tens_A S$, so the map $B \to (B \tens_A C)^h$ is \hfp. Hence the property of being \hfp (or \hlfp) is stable under base change.
\end{remark}

We now will show that the condition of being \hfp is compatible with composition of maps.

\begin{proposition}\label{prop:hfpcomp} The composition of \hfp maps is \hfp, and the composition of \hlfp maps is \hlfp.
\end{proposition}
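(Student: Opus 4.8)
The plan is to reduce the statement about \hlfp maps of Henselian schemes to the affine statement about \hfp maps of pairs, and then to prove the affine statement using the presentation-level description of \hfp maps together with Lemma~\ref{lem:polyonpoly} (stability of Henselized polynomial rings under adjoining variables) and Remark~\ref{rmk:hfpex} (stability under quotients by finitely generated ideals). First I would handle the \hfp case. Given \hfp maps $(A,I) \to (B,IB)$ and $(B,IB) \to (C,IC)$, using Remark~\ref{rmk:rad} I may as well take the ideals of definition to be the extensions of $I$. Write $B \simeq A\{X_1,\dots,X_n\}/(g_1,\dots,g_m)$ and $C \simeq B\{Y_1,\dots,Y_k\}/(h_1,\dots,h_l)$.

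Next I would observe that $B\{Y_1,\dots,Y_k\}$ is the $IB$-adic Henselization of $B[Y_1,\dots,Y_k]$, which by base change (Lemma~\ref{lem:polyonpoly}, or directly from the compatibility of Henselization with the relevant colimits) is the $I$-adic Henselization of $\bigl(A\{X_1,\dots,X_n\}/(g_1,\dots,g_m)\bigr)[Y_1,\dots,Y_k]$. Since $A\{X_1,\dots,X_n\}[Y_1,\dots,Y_k]$ and $A\{X_1,\dots,X_n,Y_1,\dots,Y_k\}$ have the same $I$-adic Henselization (this is Lemma~\ref{lem:polyonpoly}), taking the further quotient by the $g_j$ and then Henselizing gives $B\{Y_1,\dots,Y_k\} \simeq A\{X_1,\dots,X_n,Y_1,\dots,Y_k\}/(g_1,\dots,g_m)$, where now the $g_j$ are regarded in the larger Henselized polynomial ring. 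Finally, quotienting by lifts $\widetilde h_1,\dots,\widetilde h_l \in A\{X_1,\dots,X_n,Y_1,\dots,Y_k\}$ of $h_1,\dots,h_l$ yields $C \simeq A\{X_1,\dots,X_n,Y_1,\dots,Y_k\}/(g_1,\dots,g_m,\widetilde h_1,\dots,\widetilde h_l)$, which is a presentation exhibiting $A \to C$ as \hfp; one also checks $\sqrt{IC}=\sqrt{IC}$ trivially, so the radical-ideal condition in the definition is automatic in this normalization.

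For the \hlfp statement, given \hlfp maps $f: X \to Y$ and $g: Y \to Z$ of Henselian schemes, I would work locally: pick $x \in X$, a Henselian affine open $\sph(B,J) \ni f(x)$ in $Y$ over which $g$ is \hfp onto some $\sph(A,I) \subset Z$, and then a Henselian affine open $\sph(C,L) \ni x$ contained in $f^{-1}(\sph(B,J))$ over which $f$ is \hfp onto $\sph(B,J)$. (Here I use that distinguished affine opens of a Henselian affine scheme are themselves Henselian affine — in fact \hfp over the ambient one, by Example~\ref{ex:disthfp} — to shrink the target chart on $Y$ if necessary and still land in an \hfp-covered chart, and similarly Example~\ref{ex:disthfp} lets me shrink $\sph(C,L)$ while keeping the composite map \hfp onto a chart of $Z$; this uses that the composition of an \hfp map with the inclusion of a distinguished open is \hfp, which is the already-proved \hfp case of the proposition applied to Example~\ref{ex:disthfp}.) On these charts $g\circ f$ is the composite of \hfp maps of pairs, hence \hfp by the first part, so $g \circ f$ is \hlfp.

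I expect the main obstacle to be the bookkeeping in the \hfp case: specifically, making rigorous the identification $B\{Y_1,\dots,Y_k\} \simeq A\{X_1,\dots,X_n,Y_1,\dots,Y_k\}/(g_1,\dots,g_m)$, i.e.\ that ``Henselize, quotient, then adjoin Henselized variables'' agrees with ``adjoin all Henselized variables, then quotient'' — this is where Lemma~\ref{lem:polyonpoly} and the interaction of Henselization with base change along $A\{X\} \to B$ and with finitely generated quotients (Remark~\ref{rmk:hfpex}) must be combined carefully, and one must be slightly careful that the $g_j$, a priori elements of $A\{X_1,\dots,X_n\}$, are being correctly viewed inside $A\{X_1,\dots,X_n,Y_1,\dots,Y_k\}$ via the canonical map. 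The \hlfp reduction is then essentially formal once one is comfortable shrinking Henselian affine charts, which Example~\ref{ex:disthfp} makes legitimate.
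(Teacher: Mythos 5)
Your proof is correct and follows essentially the same route as the paper: the affine case via the presentations, Lemma~\ref{lem:polyonpoly}, and the identification $B\{Y_1,\dots,Y_\ell\}\simeq A\{X_1,\dots,X_n,Y_1,\dots,Y_\ell\}/(g_1,\dots,g_m)$ (justified through Remark~\ref{rmk:hfpex}/Lemma~\ref{lem:hensmodfin}), followed by the formal reduction of the \hlfp statement using Example~\ref{ex:disthfp}. The paper's proof is just a terser version of the same argument; your more explicit treatment of the ``Henselize, quotient, then adjoin variables'' bookkeeping and of the chart-shrinking step fills in details the paper leaves implicit.
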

\begin{proof} By the definitions and Example~\ref{ex:disthfp}, it suffices to show that the composition of \hfp maps is \hfp.

Let $(A,I) \to (B,IB) \to (B',IB')$ be two \hfp maps of Henselian pairs. 

For $B \simeq A\{X_1,\dots,X_n\}/(g_1,\dots,g_m), B' \simeq B\{Y_1,\dots,Y_\ell\}/(h_1,\dots,h_r)$, we have $$B' \simeq A\{X_1,\dots,X_n,Y_1,\dots,Y_\ell\}/(g_1,\dots,g_m,h_1,\dots,h_r)$$ since $$A\{X_1,\dots,X_n\}\{Y_1,\dots,Y_\ell\} \simeq A\{X_1,\dots,X_n,Y_1,\dots,Y_\ell\}$$ by Lemma~\ref{lem:polyonpoly}. Therefore $B'$ is \hfp over $A$.\end{proof}

\begin{proposition}\label{prop:hlfphlfp} If $(A,I) \to (B,IB), (A,I) \to (C,IC)$ are two \hfp maps, then any map of Henselian pairs $\Phi: (B,IB) \to (C,IC)$ over $(A,I)$ is \hfp.
\end{proposition}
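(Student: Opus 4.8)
### Proof Proposal

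The plan is to realize the map $\Phi$ as a base change of an \hfp map composed with \hfp maps, all of which are known (by Proposition~\ref{prop:hfpcomp} and Remark~\ref{rmk:hlfpbasechange}) to preserve the \hfp property. The key observation is that the graph trick, which handles the analogous statement for finitely presented morphisms of schemes, should work here once we have the right notion of ``diagonal.'' First I would form the fiber product $\sph(C,IC)\times_{\sph(A,I)}\sph(B,IB)$, which by Remark~\ref{rmk:fibprod} is $\sph((C\tens_A B)^h)$ for the appropriate Henselization. Since $(A,I)\to(B,IB)$ is \hfp, its base change along $(A,I)\to(C,IC)$ gives an \hfp map $(C,IC)\to(C\tens_A B)^h$ by Remark~\ref{rmk:hlfpbasechange}. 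The map $\Phi$ together with the identity on $C$ induces a $C$-algebra map $(C\tens_A B)^h \to C$ (using the universal property of Henselization, since $\Phi$ respects the ideals), and $\Phi$ factors as $(B,IB)\to(C\tens_A B)^h \to (C,IC)$ where the first map is \hfp (base change of the \hfp map $(A,I)\to(C,IC)$ along $(A,I)\to(B,IB)$). So by Proposition~\ref{prop:hfpcomp} it suffices to show the ``graph'' map $(C\tens_A B)^h \to C$ is \hfp.

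To handle the graph map, I would reduce to the case where $(A,I)\to(B,IB)$ is a Henselized polynomial map: write $B\simeq A\{X_1,\dots,X_n\}/(g_1,\dots,g_m)$ by the definition of \hfp. Then $(C\tens_A B)^h$ is the $IC$-adic Henselization of $C\{X_1,\dots,X_n\}/(g_1,\dots,g_m)$ (more precisely, one checks this Henselization equals $(C\tens_A B)^h$ using compatibility of Henselization with the relevant colimits, as in Remark~\ref{rmk:hlfpbasechange}). The graph map sends $X_i$ to $\Phi(b_i)\in C$ where $b_i$ is the image of $X_i$ in $B$. Choosing lifts $c_i\in C$ of $\Phi(b_i)$, the graph map is the quotient of $(C\tens_A B)^h$ by the Henselian ideal generated by the elements $X_i - c_i$ (together with the relations $g_j$, which already hold). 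Concretely, $C \simeq C\{X_1,\dots,X_n\}/(X_1-c_1,\dots,X_n-c_n)$, and then quotienting further by the $g_j$'s (which become $0$ in $C$) is harmless; so $C$ is a quotient of $(C\tens_A B)^h$ by a finitely generated ideal, hence \hfp over $(C\tens_A B)^h$ by Remark~\ref{rmk:hfpex} and Lemma~\ref{lem:polyonpoly}.

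Assembling: $\Phi$ is the composite of the \hfp map $(B,IB)\to(C\tens_A B)^h$ (base change) with the \hfp graph map $(C\tens_A B)^h\to(C,IC)$, hence \hfp by Proposition~\ref{prop:hfpcomp}. The main obstacle I anticipate is the bookkeeping around Henselizations and base change: one must be careful that $(C\tens_A B)^h$, computed as the kernel-Henselization in the fiber product construction of Remark~\ref{rmk:fibprod}, really does agree with the $IC$-adic Henselization of the finitely presented $C$-algebra $C\tens_A (A\{X\}/(g))$, and that the ideal $(X_i - c_i)$ generates the right thing after Henselization (so that the quotient is exactly $C$, not some larger Henselian ring). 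This is the kind of compatibility established in the proof of Lemma~\ref{lem:hfpishoffp} and in Remark~\ref{rmk:hlfpbasechange}, so it should go through, but it is where the argument needs genuine care rather than formal manipulation. A minor subtlety is checking that $\Phi$ respects the ideals so that $\Phi$ is a map of pairs inducing the required map on Henselizations, but this is automatic since $\sqrt{IB}=\sqrt{J}$ forces $\Phi(IB)\subseteq IC$ up to radical, which is all that is needed.
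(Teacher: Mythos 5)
Your proof is correct, but it takes a genuinely different route from the paper's. You run the graph trick at the Henselized level: factor $\Phi$ as $B \to (C\tens_A B)^h \to C$, observe the first map is \hfp by base change (Remark~\ref{rmk:hlfpbasechange}), and identify the graph map as the quotient of $(C\tens_A B)^h \simeq C\{X_1,\dots,X_n\}/(g_1,\dots,g_m)$ by the finitely generated ideal $(X_1-c_1,\dots,X_n-c_n)$, where $c_i=\Phi(b_i)$; the verification that this quotient is exactly $C$ reduces via Remark~\ref{rmk:hfpex} to $C[X]/(X_i-c_i)\simeq C$ plus the vanishing of the $\bar g_j$, and is sound. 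The paper instead descends $\Phi$ to the finitely presented level: it writes $B\simeq R^h$, $C\simeq S^h$ for finitely presented $A$-algebras $R,S$, uses the filtered-colimit description of $C$ and \spcite{00QO}{Lemma} to realize $\Phi$ as the Henselization of a map $R\to S_1$ of finitely presented $A$-algebras, and then shows via the universal properties that $C\simeq(B\tens_R S_1)^h$, i.e.\ $C$ is directly the Henselization of a finitely presented $B$-algebra. Your approach buys a cleaner formal structure (everything is assembled from base change and composition, already established) at the cost of an explicit computation with a chosen Henselized presentation of $B$; the paper's approach avoids choosing a presentation but requires the spreading-out step to a finite stage of the colimit. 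Two small wording points: the phrase ``choosing lifts $c_i$ of $\Phi(b_i)$'' is unnecessary since $\Phi(b_i)$ already lives in $C$, and the final remark about $\sqrt{IB}=\sqrt{J}$ is moot because the hypothesis already gives $\Phi$ as a map of pairs $(B,IB)\to(C,IC)$.
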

\begin{proof} We can choose $R,S$ finitely presented $A$-algebras with $R^h \simeq B, S^h \simeq C$ by Lemma~\ref{lem:hfpishoffp}. Then $\Phi$ gives a map $R \to C$. Writing $C$ as the colimit of \Et $S$-algebras $S'$ such that $S/IS \simeq S'/IS'$, we can choose $S_1$ \Et over $S$ with $S/IS \simeq S_1/IS_1$ and $\ol{\Phi}: R \to S_1$ over $A$ so that the map  $R \to S_1 \to (S_1)^h \simeq C$ is $\Phi$ by \spcite{00QO}{Lemma}. (The last isomorphism is due to Proposition~\ref{prop:hensetexp}.)

Since $S_1$ is \Et over $S$ which is finitely presented over $A$, then $S_1$ is also finitely presented over $A$. Therefore $\ol{\Phi}$ is a map of finitely presented $A$-algebras, hence it is also a finitely presented map.

Then we can write $S_1 \simeq R[T_1,\dots,T_n]/(g_1,\dots,g_m)$. We have a map $S_1 \to B \tens_R S_1 \to (B \tens_R S_1)^h$, which must factor through $C \simeq S_1^h$ by the universal property of Henselization. Therefore we have an $S_1$-algebra map $\eta: C \to (B \tens_R S_1)^h$.

The universal property of the tensor product gives us a map $B \tens_R S_1 \to C$ via the $R$-algebra maps $B \to C, S_1 \to C$; this map factors through $(B \tens_R S_1)^h$ because of the universal property of Henselization. Hence we have a map $\mu: (B \tens_R S_1)^h \to C$.

By the uniqueness condition of the universal property of Henselization, since $\mu\eta: C \to C$ is an $S_1$-algebra map from $C \simeq (S_1)^h$ to itself, it is the identity. Similarly, $\eta\mu$ is the unique $(B \tens_R S_1)$-algebra map from $(B \tens_R S_1)^h$ to itself, so it is the identity.

Therefore $C \simeq (B \tens_R S_1)^h$, which is the Henselization of the finitely presented $B$-algebra $B \tens_R S_1$. Hence $\Phi$ is \hfp as we desired to show.\end{proof}

We will now prove a useful functorial criterion for showing that a map of Henselian pairs is \hfp. This is analogous to \cite[Proposition 8.14.2]{ega41}, though our criterion will be used to show in Proposition~\ref{prop:affglue} that the condition of being \hfp can be checked Zariski-locally, whereas in the scheme case the analogous functorial criterion is not needed for the analogous purpose.

\begin{proposition}\label{prop:groth} Consider a map of rings $f: A \to B$ where $(A,I)$ and $(B,IB)$ are Henselian pairs. The following are equivalent:

\begin{enumerate}[(i)]
\item $f$ is \hfp,
\item for any inductive filtered system $(R_\alpha)$ of Henselian pairs over $(A,I)$ (each $R_\alpha$ is an $A$-algebra with $(R_\alpha,IR_\alpha)$ a Henselian pair), the canonical map $$\colim \Hom_A(B,R_\alpha) \to \Hom_A(B,\colim R_\alpha)$$ is bijective.

\end{enumerate}

\end{proposition}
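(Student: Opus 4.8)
The plan is to prove the two implications separately, with the bulk of the work in (i) $\Rightarrow$ (ii) since that is where the Henselized-polynomial-ring structure must be exploited. For (ii) $\Rightarrow$ (i), I would run the standard Grothendieck-style trick: write $B$ itself as a filtered colimit of finitely presented (in fact \hfp) $A$-subalgebras $B_\beta$ — using, say, the Henselized polynomial presentations of finite subsets of generators and relations, together with Lemma~\ref{lem:polyonpoly} and Remark~\ref{rmk:hfpex} to see each $B_\beta$ is \hfp — so that $(B,IB) = \colim (B_\beta, IB_\beta)$ as a filtered system of Henselian pairs over $(A,I)$. Applying the bijectivity hypothesis in (ii) to this particular system, the identity map $B \to B = \colim B_\beta$ must factor through some $B_\beta$, and one checks the composite $B_\beta \hookrightarrow B \to B_\beta$ is the identity (by cofinality/uniqueness in the colimit), so $B \simeq B_\beta$ is \hfp.

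For (i) $\Rightarrow$ (ii), write $B \simeq A\{X_1,\dots,X_n\}/(g_1,\dots,g_m)$. Giving an $A$-algebra map $B \to R$ for a Henselian pair $(R,IR)$ is the same as giving a point $(r_1,\dots,r_n) \in R^n$ at which all the $g_j$ vanish, \emph{subject to the constraint that the map factors through the Henselization} $A\{X_1,\dots,X_n\}$ rather than merely $A[X_1,\dots,X_n]$. The key structural input is the universal property of the Henselized polynomial ring (Definition~\ref{def:henspoly} and the surrounding appendix material): an $A$-algebra map $A\{X_1,\dots,X_n\} \to R$ into a Henselian pair $(R,IR)$ is \emph{uniquely} determined by, and exists for, any choice of images of the $X_i$ in $R$ — i.e. $\Hom_A(A\{X_1,\dots,X_n\}, R) = R^n$ functorially in the Henselian pair $(R,IR)$. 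Granting this, surjectivity and injectivity of $\colim \Hom_A(B,R_\alpha) \to \Hom_A(B, \colim R_\alpha)$ both reduce to the corresponding — and elementary — statements for finite tuples: a tuple in $(\colim R_\alpha)^n$ annihilating finitely many elements $g_1,\dots,g_m$ comes from a tuple in some $R_\alpha^n$ annihilating them (finiteness of $n$ for lifting the tuple, finiteness of $m$ plus the filtered condition for arranging the $g_j$ to actually vanish at that stage, after passing to a larger index), and two tuples in $R_\alpha^n, R_{\alpha'}^n$ with equal image in the colimit agree in some $R_\beta^n$ (again using finiteness of $n$ and filteredness). The one point requiring care is that all these intermediate objects $R_\alpha$ are genuinely Henselian pairs with $I$, so that the universal property of $A\{X_1,\dots,X_n\}$ applies to each of them and to the colimit — this is exactly the hypothesis built into the indexing system, and one should note that a filtered colimit of Henselian pairs over $(A,I)$ is again such a pair (the relevant ideal being $I \cdot \colim R_\alpha$, contained in the Jacobson radical, with Hensel's lemma inherited in the colimit by finiteness of the data in any factorization).

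The main obstacle I expect is pinning down and correctly invoking the precise universal property of the Henselized polynomial ring $A\{X_1,\dots,X_n\}$ relative to the \emph{category of Henselian pairs over $(A,I)$} — in particular that $\Hom$ out of it into such a pair is computed by $n$-tuples with no further conditions, uniqueness included. Once that representability statement is in hand (it should follow from the appendix's construction of $A\{X_1,\dots,X_n\}$ together with the universal property of Henselization of a pair applied to $A[X_1,\dots,X_n]$ with its $I$-adic ideal), the rest is the routine ``commuting $\Hom$ with filtered colimits when the source is finitely presented'' argument, now transported to the Henselian setting. A secondary subtlety is making sure that when we realize $B$ as $\colim B_\beta$ in the proof of (ii) $\Rightarrow$ (i), each $B_\beta$ really is a \emph{Henselian pair over $(A,I)$} in the sense required by (ii) and that the system is filtered with colimit exactly $(B,IB)$; this is where Lemma~\ref{lem:polyonpoly}, Remark~\ref{rmk:hfpex}, and Lemma~\ref{lem:hfpishoffp} do their work.
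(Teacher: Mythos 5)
Your (i) $\Rightarrow$ (ii) direction is correct and is essentially the paper's argument: reduce $\Hom_A(B,-)$ on Henselian pairs over $(A,I)$ to tuples in $R^n$ killing the finitely many $g_j$, via the universal property of $A\{X_1,\dots,X_n\}$ from Definition~\ref{def:henspoly}, and then run the usual finite-data argument in a filtered colimit (the colimit being again Henselian, as you note).

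The gap is in (ii) $\Rightarrow$ (i). You propose to write $B$ as a filtered colimit of \hfp \emph{subalgebras} $B_\beta$ and conclude from the factorization of $\mathrm{id}_B$ that $B \simeq B_{\beta_0}$. But these two requirements are in tension: the finitely generated $A$-subalgebras of $B$ (or rather their $I$-adic Henselizations $C_\lambda^h$, which are what you must use to stay in the category of Henselian pairs) are quotients $A\{T_1,\dots,T_r\}/J_\lambda$ with $J_\lambda$ not finitely generated in general, so they are merely ``h-finite-type,'' not \hfp; while the genuinely \hfp approximations $A\{T_1,\dots,T_r\}/(g_1,\dots,g_m)$ obtained from finite subsets of generators \emph{and relations} do not inject into $B$, so they are not subalgebras and the retraction $B \to B_{\beta_0} \to B = \mathrm{id}$ only yields that $B_{\beta_0} \to B$ is \emph{surjective}, not an isomorphism. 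Your parenthetical ``one checks the composite $B_\beta \to B \to B_\beta$ is the identity by cofinality/uniqueness'' is the unproved step: there is no reason for that composite to be the identity, and filteredness only lets you equalize the two maps after passing to a larger index, which again gives surjectivity onto $B$ rather than injectivity of $B_{\beta'} \to B$.

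The paper's proof is necessarily a two-stage argument. First it applies hypothesis (ii) to the system $(C_\lambda^h)$ of Henselizations of finitely generated subalgebras to split the identity through some $C_{\lambda_0}^h$, concluding only that $B \simeq A\{T_1,\dots,T_r\}/\J$ for \emph{some} ideal $\J$. Then it applies (ii) a second time to the filtered system $D_\mu = A\{T_1,\dots,T_r\}/\J_\mu$ over the finitely generated subideals $\J_\mu \subseteq \J$, obtaining a section $u_1: B \to D_{\mu_1}$, and carries out a diagram chase: after enlarging to $\mu_2$ so that $u_1(t_i) - q_{\mu_1}(T_i) \in \J_{\mu_2}/\J_{\mu_1}$ for all $i$, the induced $u_2 = p_{\mu_1\mu_2}u_1$ satisfies $u_2 q = q_{\mu_2}$, whence $\J = \ker q \subseteq \ker q_{\mu_2} = \J_{\mu_2} \subseteq \J$ and $\J$ is finitely generated. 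This second stage (or an equivalent substitute, such as a proof that a retract of an \hfp algebra is \hfp, which in the Henselized setting is itself not immediate) is what your proposal is missing.
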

\begin{proof}

We will first show that condition (i) implies condition (ii).

Assume that $f$ is \hfp; by Lemma~\ref{lem:hfpishoffp} we can write $B$ as the Henselization of $(B_0,IB_0)$ for a finitely presented $A$-algebra $B_0=A[X_1,\dots,X_n]/(g_1,\dots,g_m)$, giving a ``Henselian finite presentation'' of $B$. Since $R_\alpha$ is Henselian for each $\alpha$, then $\colim R_\alpha$ is Henselian with respect to the ideal $I\left(\colim R_\alpha\right)$ by \spcite{0FWT}{Lemma}.

We note that any $A$-algebra map $B \to \colim R_\alpha$, or $B \to R_\beta$ for a fixed $\beta$, is determined by the images of the $X_i$. This is true because the choice of the images of the $X_i$ determines a map $A[X_1,\dots,X_n]/(g_1,\dots,g_m) \to \colim R_\alpha$ (or $A[X_1,\dots,X_n]/(g_1,\dots,g_m) \to R_\beta$), which must uniquely factor through $B$ by the universal property of Henselization.

Therefore if we have two elements of $\colim \Hom(B,R_\alpha)$ which agree in $\Hom(B,\colim R_\alpha)$, we can find some large $\alpha_0$ so that the two maps both arise from $\Hom(B,R_{\alpha_0})$ and agree on all of the $X_i$, making them equal. Hence the map $\colim \Hom_A(B,R_\alpha) \to \Hom_A(B,\colim R_\alpha)$ is injective.

To show surjectivity of the map, consider an element $\eta$ of $\Hom(B,\colim R_\alpha)$. Since there are finitely many $X_i$ and the colimit is filtered, we can find some large $\alpha_0$ so that each $\eta(X_i)$ arises from some element $\eta_i \in R_{\alpha_0}$ for all $i$. This gives us an $A$-algebra map $\eta'_{\alpha_0}: A[X_1,\dots,X_n] \to R_{\alpha_0}$ defined by $\eta'_{\alpha_0}(X_i)=\eta_i$.

Since $(R_{\alpha_0},IR_{\alpha_0})$ is a Henselian pair, the universal property of Henselization means that $\eta'_{\alpha_0}$ factors through a unique $A$-algebra map $\mu_{\alpha_0}: A[X_1,\dots,X_n]^h = A\{X_1,\dots,X_n\} \to R_{\alpha_0}$. 

Furthermore, the composition of $\mu_{\alpha_0}$ with the map $R_{\alpha_0} \to \colim R_{\alpha}$ is equal to the map $$A\{X_1,\dots,X_n\} \to A\{X_1,\dots,X_n\}/(g_1,\dots,g_m) \to \colim R_{\alpha}$$ by construction. Therefore for each $j$, the image of $\mu_{\alpha_0}(g_j)$ by the map $R_{\alpha_0} \to \colim R_{\alpha}$ is $0$.

Since there are finitely many $g_j$, we can find a large $\alpha_1$ so that for each $j$, the image of $\mu_{\alpha_0}(g_j)$ by the map $R_{\alpha_0} \to R_{\alpha_1}$ is $0$. Then if we let $\eta_i' \in R_{\alpha_1}$ be the image of $\mu_{\alpha_0}(X_i)$ by the map $R_{\alpha_0} \to R_{\alpha_1}$, we can define an $A$-algebra map $\eta_{\alpha_1}: A\{X_1,\dots,X_n\}/(g_1,\dots,g_m) \simeq B \to R_{\alpha_1}$ by $\eta_{\alpha_1}(X_i)=\eta_i'$.

Considering $\eta_{\alpha_1}$ as an element of $\colim \Hom_A(B,R_\alpha)$, we see that its image in $\Hom_A(B,\colim R_\alpha)$ is $\eta$. 

Therefore the map $\colim \Hom_A(B,R_\alpha) \to \Hom_A(B,\colim R_\alpha)$ is surjective and injective, hence bijective. Hence (i) $\implies$ (ii).

To prove that (ii) $\implies$ (i), assume that the map $f: A \to B$ satisfies condition (ii); namely that for any inductive filtered system $(R_\alpha)$ of Henselian pairs over $(A,I)$, the map $\colim \Hom_A(B,R_\alpha) \to \Hom_A(B,\colim R_\alpha)$ is bijective. We must now show that $f$ is \hfp.

We can write $B$ as the (filtered) colimit of its finitely generated $A$-subalgebras, as $B = \colim C_\lambda$. Let $C_\lambda^h$ be the Henselization of $C_\lambda$ with respect to $IC_\lambda$. Then since Henselization commutes with filtered colimits \spcite{0A04}{Lemma}, we see that $\colim C_\lambda^h \simto B^h$. However, since $(B,IB)$ is already a Henselian pair, in fact $\colim C_\lambda^h \simto B$.

Since each $C_\lambda$ is finitely generated over $A$, we can write $C_\lambda \simeq A[T_1,\dots,T_r]/J_\lambda$ for some $r$ (depending on $\lambda$) and some ideal $J_\lambda \subset A[T_1,\dots,T_r]$ . Then $C_\lambda^h \simeq A\{T_1,\dots,T_r\}/J_\lambda$. 

By condition (ii), we have a bijection $\colim_\lambda \Hom_A(B, C_\lambda^h) \to \Hom_A(B,B)$. By considering the inverse image of the identity map under this bijection, we get a $\lambda_0$ and a map $B \to C_{\lambda_0}^h$ so that $B \to C_{\lambda_0}^h \to B$ is the identity map.

This means that $C_{\lambda_0}^h \to B$ is actually surjective. Since $C_{\lambda_0}^h \simeq A\{T_1,\dots,T_r\}/J$ for some $r$ and some ideal $J \subset A\{T_1,\dots,T_r\}$, we see that $B \simeq A\{T_1,\dots,T_r\}/\J$ for some ideal $\J$.

The ideal $\J$ is the colimit of its finitely generated subideals $\J_\mu$. Because $\colim$ is exact, if we write $D_\mu := A\{T_1,\dots,T_r\}/\J_\mu$, we get $B \simeq \colim D_\mu$. For each $\mu$, let $p_{\mu}: D_\mu \to B$ be the map arising from the colimit and let $q_{\mu}: A\{T_1,\dots,T_r\} \to D_\mu$ be the usual quotient map. Therefore for any $\mu,\mu'$ we have $p_{\mu}q_{\mu}=p_{\mu'}q_{\mu'}$ since both are equal to the quotient map $q:A\{T_1,\dots,T_r\} \to B$.

Once again by the assumed condition we have a bijection $\colim \Hom_A(B,D_\mu) \to \Hom_A(B,B)$, because all the $D_\mu$ are quotients of a Henselian ring and therefore Henselian themselves \spcite{09XK}{Lemma}. As above, this means we can find some $\mu_1$ with a map $u_1: B \to D_{\mu_1}$, so that $p_{\mu_1}u_1$ is the identity map on $B$.

For each $i$ from $1$ to $r$, let $t_i=q(T_i)$. Then since $p_{\mu_1}u_1=\rm{id}_B$, we have $$p_{\mu_1}u_1(t_i)=t_i=q(T_i)=p_{\mu_1}q_{\mu_1}(T_i).$$ Therefore $u_1(t_i)-q_{\mu_1}(T_i)$ lies in $\ker p_{\mu_1} = \J/\J_{\mu_1}$. There exists some finitely generated subideal of $\J/\J_{\mu_1}$ containing all of the $u_1(t_i)-q_{\mu_1}(T_i)$. 

We can thus find a large $\mu_2$ so that for all $i$, $u_1(t_i)-q_{\mu_1}(T_i) \in \J_{\mu_2}/\J_{\mu_1}$. Then if $p_{\mu_1\mu_2}: D_{\mu_1} \to D_{\mu_2}$ is the quotient map with kernel $\J_{\mu_2}/\J_{\mu_1}$, we see that we have the following commutative diagram (the dashed arrow $u_2$ will be defined later):

\begin{center}
\begin{tikzcd}[sep=4em]
&&D_{\mu_1}\arrow[dd,"p_{\mu_1\mu_2}",bend left] \arrow[d,"p_{\mu_1}" left]\\
A\{T_1,\dots,T_r\}\arrow[r,"q"]\arrow[rru,"q_{\mu_1}"]\arrow[rrd,"q_{\mu_2}" below,]&B\arrow[r,equals]\arrow[ru,"u_1"]\arrow[rd,dashed,"u_2" swap]&B\\
&&D_{\mu_2}\arrow[u,"p_{\mu_2}"]\\
\end{tikzcd}
\end{center} \vspace{-.4in}

By some diagram chasing, the condition that for all $i$, $u_1(t_i)-q_{\mu_1}(T_i) \in \J_{\mu_2}/\J_{\mu_1}$ implies that for all $i$ we have $p_{\mu_1\mu_2}u_1(t_i)=q_{\mu_2}(T_i)$ in $D_{\mu_2}$. Now let $u_2=p_{\mu_1\mu_2}u_1$. We note that because $p_{\mu_1}u_1=\rm{id}_B$, we also have $p_{\mu_2}u_2=\rm{id}_B$. Then $u_2(t_i)=q_{\mu_2}(T_i)$ for all $i$, or $u_2q(T_i)=q_{\mu_2}(T_i)$ for all $i$. This implies that $u_2q=q_{\mu_2}$ as $A$-algebra maps from $A\{T_1,\dots,T_r\}$ to $D_{\mu_2}$.

By the definition of $q$, $\J = \ker q$. Since $\ker q \subseteq \ker u_2q = \ker q_{\mu_2} = \J_{\mu_2}$, we see that $\J \subseteq \J_{\mu_2}$. However, since $\J_{\mu_2}$ is a finitely generated subideal of $\J$, this implies that $\J=\J_{\mu_2}$ is finitely generated.

Then the presentation $B \simeq A\{T_1,\dots,T_r\}/\J$ shows that $B$ is \hfp over $A$ as desired. \end{proof}

\subsection{Zariski-locality}

In Section~\ref{sub:basichfp}, we discussed \hfp maps of Henselian pairs and defined \hlfp maps of Henselian schemes. The definition of the latter was given with respect to a choice of open covering of the Henselian schemes in question. In this section we will show that being \hfp is Zariski-local, which will imply that being \hlfp can be checked using any affine open covering of a Henselian scheme.

As part of showing the condition of being \hfp is Zariski-local, we will need a Henselian version of a standard fact for schemes concerning distinguished affine opens:

\begin{proposition}\label{prop:acl} If $\sph(A,I)$ and $\sph(B,J)$ are affine opens in a Henselian scheme $X$, then $\sph(A,I) \cap \sph(B,J)$ is a union of open sets which are distinguished affine opens of both $\sph(A,I)$ and $\sph(B,J)$.
\end{proposition}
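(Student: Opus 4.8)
The plan is to reduce the statement to a comparison of two Henselian affine schemes sitting inside a common ambient Henselian scheme, and then transport the corresponding fact about ordinary schemes through the ``underlying scheme'' functor of Remark~\ref{rmk:getususch}. First I would recall that an affine Henselian scheme $\sph(A,I)$ has underlying topological space $V(I) \simeq \spec(A/I)$ with its Zariski topology (Definition~\ref{def:affhenssch}), and that a basic affine open $D(f) \cap V(I) \subset \sph(A,I)$ is itself the affine Henselian scheme $\sph(A_f^h, IA_f^h)$ — since $A_f^h = A\{T\}/(1-Tf)$ (Example~\ref{ex:disthfp}) and $A_f/IA_f = A_f^h/IA_f^h$, so its structure sheaf is the restriction of $\O_{\sph(A,I)}$. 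Thus, passing to underlying schemes via $X \mapsto X_0 = (X,\O_X/\mc{I})$, the open immersions $\sph(A,I) \hookrightarrow X$ and $\sph(B,J) \hookrightarrow X$ become open immersions of the affine schemes $\spec(A/I)$ and $\spec(B/J)$ into the scheme $X_0$, and distinguished affine opens of $\sph(A,I)$ correspond exactly to distinguished affine opens of $\spec(A/I)$ (and likewise for $B$).

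The key step is then the following: for schemes it is standard (\spcite{01JV}{Lemma} or an elementary argument) that if $\spec(\bar A)$ and $\spec(\bar B)$ are affine opens of a scheme $S$, then $\spec(\bar A) \cap \spec(\bar B)$ is covered by opens that are simultaneously distinguished in $\spec(\bar A)$ and in $\spec(\bar B)$. Applying this with $S = X_0$, $\bar A = A/I$, $\bar B = B/J$, we obtain a cover of $(\sph(A,I) \cap \sph(B,J))_0$ by opens of the form $D(\bar f) = D(\bar g)$ with $\bar f \in A/I$, $\bar g \in B/J$. Lifting $\bar f$ to $f \in A$ and $\bar g$ to $g \in B$ — which is harmless since the underlying space of $\sph(A,I)$ only sees $A/I$, and $D(f) \cap V(I)$ depends only on $\bar f$ — each such open, with the restricted structure sheaf from $X$, is a distinguished affine open $\sph(A_f^h) \subset \sph(A,I)$ and also a distinguished affine open $\sph(B_g^h) \subset \sph(B,J)$, because in both cases the structure sheaf is obtained by restricting $\O_X$. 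This gives the required cover.

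The main obstacle — really the only thing to be careful about — is making sure that a subset which is distinguished affine in the underlying scheme $\spec(A/I)$ genuinely lifts to a distinguished affine \emph{open Henselian subscheme} of $\sph(A,I)$ with the structure sheaf inherited from $X$, not merely on the level of topological spaces. This is where one uses that $D(f) \cap V(I) \subset \sph(A,I)$ is the affine Henselian scheme $\sph(A_f^h, IA_f^h)$ (via Lemma~\ref{lem:defshf}, Proposition~\ref{prop:isshf}, and Example~\ref{ex:disthfp}), together with the fact that, being an open subspace, it carries the restriction of $\O_{\sph(A,I)}$, which in turn is the restriction of $\O_X$; the two identifications of such a piece as a distinguished open inside $\sph(A,I)$ and inside $\sph(B,J)$ then agree because both present the same open ringed subspace of $X$. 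Once that bookkeeping is in place, the proof is a direct transcription of the scheme-theoretic statement.
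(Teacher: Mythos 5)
Your proposal is correct and follows essentially the same route as the paper: both reduce to the corresponding statement for ordinary schemes by passing to the underlying scheme $X_0=(X,\O_X/\mc{I})$ of Remark~\ref{rmk:getususch} and then lift the resulting cover back, using that a distinguished open $D(f)\cap V(I)$ of $\sph(A,I)$ is the affine Henselian scheme $\sph(A_f^h,IA_f^h)$ and depends only on the image of $f$ in $A/I$. Your extra care about why the lift respects the structure sheaves is a welcome elaboration of a step the paper leaves implicit.
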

\begin{proof} The underlying topological space of an affine Henselian scheme $\sph(A,I)$ is $\spec(A/I)$, and a distinguished affine open subset $\spec((A/I)_{\ol{f}})$ with $\ol{f} \in A/I$ corresponds to a distinguished affine open in $\sph(A,I)$ with underlying topological space $\spec((A/I)_{\ol{f}})=D(f) \cap V(I)$ (where $f$ is any lift of $\ol{f}$) that can be described as $\sph(A_f^h,IA_f^h)$. 

Then if $\mc{I}$ is the radical ideal sheaf of $\O_X$ comprising sections which vanish at the residue fields of every point, we are reduced to the scheme case applied to the scheme $Z=(X,\O_X/\mc{I})$ (see Remark~\ref{rmk:getususch}). 

This gives us a cover of $\spec(A/I) \cap \spec(B/J) \subset Z$ by open sets which are distinguished affine opens of both $\spec(A/I)$ and $\spec(B,J)$. This can then be lifted to give the desired cover of $\sph(A,I) \cap \sph(B,J)$ by distinguished affine opens.\end{proof}

\begin{proposition}\label{prop:affglue} 
Consider a map of rings $f: A \to B$ where $(A,I)$ and $(B,IB)$ are Henselian pairs, and a finite cover of $\sph(B)$ by affine opens $U_i = \sph(B_i)$, where $B_i$ is an $A$-algebra such that $(B_i,IB_i)$ is Henselian. Then if each $B_i$ is \hfp over $A$, so is $B$.
\end{proposition}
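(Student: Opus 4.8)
The plan is to verify the functorial criterion of Proposition~\ref{prop:groth} for the map $f\colon A\to B$: for any inductive filtered system $(R_\alpha)$ of Henselian pairs over $(A,I)$, with colimit $R=\colim R_\alpha$ (again a Henselian pair over $(A,I)$ by \spcite{0FWT}{Lemma}), one must show $\colim_\alpha\Hom_A(B,R_\alpha)\to\Hom_A(B,R)$ is bijective. The analogous statement is known for each $B_i$, since $B_i$ is \hfp over $A$; the task is to glue these statements over the given cover. The subtlety, compared with the scheme case, is that being \hfp is a global presentation statement, so this gluing is not a triviality.

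First I would pass to a finite cover of $\sph(B)$ by \emph{distinguished} affine opens. Since $|\sph(B)|=\spec(B/IB)$ is quasi-compact and the distinguished opens form a base for its topology, each $U_i$ is a finite union of distinguished affine opens of $\sph(B)$, and extracting a finite subcover gives $\sph(B)=\bigcup_{l=1}^{N}\sph(B_{f_l}^h)$ with $f_l\in B$ and each $\sph(B_{f_l}^h)$ contained in some $U_{i(l)}=\sph(B_{i(l)})$. By Proposition~\ref{prop:acl}, $\sph(B_{f_l}^h)$ — being an affine open of $\sph(B)$ contained in $\sph(B_{i(l)})$ — is a distinguished affine open of $\sph(B_{i(l)})$, hence \hfp over $B_{i(l)}$ by Example~\ref{ex:disthfp}, hence \hfp over $A$ by Proposition~\ref{prop:hfpcomp}; the same applies to the pairwise overlaps $\sph(B_{f_l}^h)\cap\sph(B_{f_m}^h)=\sph(B_{f_lf_m}^h)$, so each $B_{f_lf_m}^h$ is \hfp over $A$ as well. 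Finally, as these distinguished opens cover $\sph(B)$, the $f_l$ generate the unit ideal of $B$ (a prime of $B$ containing all of them lies in a maximal ideal, which lies in $V(IB)=|\sph(B)|$ since $IB\subseteq\Jac(B)$, contradicting the covering).

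Next, for an arbitrary Henselian pair $(T,IT)$ over $(A,I)$, I would record a ``gluing'' description of $\Hom_A(B,T)$. For a ring map $\varphi\colon B\to T$, the elements $\varphi(f_l)$ generate the unit ideal of $T$, so localizing and Henselizing $\varphi$ induces maps $\varphi_l\colon B_{f_l}^h\to T_{\varphi(f_l)}^h$ agreeing over the overlaps $B_{f_lf_m}^h$; conversely, since the structure sheaf of $\sph(T,IT)$ is a sheaf (Proposition~\ref{prop:isshf}), $T=\Gamma(\sph(T,IT),\O)$ is the equalizer of $\prod_l T_{\varphi(f_l)}^h\rightrightarrows\prod_{l,m}T_{\varphi(f_l)\varphi(f_m)}^h$, and $\varphi$ is reconstructed from the data $\big((\varphi(f_l))_l,(\varphi_l)_l\big)$. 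This yields a natural bijection between $\Hom_A(B,T)$ and the set of tuples $\big((g_l)_l,(\psi_l)_l\big)$ with $g_l\in T$ generating the unit ideal, $\psi_l\in\Hom_A(B_{f_l}^h,T_{g_l}^h)$, and satisfying the compatibility conditions $\psi_l(f_m)=g_m$ in $T_{g_l}^h$ and $\psi_l|_{B_{f_lf_m}^h}=\psi_m|_{B_{f_lf_m}^h}$ in $\Hom_A(B_{f_lf_m}^h,T_{g_lg_m}^h)$ for all $l,m$ (where images under the canonical maps are understood throughout).

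Finally I would run the bijectivity check through this description with $T=R_\alpha$ and $T=R$. The ``element'' data $(g_l)_l$, the relation $\sum e_lg_l=1$ witnessing that they generate the unit ideal, and the equations $\psi_l(f_m)=g_m$ all behave well under filtered colimits for elementary reasons. For the maps $\psi_l$: once the $g_l$ are realized over some $R_{\alpha_0}$, one has $R_{g_l}^h=\colim_\alpha (R_\alpha)_{g_l}^h$ as a filtered colimit of Henselian pairs over $(A,I)$ (localization and Henselization commute with filtered colimits, \spcite{0A04}{Lemma}), so Proposition~\ref{prop:groth} applied to the \hfp algebra $B_{f_l}^h$ gives $\colim_\alpha\Hom_A(B_{f_l}^h,(R_\alpha)_{g_l}^h)\xrightarrow{\sim}\Hom_A(B_{f_l}^h,R_{g_l}^h)$; similarly, because $B_{f_lf_m}^h$ is \hfp over $A$, the overlap equality holds already at a finite stage by the injectivity half of Proposition~\ref{prop:groth} for $B_{f_lf_m}^h$. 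Assembling these finitely many realizations and equalities by repeatedly passing to a larger index yields both surjectivity and injectivity of $\colim_\alpha\Hom_A(B,R_\alpha)\to\Hom_A(B,R)$, and hence $B$ is \hfp over $A$ by Proposition~\ref{prop:groth}. I expect the main obstacle to be exactly that the localizations $T_{g_l}^h$ entering the gluing description depend on the map being described, so the argument cannot be reduced to the purely formal fact that filtered colimits commute with finite limits: one must first peel off the elements $g_l$ and only then invoke the functorial criterion over each distinguished open, with the attendant bookkeeping of cofinal enlargements of the index set — together with the auxiliary check, via Proposition~\ref{prop:acl}, that the refined cover still consists of \hfp pieces.
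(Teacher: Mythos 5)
Your proof is correct and follows essentially the same route as the paper's: reduce via Proposition~\ref{prop:acl} to a finite cover by distinguished affine opens $\sph(B_{f_l}^h)$ that are \hfp over $A$, note the $f_l$ generate the unit ideal, and verify the functorial criterion of Proposition~\ref{prop:groth} by applying it over each $B_{f_l}^h$ and handling the overlaps $B_{f_lf_m}^h$ with the injectivity half of that criterion before gluing via the sheaf property. The only difference is cosmetic — you package the gluing as an explicit descent-datum bijection for $\Hom_A(B,T)$, where the paper glues the corresponding maps of Henselian spectra diagrammatically.
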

\begin{proof} We will make use of the functorial criterion of Proposition~\ref{prop:groth}.

Each $U_i$ is covered by finitely many open sets $U_{i,j}$ which are distinguished affine opens of both $U_i=\sph(B_i)$ and of $\sph(B)$. Then by Example~\ref{ex:disthfp} and Proposition~\ref{prop:hfpcomp}, we see (since $B_i$ is \hfp over $A$ and $U_{i,j}$ is \hfp over $B_i$) that $U_{i,j}$ is a distinguished affine open of $\sph(B)$ which is \hfp over $A$.

Therefore we have reduced to the case where for each $i$ there exists some $f_i \in B$ so that $B_i=B_{f_i}^h$, and $B_{f_i}^h$ is \hfp over $A$. 

Because the $\sph(B_{f_i}^h)$ cover $\sph(B)$, we know that we can cover $\spec(B/IB)$ by affine opens $\spec((B/IB)_{\ol{f_i}})$ for $\ol{f_i}$ the image of $f_i$ in $B/IB$. Then the ideal of $B/IB$ generated by the $\ol{f_i}$ is the unit ideal. Because $(B,IB)$ is Henselian, the $f_i$ generate the unit ideal of $B$ as well.

By Proposition~\ref{prop:groth}, it suffices to show that for any inductive filtered system of Henselian pairs $(R_\alpha)$ over $A$, the map $\colim \Hom_A(B,R_\alpha) \to \Hom_A(B,\colim R_\alpha)$ is bijective.

We will first show that this map is surjective. Let $R=\colim R_\alpha$ and fix some $\vp \in \Hom_A(B,R)$. For each $i$ we get a map $\vp_{f_i}^h: B_{f_i}^h \to R_{\vp(f_i)}^h$ from the map $\vp$. 

Since there are finitely many $i$, we can find some large $\alpha_0$ so that for all $i$, $\vp(f_i) \in R$ arises from some $r_i \in R_{\alpha_0}$. Then since the $f_i$ generate the unit ideal of $B$, we can (possibly by increasing $\alpha_0$ further) assume that the $r_i \in R_{\alpha_0}$ generate the unit ideal.

Now because $B_{f_i}^h$ is \hfp over $A$ for all $i$, we can apply Proposition~\ref{prop:groth} to the $\vp_{f_i}^h$ and increase $\alpha_0$ so that we have maps $\phi_{f_i}^h: B_{f_i}^h \to (R_{\alpha_0})_{r_i}^h$ such that all of the $\vp_{f_i}^h$ arise from the corresponding maps $\phi_{f_i}^h$, and so that for all $i,j$ we have $\phi_{f_i}^h(f_j)=r_j$. Therefore we can write $(R_{\alpha_0})_{f_i}^h$ for $(R_{\alpha_0})_{r_i}^h$ without ambiguity.

We now consider the situation geometrically. We get a diagram for each $i$

\begin{center}
\begin{tikzcd}[column sep=1em]
\sph(B) &&\arrow[ll] \sph(\colim R_\alpha)\\
\sph(B_{f_i}^h) \arrow[u] && \sph(\colim(R_{\alpha})_{f_i}^h) \arrow[u]\arrow["\vp_{f_i}^h",ll] \arrow[dl]\\
&\sph((R_{\alpha_0})_{f_i}^h) \arrow[lu,"\phi_{f_i}^h"]&\\
\end{tikzcd}
\end{center} \vspace{-.25in}

such that the top square and the bottom triangle both commute. Furthermore, for each pair $i,j$ we get Cartesian diagrams for the overlaps

\begin{center}
\begin{tikzcd}
\sph(B_{f_i}^h) & \sph((R_{\alpha_0})_{f_i}^h) \arrow["\phi_{f_i}^h",l] & \sph(B_{f_j}^h) & \sph((R_{\alpha_0})_{f_j}^h) \arrow["\phi_{f_j}^h",l]\\
\sph(B_{f_if_j}^h)\arrow[u]& \sph((R_{\alpha_0})_{f_if_j}^h) \arrow["\psi_{ij}",l]\arrow[u] & \sph(B_{f_if_j}^h)\arrow[u]& \sph((R_{\alpha_0})_{f_if_j}^h) \arrow["\psi_{ji}",l]\arrow[u]
\end{tikzcd}
\end{center} \vspace{-.1in}

in which both squares commute. We wish to show that $\psi_{ij}=\psi_{ji}$.  In fact they become equal to $\phi_{f_if_j}^h: \sph(\colim (R_\alpha)_{f_if_j}^h) \to \sph(B_{f_if_j}^h)$ after passing to the colimit since $\phi_{f_i}^h$ and $\phi_{f_j}^h$ must agree on the overlap as both arise from $\vp$. Since all of the $B_{f_i}^h$ are \hfp over $A$, the same holds for each $B_{f_if_j}^h$. Hence by applying Proposition~\ref{prop:groth}, we can increase $\alpha_0$ to assume that $\psi_{ji}$ and $\psi_{ij}$ are equal.

Because the $r_i$ generate the unit ideal in $R_{\alpha_0}$, $\sph(R_{\alpha_0})$ is covered by the $\sph((R_{\alpha_0})_{f_i}^h)$. Then the maps $ \sph((R_{\alpha_0})_{f_i}^h) \to \sph(B_{f_i}^h)$ agree as maps into $\sph(B)$ by the argument above, giving us a map $\sph(R_{\alpha_0}) \to \sph(B) $. 

After passing to the limit, this map $\sph(R_{\alpha_0}) \to \sph(B) $ agrees with the original $\vp$ on each of the affine opens $\sph(R_{f_i}^h)$ of the cover by construction. Therefore $\sph(R_{\alpha_0}) \to \sph(B) $ gives rise to the original $\vp$ on all of $\sph(R)$. This shows that $\colim \Hom_A(B,R_\alpha) \to \Hom_A(B,\colim R_\alpha)$ hits $\vp$ and so is surjective.

We now will show that the map $\colim \Hom_A(B,R_\alpha) \to \Hom_A(B,\colim R_\alpha)$ is injective. 

If we have two maps $B \to R_{\alpha_1}, B \to R_{\alpha_2}$ we can assume without loss of generality that $\alpha_1=\alpha_2$ since the colimit is filtered. 

Now assume we have two maps $\vp_1,\vp_2: B \to R_{\beta}$ which give rise to the same map $\vp: B \to R$. By increasing $\beta$, we can assume that for all $i$, $\vp_1(f_i)=\vp_2(f_i)$.

Now for each $i$ let $\vp_{1,f_i},\vp_{2,f_i}: B_{f_i}^h \to (R_{\beta})_{f_i}^h$ be the two maps arising from $\vp_1,\vp_2$. Writing $(R_{\beta})_{f_i}^h$ is unambiguous since $\vp_1(f_i)=\vp_2(f_i)$.

Since we assumed $B_{f_i}^h$ was \hfp over $A$, by Proposition~\ref{prop:groth} the map  $$\colim \Hom_A(B_{f_i}^h,(R_\alpha)_{f_i}^h) \to \Hom_A(B_{f_i}^h,\colim (R_\alpha)_{f_i}^h)$$ is injective. After passing to the colimit $\colim (R_\alpha)_{f_i}^h=R_{f_i}^h$, the maps $\vp_{1,f_i}$ and $\vp_{2,f_i}$ become equal, so after increasing $\beta$ again we can assume they are equal as maps $B_{f_i}^h \to (R_{\beta})_{f_i}^h$. 

Increasing $\beta$ once again, we can assume the $f_i$ generate the unit ideal of $R_\beta$. Since $\vp_{1,f_i}=\vp_{2,f_i}$ for all $i$, we see that $\vp_1=\vp_2$ as maps $B \to R_\beta$. Hence $\colim \Hom_A(B,R_\alpha) \to \Hom_A(B,\colim R_\alpha)$ is injective as we wished to show.

Therefore since $\colim \Hom_A(B,R_\alpha) \to \Hom_A(B,\colim R_\alpha)$ is both surjective and injective, $B$ is \hfp over $A$ by Proposition~\ref{prop:groth}. \end{proof}

Now we can show that the condition of being \hlfp may be checked Zariski-locally.

\begin{proposition}\label{prop:everyaff} Consider a map of Henselian schemes $f: (X,\O_X) \to (Y,\O_Y)$ which is \hlfp. Then for any affine opens $U=\sph(C',\mc{I}') \subset X, V = \sph(C,\mc{I}) \subset Y$ with $f(U) \subset V$, the corresponding map of rings $C \to C'$ is \hfp.
\end{proposition}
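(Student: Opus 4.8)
The plan is to reduce to Proposition~\ref{prop:affglue} by covering $U$ by finitely many distinguished affine opens, each manifestly \hfp over $C$. A preliminary point, which I would dispatch first using the local \hfp structure of $f$ and a localization argument on a cover, is that $\sqrt{\mathcal I C'}=\mathcal I'$, so that $(C',\mathcal I C')$ is a Henselian pair (Remark~\ref{rmk:maphens}, cf.\ Remark~\ref{rmk:rad}) and it is meaningful to ask whether $C\to C'$ is \hfp.

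Since $f$ is \hlfp, fix an open cover of $X$ by affine opens $W_\beta=\sph(B_\beta)$ and, for each $\beta$, an affine open $V_\beta=\sph(A_\beta)\subseteq Y$ with $f(W_\beta)\subseteq V_\beta$ and $A_\beta\to B_\beta$ \hfp. The $W_\beta$ cover $U$, so by Proposition~\ref{prop:acl} and quasi-compactness of $U$ I pass to a finite cover $U=U_1\cup\dots\cup U_n$ with each $U_k$ a distinguished affine open of $U$ and also a distinguished affine open of some $W_{\beta(k)}$. Fix $k$. From $f(U_k)\subseteq f(W_{\beta(k)})\subseteq V_{\beta(k)}$ and $f(U_k)\subseteq f(U)\subseteq V$, the map $f|_{U_k}$ factors through $V\cap V_{\beta(k)}$; applying Proposition~\ref{prop:acl} to this intersection and quasi-compactness of $U_k$, I cover $U_k$ by finitely many opens $U_{k,l}:=(f|_{U_k})^{-1}(T_l)$, where each $T_l\subseteq V\cap V_{\beta(k)}$ is a distinguished affine open of both $V$ and $V_{\beta(k)}$; writing $T_l=\sph(C_{g_l}^h)=\sph((A_{\beta(k)})_{a_l}^h)$, the preimage $U_{k,l}$ is a distinguished affine open of $U_k$, say $U_{k,l}=\sph(D_{k,l})$.

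The heart of the argument is the following short chain for each $k,l$. Since $U_{k,l}$ is a distinguished affine open of $U_k$, which is a distinguished affine open of $W_{\beta(k)}=\sph(B_{\beta(k)})$, which is \hfp over $V_{\beta(k)}=\sph(A_{\beta(k)})$, Example~\ref{ex:disthfp} and Proposition~\ref{prop:hfpcomp} give that $D_{k,l}$ is \hfp over $A_{\beta(k)}$; likewise $T_l=\sph((A_{\beta(k)})_{a_l}^h)$ is \hfp over $A_{\beta(k)}$ (Example~\ref{ex:disthfp}). Hence by Proposition~\ref{prop:hlfphlfp} the ring map $(A_{\beta(k)})_{a_l}^h=C_{g_l}^h\to D_{k,l}$ underlying $f|_{U_{k,l}}\colon U_{k,l}\to T_l$ is \hfp; composing with the \hfp map $C\to C_{g_l}^h$ (Example~\ref{ex:disthfp}) and using Proposition~\ref{prop:hfpcomp} shows $D_{k,l}$ is \hfp over $C$ --- and this is via the restriction of $C\to C'$, because $U_{k,l}\hookrightarrow U\xrightarrow{f}V$ and $U_{k,l}\xrightarrow{f}T_l\hookrightarrow V$ are the same morphism. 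Thus $\{U_{k,l}\}_{k,l}$ is a finite affine open cover of $\sph(C')$ with every $D_{k,l}$ \hfp over $C$, and Proposition~\ref{prop:affglue} (applicable by the preliminary paragraph) gives that $C\to C'$ is \hfp.

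The step I expect to be the main obstacle is the change of affine chart on the target: a priori $V$ and the auxiliary chart $V_{\beta(k)}$ are unrelated affine opens of $Y$, so \hfp-ness over $A_{\beta(k)}$ does not transfer formally to \hfp-ness over $C$. The bridge is the common distinguished affine overlap $T_l$ together with the rigidity result that any map between two \hfp algebras over a common base pair is itself \hfp (Proposition~\ref{prop:hlfphlfp}) --- which is precisely what that proposition is for. Arranging the overlap refinements to be distinguished in both charts at once (Proposition~\ref{prop:acl}) and keeping the identifications $C_{g_l}^h=(A_{\beta(k)})_{a_l}^h$ consistent with all maps is the fussy part; the radical-ideal bookkeeping $\sqrt{\mathcal I C'}=\mathcal I'$ is a separate minor point (not entirely formal, but routine given the local structure), and everything else is a direct application of the already-established compatibility properties of \hfp morphisms.
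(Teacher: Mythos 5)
Your proof is correct and follows essentially the same route as the paper: cover $U$ by finitely many distinguished affine opens that are \hfp over $C$ and conclude with Proposition~\ref{prop:affglue}, using Proposition~\ref{prop:acl}, Example~\ref{ex:disthfp} and Proposition~\ref{prop:hfpcomp} to handle the chart compatibilities. The only divergence is the target-chart change, where the paper shrinks $V_{f(x)}$ to a distinguished affine open of $V$ so that \hfp-ness over $C$ follows by base change and composition, whereas you keep the auxiliary chart and apply Proposition~\ref{prop:hlfphlfp} on a common distinguished overlap --- an equally valid variant; your explicit attention to $\sqrt{\mathcal{I}C'}=\mathcal{I}'$ addresses a point the paper leaves implicit.
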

\begin{proof}Let $x \in U \subset X$ be an arbitrary point. Then by the definition of being \hlfp we can find neighborhoods $V_{f(x)} \simeq \sph(A,I) \subset V$ and $U_x \simeq \sph(B,IB) \subset U$, such that $f(U_x) \subset V_{f(x)}$ and $B \simeq A\{X_1,\dots,X_n\}/(g_1,\dots,g_m)$. 

By Proposition~\ref{prop:acl}, we can choose a distinguished affine open $V_{f(x)}' \subseteq V_{f(x)}$ which contains $f(x)$ and is also a distinguished affine open of $V$. By Example~\ref{ex:disthfp}, we see that $V_{f(x)}'$ is \hfp over $V$. 

 If $V_{f(x)}'=\sph(A_a^h,IA_a^h)$ we then see that $U_x':=f^{-1}(V_{f(x)}') \cap U_x$ is also a distinguished affine open of $U_x$ by Example~\ref{ex:disthfp} and Lemma~\ref{lem:polyonpoly}. 
 
Then $U_x'$ is \hfp over $V_{f(x)}'$, hence over $V=\sph(C)$ (Proposition~\ref{prop:hfpcomp}). Hence for any $x \in U$ we can find an affine neighborhood $U_x'$ of $x$ which is \hfp over $V$. Applying Proposition~\ref{prop:affglue}, we see that $C'$ is \hfp over $C$ as we desired to show.\end{proof}

\section{\'Etale and smooth morphisms}\label{sec:etsm}

Now that we have compatible concepts of \hfp and \hlfp maps, we can define ``Henselian \'etale'' and ``Henselian smooth'' morphisms in this section. 

This was discussed in \cite{cox1, kpr}, but the definition of ``Henselian \'etale'' given there assumed that flatness is a consequence of a formal \Et lifting criterion; in Remark~\ref{rmk:kprwrong} we will show this is not the case. Instead we will define ``Henselian smooth'' or ``Henselian \Et'' maps as the Henselization of a smooth or \Et map in the affine case, and show that this is equivalent to being \hlfp, flat, and satisfying an appropriate lifting criterion.

\subsection{Preliminaries}

We will often wish to descend certain properties from the Henselization of some algebra $R$ at an ideal $I$ to an \Et (hence finitely presented) $R$-algebra by showing that the property in question holds at all the points of $V(I)$. In Corollary~\ref{cor:niceprops} of the \hyperref[sec:appendix]{Appendix}, we show this can be done for the properties of being quasi-finite, flat, or smooth, recorded here:

\begin{restatable*}{corollary}{corniceprops}\label{cor:niceprops}
Let $(A,I)$ be a Henselian pair and $R$ a finitely presented $A$-algebra such that the map $\spec(R) \to \spec(A)$ is quasi-finite (resp. flat, resp. smooth) at each prime in $V(IR) \subset \spec(R)$. Then we can find some $R'$ \Et over $R$ such that $R'/IR' \simeq R/IR$ and the map $A \to R'$ is quasi-finite (resp. flat, resp. smooth).
\end{restatable*}

Given a map of Henselian pairs $(A,I) \to (B,IB)$, it will be useful to lift certain finiteness properties of the map $A/I \to B/IB$ to the map $A \to B$. This will let us relate $A$-algebras satisfying an infinitesimal lifting condition to the Henselizations of smooth or \Et $A$-algebras by comparing them modulo $I$.

\begin{lemma}\label{lem:betterfin} Consider a map of rings $f: A \to B$ where $(A,I)$ and $(B,IB)$ are Henselian pairs. If $f$ is \hfp and $\ol{f}: A/I \to B/IB$ is module-finite, then $f$ is module-finite and finitely presented.
\end{lemma}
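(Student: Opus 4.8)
The plan is to reduce to Zariski's Main Theorem after first replacing $R$ by a globally quasi-finite \Et model, and then to lift an idempotent along the Henselian pair produced by the Main Theorem. First, by Lemma~\ref{lem:hfpishoffp} I would write $B \cong R^h$ for some finitely presented $A$-algebra $R$, so that $R/IR \cong R^h/IR^h = B/IB$ is module-finite over $A/I$. For a prime $\mathfrak p \in V(I) \subseteq \spec A$ the fibre of $\spec R \to \spec A$ over $\mathfrak p$ coincides with the fibre of $\spec(R/IR) \to \spec(A/I)$ over $\mathfrak p$, and the latter is finite over $\kappa(\mathfrak p)$; hence $\spec R \to \spec A$ is quasi-finite at every point of $V(IR)$. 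Corollary~\ref{cor:niceprops} then furnishes an \Et $R$-algebra $R'$ with $R'/IR' \cong R/IR$ and with $A \to R'$ quasi-finite. Note that $R'$ is of finite presentation over $A$ (being \Et over the finitely presented $A$-algebra $R$), and that $(R')^h \cong R^h \cong B$ by Proposition~\ref{prop:hensetexp}.

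Next I would apply the finite-presentation form of Zariski's Main Theorem to the quasi-finite, finitely presented map $A \to R'$, factoring it as $A \to S \to R'$ with $S$ module-finite and of finite presentation over $A$ and with $\spec R' \hookrightarrow \spec S$ an open immersion. Since $A \to S$ is finite and $(A,I)$ is Henselian, $(S, IS)$ is a Henselian pair. Reducing modulo $I$, the induced open immersion $\spec(R'/IR') \hookrightarrow \spec(S/IS)$ is moreover finite --- because $R'/IR' \cong B/IB$ is module-finite over $A/I$, hence over $S/IS$ --- so it is a clopen immersion and cuts out an idempotent $\bar e \in S/IS$. Lifting $\bar e$ to an idempotent $e \in S$ (possible as $(S,IS)$ is Henselian) gives a decomposition $S = S_1 \times S_2$ with $S_1 = eS$, in which $\spec(S_1/IS_1)$ is precisely the image of $\spec(R'/IR')$; thus, under $\spec R' \hookrightarrow \spec S = \spec S_1 \sqcup \spec S_2$, the closed subset $V(IR')$ maps isomorphically onto $V(IS_1)$.

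For the endgame I would argue topologically: the open subset $\spec R' \subseteq \spec S$ contains $V(IS_1)$, which in turn contains every maximal ideal of $S_1$ (as $IS_1 \subseteq \Jac(S_1)$), and an open subset of an affine scheme containing all of its closed points is the whole scheme; hence $\spec S_1 \subseteq \spec R'$. Therefore $\spec S_1$ is clopen in $\spec R'$, so $R' \cong S_1 \times C$ for a ring $C$ with $\spec C = \spec R' \setminus \spec S_1$ disjoint from $V(IR')$, i.e.\ with $IC = C$. Passing to $I$-adic Henselizations and using that the Henselization of a product pair is the product of the Henselizations of the factors, that $(S_1, IS_1)$ is already Henselian (being finite over the Henselian pair $(A,I)$, equivalently a direct factor of the Henselian pair $(S,IS)$), and that the Henselization of a pair along the unit ideal is the zero ring, I obtain $B \cong (R')^h \cong S_1^h \times C^h \cong S_1 \times 0 = S_1$. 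Being a direct factor of the module-finite, finitely presented $A$-algebra $S$, this $S_1$ is module-finite and finitely presented over $A$, which gives the claim.

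The point I expect to require the most care is that $R'$, although \Et over $R$ and agreeing with $R$ modulo $I$, need not make $(R', IR')$ into a Henselian pair, so one cannot shortcut to $B \cong R'$; one really must carry out the product decomposition $R' \cong S_1 \times C$ and Henselize it. The other ingredient worth isolating cleanly is the correct finite-presentation statement of Zariski's Main Theorem, in the form ``a quasi-finite map of finite presentation factors through a module-finite map of finite presentation via an open immersion''. The remaining steps --- the fibre identification, idempotent lifting along $(S, IS)$, the ``open set containing all closed points'' observation, and the two elementary facts about Henselizations used at the end --- are routine.
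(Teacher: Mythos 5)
Your proof is correct and follows essentially the same route as the paper's: reduce to a quasi-finite model via Corollary~\ref{cor:niceprops}, apply Zariski's Main Theorem, split off the relevant factor of $S$ by lifting the idempotent along the Henselian pair $(S,IS)$, and identify $B$ with that finite factor after Henselizing. The only (harmless) variation is at the end, where you argue topologically that $\spec S_1\subseteq\spec R'$ and Henselize the product $S_1\times C$, whereas the paper instead decomposes $R'$ by the pulled-back idempotent and applies Proposition~\ref{prop:hensetexp} to the open immersion $\widetilde{R}\to R_1$, which is an isomorphism modulo $I$.
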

\begin{proof} If $f$ is \hfp, then by Lemma~\ref{lem:hfpishoffp} we can find a finitely presented $A$-algebra $R$ such that $B \simeq R^h$ (the $I$-adic Henselization of $R$). Let $A_0 := A/I$; define $B_0,R_0$ similarly. Since $B \simeq R^h$, we have $B_0 \simeq R_0$. By hypothesis, $B_0$ is $A_0$-finite.

Because $R_0$ is $A_0$-finite (hence quasi-finite), the morphism $\spec(R) \nobreak\to\nobreak \spec(A)$ is quasi-finite at each point of $V(IR)$. Therefore we can apply Corollary~\ref{cor:niceprops} to find some $R_{qf}$ which is \Et over $R$, with $R/IR \simeq R_{qf}/IR_{qf}$, so that $R_{qf}$ is quasi-finite over $A$. We now replace $R$ with $R_{qf}$. 

The map $R \to B$ is flat by \spcite{0AGU}{Lemma}. Since we have arranged that $R$ is quasi-finite over $A$, and the map $\spec(R) \to \spec(A)$ is affine, hence separated, we can apply Zariski's Main Theorem \spcite{05K0}{Lemma} to find a finite and finitely presented $A$-algebra $S$ with an open immersion $\spec(R) \hookrightarrow \spec(S)$ over $A$. As above, write $S_0$ for $S/IS$. Since $S$ is finite, hence integral, over $A$, the pair $(S,IS)$ is Henselian. 

Since $R_0$ is finite over $A_0$, it is also finite over $S_0$. Therefore $\spec(R_0) \hookrightarrow \spec(S_0)$ is both finite and an open immersion. Since a finite map is closed, the image of $\spec(R_0)$ is clopen in $\spec(S_0)$. Therefore we can write $S_0$ as a product of rings $R_0 \times S_0'$. Since $(S,IS)$ is Henselian, we can use the equivalent definitions of Henselian in the \hyperref[sec:appendix]{Appendix} (Lemma~\ref{lem:sphenspair}) to get a ring decomposition $S = \widetilde{R} \times S'$, where $\widetilde{R}/I\widetilde{R} \simeq R_0, S'/IS' \simeq S_0'$, via the bijection of the idempotents in $S$ and in $S_0$.

Now from the map $S \to R$ we get a ring decomposition $R = R_1 \times R_2$ with $R_2/IR_2=0$ corresponding to the images of the idempotents giving the decomposition of $S$. Because $R_2/IR_2=0$, we have $R_1/IR_1 \simeq R/IR$, so $R_1 \ne 0$. Then since $R \to R_1$ is a localization at an idempotent, it is a nonzero \Et map; furthermore, it is an isomorphism modulo $I$, so $R^h \simeq R_1^h \simeq B$ (Proposition~\ref{prop:hensetexp}).

Then the open immersion $\spec(R) \hookrightarrow \spec(S)$ gives us an open immersion $\spec(R_1) \hookrightarrow \spec(\widetilde{R})$ which is an isomorphism along $I$, with $\widetilde{R}$ finite and finitely presented over $A$ since $S$ is. Since an open immersion is \Et and $\widetilde{R} \to R_1$ is an isomorphism modulo $I$, we have isomorphisms $\widetilde{R}^h \simeq R_1^h \simeq B$ (Proposition~\ref{prop:hensetexp}). However, $\widetilde{R}$ is finite, hence integral, over $A$, so $(\widetilde{R},I\widetilde{R})$ is a Henselian pair by \spcite{09XK}{Lemma}. Therefore $B \simeq \widetilde{R}$, so $B$ is finite and finitely presented over $A$.\end{proof}

\begin{lemma}\label{lem:surj} Consider a map of rings $f: A \to B$ where $(A,I)$ and $(B,IB)$ are Henselian pairs. If $f$ is \hfp and $\ol{f}: A/I \to B/IB$ is surjective, then $f$ is surjective.
\end{lemma}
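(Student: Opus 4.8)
The plan is to reduce the claim to a one-step application of Nakayama's lemma, using Lemma~\ref{lem:betterfin} to supply the necessary module-finiteness. First I would note that the surjectivity of $\ol{f}: A/I \to B/IB$ forces $B/IB$ to be module-finite over $A/I$ — indeed it is generated as an $A/I$-module by the image of $1$. Hence the hypotheses of Lemma~\ref{lem:betterfin} are satisfied, and that lemma tells us that $f$ is module-finite (in fact finitely presented); in particular $B$ is a finitely generated $A$-module.

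Next I would apply Nakayama's lemma over $A$ to the finitely generated $A$-module $M = B$, with the $A$-submodule $N = f(A) \subseteq B$. Since $(A,I)$ is a Henselian pair we have $I \subseteq \Jac(A)$, and the equality $B = IB + f(A)$ — equivalently, the statement that the image of $f(A)$ in $B/IB$ is all of $B/IB$ — is exactly the surjectivity of $\ol{f}$. Nakayama then yields $B = f(A)$, i.e.\ every $b \in B$ is of the form $f(a)$, so $f$ is surjective.

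The only substantive ingredient here is Lemma~\ref{lem:betterfin}: without first knowing that $B$ is a finite $A$-module one cannot invoke Nakayama, and it is the \hfp hypothesis together with the Henselian structure that lets one promote module-finiteness modulo $I$ to honest module-finiteness. So the main obstacle is already dealt with upstream; granting Lemma~\ref{lem:betterfin}, the present statement is essentially immediate. (Alternatively one could argue by hand from a presentation $B \simeq A\{X_1,\dots,X_n\}/(g_1,\dots,g_m)$ and a descent argument, but routing through Lemma~\ref{lem:betterfin} is the cleanest route.)
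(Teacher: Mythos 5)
Your proposal is correct and follows essentially the same route as the paper: invoke Lemma~\ref{lem:betterfin} (whose module-finiteness hypothesis is supplied by the surjectivity of $\ol{f}$) to get that $B$ is a finite $A$-module, then conclude by Nakayama using $I \subseteq \Jac(A)$. The only cosmetic difference is that the paper applies Nakayama to the cokernel of $f$ rather than to the submodule $f(A) \subseteq B$, which is an equivalent formulation.
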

\begin{proof} By Lemma~\ref{lem:betterfin}, we know that $f$ is module-finite and finitely presented. 

Let $C$ be the cokernel of $f$ considered as an $A$-module map. It is clearly finitely generated over $A$ as a module. Since $\ol{f}$ is surjective, we see that $C/IC = 0$. Because $(A,I)$ is a Henselian pair, $I \subseteq \Jac(A)$. Then by Nakayama's lemma we have $C=0$; hence $f$ is surjective.\end{proof}

\subsection{Defining \hsmooth and \hetale morphisms}

A concrete way to define a ``Henselian smooth'' map of Henselian pairs is as the Henselization of a smooth map:

\begin{definition}\label{def:hsmet}
A map of pairs $(A,I) \to (B,J)$  is {\bf Henselian smooth} or {\bf \hsmooth} if there exists a smooth $A$-algebra $R$ such that the Henselization $R^h$ of $R$ along $IR$ is $A$-isomorphic to $B$, with $\sqrt{IB}=\sqrt{J}$.

Similarly, a map of pairs $(A,I) \to (B,J)$  is {\bf Henselian \'etale} or {\bf \hetale} if there exists a \Et $A$-algebra $R$ such that the Henselization $R^h$ of $R$ along $IR$ is $A$-isomorphic to $B$, with $\sqrt{IB}=\sqrt{J}$.
\end{definition}
\begin{remark}\label{rmk:hsmetbasechange}
By a similar argument to that in Remark~\ref{rmk:hlfpbasechange} (replacing the finitely presented $A$-algebra with an \Et or smooth $A$-algebra), the property of being \hetale or \hsmooth is stable under base change.\end{remark}

These definitions are ``affine-global''; it is not obvious they can be checked Zariski-locally. Therefore we now define infinitesimal lifting conditions for maps of Henselian pairs that are analogous to the notions of formal {\Et}ness or smoothness for general rings; these definitions will be easier to work with in some respects. In particular, we will show in Propositions~\ref{prop:flathenset}~and~\ref{prop:flathenssm} that Definition~\ref{def:hsmet} in the \hfp and flat case is equivalent to a suitable lifting condition. This will yield that Definition~\ref{def:hsmet} is Zariski-local.

\begin{definition}\label{def:hfsmet}
We say that a map of Henselian pairs $(A,I) \to (B,IB)$ is {\bf Henselian formally smooth} or {\bf \hfsmooth} (resp. {\bf Henselian formally \Et} or {\bf \hfetale}) if for any diagram of $A$-homomorphisms

\begin{center}
\begin{tikzcd}
E \arrow[r,"p",twoheadrightarrow]& C\\
B\arrow[u,"v",dashed] \arrow[ru,"u"]\\
\end{tikzcd}
\end{center} \vspace{-.25in}

where $I^NE=0, I^NC=0$ for some $N \ge 1$ and $p$ is surjective with $(\ker p)^M=0$ for some integer $M > 0$, there exists a (unique, for \hfetale) dashed arrow $v$ over $A$ making the diagram commute. 
\end{definition}

It is clear that if a map is \hfetale, then it is \hfsmooth. 

We will show that for an \hfp morphism $\phi: (A,I) \to (B,IB)$ of Henselian pairs, the lifting condition of Definition~\ref{def:hfsmet} is equivalent to the condition that for all $r \ge 1$, the morphism $\phi_r: A/I^r \to B/I^rB$ is \Et (or smooth). The latter is often an easier property to check. 

In particular, because localization commutes with these quotients by powers of $I$, this equivalence yields that the properties of being \hfetale or \hfsmooth are Zariski-local in the \hfp case.

\begin{lemma}\label{lem:finet} Given an \hfp map of Henselian pairs $\phi:(A,I) \to (B,IB)$, the following are equivalent:

\begin{enumerate}[(i)]
\item The map $\phi$ is \hfetale;
\item For all $r \ge 1$, the map $A/I^r \to B/I^rB$ is \Et.
\end{enumerate}

\end{lemma}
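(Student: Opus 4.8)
The plan is to prove the two implications separately, using the characterization of \hfp maps via Henselized polynomial rings and the known equivalence between formal smoothness/\Et-ness and the vanishing (or invertibility) of cotangent/differential data.

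For (i) $\implies$ (ii): Suppose $\phi$ is \hfetale. Fix $r \ge 1$, and consider the map $\phi_r : A/I^r \to B/I^rB$. Since $\phi$ is \hfp, the induced map $A/I^r \to B/I^rB$ is finitely presented (the quotient of a Henselized polynomial ring $A\{X_1,\dots,X_n\}/(g_j)$ by $I^r$ is $(A/I^r)[X_1,\dots,X_n]/(\bar g_j)$ because $I^r$-adic quotients of the Henselization agree with those of the polynomial ring). To show $\phi_r$ is \Et, by the infinitesimal lifting criterion for \Et-ness of finitely presented ring maps it suffices to check that for every square-zero extension $E' \twoheadrightarrow C'$ of $A/I^r$-algebras, any $A/I^r$-map $B/I^rB \to C'$ lifts uniquely to $E'$. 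But such $E', C'$ are killed by $I^r$, hence by $I^N$ for $N = r$, and the kernel is square-zero, so Definition~\ref{def:hfsmet} applies with $M = 2$, $N = r$: precomposing with $B \twoheadrightarrow B/I^rB$ gives the required lift, which descends because $I^rC' = 0$. Uniqueness likewise transfers. Hence $\phi_r$ is formally \Et and finitely presented, so \Et.

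For (ii) $\implies$ (i): Given a lifting problem as in Definition~\ref{def:hfsmet}, with $E \twoheadrightarrow C$ killed by $I^N$ on both sides and nilpotent kernel, I would first reduce to the square-zero kernel case by the standard dévissage (factor $p$ through the filtration of $\ker p$ by powers, noting each $E/(\ker p)^k$ is still $I^N$-torsion). So assume $(\ker p)^2 = 0$; then $E$ and $C$ are killed by $I^N$, hence are $A/I^N$-algebras, and the datum $u : B \to C$ factors through $B/I^NB \to C$. Now $B/I^NB$ is \Et over $A/I^N$ by hypothesis (ii) with $r = N$, so the \Et lifting property of $A/I^N$-algebras yields a unique $A/I^N$-algebra map $B/I^NB \to E$ lifting $u$; composing with $B \to B/I^NB$ gives the desired $v$, and uniqueness of $v$ over $A$ follows from uniqueness over $A/I^N$ (any such $v$ automatically kills $I^NB$). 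This gives \hfetale-ness. The smooth variant of the argument is identical, dropping all uniqueness assertions and using formal smoothness of $A/I^r \to B/I^rB$.

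The main obstacle I anticipate is not in either lifting argument per se but in the bookkeeping of the claim that $B/I^rB \cong (A/I^r)[X_1,\dots,X_n]/(\bar g_1,\dots,\bar g_m)$ when $B \cong A\{X_1,\dots,X_n\}/(g_1,\dots,g_m)$ — i.e., that passing to $I^r$-adic quotients turns the Henselized polynomial ring into an ordinary polynomial ring and hence $\phi_r$ genuinely lands among \emph{finitely presented} maps (so that ``formally \Et'' upgrades to ``\Et''). This should follow from the construction of $A\{X_1,\dots,X_n\}$ in the Appendix together with the fact that $R^h/I^rR^h \cong R/I^rR$ for the Henselization of a pair, but it is the step that needs the most care; once it is in hand, both directions are formal consequences of the standard infinitesimal criteria.
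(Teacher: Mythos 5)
Your proposal is correct and follows essentially the same route as the paper: both directions transfer the lifting problem between $B$ and $B/I^rB$ using the fact that the test algebras are killed by a power of $I$, and then invoke the standard infinitesimal criterion for \'etale maps of finitely presented algebras (the paper cites \cite[Theorem 17.6.1]{ega44} directly for nilpotent kernels, where you do the square-zero d\'evissage explicitly — an immaterial difference). The bookkeeping point you flag, that $B/I^rB \cong (A/I^r)[X_1,\dots,X_n]/(\bar g_j)$ is finitely presented over $A/I^r$, is handled exactly as you suggest via $R^h/I^rR^h \cong R/I^rR$.
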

\begin{proof} 

We first show that (i) $\implies$ (ii). Assume $\phi$ is \hfetale and fix some $r \ge 1$. 

Consider $\ol{f}_r: A/I^r \to B/I^rB$ and a map $q: R \to S$ of $A/I^r$-algebras such that $q$ is surjective with some power of $\ker q$ being $0$. If we have a homomorphism $g: B/I^rB \to S$, then we have a commutative diagram of $A$-homomorphisms:

\begin{center}
\begin{tikzcd}
R \arrow[r,"q",twoheadrightarrow] & S\\
B \arrow[ru,"\widetilde{g}"] \arrow[u,"\widetilde{h}",dashed] \arrow[r,twoheadrightarrow]& B/I^rB \arrow[u,"g"] \\
\end{tikzcd}
\end{center} \vspace{-.25in}

Therefore, by condition (i), there exists a unique dashed arrow $\widetilde{h}: B \to R$ over $A$ making the diagram commute. Then since $R$ is an $A/I^r$-algebra, we must have $\widetilde{h}(I^rB)=0$; this gives us a map $h: B/I^rB \to R$ such that $qh=g$. 

If there existed a second $A/I^r$-homomorphism $h': B/I^rB \to R$ with $qh'=g$, then the composition of $h'$ with the quotient map $B \to B/I^rB$ would have to equal $\widetilde{h}$ since $\phi$ is \hfetale. However, that would mean that $h=h'$ as $B \to B/I^rB$ is surjective. Therefore $h$ is unique.

Therefore for any map $q: R \to S$ of $A/I^r$-algebras which is surjective with nilpotent kernel, any $A$-homomorphism $g: B/I^rB \to S$ has a unique lift $h: B/I^rB \to R$ over $A$ with $qh=g$. Hence $\ol{f}_r$ satisfies the lifting property of \cite[Definition 19.10.2]{ega41}, and $\ol{f}_r$ is finitely presented since $f$ is \hfp. Therefore by \cite[Theorem 17.6.1]{ega44}, $\ol{f}_r$ is \Et. 

Therefore (i) $\implies$ (ii). We now will show that (ii) $\implies$ (i); begin by assuming that for all $r$ the map $\ol{f}_r: A/I^r \to B/I^rB$ is \Et.

Now consider an arbitrary surjective homomorphism of discrete $A$-algebras, $p: E \to C$, with some power of $\ker p$ equal to $0$, and an $A$-homomorphism $u: B \to C$. Note that $E,C$ are  $A/I^n$ algebras for some $n \ge 1$, so $u$ must factor through $B/I^nB$. Therefore we have a diagram of $A/I^n$-algebras:

\begin{center}
\begin{tikzcd}
E \arrow[r,"p",twoheadrightarrow] & C\\
B/I^nB \arrow[ru,"\ol{u}"] \arrow[u,"\ol{v}",dashed] \\
\end{tikzcd}
\end{center} \vspace{-.25in}

Recall that by condition (ii) $\ol{f}_n: A/I^n \to B/I^nB$ is \Et. So because $p: E \to C$ is a surjective homomorphism of $A/I^n$-algebras with nilpotent kernel, there exists a unique dashed arrow $\ol{v}$ over $A/I^n$ making the diagram commute \cite[Theorem 17.6.1]{ega44}. It is then evident that the lift $v: B \to E$ of $\ol{v}$ is a $A$-homomorphism with $pv=u$. It remains only to show that $v$ is unique. 

Consider a second $w: B \to E$ with $pw=u$, so $w$ factors through $B/I^nB$. Both $w$ and $v$ factor through $B/I^n B$, so since (by {\Et}ness over $A/I^n$) both factor through the same map $B/I^nB \to E$, they are equal as maps $B \to E$. Therefore the lifting property is satisfied and $\phi$ is \hfetale.\end{proof}

\begin{lemma}\label{lem:finsm} Given an \hfp map of Henselian pairs $\phi:(A,I) \to (B,IB)$, the following are equivalent:

\begin{enumerate}[(i)]
\item The map $\phi$ is \hfsmooth;
\item For all $r \ge 1$, the map $A/I^r \to B/I^rB$ is smooth.
\end{enumerate}

\end{lemma}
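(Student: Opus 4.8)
The plan is to adapt the proof of Lemma~\ref{lem:finet} essentially verbatim, deleting every uniqueness clause: in the smooth setting only the \emph{existence} of infinitesimal lifts is at issue, so the argument is a strict simplification of the \Et case. For \textbf{(i) $\implies$ (ii)}, fix $r \ge 1$. Since $\phi$ is \hfp, the reduction $\ol{f}_r \colon A/I^r \to B/I^rB$ is of finite presentation (exactly as in the proof of Lemma~\ref{lem:finet}), so by the usual characterization of smoothness via the infinitesimal lifting criterion for nilpotent thickenings of finitely presented maps --- the smooth analogue of \cite[Theorem 17.6.1]{ega44} --- it suffices to lift along an arbitrary surjection $q \colon R \to S$ of $A/I^r$-algebras with $(\ker q)^M = 0$. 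Given an $A/I^r$-homomorphism $g \colon B/I^rB \to S$, precompose with the quotient $\pi \colon B \to B/I^rB$ to obtain an $A$-homomorphism $\tilde{g} = g\pi \colon B \to S$; since $I^rR = I^rS = 0$ and $\ker q$ is nilpotent, the \hfsmooth hypothesis applied with $N = r$ provides an $A$-homomorphism $\tilde{h} \colon B \to R$ with $q\tilde{h} = \tilde{g}$. As $R$ is killed by $I^r$, the map $\tilde{h}$ annihilates $I^rB$ and hence factors through some $h \colon B/I^rB \to R$; then $qh\pi = \tilde{g} = g\pi$ with $\pi$ surjective forces $qh = g$. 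Thus $\ol{f}_r$ is finitely presented and satisfies the lifting criterion, so it is smooth.

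For \textbf{(ii) $\implies$ (i)}, take a diagram as in Definition~\ref{def:hfsmet}: a surjection $p \colon E \to C$ of $A$-algebras with $I^NE = I^NC = 0$ and $(\ker p)^M = 0$, together with an $A$-homomorphism $u \colon B \to C$. Because $I^NC = 0$, the map $u$ factors through $\ol{u} \colon B/I^NB \to C$, and $E, C$ are $A/I^N$-algebras with $p$ surjective of nilpotent kernel. By (ii) with $r = N$, the map $A/I^N \to B/I^NB$ is smooth, so $\ol{u}$ admits an $A/I^N$-linear lift $\ol{v} \colon B/I^NB \to E$ with $p\ol{v} = \ol{u}$; composing with $B \to B/I^NB$ yields the desired $A$-homomorphism $v \colon B \to E$ satisfying $pv = u$. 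No uniqueness is claimed, consistent with the definition of \hfsmooth, so the argument stops here.

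I do not anticipate a genuine obstacle: this is the \Et argument of Lemma~\ref{lem:finet} with the uniqueness bookkeeping stripped out, and as such is routine. The single point requiring mild care --- identical to the situation in Lemma~\ref{lem:finet} --- is the finite-presentation input in direction (i) $\implies$ (ii), which rests on the identification $A\{X_1,\dots,X_n\}/I^rA\{X_1,\dots,X_n\} \simeq (A/I^r)[X_1,\dots,X_n]$; equivalently, on the fact that the $I$-adic Henselization $R^h$ of a pair $(R, IR)$ satisfies $R^h/I^rR^h \simeq R/I^rR$, since an \Et algebra that becomes an isomorphism modulo a nilpotent ideal is itself an isomorphism (applied to the \Et algebras occurring in $R^h = \colim R_i$ after reduction mod $I^r$).
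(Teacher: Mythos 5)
Your proposal is correct and is exactly the paper's argument: the paper's proof of this lemma literally says it is the proof of Lemma~\ref{lem:finet} with the uniqueness assertions removed, invoking the lifting characterization of smoothness (\cite[Definition 19.3.1]{ega41} together with \cite[Theorem 17.5.1]{ega44}) in place of the \Et one. Your handling of the finite-presentation input via $A^h/I^rA^h \simeq A/I^r$ matches the fact recorded in the paper's appendix (\spcite{0AGU}{Lemma}), so there is nothing to add.
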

\begin{proof}
The proof is the same as that of Lemma~\ref{lem:finet}, just without any uniqueness assumptions. The relevant lifting property is \cite[Definition 19.3.1]{ega41}, which proves smoothness by \cite[Theorem 17.5.1]{ega44}.\end{proof}

We can now relate the infinitesimal lifting conditions  of being \hfsmooth or \hfetale  to the more concrete notions of \hsmooth  or \hetale algebras.

\begin{proposition}\label{prop:hetisformal} For a map of rings $\phi: A \to B$ with $(A,I)$ and $(B,IB)$ both Henselian pairs (for $I$ an ideal of $A$), if $\phi$ is \hetale (resp. \hsmooth) then it is \hfetale (resp. \hfsmooth).
\end{proposition}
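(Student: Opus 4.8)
The plan is to reduce to a statement about finite-level quotients and then invoke the characterizations already established. Suppose $\phi\colon (A,I)\to(B,IB)$ is \hetale; the \hsmooth case is identical with smoothness in place of {\Et}ness. By Definition~\ref{def:hsmet} there is a \Et $A$-algebra $R$ with $R^h\simeq B$ as $A$-algebras, where $R^h$ denotes the $IR$-adic Henselization. Since $R$ is \Et over $A$ and hence finitely presented, $R^h$ is \hfp over $A$ (Example~\ref{ex:disthfp} gives the localization case; in general $R^h$ is the Henselization of a finitely presented $A$-algebra, so apply Remark~\ref{rmk:hfpex}), so $\phi$ is \hfp. This lets us use the criterion of Lemma~\ref{lem:finet} (resp.\ Lemma~\ref{lem:finsm}): it now suffices to show that for every $r\ge 1$ the map $A/I^r\to B/I^rB$ is \Et (resp.\ smooth).

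For this, the key observation is that Henselization does not change the quotients by powers of the defining ideal: one has canonically $R^h/I^rR^h\simeq R/I^rR$ for all $r\ge 1$, since $A\to A^h$ (and likewise $R\to R^h$) induces an isomorphism modulo any power of the ideal of definition — this is part of the standard package on Henselian pairs recorded in the Appendix, and follows because $R^h$ is a filtered colimit of \Et $R$-algebras that are isomorphisms modulo $IR$, so they are isomorphisms modulo $I^rR$ as well. Hence $B/I^rB\simeq R^h/I^rR^h\simeq R/I^rR$ as $A/I^r$-algebras. Since $R$ is \Et over $A$, base change along $A\to A/I^r$ shows $R/I^rR = R\otimes_A A/I^r$ is \Et over $A/I^r$; thus $A/I^r\to B/I^rB$ is \Et for every $r$. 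By Lemma~\ref{lem:finet}, $\phi$ is \hfetale. In the smooth case, $R\otimes_A A/I^r$ is smooth over $A/I^r$ by base change, so Lemma~\ref{lem:finsm} gives that $\phi$ is \hfsmooth.

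The only point requiring a little care — and the step I would expect to be the mild obstacle — is the compatibility $R^h/I^rR^h\simeq R/I^rR$ for higher $r$, not merely $r=1$: the universal property of Henselization is phrased modulo $I$ itself, so one should either cite the relevant Appendix lemma on Henselian pairs directly, or argue via the colimit presentation of $R^h$ by \Et $R$-algebras $R'$ with $R'/IR'\simeq R/IR$, noting that such an \Et map which is an isomorphism mod $I$ is automatically an isomorphism mod $I^r$ (the kernel and cokernel of $R\to R'$ are finitely presented, killed by powers of $I$ that lie in the Jacobson radical, hence vanish by Nakayama; or invoke that \Et maps are "isomorphisms to infinitesimal order" along a section). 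Once that is in hand, everything else is formal.
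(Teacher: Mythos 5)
Your proof is correct and takes essentially the same route as the paper: reduce via Lemma~\ref{lem:finet} (resp.\ Lemma~\ref{lem:finsm}) to checking that $A/I^r \to B/I^rB$ is \Et (resp.\ smooth), and use $B/I^rB \simeq R^h/I^rR^h \simeq R/I^rR$ together with base change of {\Et}ness (resp.\ smoothness). The compatibility $R^h/I^rR^h \simeq R/I^rR$ that you flag as the delicate point is exactly the fact the paper records in its Appendix (the Henselization map induces an isomorphism modulo every power of the ideal of definition), so your argument matches the paper's.
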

\begin{proof} 
Assume $\phi$ is \hetale. Therefore $B$ is the Henselization of an \Et $A$-algebra $R$, so for all $r$ we have $B/I^rB \simeq R/I^rR$, which is \Et over $A/I^r$ because the base change of an \Et map is \Et. Furthermore since $R$ is \Et, hence finitely presented, over $A$, the map $\phi$ is \hfp. Hence by Lemma~\ref{lem:finet}, $\phi$ is \hfetale. 

The proof for $\phi$ \hsmooth is similar, using Lemma~\ref{lem:finsm}.\end{proof}

The converse of Proposition~\ref{prop:hetisformal} is not always true, but we can show that an \hfetale (\hfsmooth) algebra is the quotient of an \hetale (\hsmooth) algebra by a suitable ideal.

\begin{theorem}\label{thm:hetale} Given an \hfp map of rings $\phi: A \to B$ and an ideal $I \subset A$ such that both pairs $(A,I)$ and $(B,IB)$ are Henselian, if $\phi$ is \hfetale (resp. \hfsmooth), then there exists an \Et (resp. smooth) $A$-algebra $R$ with $B \simeq R^h/J$ for $R^h$ the Henselization of $R$ along $IR$ and $J$ a finitely generated ideal of $R^h$ with $J \subseteq \bigcap_{r \ge 1} I^rR^h$. \end{theorem}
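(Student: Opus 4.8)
The plan is to start from the hypothesis that $\phi$ is \hfsmooth (resp. \hfetale) and use the concrete description $B/I^rB$ is smooth (resp. \Et) over $A/I^r$ for all $r \ge 1$, which is available by Lemma~\ref{lem:finsm} (resp. Lemma~\ref{lem:finet}) since $\phi$ is assumed \hfp. In particular $B_0 := B/IB$ is smooth (resp. \Et) over $A_0 := A/I$. The idea is to first build a candidate $R$ by lifting $B_0$ to something smooth (resp. \Et) over $A$, then show that $B$ is a quotient of its Henselization by an ideal contained in $\bigcap_r I^rR^h$.

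\textbf{Step 1: Produce the smooth (resp. \'etale) lift.} Since $\phi$ is \hfp, by Lemma~\ref{lem:hfpishoffp} we may write $B \simeq S^h$ for a finitely presented $A$-algebra $S$, so that $S_0 := S/IS \simeq B_0$ is smooth (resp. \Et) over $A_0$. Hence the map $\spec(S) \to \spec(A)$ is smooth (resp. smooth of relative dimension $0$, i.e. \Et, which in finitely presented terms means smooth plus quasi-finite) at every point of $V(IS)$. Apply Corollary~\ref{cor:niceprops} to find $R$ \Et over $S$ with $R/IR \simeq S/IS$ and $A \to R$ smooth; in the \Et case, additionally apply the quasi-finite part of Corollary~\ref{cor:niceprops} (exactly as in the proof of Lemma~\ref{lem:betterfin}) so that $A \to R$ is smooth and quasi-finite, hence \Et. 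Because $R$ is \Et over $S$ and an isomorphism modulo $I$, Proposition~\ref{prop:hensetexp} gives $R^h \simeq S^h \simeq B$ as $A$-algebras — wait, that would already finish with $J = 0$, which is too strong and contradicts the promised counterexample in Remark~\ref{rmk:quothet}. So the subtlety is that Corollary~\ref{cor:niceprops} lifts smoothness \emph{at the points of $V(IR)$} but need not preserve the property of being an isomorphism modulo $I$ on the nose after the \Et base change; more precisely, one only gets $R/IR \simeq S/IS \simeq B_0$, and the natural map $R^h \to B$ is then a map of Henselian $A$-algebras inducing an isomorphism modulo $I$ but not necessarily an isomorphism.

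\textbf{Step 2: Analyze the surjection $R^h \to B$ and its kernel.} The map $R \to R/IR \simeq B/IB \to B/I^rB$ does not obviously lift to $R \to B$, but here is where the \hfetale/\hfsmooth hypothesis on $\phi$ enters: the lifting property of Definition~\ref{def:hfsmet} applied to the surjections $B/I^{r+1}B \twoheadrightarrow B/I^rB$ (with nilpotent kernel $I^rB/I^{r+1}B$, all killed by $I$) lets us compare $R^h$ and $B$ level by level. Concretely, since $R^h/IR^h \simeq B/IB$ and $R^h$ is \hfsmooth over $A$ (by Proposition~\ref{prop:hetisformal}, as $R$ is smooth) while $B$ is \hfsmooth, one builds compatible $A$-algebra maps $R^h/I^rR^h \to B/I^rB$ and $B/I^rB \to R^h/I^rR^h$ and uses the uniqueness in the \Et case (resp. smoothness-induced comparison in the smooth case) to see these are isomorphisms for every $r$. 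Hence the natural map $R^h \to B$ induces isomorphisms $R^h/I^rR^h \simeq B/I^rB$ for all $r$. Then $J := \ker(R^h \to B)$ satisfies $J \subseteq I^rR^h$ for all $r$, i.e. $J \subseteq \bigcap_{r \ge 1} I^rR^h$. Surjectivity of $R^h \to B$ follows from $R^h/IR^h \simeq B/IB$ together with $IR^h \subseteq \Jac(R^h)$ and Nakayama-type reasoning (using that $B$ is $R^h$-finite, which holds because $B$ and $R^h$ have the same reduction and both are \hfp over $A$, via Lemma~\ref{lem:surj} applied to $R^h \to B$ — here one needs $R^h \to B$ to be \hfp, which follows from Proposition~\ref{prop:hlfphlfp} since both are \hfp over $A$).

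\textbf{Step 3: Finite generation of $J$.} Since $R^h \to B$ is \hfp (Proposition~\ref{prop:hlfphlfp}) and surjective (Step 2, via Lemma~\ref{lem:surj}), write $B \simeq R^h\{T_1,\dots,T_k\}/(h_1,\dots,h_\ell)$; surjectivity lets us eliminate the $T_i$ and conclude $B \simeq R^h/J$ with $J$ finitely generated — this is exactly the kind of argument run in the proof of Proposition~\ref{prop:groth}, where a presentation over a Henselized polynomial ring together with a section forces the defining ideal to be finitely generated. Combining Steps 1--3 gives the desired $R$, $R^h$, and finitely generated $J \subseteq \bigcap_{r\ge 1} I^rR^h$.

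\textbf{Main obstacle.} The crux is Step 2: showing the comparison map $R^h \to B$ has kernel inside every $I^rR^h$, i.e. that after modding out by powers of $I$ the \hfsmooth/\hfetale data on both sides forces isomorphisms. In the \hfetale case the uniqueness clause in Definition~\ref{def:hfsmet} should make this a clean two-sided inverse construction; in the \hfsmooth case one lacks uniqueness, so more care is needed — one likely lifts a chosen isomorphism $R^h/IR^h \simeq B/IB$ successively through the square-zero (or nilpotent) thickenings $\cdot/I^{r+1} \to \cdot/I^r$ using smoothness on both sides, and then must check the resulting inverse limits of maps are mutually inverse, which is where I expect the real work to lie. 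A secondary technical point is making sure Corollary~\ref{cor:niceprops} is applicable in the \Et (smooth-plus-quasi-finite) case and that the \Et base change $R$ of $S$ can indeed be arranged with $R/IR \simeq B_0$, as in the proof of Lemma~\ref{lem:betterfin}.
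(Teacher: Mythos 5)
Your Step 1 does not work as stated: knowing that $S/IS$ is smooth over $A/I$ does \emph{not} imply that $\spec(S) \to \spec(A)$ is smooth at the points of $V(IS)$, because smoothness at such a point requires flatness of $A \to S$ there, and that flatness is exactly what can fail here — it is the whole content of the counterexample in Remark~\ref{rmk:quothet}, and in Proposition~\ref{prop:flathenssm} flatness is an \emph{extra hypothesis} used (via Lemma~\ref{lem:flathfp}) precisely to make this kind of step legitimate. So Corollary~\ref{cor:niceprops} is not applicable to $S$, and your attempted resolution of the apparent contradiction (that the corollary ``need not preserve the property of being an isomorphism modulo $I$'') misdiagnoses the problem: the corollary does produce $R'$ with $R'/IR' \simeq S/IS$; its hypothesis is simply not satisfied. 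The paper instead produces $R$ by lifting the smooth $A/I$-algebra $B/IB$ directly to a smooth $A$-algebra using Arabia's lifting theorem \cite[Theorem 1.3.1]{arabia} (and, in the \Et case, by lifting a standard smooth presentation of $B/IB$ to a standard smooth, \Et $A$-algebra); no flatness of $B$ over $A$ enters.

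The second gap is in Step 2: you never actually construct the map $R^h \to B$. Lifting through the thickenings $B/I^{r+1}B \twoheadrightarrow B/I^rB$ only yields a compatible system of maps $R^h \to B/I^rB$, i.e.\ a map to the $I$-adic completion of $B$; since $B$ is Henselian but not complete, this does not descend to a map into $B$. The paper's construction is genuinely different: it forms $R \tens_A B$, which is \Et (resp.\ smooth) over $B$ and whose reduction mod $I$ is $B/IB \tens_{A/I} B/IB$; composing the quotient with multiplication gives $R \tens_A B \to B/IB$, and then the Henselian property of $(B,IB)$ (resp.\ the lifting lemma \spcite{07M7}{Lemma} together with Proposition~\ref{prop:hensetexp}) lifts this to $R \tens_A B \to B$, whence an $A$-algebra map $R^h \to B$ inducing an isomorphism mod $I$. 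Once that map exists, surjectivity and finite presentation follow from Proposition~\ref{prop:hlfphlfp} with Lemmas~\ref{lem:betterfin} and~\ref{lem:surj} (as you anticipate in Step 3), and the key fact that $R^h/I^rR^h \to B/I^rB$ is an isomorphism for every $r$ is proved not by any uniqueness argument but by noting it is a finitely presented surjection between flat $A/I^r$-algebras, hence flat, hence an isomorphism — an argument that works uniformly in the smooth case, where the uniqueness you would need is unavailable. In short, your outline correctly locates the difficulties but leaves both essential constructions (the lift $R$ and the map $R^h \to B$) unestablished, and the specific route proposed for the first one is incorrect rather than merely incomplete.
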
\begin{remark}\label{rmk:quothet} Conversely, any such quotient $B=R^h/J$ is \hfetale (resp. \hfsmooth), since $B/I^n B \simeq R^h/I^n R^h \simeq R/I^nR$ for all $n \ge 0$ \spcite{0AGU}{Lemma}. A basic example of this with nonzero $J$ can be found in Example~\ref{ex:hensnotsep} of the \hyperref[sec:appendix]{Appendix}, where $R = A=\mc{C}^\infty_0(\R,\R)$ and $J$ is the principal ideal generated by $e^{-1/x^2}$.
\end{remark}
\begin{proof} The ring $B/IB$ is \Et (resp. smooth) over $A/I$ by Lemmas~\ref{lem:finet}~and~\ref{lem:finsm}. We can obtain a smooth $A$-algebra $R$ such that $R/IR \simeq B/IB$  (\cite[Theorem 1.3.1]{arabia}, \spcite{07M8}{Proposition}).

In the \hetale case, we can construct $R$ more explicitly to be \Et over $A$: we have a standard smooth presentation of $B/IB$ over $A/I$ by \spcite{00U9}{Lemma} which can be lifted easily to a standard smooth and \Et $A$-algebra $R$. 

Now we will find a map $R^h \to B$, where $R^h$ is the Henselization of the pair $(R,IR)$, which lifts the isomorphism $R^h/IR^h \simeq R/IR \simeq B/IB$. Since $B$ is Henselian, it suffices to find an $A$-algebra map $R \to B$ lifting the isomorphism $R/IR \simeq B/IB$.

We consider the tensor product $R \tens_A B$, which is \Et (resp. smooth) over $B$. The map $R \to B$ which we desire will factor through this tensor product.

We can calculate $(R \tens_A B)/I(R \tens_A B)$ as $$A/I \tens_A (R \tens_A B)  = (R \tens_A  A/I) \tens_{A/I} (A/I \tens_A B) = R/IR \tens_{A/I} B/IB = B/IB \tens_{A/I} B/IB,$$

so if we compose the quotient map with the multiplication map, we get a map $R \tens_A B \to B/IB$.

In the \Et case, since $B$ is Henselian, we get a lift $R \tens_A B \to B$, which gives us an $A$-algebra map $R \to B$. This map $R \to B$ is an isomorphism modulo $I$ by construction. Furthermore, by the universal property of Henselization, it factors through $R^h$, and the map $R^h \to B$ is also an isomorphism modulo $I$.

In the more general smooth case, we can find an \Et $B$-algebra $B'$ with $B/IB \simeq B'/IB'$ and a map $R \tens_A B \to B'$ lifting the map $R \tens_A B \to B/IB$ by \spcite{07M7}{Lemma}. However, by the universal property of Henselization and Proposition~\ref{prop:hensetexp}, $(B')^h \simeq B^h \simeq B$. Therefore the map $R \tens_A B \to B' \to (B')^h \simto B$ also lifts $R \tens_A B \to B/IB$. Furthermore, since $B$ is Henselian, the map $R \to R \tens_A B \to B$ factors through $R^h$, and the map $R^h \to B$ is an isomorphism modulo $I$ by construction.

Therefore in either case we have a map $R^h \to B$ which lifts the isomorphism $R^h/IR^h \simeq B/IB$.

The ring $R^h$ is \hfp over $A$ since $R$ is finitely presented over $A$. Since $B$ is also \hfp over $A$, the map $R^h \to B$ must also be \hfp by Proposition~\ref{prop:hlfphlfp}. By applying Lemma~\ref{lem:betterfin} and Lemma~\ref{lem:surj}, the map $R^h \to B$ is finitely presented in the usual sense, surjective, and an isomorphism modulo $I$.

For each $r \ge 1$, the morphism $\phi_r: R^h/I^rR^h \to B/I^rB$ is finitely presented, surjective, and an isomorphism modulo $I$. Since $R^h/I^rR^h\simeq R/I^rR$ is $A/I^r$-smooth, and $B/I^rB$ is as well by Lemma~\ref{lem:finsm}, both are flat over $A/I^r$. Therefore $R^h/I^rR^h \twoheadrightarrow B/I^rB$ is also flat (by fibral flatness \cite[Theorem 11.3.10]{ega43}), hence faithfully flat and in fact an isomorphism.

Since $R^h \to B$ is surjective by Lemma~\ref{lem:surj} and finitely presented, we can write $B \simeq R^h/J$ for $J$ a finitely generated ideal of $R^h$. Since $R^h \to B$ is an isomorphism modulo every positive power of $I$, we see that  for all $r \ge 1$ we must have $J \subset I^rR^h$. Hence $J \subset \bigcap_{r \ge 1} I^rR^h$.\end{proof}

Essentially, Theorem~\ref{thm:hetale} says that an \hfetale $A$-algebra $B$ which is \hfp is precisely the quotient of an \hetale algebra $R^h$ by a finitely generated ideal which is contained in $\bigcap_{r \ge 1} I^rR^h$. Similarly, an \hfsmooth $A$-algebra which is \hfp is precisely the quotient of an \hsmooth algebra $R^h$ by a finitely generated ideal contained in $\bigcap_{r \ge 1} I^rR^h$.

\begin{remark}\label{rmk:noethhetale} In the setting of Theorem~\ref{thm:hetale} with Noetherian $A$, the flatness issue does not arise. Let $A,B,R,J$ be as in Theorem~\ref{thm:hetale}. If $A$ is Noetherian, then $R^h$ is Noetherian by \spcite{0AGV}{Lemma}. Since $(R^h,IR^h)$ is Henselian, we have $IR^h \subseteq \Jac(R^h)$. The Krull intersection theorem then yields $\bigcap_{r \ge 1} I^rR^h = 0$, so $J=0$ and $B \simeq R^h$. 

Therefore if $A$ is Noetherian, the notions of \hfetale and \hetale are the same in the \hfp case. Similarly, for $A$ Noetherian, an \hfp $A$-algebra $B$ is \hsmooth over $A$ if and only if it is \hfsmooth over $A$. \end{remark}

We cannot avoid the interference of $J$ in Theorem~\ref{thm:hetale} in the general case, since for a Henselian pair $(C,\mf{I})$ it is not necessarily the case that $\bigcap_{r \ge 1} \mf{I}^r = 0$ -- i.e., Henselian pairs are not necessarily adically separated. An example of this phenomenon is given in Example~\ref{ex:hensnotsep} of the \hyperref[sec:appendix]{Appendix} (as we noted in Remark~\ref{rmk:quothet}). 

Furthermore, such separatedness is not inherited from that of a base ring beyond the Noetherian case (see Remark~\ref{rmk:noethhetale}). For an example of an \hetale algebra $B$ over an adically separated Henselian pair $(A,I)$ such that $B$ is {\it not} $I$-adically separated, an example due to Gabber can be found in \cite[Example 5.2.13]{sheelathesis}; it has a lengthy formulation and thus we omit it here.

More generally, we can show that the condition of being \hetale is equivalent to the condition of being \hfetale, \hfp, and flat:

\begin{proposition}\label{prop:flathenset} Let $\phi: (A,I) \to (B,IB)$ be a map of Henselian pairs. Then $\phi$ is \hfp, \hfetale, and flat if and only if it is \hetale.
\end{proposition}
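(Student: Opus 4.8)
The plan is to prove the two implications separately: the forward direction is immediate from results already in hand, and the reverse direction is where the flatness hypothesis is used, through Theorem~\ref{thm:hetale} and a Nakayama argument.

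First I would dispatch the forward direction. If $\phi$ is \hetale, then by Definition~\ref{def:hsmet} there is an \Et $A$-algebra $R$ with $R^h \simeq B$ over $A$, where $R^h$ is the Henselization of $(R,IR)$. Since $R$ is \Et, it is finitely presented over $A$, so $R^h$ is \hfp over $A$ by Lemma~\ref{lem:hfpishoffp}; it is \hfetale by Proposition~\ref{prop:hetisformal}; and because $R^h$ is a filtered colimit of \Et $R$-algebras, each of which is \Et (hence flat) over $A$, the ring $R^h$ is a filtered colimit of flat $A$-algebras and so is $A$-flat. Thus $\phi$ is \hfp, \hfetale, and flat.

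For the converse, assume $\phi$ is \hfp, \hfetale, and flat. By Theorem~\ref{thm:hetale} there is an \Et $A$-algebra $R$ and an $A$-algebra isomorphism $B \simeq R^h/J$, where $R^h$ is the Henselization of $(R,IR)$ and $J$ is a finitely generated ideal of $R^h$ with $J \subseteq \bigcap_{r \ge 1} I^r R^h$. It then suffices to show $J = 0$, since then $B \simeq R^h$ exhibits $\phi$ as \hetale. Because $J \subseteq IR^h$, we have $B/IB = R^h/(IR^h + J) = R^h/IR^h$, so the natural surjection $R^h/IR^h \to B/IB$ is an isomorphism. Applying $-\tens_A A/I$ to the short exact sequence of $A$-modules $0 \to J \to R^h \to B \to 0$ and using that ${\rm Tor}_1^A(A/I,B) = 0$ by $A$-flatness of $B$, I get a short exact sequence $0 \to J/IJ \to R^h/IR^h \to B/IB \to 0$; as the right-hand map is an isomorphism, $J/IJ = 0$, i.e.\ $J = IJ = (IR^h)J$. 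Since $(R^h,IR^h)$ is a Henselian pair, $IR^h \subseteq \Jac(R^h)$, so Nakayama's lemma applied to the finitely generated $R^h$-module $J$ gives $J = 0$.

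The main thing to be careful about is to resist trying to show that $R^h \to B$ is flat — this is not obviously true and is not what the hypotheses provide. Instead, one uses flatness over the base ring $A$ (only the vanishing of ${\rm Tor}_1^A(A/I,B)$ is needed) to kill $J/IJ$, and then the containment $J \subseteq IR^h \subseteq \Jac(R^h)$ together with finite generation of $J$ is exactly what makes Nakayama applicable. I do not expect any other step to pose difficulty.
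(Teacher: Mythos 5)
Your proof is correct, and the converse direction takes a genuinely different route from the paper's. After invoking Theorem~\ref{thm:hetale} to write $B \simeq R^h/J$, the paper does \emph{not} stop at $A$-flatness: it first shows $R \to B$ is flat (via the \'etale map $R \tens_A B \to B$), then that $R^h \to B$ is flat as a filtered colimit of flat maps $S \to S \tens_R B \to B$, concludes that $\spec(B) \to \spec(R^h)$ is a finitely presented flat closed immersion and hence clopen, and finally uses the bijection on idempotents for the Henselian pair $(R^h, IR^h)$ to kill the complementary factor. Your argument short-circuits all of this: the vanishing of ${\rm Tor}_1^A(A/I,B)$ turns the sequence $0 \to J \to R^h \to B \to 0$ into a short exact sequence after applying $-\tens_A A/I$, the containment $J \subseteq IR^h$ makes $R^h/IR^h \to B/IB$ an isomorphism, and then $J = (IR^h)J$ with $J$ finitely generated and $IR^h \subseteq \Jac(R^h)$ gives $J=0$ by Nakayama. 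This is shorter and more elementary, and it is a nice counterpoint to the error in \cite{kpr} dissected in Remark~\ref{rmk:kprwrong}: there the mistake was applying $\tens$ to a short exact sequence and assuming the kernel maps onto the kernel without any flatness; you have exactly the flatness hypothesis needed to make that step legitimate, and you use it for precisely that purpose. Your parenthetical worry that one should ``resist trying to show $R^h \to B$ is flat'' is slightly overstated --- the paper does establish this, and it is how the paper's proof proceeds --- but your observation that it is not needed is what buys you the simpler argument.
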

\begin{proof} 

First assume that $\phi$ is \hetale (hence \hfp by definition). Then by Proposition~\ref{prop:hetisformal}, $\phi$ is \hfetale. By the definition of \hetale, $B \simeq R^h$ for $R$ \Et over $A$. Since $R \to R^h \simeq B$ is flat and \Et maps are flat, we also know that $\phi$ is flat. Therefore if $\phi$ is \hetale, it is \hfetale and flat.

To show the other direction of the implication, assume that $\phi$ is \hfp, \hfetale, and flat. By Theorem~\ref{thm:hetale}, we can find an \Et $A$-algebra $R$ with $B \simeq R^h/J$ for $R^h$ the Henselization of $R$ along $IR$ and $J$ a finitely generated ideal of $R^h$ with $J \subseteq \bigcap_{r \ge 1} I^rR^h$. Consider the diagram

\begin{center}
\begin{tikzcd}
 &R^h\arrow[r]&B\\
R\arrow[ur,"\text{flat}" sloped]\arrow[r,"\text{flat}"] &R \tens_A B \arrow[dashed]{ur}&\\
A\arrow[u,"\text{\Et}"]\arrow[r,"\text{flat}"]  &B\arrow[u,"\text{\Et}"]\arrow[equal]{uur}&\\
\end{tikzcd}
\end{center} \vspace{-.25in}

The dotted arrow exists by the universal property of the tensor product; furthermore, it is \Et since it is a map between \Et $B$-algebras (one being $B$ itself). Then since $R \tens_A B \to B$ is \Et and $R \to R \tens_A B$ is flat, $R \to B$ is flat.

We recall $R^h$ is the colimit of \Et $R$-algebras $S$ with $S/IS \simeq R/IR$. For each $S$ we have a map $S \to B$ arising from the universal property of Henselization, with $R^h \to B$ being the colimit of these maps. Since $R \to B$ is flat and $R \to S$ is \Et, we deduce that the map $S \to B$ is flat for each $S$.  
Therefore $R^h \to B$ is flat since it is the colimit of the flat maps $S \to B$ \spcite{05UT}{Lemma}. 

Since $B \simeq R^h/J$, the (finitely presented) map $\spec(B) \to \spec(R^h)$ is both flat and a closed immersion. Therefore $V(J) \subset \spec(R^h)$, which is homeomorphic to $\spec(B)$, is both open and closed (since finitely presented flat maps are open by \spcite{01UA}{Lemma}). Hence we get a decomposition of rings $R^h = B \times B'$; since $R^h/IR^h \simeq B/IB$, necessarily $B'/IB'=0$. However, since the pair $(R^h,IR^h)$ is Henselian, the map $R^h \to R^h/IR^h$ induces a bijection on idempotents (Lemma~\ref{lem:sphenspair}), so $B'/IB'=0$ implies that $B'=0$ and $R^h \simeq B$. \end{proof}

Similarly, the condition of being \hsmooth is equivalent to the condition of being \hfp, \hfsmooth, and flat. To prove this, we will first show that for a Henselian pair $(A,I)$, an \hfp and flat $A$-algebra is the $I$-adic Henselization of a flat and finitely presented $A$-algebra.

\begin{lemma}\label{lem:flathfp} Given a map of rings $\phi: A \to B$ and an ideal $I \subset A$ such that both pairs $(A,I)$ and $(B,IB)$ are Henselian, if $\phi$ is \hfp and flat then we can find a finitely presented $A$-algebra $S$ with $S^h \simeq B$ such that $S$ is also $A$-flat.
\end{lemma}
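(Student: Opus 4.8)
The strategy is to start from the fact (Lemma~\ref{lem:hfpishoffp}) that an \hfp map $A \to B$ has the form $B \simeq R^h$ for some finitely presented $A$-algebra $R$, and then to repair $R$ so that it becomes flat over $A$ without changing its Henselization. The obstruction to flatness of $R$ over $A$ lives at the primes of $\spec(R)$, but since $B = R^h$ is flat over $A$ and $R^h$ is a filtered colimit of \Et $R$-algebras, flatness of $R \to B$ forces $R \to A$ to be flat at every point of $V(IR)$: indeed, for $\mf{q} \in V(IR)$ there is a point $\mf{q}'$ of $\spec(R^h)$ over it with $R_{\mf{q}} \to (R^h)_{\mf{q}'}$ faithfully flat (it is a localization of an \Et extension, hence flat, and faithfully flat on local rings), so flatness of $A \to (R^h)_{\mf{q}'}$ descends to flatness of $A \to R_{\mf{q}}$. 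Thus the hypotheses of Corollary~\ref{cor:niceprops} (the flatness case) are met.

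Applying Corollary~\ref{cor:niceprops}, I obtain an \Et $R$-algebra $R'$ with $R'/IR' \simeq R/IR$ such that $A \to R'$ is flat. Since $R'$ is \Et over $R$ which is finitely presented over $A$, the algebra $R'$ is itself finitely presented over $A$; and flatness is exactly what we wanted. It remains to check that $(R')^h \simeq B$, where the Henselization is along $IR'$: because $R \to R'$ is \Et and induces an isomorphism modulo $I$, Proposition~\ref{prop:hensetexp} gives $(R')^h \simeq R^h \simeq B$ as $A$-algebras. So $S := R'$ does the job. One should also note $(R', IR')$ is automatically a Henselian pair only after Henselizing; but we do not need $R'$ itself to be Henselian — we only need $S$ to be a flat finitely presented $A$-algebra with $S^h \simeq B$, which is what has been produced.

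The only point requiring a little care — and the step I expect to be the main (modest) obstacle — is the descent of flatness from $A \to (R^h)_{\mf{q}'}$ to $A \to R_{\mf{q}}$, i.e.\ verifying that every prime $\mf{q} \in V(IR)$ is indeed dominated by a prime of $\spec(R^h)$ lying over it and that the local map is faithfully flat. This follows from $R^h = \colim S$ over \Et $R$-algebras $S$ with $S/IS \simeq R/IR$, so that $\spec(R^h) \to \spec(R)$ is surjective onto $V(IR)$ (the mod-$I$ fibre is unchanged) and flat; picking any preimage $\mf{q}'$ of $\mf{q}$, the map $R_{\mf{q}} \to (R^h)_{\mf{q}'}$ is local and flat, hence faithfully flat, and faithfully flat base change reflects flatness of $A \to R_{\mf{q}}$. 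Everything else is a direct citation of Lemma~\ref{lem:hfpishoffp}, Corollary~\ref{cor:niceprops}, and Proposition~\ref{prop:hensetexp}.
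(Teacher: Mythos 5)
Your proposal is correct and follows essentially the same route as the paper's proof: produce $R$ with $R^h\simeq B$ via Lemma~\ref{lem:hfpishoffp}, deduce flatness of $A\to R$ at each prime of $V(IR)$ by descending along the faithfully flat local maps $R_{\mf{q}}\to (R^h)_{\mf{q}'}$ (using that $R/IR\simeq R^h/IR^h$ supplies the prime $\mf{q}'$), and then apply Corollary~\ref{cor:niceprops} together with Proposition~\ref{prop:hensetexp}. The one step you flag as delicate is exactly the step the paper also carries out, citing the cancellation property of flatness for the composite $A\to R_{\mf{q}}\to (R^h)_{\mf{q}'}$.
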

\begin{proof}

By Lemma~\ref{lem:hfpishoffp} there exists a finitely presented $A$-algebra $S$ such that $S^h$ the Henselization of $S$ by $IS$ is isomorphic to $B$ over $A$. To show that we can take $S$ to be flat over $A$, we will find an \Et $S$-algebra which is $A$-flat and also has Henselization $B$.

Let $\psi: S \to B$ be the canonical map from $S$ to its Henselization $B \simeq S^h$. Consider any prime ideal $\mf{p}$ of $S$ which contains $IS$. Since $B \simeq S^h$, we have $B/IB \simeq S/IS$. The prime ideals of $S$ containing $IS$ are in bijective correspondence with the prime ideals of $S/IS \simeq B/IB$, and these are in bijective correspondence with prime ideals of $B$ containing $IB$. This gives us a prime $\mf{P} \subset B$ with $\psi^{-1}(\mf{P})=\mf{p}$, and $\mf{P} \supset IB$.

Now let $f$ be the structure map $f: A \to S$ making $S$ an $A$-algebra. Then we get a prime ideal $\mf{q} = f^{-1}(\mf{p}) \subset A$; in fact $\mf{q}=(\psi f)^{-1}(\mf{P}) = \phi^{-1}(\mf{P})$ since $\psi f = \phi$.

The map $\psi$ from $S$ to its Henselization $B$ is flat. By our hypothesis $\phi=\psi f $ is also flat. Therefore the homomorphisms of local rings $A_{\mf{q}} \to B_{\mf{P}}$ (induced by $\phi$) and $S_{\mf{p}} \to B_{\mf{P}}$ (induced by $\psi$) are flat and in fact faithfully flat by \spcite{00HR}{Lemma}. Therefore the map $A_{\mf{q}} \to S_{\mf{p}}$ is flat by \spcite{039V}{Lemma}. Hence $A \to S$ is flat at all prime ideals in $V(IS)$. 

Therefore we can apply Corollary~\ref{cor:niceprops} to find some flat and finitely presented $A$-algebra $C_1$  which is \Et over $S$ such that $S/IS \simeq C_1/IC_1$. Furthermore $C_1^h \simeq B$ by Proposition~\ref{prop:hensetexp}, so we can replace $S$ by $C_1$, completing the proof.\end{proof}

\begin{proposition}\label{prop:flathenssm} Let $\phi: (A,I) \to (B,IB)$ be a map of Henselian pairs. Then $\phi$ is \hfp, \hfsmooth, and flat if and only if it is \hsmooth.
\end{proposition}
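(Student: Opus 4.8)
The plan is to follow the template of Proposition~\ref{prop:flathenset}, but to handle the substantive implication via Lemma~\ref{lem:flathfp} and the smooth case of Corollary~\ref{cor:niceprops} rather than through a colimit-of-flat-maps argument as in the \hetale case. (The \hetale proof used that a $B$-algebra map between two \Et $B$-algebras is \Et, which has no smooth analogue, so a slightly different route is needed.) For the forward implication, suppose $\phi$ is \hsmooth, so $B \simeq R^h$ as $A$-algebras for a smooth $A$-algebra $R$, with $R^h$ the $IR$-adic Henselization. Then $R$ is finitely presented over $A$, so $\phi$ is \hfp by Lemma~\ref{lem:hfpishoffp}; $\phi$ is \hfsmooth by Proposition~\ref{prop:hetisformal}; and $\phi$ is flat since $A \to R$ is smooth (hence flat) while $R \to R^h$ is flat (a filtered colimit of \Et, hence flat, $R$-algebras; cf.\ Remark~\ref{rmk:locringspace}).

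For the converse, assume $\phi$ is \hfp, \hfsmooth, and flat. First I would invoke Lemma~\ref{lem:flathfp} to produce a finitely presented, $A$-flat algebra $S$ with $S^h \simeq B$ as $A$-algebras, the Henselization taken along $IS$. By Lemma~\ref{lem:finsm}, the ring $S/IS \simeq B/IB$ is smooth over $A/I$. The key claim is then that the morphism $\spec(S) \to \spec(A)$ is smooth at every prime $\mf{p} \in V(IS)$: it is flat there because $S$ is $A$-flat, and, setting $\mf{q} = \mf{p} \cap A$ (which contains $I$), the fiber over $\mf{q}$ is $S \otimes_A \kappa(\mf{q}) \simeq (S/IS) \otimes_{A/I} \kappa(\mf{q})$, a base change of the smooth $A/I$-algebra $S/IS$, hence smooth over $\kappa(\mf{q})$; by the fibral criterion for smoothness (flat with smooth fibers), $\spec(S) \to \spec(A)$ is smooth at $\mf{p}$. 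Now the smooth case of Corollary~\ref{cor:niceprops} yields an \Et $S$-algebra $R$ with $R/IR \simeq S/IS$ and $A \to R$ smooth. Since $R$ is \Et over $S$ and an isomorphism modulo $I$, Proposition~\ref{prop:hensetexp} gives $R^h \simeq S^h \simeq B$ as $A$-algebras. Thus $B$ is $A$-isomorphic to the Henselization of the smooth $A$-algebra $R$, i.e.\ $\phi$ is \hsmooth.

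The step that needs the most care is the verification that $\spec(S) \to \spec(A)$ is smooth at all of $V(IS)$: one must check that the primes of $S$ in $V(IS)$ lie over primes of $A$ containing $I$, so that their fibers are exactly base changes of the smooth $A/I$-algebra $S/IS$, and then combine this with $A$-flatness of $S$ via the fibral smoothness criterion. Once that is in place, everything else is a repackaging of Lemmas~\ref{lem:flathfp} and~\ref{lem:finsm}, Corollary~\ref{cor:niceprops}, and Proposition~\ref{prop:hensetexp}, exactly paralleling (indeed slightly simplifying) the proof of Proposition~\ref{prop:flathenset}.
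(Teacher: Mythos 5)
Your proof is correct and follows essentially the same route as the paper's: Lemma~\ref{lem:flathfp} to replace $B$ by the Henselization of a flat, finitely presented $S$, the fibral criterion (flat plus smooth fibers over $\kappa(\mf{q})$ with $\mf{q} \supseteq I$) to get smoothness of $A \to S$ along $V(IS)$, then Corollary~\ref{cor:niceprops} and Proposition~\ref{prop:hensetexp} to conclude. The only difference is that you apply the fiber computation directly to $S$, whereas the paper runs it through the \Et $S$-algebras $C$ appearing in the colimit presentation of $S^h$ before invoking Corollary~\ref{cor:niceprops} for $S$ itself; your version is a mild and harmless streamlining of the same argument.
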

\begin{proof} The strategy of proof below could be used to prove Proposition~\ref{prop:flathenset}, but instead we used a simpler method in that case, avoiding the use of Lemma~\ref{lem:flathfp}.

First assume that $\phi$ is \hsmooth. Then by Proposition~\ref{prop:hetisformal}, $\phi$ is \hfsmooth. By the definition of \hsmooth, in fact $B \simeq R^h$ for $R$ smooth over $A$. Since $R \to R^h \simeq B$ is flat and smooth maps are flat, we also know that $\phi$ is flat. Therefore if $\phi$ is \hsmooth, it is \hfsmooth and flat.

To show the other direction, we assume that $\phi$ is flat, \hfp, and \hfsmooth. By Lemma~\ref{lem:flathfp} we can find a flat and finitely presented $A$-algebra $S$ with $S^h \simeq B$.

Since $B \simeq S^h$, for all $r \ge 1$ we have $S/I^rS \simeq B/I^rB$. By Lemma~\ref{lem:finet}, we see that $S/I^rS$ is smooth over $A/I^r$ for all $r \ge 1$.

We can write $B \simeq S^h$ as the colimit $\colim_C C$ of the \Et $S$-algebras $C$ with $C/IC \simeq S/IS$. Since $S$ is flat and finitely presented over $A$, each $C$ is flat and finitely presented over $A$ as well.

Now fix a prime $\mf{q} \in V(IC) \subset \spec(C)$, which lies over $\mf{p} \supset I$ a prime ideal of $A$. Then the fiber ring $C \tens_A \kappa({\mf{p}})$ is the same as the fiber ring of the map $A/I \to C/IC$ at the prime $\mf{p}/I$. In other words, $\kappa(\mf{p} \in \spec(C)) = \kappa(\mf{p}/I \in \spec(C/I))$ and $C \tens_A \kappa({\mf{p}}) \simeq C/IC \tens_{A/I} \kappa(\mf{p}/I)$ as rings over $\kappa(\mf{p})$. 

The map $A/I \to S/IS \simeq C/IC$ is smooth. Then $C \tens_A \kappa({\mf{p}}) \simeq C/IC \tens_{A/I} \kappa(\mf{p}/I)$ is smooth over $\kappa(\mf{p})$. Then since $A \to C$ is finitely presented and flat, $A \to C$ is smooth at $\mf{q}$ for each prime $\mf{q}$ of $C$ containing $IC$ by \spcite{00TF}{Lemma}.

Because $A \to S$ is smooth at each prime of $V(IS)$, we can apply Corollary~\ref{cor:niceprops} to find some $C_1$ which is \Et over $S$, with $S/IS \simeq C_1/IC_1$, so that $C_1$ is smooth over $A$ (hence finitely presented). Furthermore $C_1^h \simeq S^h \simeq B$ by Proposition~\ref{prop:hensetexp}, so $B$ is \hsmooth over $A$ as we desired to show.\end{proof}
\begin{remark}\label{rmk:kprwrong}

It is ``shown'' in \cite[Satz 3.6.3]{kpr} (which is cited in \cite[Proposition 1]{cox1}) that a map of Henselian schemes $X \to Y$ which is \hfp and \hfetale can be locally described as an open subspace of a ``standard \Et'' map $\sph(((C\{T\}/(g))_{g'})^h) \to \sph(C)$, and hence is flat.  But the local quotient map modulo a nonzero ideal as in Remark~\ref{rmk:quothet} shows that such flatness can fail (beyond the Noetherian case).  Let's explain where the error occurs in the proof of \cite[Satz 3.6.3]{kpr}.

In that proof, it is first shown that if $X \to Y$ is \hfp and \hfetale, it can be realized locally as a closed subspace of such a ``standard \hetale'' map. For $x \in X$ with image $y \in Y$, the realization of $X$ as a locally closed subspace of a ``standard \hetale'' map yields a surjection $((\O_{Y,y}\{T\}/(g))_{g'})^h \to \O_{X,x}$ with finitely generated kernel $N$. Letting $A'=((\O_{Y,y}\{T\}/(g))_{g'})^h/N^2,J=N/N^2$, we have a short exact sequence $$0 \to J \to A' \to \O_{X,x} \to 0,$$ and there exists a unique section $\O_{X,x} \to A'$ because $\O_{Y,y} \to \O_{X,x}$ is \hfetale by assumption.

For $k$ the residue field of the local ring $\O_{Y,y}$, it is shown in \cite{kpr} that the quotient map $$A' \tens_{\O_{Y,y}} k \to \O_{X,x} \tens_{\O_{Y,y}} k$$ is an isomorphism. If $J \tens_{\O_{Y,y}} k$ were the kernel of this map, then it would vanish, so we could apply Nakayama's Lemma (as $J$ is finitely generated) to deduce that $J=0$, hence $N=0$ and $\O_{X,x} = A'$. Then $\O_{X,x}$ would be $\O_{Y,y}$-flat by inspection; that is, the desired flatness of $X \to Y$ at $x$ would follow. However, in general there is no reason for $(\cdot) \tens_{\O_{Y,y}} k$ applied to a short exact sequence to carry the kernel to the kernel, unless the rightmost term of the short exact sequence is known to be flat -- but that would be exactly the $\O_{Y,y}$-flatness of $\O_{X,x}$ that one is aiming to prove.

In Remark~\ref{rmk:quothet} we have an example of a quotient map of Henselian pairs $(A,I) \to (A/J,I/J)$ which is \hfetale but not flat since $J \subseteq \bigcap_{r \ge 1} I^r$ but $J^2 \ne J$. Therefore we know that flatness is not an automatic property of locally \hfp and \hfetale maps (beyond the Noetherian case). Thus the above error in the proof of \cite[Satz 3.6.3]{kpr} cannot be fixed.
\end{remark}

\subsection{The small \hetale site}\label{sub:hetsite}

In forthcoming work \cite{ghga} concerning a Henselian version of formal GAGA, it will be very useful to consider the \hetale topology of a Henselian scheme; thus in this section we define and study the small \hetale site. In particular, we will show that the category of \hetale maps over a Henselian scheme $X$ is equivalent to the category of \Et maps over the underlying ordinary scheme $X_0$ from Remark~\ref{rmk:getususch}. This was discussed in \cite[Theorem 1]{cox1}, relying on the erroneous result in Remark~\ref{rmk:kprwrong}, so our treatment is necessarily different.

We will first show that the category of \hetale algebras over a Henselian pair $(A,I)$ is equivalent to the category of \Et algebras over the quotient $A/I$. 

\begin{proposition}\label{prop:affcatequiv} Let $(A,I)$ be a Henselian pair. The functor $\mf{F}$ which sends an \hetale map $(A,I) \to (B,IB)$ to the \Et map $A/I \to B/IB$ is an equivalence of categories between the category of \hetale algebras over $A$ and the category of \Et algebras over $A/I$.
\end{proposition}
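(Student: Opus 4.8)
The plan is to exhibit an explicit quasi-inverse to $\mf{F}$ and then check that the two round-trip composites are naturally isomorphic to the respective identity functors. The quasi-inverse $\mf{G}$ should send an \Et $A/I$-algebra $\ol{B}$ to the pair $(B, IB)$, where $B := R^h$ is the Henselization along $IR$ of any \Et $A$-algebra $R$ lifting $\ol{B}$ (i.e.\ $R/IR \simeq \ol{B}$); such an $R$ exists because any \Et $A/I$-algebra has a standard \Et presentation which lifts to a standard \Et $A$-algebra, as invoked in the proof of Theorem~\ref{thm:hetale}. The first thing I would verify is that $\mf{G}$ is well-defined on objects: if $R_1, R_2$ are two \Et $A$-algebras with $R_1/IR_1 \simeq \ol{B} \simeq R_2/IR_2$, then $R_1^h$ and $R_2^h$ are canonically $A$-isomorphic. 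This is exactly Proposition~\ref{prop:hensetexp} (the statement that the Henselization of an \Et algebra depends only on its reduction mod $I$): both $R_1^h$ and $R_2^h$ represent the same functor, or one argues directly by lifting the isomorphism $R_1/IR_1 \simeq R_2/IR_2$ through the \Et maps using the Henselian property of the target.

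Next I would make $\mf{G}$ a functor, i.e.\ define it on morphisms. Given an $A/I$-algebra map $\ol{B_1} \to \ol{B_2}$ between \Et $A/I$-algebras, lift $\ol{B_i}$ to \Et $A$-algebras $R_i$; the composite $R_1 \to R_2 \to R_2^h = B_2$ reduces mod $I$ to $R_1/IR_1 \simeq \ol{B_1} \to \ol{B_2} \simeq R_2/IR_2$, and since $B_2$ is Henselian along $IB_2$ and $R_1 \to B_2$ is a map of an \Et $A$-algebra into a Henselian pair over $A$, it factors uniquely through $R_1^h = B_1$ by the universal property of Henselization. This gives a map $B_1 \to B_2$ of Henselian pairs over $(A,I)$, and by Proposition~\ref{prop:hetisformal} (together with the fact that \hetale maps are closed under composition, which follows from the \Et case and Proposition~\ref{prop:hlfphlfp}) one checks this is \hetale. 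Functoriality and the fact that $\mf{F}\circ\mf{G} \cong \mathrm{id}$ are then both immediate, since $B_i/IB_i = R_i/IR_i \simeq \ol{B_i}$ by construction and the induced map on reductions is the original one by the uniqueness clauses above.

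The remaining point is the natural isomorphism $\mf{G}\circ\mf{F} \cong \mathrm{id}$. Start with an \hetale map $(A,I) \to (B,IB)$, so by definition $B \simeq S^h$ for some \Et $A$-algebra $S$, and $\mf{F}(B) = B/IB \simeq S/IS$. Applying $\mf{G}$ produces $(S^h, IS^h) \simeq (B,IB)$, and the isomorphism is the identity on reductions mod $I$; by the uniqueness part of the universal property of Henselization (applied to the \Et algebra $S$ mapping into the Henselian pair $B$), the resulting $A$-algebra isomorphism $\mf{G}(\mf{F}(B)) \simto B$ is canonical, and naturality in $B$ follows from the same uniqueness. I expect the main obstacle to be purely bookkeeping: ensuring that every map produced by a universal property is the \emph{unique} one lifting a prescribed reduction mod $I$, so that all the squares needed for well-definedness and naturality commute — this is where one leans repeatedly on the uniqueness clause in the universal property of Henselization and on Proposition~\ref{prop:hensetexp}. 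No genuinely new idea beyond those already assembled in Section~\ref{sec:etsm} is needed; full faithfulness of $\mf{F}$ is subsumed in the verification that $\mf{G}$ is a functor with $\mf{F}\mf{G} \cong \mathrm{id}$ and $\mf{G}\mf{F} \cong \mathrm{id}$.
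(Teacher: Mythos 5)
Your quasi-inverse architecture is a legitimate repackaging of the paper's argument (which instead checks essential surjectivity, fullness, and faithfulness of $\mf{F}$ directly), but as written it has a genuine gap: every well-definedness, functoriality, and naturality claim you make ultimately rests on the assertion that an $A$-algebra map between \hetale algebras is determined by its reduction modulo $I$, and you attribute this to ``the uniqueness clause in the universal property of Henselization.'' That clause only says that a given map $R_1 \to C$ into a Henselian pair extends \emph{uniquely} to $R_1^h \to C$; it says nothing about whether two maps $R_1 \to C$ with the same reduction mod $I$ must coincide. The latter is exactly the faithfulness of $\mf{F}$, and it needs a separate input: since $R_1$ is \Et over $A$, the diagonal in $\spec(R_1 \tens_A R_1)$ is open, so the locus in $\spec(C)$ where two $A$-maps $R_1 \rightrightarrows C$ agree is open; it contains $V(IC)$, and since $IC \subseteq \Jac(C)$ the only open set containing $V(IC)$ is all of $\spec(C)$. (Equivalently: sections of the \Et $C$-algebra $R_1 \tens_A C$ lift uniquely from $C/IC$ because idempotents lift uniquely, Lemma~\ref{lem:sphenspair}.) This is precisely the paper's faithfulness argument and cannot be omitted --- without it your $\mf{G}$ is not even well-defined on morphisms, because the lift of $\ol{g}$ you construct is a priori non-unique, and the round-trip isomorphisms are not canonical.

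A second, smaller misstep: in defining $\mf{G}$ on morphisms you write down ``the composite $R_1 \to R_2 \to R_2^h = B_2$,'' but no map $R_1 \to R_2$ lifting $\ol{g}$ exists in general. What does exist (and is the real content of fullness) is a lift $R_1 \to B_2$: apply the section-lifting property of the Henselian pair $(B_2, IB_2)$ to the \Et $B_2$-algebra $R_1 \tens_A B_2$, whose reduction mod $I$ admits the section induced by $\ol{g}$ followed by multiplication --- this is the tensor-product device used in the proof of Theorem~\ref{thm:hetale}, and is what the paper accomplishes by citing Arabia. You invoke the right idea for well-definedness on objects (``lifting the isomorphism \dots using the Henselian property of the target''), but then bypass it at the step where it is actually needed. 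With these two repairs --- the \Et-diagonal/idempotent uniqueness argument for faithfulness and the corrected lifting target for fullness --- your quasi-inverse construction goes through and becomes essentially equivalent to the paper's proof.
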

If the ideal $I$ is radical, then the quotient $B/IB$ is reduced since it is \Et over the reduced ring $A/I$. Therefore the ideal $IB$ is also radical.

\begin{proof} We first note that for any \hetale map $(A,I) \to (B,IB)$, the map $A/I \to B/IB$ is  \Et by Proposition~\ref{prop:hetisformal} and Lemma~\ref{lem:finet}, so this functor is well-defined. 

To show that $\mf{F}$ is  an equivalence of categories, we must show that it is both essentially surjective and fully faithful. That is, we must show that for every \Et $A/I$-algebra $\ol{S}$ there exists an \hetale $A$-algebra $S$ so that $S/IS \simeq \ol{S}$, and that for any \hetale $A$-algebras $B,B'$, the natural map $$\Hom_A(B,B') \to \Hom_{A/I}(B/IB,B'/IB')$$ is a bijection.

We begin by showing essential surjectivity. Let $\ol{S}$ be any \Et algebra over $A/I$. We can find an \Et $A$-algebra $S_1$ with $S_1/IS_1 \simeq \ol{S}$ by \spcite{04D1}{Lemma}. Now letting $S = S_1^h$ be the Henselization of the pair $(S_1,IS_1)$, we have that $S/IS \simeq S_1/IS_1 \simeq \ol{S}$ and $S$ is \hetale over $A$. Therefore $\mf{F}$ is essentially surjective. 

We now show fullness of $\mf{F}$ -- that is, given a map $\ol{g}: B/IB \to B'/IB'$ over $A/I$, where $B,B'$ are \hetale $A$-algebras, we wish to find a lift $g: B \to B'$. 

Assume that $R,R'$ are \Et $A$-algebras such that $R^h \simeq B, (R')^h \simeq B'$. Then we can find an \Et $R'$-algebra $R''$ with $R'/IR' \simeq R''/IR''$ and a map $\widetilde{g}: R \to R''$ so that the map $R/IR \to R''/IR''$ induced by $\widetilde{g}$ is equal to $\ol{g}$ via the isomorphisms $R/IR \simeq B/IB, R'/IR' \simeq R''/IR'' \simeq B'/IB'$ \cite[Corollary 2.1.3]{arabia}. 

We see that $(R')^h \simeq (R'')^h \simeq B'$ (Proposition~\ref{prop:hensetexp}). Therefore  the composition of $\widetilde{g}$ with the map $R'' \to (R'')^h \simeq B'$ gives us a map $R \to B'$. This map factors through $R^h \simeq B$ by the universal property of Henselization, so we get a map $g: B \to B'$  which  induces $\ol{g}$ modulo $I$ by construction. Hence $\mf{F}$ is full.

Finally we show faithfulness of the functor; we must show that for $B,B'$ a pair of \hetale $A$-algebras and two maps $f,g \in \Hom_A(B,B')$, if $f$ and $g$ are equal as elements of $\Hom_{A/I}(B/IB,B'/IB')$, then  $f=g$. 

We can once again choose \Et $A$-algebras $R,R'$ such that $R^h \simeq B, (R')^h \simeq B'$. Then $R$ is necessarily finitely presented over $A$. Since the Henselization is defined as a filtered colimit, we can find some \Et $R'$-algebra $R''$ such that $R''/IR'' \simeq R'/IR'$ with maps $\widetilde{f},\widetilde{g}: R \to R''$ which induce $f,g$ respectively on the Henselizations $R^h \to (R'')^h$ (i.e. $ B \to B'$) by \spcite{00QO}{Lemma}.  For ease of notation we can replace $R'$ with $R''$, as $(R'')^h \simeq (R')^h$ (Proposition~\ref{prop:hensetexp}), so we can assume that $\widetilde{f},\widetilde{g}$ are maps $R \to R'$, as in the commutative diagram below.\begin{center}
\begin{tikzcd}
	&[-36pt]R & {R'} &[-36pt] \\
	B \simeq &R^h & (R')^h &\simeq B'
	\arrow["{\widetilde{f}}", shift left, from=1-2, to=1-3]
	\arrow["{\widetilde{g}}"', shift right, from=1-2, to=1-3]
	\arrow[from=1-2, to=2-2]
	\arrow[from=1-3, to=2-3]
	\arrow["f", shift left, from=2-2, to=2-3]
	\arrow["g"', shift right, from=2-2, to=2-3]
\end{tikzcd}
\end{center}Consider the map $\Phi: \spec(R') \to \spec(R \tens_A R)$ defined by the map of rings $\widetilde{f} \tens \widetilde{g}: R \tens_A R \to R'$ where $(\widetilde{f} \tens \widetilde{g})(r \tens s) = \widetilde{f}(r)\widetilde{g}(s)$. 

Because $R$ is \Et over $A$, the image $\Delta$ of the diagonal morphism $\spec(R) \to \spec(R \tens_A R)$ is open. Therefore $E_R=\Phi^{-1}(\Delta) \subset \spec(R')$ is also open, and it must contain $V(IR')$ since $\widetilde{f}$ and $\widetilde{g}$ are equal modulo $I$. 

Let $E_{B'}$ be the inverse image of $E_R$ along the morphism $\spec(B') \to \spec(R')$, which must be open. We then see that $E_{B'}$ contains $V(IB')$. However, since $IB'\subseteq \Jac(B)$, the only open set containing $V(IB')$ is the whole space $\spec(B')$. Therefore $E_{B'}=\spec(B')$.

Because $E_{B'}=\spec(B')$, we see that the map $R \to R^h \simeq B$ equalizes the maps $f,g: B \to B' \simeq (R')^h$. Then by the universal property of Henselization we have $f=g$ as we desired to show. Hence $\mf{F}$ is fully faithful.


Then $\mf{F}$ is essentially surjective and fully faithful, hence an equivalence of categories as we desired to show. \end{proof}

We now define a global analogue of the affine notions of \hetale and \hsmooth maps.

\begin{definition}\label{def:hsmetglob} A morphism $f: (X,\O_X) \to (Y,\O_Y)$ of Henselian schemes is {\bf \hetale} if it can be locally described as a morphism of affine Henselian schemes $\sph(B,J) \to \sph(A,I)$ such that the corresponding map of Henselian pairs $(A,I) \to (B,J)$ is \hetale (so $\sqrt{J}=\sqrt{IB}$). 

Similarly, a morphism $f: (X,\O_X) \to (Y,\O_Y)$ of Henselian schemes is {\bf \hsmooth} if it can be locally described as a morphism of affine Henselian schemes $\sph(B,J) \to \sph(A,I)$ such that the corresponding map of Henselian pairs $(A,I) \to (B,J)$ is \hsmooth (so $\sqrt{J}=\sqrt{IB}$). 
\end{definition}

It is clear that \hetale and \hsmooth morphisms of Henselian schemes are \hlfp. Also, by the Zariski-local nature of the properties of being \hetale or \hsmooth for maps of Henselian pairs (by Lemmas~\ref{lem:finet}~and~\ref{lem:finsm} and Propositions~\ref{prop:flathenset}~and~\ref{prop:flathenssm}) we see that Definition~\ref{def:hsmetglob} recovers Definition~\ref{def:hsmet} for affine Henselian schemes $X$ and $Y$.

We will now prove a global analogue of Proposition~\ref{prop:affcatequiv}. 

\begin{proposition}\label{prop:globcatequiv} Let $(X,\O_X)$ be a Henselian scheme and let $(X_0,\O_{X_0})$ be the scheme defined as in Remark~\ref{rmk:getususch}. The functor $\mf{F}$ from the category of Henselian schemes \hetale over $X$ to the category of schemes \Et over $X_0$ which sends $(Y,\O_Y)$ to $(Y_0,\O_{Y_0})$ (defined as in Remark~\ref{rmk:getususch}) is an equivalence of categories.
\end{proposition}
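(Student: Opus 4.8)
The plan is to globalize Proposition~\ref{prop:affcatequiv} by a Zariski-gluing argument, using throughout that a Henselian scheme and its associated ordinary scheme from Remark~\ref{rmk:getususch} have the same underlying topological space, and that $\mf{F}$ preserves this space and the underlying continuous maps. Fix once and for all an affine open cover $\{U_\alpha = \sph(A_\alpha,I_\alpha)\}$ of $X$ with each $I_\alpha$ radical (possible by Remark~\ref{rmk:maphens}); it induces an affine open cover $\{U_{\alpha,0} = \spec(A_\alpha/I_\alpha)\}$ of $X_0$. First I would note that $\mf{F}$ is well-defined: an \hetale map $(A,I)\to(B,IB)$ has $B \simeq R^h$ for some \Et $R/A$, so $B/IB \simeq R/IR$ is \Et over $A/I$; since \hetale-ness is Zariski-local (Definition~\ref{def:hsmetglob} and the remark following it), reductions of \hetale morphisms are \Et, and functoriality is clear.

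For full faithfulness, fix \hetale morphisms $Y,Y' \to X$ and a morphism $\psi_0\colon Y_0 \to Y'_0$ over $X_0$. Since $Y_0$ and $Y$ share an underlying space, and likewise for $Y'$, I would cover $Y'$ by affine opens $\sph(B'_k)$, pull back the cover $\{\spec(B'_k/IB'_k)\}$ of $Y'_0$ along $\psi_0$, and refine to an affine open cover $\{\sph(B_l)\}$ of $Y$ such that each $\sph(B_l)$ maps into some $\sph(B'_{k(l)})$ and, after shrinking, lies over some $U_{\alpha(l)}$; then $B_l$ and $B'_{k(l)}$ are \hetale $A_{\alpha(l)}$-algebras. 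By Proposition~\ref{prop:affcatequiv} the algebra map $B'_{k(l)}/I_{\alpha(l)}B'_{k(l)} \to B_l/I_{\alpha(l)}B_l$ underlying $\psi_0$ lifts uniquely to an $A_{\alpha(l)}$-algebra map $B'_{k(l)} \to B_l$, i.e. to a morphism $\sph(B_l) \to \sph(B'_{k(l)})$ over $X$. On overlaps these local lifts agree: by Proposition~\ref{prop:acl} the overlap of two of the $\sph(B_l)$ is covered by common distinguished affine opens, on each of which both lifts reduce to the same map and hence coincide by the faithfulness in Proposition~\ref{prop:affcatequiv}. Thus they glue to a morphism $\psi\colon Y \to Y'$ over $X$ with $\mf{F}(\psi)=\psi_0$, and the same local comparison shows $\psi$ is the unique such lift; hence $\mf{F}$ is fully faithful.

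For essential surjectivity, let $g\colon Z \to X_0$ be \Et. For each $\alpha$ the base change $Z_\alpha := Z \times_{X_0} U_{\alpha,0}$ is \Et over $\spec(A_\alpha/I_\alpha)$; I would choose an affine open cover $\{\spec(\ol{B}_{\alpha,i})\}$ of $Z_\alpha$ by \Et $A_\alpha/I_\alpha$-algebras and lift each, via Proposition~\ref{prop:affcatequiv}, to an \hetale $A_\alpha$-algebra $B_{\alpha,i}$ with $B_{\alpha,i}/I_\alpha B_{\alpha,i} \simeq \ol{B}_{\alpha,i}$; the affine Henselian scheme $\sph(B_{\alpha,i})$ has underlying space $\spec(\ol{B}_{\alpha,i})$. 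The gluing datum for $Z_\alpha$ consists of opens $W_{ij} \subset \spec(\ol{B}_{\alpha,i})$ and isomorphisms $\phi_{ij}\colon W_{ij} \simto W_{ji}$ over $A_\alpha/I_\alpha$ satisfying the cocycle condition; each $W_{ij}$ is canonically the underlying space of an open Henselian subscheme $\widehat{W}_{ij} \subset \sph(B_{\alpha,i})$, which is \hetale over $U_\alpha$ (being open in an \hetale $U_\alpha$-scheme), and by the full faithfulness just established (applied over $U_\alpha$, together with Proposition~\ref{prop:acl}) each $\phi_{ij}$ lifts uniquely to $\widehat{\phi}_{ij}\colon \widehat{W}_{ij} \simto \widehat{W}_{ji}$; uniqueness forces the cocycle identity, so these glue to an \hetale morphism $Y_\alpha \to U_\alpha$ with $(Y_\alpha)_0 \simeq Z_\alpha$ canonically. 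Finally, over $U_\alpha \cap U_\beta$ the two \hetale schemes $Y_\alpha|$ and $Y_\beta|$ both have reduction canonically identified with $Z|_{U_{\alpha,0}\cap U_{\beta,0}}$, so by full faithfulness there is a unique isomorphism $Y_\alpha| \simeq Y_\beta|$ inducing this identification; the cocycle condition again follows from uniqueness, and gluing the $Y_\alpha$ produces an \hetale $Y \to X$ with $\mf{F}(Y)=Y_0 \simeq Z$ over $X_0$. Together with the previous paragraph, this shows $\mf{F}$ is an equivalence.

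The hard part will be the bookkeeping in the last paragraph: tracking the opens $W_{ij}$ and the cocycle isomorphisms through the affine equivalence, and repeatedly invoking Proposition~\ref{prop:acl} to keep the affine structures on overlaps compatible. Each individual step is forced once one knows that open Henselian subschemes are determined by their underlying opens and that lifted morphisms are unique, but assembling the descent data carefully is where the care is needed.
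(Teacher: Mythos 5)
Your proof is correct and follows essentially the same route as the paper: reduce to the affine equivalence of Proposition~\ref{prop:affcatequiv} by Zariski-locality and glue, using full faithfulness to assemble local preimages for essential surjectivity. The only difference is that you spell out the descent bookkeeping (lifting gluing data, forcing cocycle conditions via uniqueness) that the paper's proof leaves implicit.
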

\begin{proof} To see that $\mf{F}$ is well-defined, we recall the construction in Remark~\ref{rmk:getususch} and see that, locally on Henselian affine opens in $X$, $\mf{F}$ behaves exactly as the functor of Proposition~\ref{prop:affcatequiv}.

We first prove that $\mf{F}$ is faithful. We consider two Henselian schemes $Y,Y'$ both \hetale over $X$ with morphisms $f,g: Y \to Y'$. Assume that the morphisms $f_0,g_0: Y_0 \to Y_0'$ between the corresponding \Et $X_0$-schemes agree. To show $f=g$, we can check Zariski-locally, so we can assume $X$ is Henselian affine. Since $|Y|=|Y_0|$, we can also assume $Y$ is affine and deduce that $f=g$ from Proposition~\ref{prop:affcatequiv}. Thus $\mf{F}$ is faithful.

To see that $\mf{F}$ is full, we again consider two Henselian schemes $Y,Y'$ (\hetale over $X$) with a morphism $f_0: Y_0 \to Y_0'$. Because we have already shown $\mf{F}$ is faithful and $|Y|=|Y_0|,|Y'|=|Y_0|$, we may construct the desired morphism $f: Y \to Y'$ by working Zariski-locally on $Y$. Thus we can again reduce to the affine case and apply Proposition~\ref{prop:affcatequiv} to show that $\mf{F}$ is full.

Finally, we wish to show that $\mf{F}$ is essentially surjective -- that is, given an \Et $X_0$-scheme $Z$ we wish to obtain $Y$ \hetale over $X$ such that $Y_0=Z$. Since $\mf{F}$ is fully faithful, we may construct $Y$ by working Zariski-locally on $X$. Thus we have once more reduced to the affine case Proposition~\ref{prop:affcatequiv}. 

Therefore $\mf{F}$ is essentially surjective, hence an equivalence of categories as we desired to show.
\end{proof}

We will now define the small \hetale site and show that quasi-coherent sheaves for the Zariski topology of a Henselian scheme are sheaves for the \hetale topology.

\begin{definition}\label{def:hetsite}
The {\bf small \hetale site} $X_{\het}$ of a Henselian scheme $X$ is the category of Henselian schemes \hetale over $X$, with the evident notion of a covering. \end{definition}

The functor of Proposition~\ref{prop:globcatequiv} gives an isomorphism of sites $(X_0)_{\et} \to X_{\het}$ for $X_0$ as in Remark~\ref{rmk:getususch}.

Quasi-coherent sheaves on a scheme are sheaves for the \Et topology. We now show that similarly, quasi-coherent sheaves on Henselian schemes are also sheaves on the small \hetale site, as mentioned in Section~\ref{sec:intro}. This will be essential to proving a version of formal GAGA for Henselian schemes in \cite{ghga}, as discussed in Remark~\ref{rmk:ghgahet}.

\thmqcohhet

\begin{proof} It suffices to check that $\F_{\het}$ is an \hetale sheaf Zariski-locally on $X$, since it is a sheaf for the Zariski topology on each object of $X_\het$. Thus we can reduce to the case where $X$ is an affine Henselian scheme $\sph(A,I)$ for a Henselian pair $(A,I)$ and $\F=\widetilde{M}$ for some $A$-module $M$. 

Let $Y=\spec(A)$. For the underlying scheme $X_0$ of $X$, as defined in Remark~\ref{rmk:getususch}, the small Zariski sites $(X_0)_{\zar}$ and $X_{\zar}$ are the same since $X$ and $X_0$ have the same underlying topological space. Furthermore, the small \Et site $(X_0)_{\et}$ and the small \hetale site $X_{\het}$ are equivalent via the functor of Proposition~\ref{prop:globcatequiv}. In fact $X_0=\spec(A/I)$ and we have a canonical closed immersion $\iota: X_0 \to Y$ such that $X$ is the Henselization of $Y$ along the closed subscheme $X_0$. 

 We will show that the presheaf $\F_\het$ is a \hetale sheaf by giving an isomorphism to a sheaf on the equivalent site $(X_0)_\et$. 

Let $\F'$  be the sheaf on $\spec(A)$ associated to $M$. By Lemma~\ref{lem:modpullback}, $\F$ is the pullback of $\F'$ along the canonical map $\sph(A) \to \spec(A)$. We may now define a sheaf $\F_0 = \iota_{\et}^{-1}((\F')_{\et})$ on $(X_0)_{\et}$ (hence a sheaf on $X_\het$). 

Applying Lemma~\ref{lem:tildefunctor} we obtain a map of Zariski sheaves $\F \to \F_0|_{(X_0)_{\zar}}$, which is an isomorphism by Lemma~\ref{lem:fancypullback}. For each affine object $X' =\sph(B,J) \to X$ of $X_\het$ with underlying scheme $X_0'=\spec(B/J) \subseteq \spec(B)=:Y'$, we can apply Lemmas~\ref{lem:tildefunctor}~and~\ref{lem:fancypullback} again to get an isomorphism $\F_\het|_{X'_\zar} \simto \F_0|_{(X_0')_\zar}$ on $X'_\zar=(X_0')_\zar$.

Thus we obtain via ``gluing'' a map $\F_\het \to \F_0$ of presheaves on $X_\het$. To check that this map is an isomorphism, it suffices to work Zariski-locally on each object $Z \to X$ of $X_\het$. Therefore it suffices to compare sections on affine objects $X' \to X$ of $X_\het$. 
Since we have an isomorphism $\F_\het|_{X'_\zar} \simto \F_0|_{(X_0')_\zar}$ given by Lemma~\ref{lem:fancypullback}, in fact $\Gamma(X', \F_{\het}) \simto \Gamma(X_0',\F_0)$ as well. 

Hence $\F_\het \simto \F_0$ is an isomorphism, so $\F_\het$ is a \hetale sheaf as we desired to show.
 \end{proof}

\begin{remark}\label{rmk:morhet} For a quasi-coherent sheaf $\F$ on a scheme $X$ with closed subscheme $Y$ (and Henselization $X^h$ along $Y$), the sheaf $(\F^h)_{\het}$ on the small \hetale site $(X^h)_{\het}$ of $X^h$ is the pullback of $\F_{\et}$ along the morphism of sites $(X^h)_{\het} \to X_{\et}$ corresponding to the morphism of locally ringed spaces $X^h \to X$. (This follows from Lemma~\ref{lem:fancypullback} and the construction of $\F_0$ in the proof of Theorem~\ref{thm:qcohhet}.) Thus to simplify notation, we will often write $(\F_{\et})^h$ for $(\F^h)_{\het}$.

It is also clear that pullback and ``finite pushforward'' (see Lemma~\ref{lem:modpush}) of quasi-coherent sheaves along morphisms of Henselian schemes are compatible with the functor $\F \mapsto \F_{\het}$ of Theorem~\ref{thm:qcohhet}. 

\end{remark}

\begin{lemma}\label{lem:stalkhet} Let $X$ be a Henselian scheme, with $X_0$ the scheme with the same underlying topological space as $X$ defined in Remark~\ref{rmk:getususch}. For $\ol{x}_0$ a geometric point of $X_0$ lying over $x \in X_0$, consider $x$ as a point of $X$ and write $\ol{x}$ for the geometric point of $X$ arising from $\ol{x}_0$. Then the following statements hold:

\begin{enumerate}[(i)]
\item the stalk of the structure sheaf $\O_{X_{\het}}$ of the small \hetale site at $\ol{x}$ is isomorphic to $\O_{X,x}^{\rm sh}$, the strict Henselization of the local ring $\O_{X,x}$ along its {\it maximal ideal},

\item if $X$ is the Henselization of a scheme $Y$ along a closed subscheme $Z$, then considering $\ol{x}_0$ as a geometric point of $Z=X_0 \subset Y$, the stalk of $\O_{Y_{\et}}$ at $\ol{x}_0$ is isomorphic to the stalk of $\O_{X_{\het}}$ at $\ol{x}$. 
\end{enumerate}
\end{lemma}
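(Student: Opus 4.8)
We establish (ii) first and then deduce (i) from it.

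\emph{Part (ii).} Set $Z=X_0$, so that $X=Y^h$ is the Henselization of $Y$ along $Z$ and, by Proposition~\ref{prop:globcatequiv}, the functor $\mf{F}$ identifies the site $X_{\het}$ with $Z_{\et}$. The key point will be that under this identification $\O_{X_{\het}}$ corresponds to the sheaf $\iota_{\et}^{-1}\left(\O_{Y_{\et}}\right)$ on $Z_{\et}$, where $\iota\colon Z\hookrightarrow Y$ is the inclusion; this is the small-\hetale-site refinement of the identity $\O_{X^h}=\iota_{\et}^{-1}\left(\O_{Y_{\et}}\right)|_{Z_{\zar}}$ from Definition~\ref{def:hensationsch}. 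To verify it we may argue locally on $Y$, so take $Y=\spec A$ and $Z=\spec(A/I)$; an affine object of $Z_{\et}$ is then $\spec(B/IB)$ for a \hetale $A$-algebra $B$, the morphism $\spec B\to\spec A$ is a cofiltered limit of \Et morphisms (because $B$ is a filtered colimit of \Et $A$-algebras), and applying \spcite{09ZH}{Lemma} to the Henselian pair $(B,IB)$ and the closed immersion $\spec(B/IB)\hookrightarrow\spec B$ yields $$\left(\iota_{\et}^{-1}\left(\O_{Y_{\et}}\right)\right)\!\bigl(\spec(B/IB)\bigr)\;\cong\;\Gamma\!\left(\spec B,\O_{(\spec B)_{\et}}\right)\;=\;B\;=\;\Gamma\!\left(\sph(B,IB),\O_{\sph(B,IB)}\right),$$ precisely as in the proofs of Propositions~\ref{prop:isshf} and~\ref{prop:hensationshf}; one then checks naturality in the object and compatibility with restriction, gluing, and the ring structure to conclude that this is an isomorphism of sheaves of rings on $Z_{\et}$. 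Granting this, (ii) is immediate because inverse image of sheaves commutes with stalks at geometric points: $$\O_{X_{\het},\ol{x}}\;\cong\;\left(\iota_{\et}^{-1}\left(\O_{Y_{\et}}\right)\right)_{\ol{x}_0}\;\cong\;\O_{Y_{\et},\,\iota(\ol{x}_0)}\;=\;\O_{Y_{\et},\ol{x}_0},$$ the last term being the stalk of $\O_{Y_{\et}}$ at $\ol{x}_0$ viewed as a geometric point of $Z\subset Y$.

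\emph{Part (i).} We reduce to the case $X=\sph(A,I)$ and let $\mf{p}\in V(I)$ be the image of $x$, so $\O_{X,x}=(A_{\mf{p}})^h$ is the Henselization of $A_{\mf{p}}$ with respect to $IA_{\mf{p}}$ (Remark~\ref{rmk:locringspace}). Since $(A,I)$ is already a Henselian pair, $\sph(A,I)$ is the Henselization $(\spec A)^h$ of the scheme $\spec A$ along $V(I)$ (Definition~\ref{def:hensationsch}), so (ii) applies with $Y=\spec A$ and gives $$\O_{X_{\het},\ol{x}}\;\cong\;\O_{(\spec A)_{\et},\ol{x}}\;=\;(A_{\mf{p}})^{\rm sh},$$ the strict Henselization of $A_{\mf{p}}$ at $\mf{p}$ with separable closure prescribed by $\ol{x}$, by the standard description of the stalk of the structure sheaf on a small \Et site. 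It remains to observe that $\O_{X,x}^{\rm sh}=\left((A_{\mf{p}})^h\right)^{\rm sh}$ coincides with $(A_{\mf{p}})^{\rm sh}$: indeed $(A_{\mf{p}})^h$ is a filtered colimit of \Et $A_{\mf{p}}$-algebras, so its strict Henselization at the maximal ideal (lying over $\mf{p}$) is again a strictly Henselian local ring realized as a filtered colimit of \Et $A_{\mf{p}}$-algebras equipped with a compatible geometric point, hence is canonically $(A_{\mf{p}})^{\rm sh}$ --- that is, strict Henselization factors through the $I$-adic Henselization. Combining the displays gives $\O_{X_{\het},\ol{x}}\cong\O_{X,x}^{\rm sh}$.

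The chief difficulty is the identification $\O_{X_{\het}}\cong\iota_{\et}^{-1}\left(\O_{Y_{\et}}\right)$ underlying (ii). Agreement on affine objects is the same \spcite{09ZH}{Lemma} manipulation that recurs throughout Section~\ref{sec:def}, but care is needed to see that it is natural in the object, compatible with restriction and gluing, and compatible with the ring structures, so that it genuinely descends to an isomorphism of structure sheaves under the site identification $X_{\het}\cong Z_{\et}$. The remaining inputs --- that $\iota_{\et}^{-1}$ commutes with stalks, that the structure-sheaf stalk on a small \Et site is a strict Henselization, and that strict Henselization is insensitive to first passing to an ind-\Et local extension such as $(A_{\mf{p}})^h$ --- are standard.
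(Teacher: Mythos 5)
Your proposal is correct and follows essentially the same route as the paper: part (ii) via the site equivalence of Proposition~\ref{prop:globcatequiv} and the identification $\O_{X_{\het}}\cong\iota_{\et}^{-1}(\O_{Y_{\et}})$ (checked on affine objects with \spcite{09ZH}{Lemma} and combined with the fact that inverse image commutes with stalks), and part (i) by reducing to the affine case, invoking (ii) for $Y=\spec(A)$, and observing that strict Henselization at the maximal ideal is unchanged by first passing to the ind-\Et extension $(A_{\mf p})^h$. The paper justifies that last step slightly more explicitly (via \spcite{0DYD}{Lemma}, \spcite{08HS}{Lemma}, and \spcite{08HT}{Lemma}), but the content is identical.
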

\begin{proof}

We will first prove the statement (ii). 

In the situation of (ii), the small \Et site $Z_{\et}$ and the small \hetale site $X_{\het}$ are equivalent by Proposition~\ref{prop:globcatequiv}. As in the proof of Theorem~\ref{thm:qcohhet}, we see that  $\O_{X_{\het}}=\iota_{\et}^{-1}(\O_{Y_{\et}})$ as sheaves on $Z_{\et}$. Then by the definition of the inverse image functor, we get the desired equality of stalks \spcite{03Q1}{Lemma}.

To prove (i), we note that the stalk of $\O_{X_{\het}}$ at $\ol{x}$ can be computed Henselian affine-locally on $X$; then since a Henselian affine scheme $\sph(A,I)$ is the Henselization of $\spec(A)$ along $\spec(A/I)$, we can reduce to the situation of (ii). Hence we assume that $X$ is the Henselization of a scheme $Y$ along a closed subscheme $Z$, with $\ol{x}_0$ a geometric point of $Z$ over a point $x \in Z \subset Y$.

By (ii), we have an isomorphism of stalks $\O_{Y_{\et},\ol{x}_0} \simeq \O_{X_{\het},\ol{x}}$. By \spcite{04HX}{Lemma} we have $\O_{Y_{\et},\ol{x}_0} \simeq (\O_{Y,x})^{\rm sh}$, where the strict Henselization is taken with respect to the {\it maximal ideal} of $\O_{Y,x}$.

If we let $\mc{I}$ be the radical ideal sheaf corresponding to the closed subscheme $Z \subset Y$, we know that $\O_{X,x}$ is the $\mc{I}_x$-adic Henselization of $\O_{Y,x}$. These local rings have the same residue field.

Since the strict Henselization  $(\O_{Y,x})^{\rm sh}$ is Henselian for the maximal ideal of $\O_{Y,x}$, which contains $\mc{I}_x$, by \spcite{0DYD}{Lemma} the ring $(\O_{Y,x})^{\rm sh}$ is also Henselian with respect to $\mc{I}_x(\O_{Y,x})^{\rm sh}$. Therefore the morphism $\O_{Y,x} \to (\O_{Y,x})^{\rm sh}$ factors through $\O_{X,x}$. Both $\O_{X,x}$ and $(\O_{Y,x})^{\rm sh}$ are filtered colimits of \Et $\O_{Y,x}$-algebras, so $\O_{X,x} \to (\O_{Y,x})^{\rm sh}$ is a filtered colimit of \Et ring maps by \spcite{08HS}{Lemma}.
 
 Then by uniqueness properties of the strict Henselization \spcite{08HT}{Lemma}, we see that the map $(\O_{X,x})^{\rm sh} \to (\O_{Y,x})^{\rm sh}$ is an isomorphism, thus proving (i).\end{proof}

We will now compare the cohomology of quasi-coherent sheaves on the small \hetale site to Zariski cohomology. To simplify notation, for a quasi-coherent sheaf $\F$ on a Henselian scheme $X$, we write $\H^i_{\het}(X,\F)$ for the cohomology group $\H^i(X_{\het},\F_{\het})$. Similarly for a quasi-coherent sheaf $\G$ on a scheme $Y$, we write $\H^i_{\et}(Y,\G)$ for $\H^i(Y_{\et},\G_{\et})$. We will first show that the \hetale cohomology of a quasi-coherent sheaf on a Henselian affine scheme of characteristic $p > 0$ vanishes in positive degrees.

\begin{lemma}\label{lem:thmbhet} Let $(A,I)$ be a Henselian pair such that $A$ has characteristic $p > 0$. Then for a quasi-coherent sheaf $\F$ on $\spec(A)$ and any affine object $Z$ of $\sph(A)_{\het}$, the \hetale cohomologies $\H^j_{\het}(Z,\F^h)$ are $0$ for $j > 0$.
\end{lemma}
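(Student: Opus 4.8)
The plan is to reduce the statement, via the equivalence of sites $\sph(A)_{\het} \simeq \spec(A/I)_{\et}$, to the affine \'etale vanishing already established in the course of the proof of Theorem~\ref{thm:thmbpos}. First I would unwind what an affine object $Z$ of $\sph(A)_{\het}$ is: after replacing its ideal of definition by $IB$ (which is harmless by Remark~\ref{rmk:rad}), it has the form $Z = \sph(B,IB)$ for an \hetale $A$-algebra $B$, so by Definition~\ref{def:hsmet} we may write $B \simeq R^h$ with $R$ an \'etale $A$-algebra and $R^h$ its $I$-adic Henselization; in particular $B/IB \simeq R/IR$ is \'etale over $A/I$. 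Since $A$ is an $\mathbf{F}_p$-algebra, so is $B$, and therefore every $\O$-module sheaf occurring below is $p$-torsion as an abelian sheaf.

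Next I would transport the group $\H^j_{\het}(Z,\F^h)$ through Proposition~\ref{prop:globcatequiv}. Combined with Remark~\ref{rmk:morhet} (and the construction of the sheaf $\F_0$ in the proof of Theorem~\ref{thm:qcohhet}), this identifies $(\F^h)_{\het}$ with $\iota_{\et}^{-1}(\F_{\et})$, for $\iota \colon \spec(A/I) \hookrightarrow \spec(A)$ the canonical closed immersion, and identifies $Z$ with the affine object $\spec(B/IB) \in \spec(A/I)_{\et}$, so that
\[
\H^j_{\het}(Z,\F^h) \;\simeq\; \H^j_{\et}\left(\spec(B/IB),\ \left.\iota_{\et}^{-1}(\F_{\et})\right|_{\spec(B/IB)}\right).
\]
But the right-hand side is exactly the quantity shown to vanish for $j > 0$ inside the proof of Theorem~\ref{thm:thmbpos}: using the Cartesian square relating the closed immersion $\spec(B/IB) \hookrightarrow \spec(B)$ to $\iota$, one rewrites $\iota_{\et}^{-1}(\F_{\et})|_{\spec(B/IB)}$ as $\phi_{\et}^{-1}(\F_{\et})|_{\spec(B/IB)}$, where $\phi \colon \spec(B) = \spec(R^h) \to \spec(A)$ is a cofiltered limit of \'etale morphisms; then, since $(B,IB)$ is a Henselian pair and $\phi_{\et}^{-1}(\F_{\et})$ is $p$-torsion, Gabber's theorem \spcite{09ZI}{Theorem} gives $\H^j_{\et}(\spec(B),\phi_{\et}^{-1}(\F_{\et})) \simeq \H^j_{\et}(\spec(B/IB),\phi_{\et}^{-1}(\F_{\et})|_{\spec(B/IB)})$ for every $j$; and finally $\phi_{\et}^{-1}(\F_{\et})$ is the \'etale sheaf attached to a quasi-coherent module on the affine scheme $\spec(B)$ (using quasi-coherence of $\F$ together with the fact that $\phi$ is a cofiltered limit of \'etale maps, exactly as in the proof of Proposition~\ref{prop:hensationshf}), so its higher \'etale cohomology vanishes because it coincides with Zariski cohomology on an affine scheme. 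Chaining these isomorphisms yields $\H^j_{\het}(Z,\F^h) = 0$ for $j > 0$.

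I do not expect a genuine obstacle here, since all of the real work has already been done in Theorem~\ref{thm:thmbpos}; the only new ingredient is the observation that, under $\sph(A)_{\het} \simeq \spec(A/I)_{\et}$, the \hetale cohomology of $\F^h$ over an affine object $Z$ is computed by precisely the vanishing step of that proof. The one point requiring a little care is checking that the restriction of $(\F^h)_{\het}$ to $Z$ really is the restriction of $\iota_{\et}^{-1}(\F_{\et})$ to the corresponding \'etale $\spec(A/I)$-scheme; this is what Remark~\ref{rmk:morhet} provides. If one prefers to avoid the site-theoretic language, an equivalent route is simply to repeat the middle portion of the proof of Theorem~\ref{thm:thmbpos} with its \'etale $A$-algebra taken to be $R$ and with ``Zariski cohomology on $Z$'' replaced throughout by ``\'etale cohomology on $\spec(B/IB)$''.
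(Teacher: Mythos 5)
Your proposal is correct and follows essentially the same route as the paper: transport the computation through the site equivalence $\sph(A)_{\het}\simeq\spec(A/I)_{\et}$ of Proposition~\ref{prop:globcatequiv} (with Remark~\ref{rmk:morhet} identifying $(\F^h)_{\het}$ with $\iota_{\et}^{-1}(\F_{\et})$), and then invoke the affine vanishing step from the proof of Theorem~\ref{thm:thmbpos}, which rests on Gabber's theorem for the Henselian pair $(B,IB)$ applied to the $p$-torsion sheaf $\phi_{\et}^{-1}(\F_{\et})$ together with the vanishing of higher \'etale cohomology of quasi-coherent sheaves on the affine scheme $\spec(B)$. The paper simply cites that step rather than re-deriving it, but the content is identical.
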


The counterexample given by de Jong in \cite{nothmbblog} to the analogous vanishing for Zariski cohomology for $A$ of characteristic $0$ (described here in Proposition~\ref{prop:cxyhglobsec}) also works in the \hetale topology. Therefore in Lemma~\ref{lem:thmbhet} we cannot drop the assumption that $A$ has positive characteristic. 

\begin{proof}[Proof of Lemma~\ref{lem:thmbhet}] The same fact for Zariski cohomology was proved by de Jong in \cite{thmbblog}, which here is Theorem~\ref{thm:thmbpos}. The proof relies on \spcite{09ZI}{Theorem}, a result of Gabber concerning torsion abelian \Et sheaves, which can only be used in positive characteristic as in that case all abelian sheaves are torsion.

Let $Y=\spec(A/I)$ and $X=\spec(A)$, with $X^h=\sph(A)$. Then $Y$ and $X^h$ have the same underlying topological space.

 If we let $i: Y \hookrightarrow X$ be the canonical closed immersion, the sheaf $(i_{\et}^{-1}(\F_{\et}))|_{Y_{\zar}}$ on $Y$ is a sheaf of modules over $i_{\et}^{-1}(\O_{X_{\et}})|_{Y_{\zar}} = \O_{X^h}$, and is equal to the pullback $\F^h$ of $\F$ to $X^h$ (Lemma~\ref{lem:fancypullback}). As a refinement, the sheaf $i_{\et}^{-1}(\F_{\et})$ on $Y_{\et}$, via the isomorphism of sites $Y_{\et} \to (X^h)_{\het}$ given by Proposition~\ref{prop:globcatequiv}, is the sheaf $(\F^h)_{\het}$ by Remark~\ref{rmk:morhet}.

Let $Z=\sph(B^h)$ be an affine object of $(X^h)_{\het}$, and $Z_0=\spec(B/IB)=\spec(B^h/IB^h)$ the corresponding affine object of $Y_{\et}$ (where $B$ is an \Et $A$-algebra). 

Since $(B^h,IB^h)$ is a Henselian pair and $\F,\F^h$ are both necessarily $p$-torsion, by \spcite{09ZI}{Theorem} we have $\H^j_{\et}(\spec(B^h),\F) = \H^j_{\et}(\spec(B^h/IB^h),i_{\et}^{-1}(\F))$ for all $j$. Then for all $j>0$ we have $$\H^j_{\et}(\spec(B^h),\F) = \H^j_{\et}(Z_0,i_{\et}^{-1}(\F_{\et}))=\H^j_\het(Z,\F^h)=0,$$ completing the proof.
\end{proof}

Now, as mentioned in Section~\ref{sec:intro}, we can prove the desired comparison between Zariski cohomology and \hetale cohomology on Henselian schemes. This comparison is only an isomorphism in positive characteristic since it relies on Lemma~\ref{lem:thmbhet},  and this isomorphism result is vital to deriving a Henselian version of formal GAGA in positive characteristic in forthcoming work \cite{ghga} (see Remark~\ref{rmk:ghgahet}).

\thmhetzarcoh
\begin{proof} We have a continuous map of sites $\pi: X_{\het} \to X_{\zar}$, with the associated pushforward functor of sheaves defined by $(\pi_*\F)(U \;\subset\; X) = \F(U \to X)$.

The left adjoint $\pi^{-1}$ of $\pi_*$, described in Theorem~\ref{thm:qcohhet}, is an exact functor. Hence the pushforward of an injective abelian sheaf is also injective.

Now let $\G$ be a quasi-coherent sheaf of $\O_X$-modules, with $\F=\G_{\het}$; then $\pi_*\F=\G$. In the Leray spectral sequence $E_2^{i,j}= \H^i(X,R^j\pi_*\F) \implies \H^{i+j}(X_{\het},\F) = \H^{i+j}_{\het}(X,\G)$ \spcite{0732}{Lemma}, the edge map $\H^i(X,\G) = H^i(X,\pi_*\F) \to H^i_{\het}(X_{\het},\F)$ is the natural map. Thus, if we can show that $R^j\pi_*\F=0$ for all $j > 0$, we have the desired isomorphism \spcite{0733}{Lemma}.

The higher direct image $R^j\pi_*\F$ is the sheafification of the presheaf $V \mapsto \H^j_\het(V,\F)=\H^j_\het(V,\G)$ on $X_\zar$. We may thus compute the stalk at a point $x \in X$ as the colimit $\colim_{x \in U} \H^j_{\het}(U,\G)$ taken over open neighborhoods $U$ of $x$ in $X$. In fact, 
we can take the colimit over the system of Henselian affine open neighborhoods of $x$, since this system is cofinal.

By Lemma~\ref{lem:thmbhet} for $j > 0$ we have $\H^j_{\het}(U,\G)=0$ for $U$ Henselian affine; thus $$(R^j\pi_*\F)_x=\colim_{\substack{x \in U \\ U \text{ Hens. affine}}} \H^j_{\het}(U,\G) =0.$$ Hence $R^j\pi_*\F=0$ for $j>0$ as we desired to show. \end{proof}


\begin{definition}\label{def:hetcmpsnmap}For a scheme $X$ with Henselization $X^h$ along a closed subscheme $Y$ and a quasi-coherent sheaf $\F$ on $X$, the equality $(\F^h)_{\het} = i_{\et}^{-1}(\F_{\et})$ for $i: Y \hookrightarrow X$ the canonical closed immersion gives rise to a {\bf \hetale cohomology comparison map} $\H^j_{\et}(X,\F) \to \H^j_{\het}(X^h,\F^h)$.
\end{definition}

\corhetzarcmpsn
\begin{proof} 

We have a commutative diagram 
\begin{center}
\begin{tikzcd}
\H^j(X,\F) \arrow[r]\arrow[d,Isom]& \H^j(X^h,\F^h)\arrow[d,Isom]\\
\H^j_{\et}(X,\F) \arrow[r]& \H^j_{\het}(X^h,\F^h)\\
\end{tikzcd}
\end{center} \vspace{-.25in}
where the left vertical arrow is an isomorphism by standard facts about \Et cohomology of quasi-coherent sheaves \spcite{03DW}{Proposition}, and the right vertical arrow is an isomorphism by Theorem~\ref{thm:hetzarcoh}. It is therefore immediate that the upper horizontal arrow is an isomorphism if and only if the lower horizontal arrow is an isomorphism.\end{proof}

\section{Appendix: Henselian rings}\label{sec:appendix}

In this appendix we collect certain useful facts and proofs about Henselian pairs and Henselization as a convenient reference for the reader.

Some equivalent definitions of a Henselian pair are given in \spcite{09XI}{Lemma}, which we state here without proof. 

\begin{lemma}\label{lem:sphenspair} Let $A$ be a ring and $I \subsetneq A$ an ideal. 

The following are equivalent \begin{enumerate}[(a)]

\item $(A,I)$ is a Henselian pair,

\item Given a diagram 

\begin{center}
 \begin{tikzcd} A \arrow[r,"\text{\Et}",shift left=.5ex]\arrow[d] & A' \arrow[l,dashed,shift left=.5ex] \arrow[d]\\
A/I \arrow[r,shift left=.5ex] & \arrow[l,"\sigma",shift left=.5ex] A'/IA'\\
\end{tikzcd}
\end{center} \vspace{-.25in}

for an \Et ring map $A \to A'$, the corresponding map $A/I \to A'/IA'$, and a section $\sigma: A'/IA' \to A/I$, there exists a section $A' \to A$ (the dashed arrow) lifting $\sigma$,

\item for any finite $A$-algebra $B$ the map $B \to B/IB$ induces a bijection on idempotents,

\item for any integral $A$-algebra $B$ the map $B \to B/IB$ induces a bijection on idempotents,

\item (Gabber) $I$ is contained in the Jacobson radical $\Jac(A)$ and every monic polynomial $f(T) \in A[T]$ of the form

\begin{equation*} f(T) = T^n(T-1) + a_nT^n+\dots+a_1T+a_0 \end{equation*} 
with $a_n,\dots,a_0 \in I$ and $n \ge 1$ has a root $\alpha \in 1+I$.
\end{enumerate}

Moreover, in part (e) the root is unique.
\end{lemma}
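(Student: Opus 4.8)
The plan is to separate Gabber's contribution, part (e), from the classical equivalences $(\mathrm{a})\Leftrightarrow(\mathrm{b})\Leftrightarrow(\mathrm{c})\Leftrightarrow(\mathrm{d})$, and to organize the latter around one observation: the idempotents of a ring $R$ correspond bijectively to the coprime monic factorizations $T^2-T=(T-e)(T-(1-e))$ in $R[T]$ (coprime because $1-2e$ is a unit lying in the ideal generated by the two factors). Thus ``lifting idempotents along $R\to R/IR$'' is literally an instance of the Hensel factorization property for the pair $(R,IR)$. The only genuinely hard step is $(\mathrm{e})\Rightarrow(\mathrm{a})$, which is Gabber's theorem and which I expect to be the main obstacle.

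For the classical part I would argue as follows. For $(\mathrm{a})\Rightarrow(\mathrm{d})$: show that if $(A,I)$ is a Henselian pair and $B$ is integral over $A$, then $(B,IB)$ is again a Henselian pair --- reduce to $B$ finite, then to $B=A[T]/(p)$ with $p$ monic, and solve a factorization problem over $B$ by pushing it down to one over $A$; once $(B,IB)$ is known to be Henselian, idempotents lift by the observation above, which is (d). The implication $(\mathrm{d})\Rightarrow(\mathrm{c})$ is trivial. For $(\mathrm{c})\Rightarrow(\mathrm{a})$: given a monic $f\in A[T]$ with a coprime monic factorization $\bar f=\bar g_0\bar h_0$ over $A/I$, the finite free $A$-algebra $B=A[T]/(f)$ satisfies $B/IB\cong (A/I)[T]/(\bar g_0)\times (A/I)[T]/(\bar h_0)$ by the Chinese Remainder Theorem; lift the resulting idempotent to $e\in B$ by (c), obtaining a decomposition $B\cong B_1\times B_2$, and read off monic lifts $g,h$ of $\bar g_0,\bar h_0$ with $f=gh$ from the characteristic polynomials of multiplication by $T$ on the (finite projective) factors $B_i$ --- a standard computation, checked Zariski-locally on $\spec A$ where the $B_i$ become free. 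Finally $(\mathrm{a})\Leftrightarrow(\mathrm{b})$: for $(\mathrm{b})\Rightarrow(\mathrm{a})$, the scheme of pairs of monic polynomials $(g,h)$ with $gh=f$ is, near $(\bar g_0,\bar h_0)$, \'etale over $\spec A$ (its Jacobian there is the resultant of $\bar g_0,\bar h_0$, a unit), so lifting the section $(\bar g_0,\bar h_0)$ by (b) lifts the factorization; for $(\mathrm{a})\Rightarrow(\mathrm{b})$ one uses that an \'etale $A$-algebra with a mod-$I$ section is Zariski-locally standard \'etale near that section, reducing to lifting a simple root of a monic polynomial modulo $I$, which (a) provides by factoring off a monic linear factor.

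Next, $(\mathrm{a})\Rightarrow(\mathrm{e})$ is easy. Given $f(T)=T^n(T-1)+a_nT^n+\dots+a_0$ with $a_i\in I$ and $n\ge 1$, reduction modulo $I$ gives the coprime monic factorization $\bar f=T^n\cdot(T-1)$ (coprimality: $T^n-(T-1)(T^{n-1}+\dots+1)=1$); by (a) lift to $f=g\cdot(T-\alpha)$ with $g\equiv T^n$ and $\alpha\equiv 1\pmod I$, so $\alpha\in 1+I$ and $f(\alpha)=0$. Uniqueness: if $\beta\in 1+I$ also satisfies $f(\beta)=0$, then $0=f(\beta)=g(\beta)(\beta-\alpha)$ and $g(\beta)\equiv\beta^n\equiv 1\pmod I$ is a unit (since $I\subseteq\Jac(A)$), forcing $\beta=\alpha$.

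The serious direction --- the main obstacle --- is $(\mathrm{e})\Rightarrow(\mathrm{a})$, due to Gabber. By the equivalences just established it suffices to verify (b), which after translation amounts to showing that a monic $p\in A[T]$ with $p(0)\in I$ and $p'(0)$ a unit has a root in $I$. The difficulty is that hypothesis (e) only provides roots of polynomials of the very rigid shape $T^n(T-1)+(\text{coefficients in }I)$, whereas a general such $p$ need not be of this form even after translation, since its reduction modulo $I$ need not be $T^{n}(T-1)$. The heart of Gabber's argument is to manufacture, from $p$, auxiliary polynomials of the special shape whose roots control a root of $p$ --- via suitable substitutions, rescalings, and an induction on the degree, crucially exploiting the uniqueness clause in (e) to keep the construction canonical --- and I would follow that argument as recorded in the Stacks Project. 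Reproducing this reduction faithfully is the technical core of the lemma; everything else is bookkeeping.
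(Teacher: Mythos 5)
This lemma is stated in the paper without proof: the text immediately preceding it says the equivalences are quoted from the Stacks Project (tag 09XI), so there is no in-paper argument to compare against. Your outline follows the same standard route as that source, and the pieces you do work out are correct: the identification of idempotents with coprime monic factorizations of $T^2-T$, the chain $(\mathrm{a})\Rightarrow(\mathrm{d})\Rightarrow(\mathrm{c})\Rightarrow(\mathrm{a})$ via $B=A[T]/(f)$ and characteristic polynomials on the projective factors, the \Et-lifting characterization via the resultant being a unit, and the derivation of (e) from (a) together with the uniqueness argument are all sound.

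Two points keep this from being a complete proof. First, a Henselian pair by Definition~2.1 requires \emph{both} the factorization-lifting property and $I\subseteq\Jac(A)$; your arguments for $(\mathrm{b})\Rightarrow(\mathrm{a})$ and $(\mathrm{c})\Rightarrow(\mathrm{a})$ only produce the factorization lifting and never establish the Jacobson-radical containment. This is fixable by a standard argument (e.g.\ for $a\in I$ one shows $1+a$ is a unit by producing, via idempotent or section lifting, a decomposition of a suitable finite or \Et $A$-algebra that forces invertibility), but as written it is a genuine omission. Second, and more substantially, the implication $(\mathrm{e})\Rightarrow(\mathrm{a})$ — which you correctly identify as the only hard step — is not proved but only described and delegated to the Stacks Project. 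Since the whole content of Gabber's criterion lies in that reduction, the proposal as it stands is an accurate architectural sketch of the known proof rather than a proof; given that the paper itself treats the lemma as a citation, that is a defensible stopping point, but it should be labeled as such rather than presented as a completed argument.
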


When we use the condition \ref{lem:sphenspair}(b), often the section $\sigma: A'/IA' \to A/I$ will arise from an $A$-algebra map $\sigma': A' \to A/I$ (so the kernel of $\sigma'$ will necessarily contain $IA'$). We might say that the section $A' \to A$ lifts $\sigma'$ rather than saying that it lifts $\sigma$. 

The very explicit condition \ref{lem:sphenspair}(e) is useful for showing that certain rings are Henselian, as in the following example. 

\begin{example}\label{ex:hensnotsep}

Let $R=\mc{C}_0^\infty(\R^n,\R)$ be the ring of germs of smooth functions from $\R^n$ to $\R$ at the origin, and let $I \subset R$ be the ideal of functions which are $0$ at the origin. By Hadamard's lemma \cite[\nopp 2.8]{hadcite}, $I$ is finitely generated by the coordinate functions $x_1,\dots,x_n$. It is also clear that $R$ is a local ring with maximal ideal $I$, so $I\subseteq\Jac(R)$.

Now, by the condition Lemma~\ref{lem:sphenspair}(e), to show that $(R,I)$ is a Henselian pair it suffices to show that for any monic polynomial $g(T) \in R[T]$ of the form $$g(T) = T^ n(T - 1) + a_ n T^ n + \ldots + a_1 T + a_0$$ where $a_0,\dots,a_n \in I$ and $n \ge 1$, $g$ has a root $\alpha \in \mathbf{1}+I$.

We fix such a $g$. Note that $R/I \simeq \R$, and the reduction $\ol{g}(T) \in \R[T]$ is equal to $T^n(T-1)$; hence $\ol{g}(1)=0, \ol{g}'(1)=1 \ne 0$. 

Since each of the $a_i$ are elements of $R$, they are germs of smooth functions $\R^n \to \R$ at the origin, which we also denote by $a_i$. Now we can define a function $G: \R^{n+1} \to \R$ by $$G(\mathbf{x},y)=y^n(y-1) + a_ n(\mathbf{x}) y^ n + \ldots + a_1(\mathbf{x}) y + a_0(\mathbf{x})$$ for $\mathbf{x} \in \R^n$ and $y \in \R$. 

Since the $a_i$ are in the ideal $I$, we see that $a_i(\mathbf{0})=0$ for all $i$ (where we write $\mathbf{0}$ for the origin in $\R^n$ for simplicity's sake).  Therefore $G(\mathbf{0},1)=0$. Furthermore, it is clear that $G$ is smooth, and the fact that $\ol{g}'(1)=1 \ne 0$ means we can apply the implicit function theorem to get a neighborhood $U$ of $\mathbf{0}$ and a smooth function $q: U \to \R$ with $q(\mathbf{0})=1$ so that $G(\mathbf{x},q(\mathbf{x}))=0$ for all $\mathbf{x} \in U$. Then, considering $q$ as an element of $R$, it is evident that $q \in \mathbf{1}+I$ and that $q$ is our desired root of $g(T) \in R[T]$. 

Therefore we see that $(R,I)$ is a Henselian pair. In fact $(R,I)$ gives an example of a Henselian pair for which $R$ is not $I$-adically separated, since $e^{-1/||\mathbf{x}||^2} \in \bigcap_{r \ge 1} I^r$. \end{example}

For a local ring $R$, there exists a unique (up to $R$-isomorphism) Henselian local ring $R^h$ with a local ring homomorphism $R \to R^h$ that is a filtered colimit of \Et ring maps. Furthermore, the completion of $R^h$ at its maximal ideal is isomorphic to the completion  $R^\wedge$ of $R$, and the construction of $R^h$ is functorial in $R$. (For an exposition of Henselization of local rings, see \spcite{0BSK}{Section}.) 

Under mild hypotheses on a normal Noetherian local ring $R$, its Henselization is the ``algebraic closure'' of $R$ in $R^\wedge$ -- i.e., $R^h$ consists of exactly the elements of $R^\wedge$ which are solutions of polynomial equations over $R$. (See \spcite{07QX}{Section}, which uses the deep Artin-Popescu Approximation Theorem.)

Given the definition of Henselian pairs above, we can ask for a similar notion of Henselization in the global setting that, in the Noetherian case, serves as an intermediate ``algebraic'' step towards the completion. 

\begin{definition}\label{def:henselization} For any pair $(A,I)$, the {\bf Henselization of $(A,I)$} is the pair $(A^h,I^h)$ for $A^h$ the filtered colimit of the category of \Et ring maps $A \to B$ such that $A/I \to B/IB$ is an isomorphism and $I^h=IA^h$.\footnote{The Henselization of $(A,I)$ is sometimes defined as the colimit of the larger category of \Et ring maps $A \to B$ admitting an $A$-algebra map $B \to A/I$.}
\end{definition}

In fact $(A^h,I^h)$ is a Henselian pair with $A/I \simeq A^h/I^h$. Furthermore, any morphism of pairs $(A,I) \to (A',I')$ with the latter pair being Henselian must factor uniquely through the canonical morphism $(A,I) \to (A^h,I^h)$. This is shown in \spcite{0A02}{Lemma}, and we call it the {\bf universal property of Henselization.}

We also say that $A^h$ is the {\bf Henselization of $A$ along $I$} or the {\bf $I$-adic Henselization of $A$}, and we may omit $I$ if it is clear from context. For a Henselian pair $(A,I)$, we often refer to the Henselization of the pair $(B,IB)$ for an $A$-algebra $B$ as the {\bf $I$-adic Henselization of $B$.}

Since the Henselization of a ring $A$ along an ideal $I$ is an ``intermediate step'' towards the completion of $A$ along $I$, one would expect it to have some of the same nice properties as Noetherian completion. For example, the morphism $A \to A^h$ is flat, and it induces an isomorphism $A/I^r \to A^h/I^rA^h$ for all $r \ge 1$  \spcite{0AGU}{Lemma}. 

\begin{remark}\label{rmk:radideal}
Furthermore, for two ideals $I,J$ of a ring $A$, if $V(I)=V(J)$ -- equivalently, $\sqrt{I}=\sqrt{J}$ -- then the Henselizations of the pairs $(A,I)$ and $(A,J)$ are uniquely isomorphic as $A$-algebras \spcite{0F0L}{Lemma}. More generally, if $V(I)=V(J)$ for two ideals $I,J$ of a ring $A$, then $(A,I)$ is Henselian exactly when $(A,J)$ is \spcite{09XJ}{Lemma}.\end{remark}

From the definition of the Henselization of a pair, we can show that certain \Et algebras have the same Henselization as the base ring:

\begin{proposition}\label{prop:hensetexp} If $(R,I) \to (S,IS)$ is a map of pairs with $R \to S$ \Et and $R/I \simeq S/IS$, then the $I$-adic Henselization $R^h$ of $R$ is isomorphic to the $IS$-adic Henselization $S^h$ of $S$.
\end{proposition}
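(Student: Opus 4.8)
The plan is to compare the two filtered systems defining $R^h$ and $S^h$ directly, exhibiting the index category for $S^h$ as a cofinal subcategory of the index category for $R^h$. Write $\mc{C}_R$ for the (filtered) category of \Et ring maps $R \to B$ with $R/I \to B/IB$ an isomorphism, so that $R^h = \colim_{B \in \mc{C}_R} B$ by Definition~\ref{def:henselization}, and similarly $S^h = \colim_{C \in \mc{C}_S} C$ with $\mc{C}_S$ the category of \Et maps $S \to C$ inducing an isomorphism modulo $I$. Since $R \to S$ is \Et and $R/I \simeq S/IS$, the algebra $S$ is an object of $\mc{C}_R$; let $\mc{D} \subseteq \mc{C}_R$ be the full subcategory of those $R \to B$ admitting an $R$-algebra map from $S$.

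First I would identify $\mc{D}$ with $\mc{C}_S$. If $R \to B$ lies in $\mc{C}_R$ and $S \to B$ is an $R$-map, then factoring $S \to B$ as $S \to S\otimes_R B \to B$ exhibits it as the composite of the \Et map $S \to S\otimes_R B$ (base change of $R \to B$) with a ring-theoretic section of the \Et map $B \to S\otimes_R B$ (base change of $R \to S$); a section of an \Et map is an open immersion, hence \Et, so $S \to B$ is \Et. Reducing modulo $I$, the map $S/IS \to B/IB$ is an isomorphism, being compatible with the isomorphisms $R/I \simeq S/IS$ and $R/I \simeq B/IB$; thus $(S \to B) \in \mc{C}_S$. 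The $S$-structure is moreover unique (two $R$-maps $S \to B$ agree modulo $I$, and two sections of an \Et map over the Henselian pair $(B,IB)$ agreeing modulo $I$ coincide, since the locus where they agree is clopen and contains $V(IB)$, while $IB \subseteq \Jac(B)$). Conversely any $S \to C$ in $\mc{C}_S$ yields an \Et map $R \to C$ with $R/I \simeq S/IS \simeq C/IC$, obviously in $\mc{D}$. Hence the forgetful functor $\mc{C}_S \to \mc{C}_R$ is fully faithful with essential image $\mc{D}$.

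Next I would show $\mc{D}$ is cofinal in $\mc{C}_R$, which gives $R^h = \colim_{\mc{D}} B = \colim_{\mc{C}_S} C = S^h$, compatibly with the maps from $R$ (and the ideals correspond since the isomorphism is $R$-linear, identifying $IR^h$ with $IS^h$). For cofinality it suffices that $B \downarrow \mc{D}$ is nonempty and connected for each $B \in \mc{C}_R$. Nonemptiness: $B \to B\otimes_R S$ is \Et, and $(B\otimes_R S)/I(B\otimes_R S) \simeq B/IB \otimes_{R/I} S/IS \simeq R/I$, so $B\otimes_R S$ lies in $\mc{C}_R$ and is under $S$, hence in $\mc{D}$, and receives a map from $B$. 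Connectedness: given $B \to D_1$ and $B \to D_2$ with $D_i \in \mc{D}$, filteredness of $\mc{C}_R$ produces a cocone $D_1 \to Y \leftarrow D_2$ under $B$, and then $D_1 \to Y \to Y\otimes_R S \leftarrow D_2$ connects them through the object $Y\otimes_R S \in \mc{D}$. This proves $R^h \simto S^h$.

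The only nontrivial point is the identification $\mc{D} \simeq \mc{C}_S$ — specifically that an $R$-algebra map between \Et $R$-algebras is itself \Et, which I obtain via the "section of an \Et map is an open immersion" fact above; everything else is bookkeeping with filtered colimits. Alternatively, the proposition can be deduced from the universal property of Henselization (\spcite{0A02}{Lemma}): one checks that $(S^h, IS^h)$ together with $R \to S \to S^h$ is again initial among Henselian pairs under $(R,I)$, producing for each Henselian target $(T,\mathfrak{b})$ a lift $S \to T$ by applying Lemma~\ref{lem:sphenspair}(b) to the \Et $T$-algebra $S\otimes_R T$, which is trivial modulo $\mathfrak{b}$; there the subtle step is the uniqueness of such a lift, which is exactly the clopen-locus argument used above.
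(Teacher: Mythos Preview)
Your overall strategy --- comparing the two filtered index categories and arguing via cofinality --- is the same as the paper's, but there is a genuine gap in your identification $\mc{D} \simeq \mc{C}_S$. You assert that the $S$-structure on an object $B \in \mc{D}$ is unique, arguing that two $R$-maps $S \to B$ agree because the locus of agreement is clopen, contains $V(IB)$, and ``$IB \subseteq \Jac(B)$''. That last claim is false: $B$ is merely an \Et $R$-algebra, so there is no reason for $IB$ to lie in its Jacobson radical (indeed, if every $B \in \mc{C}_R$ were Henselian along $IB$, then $B=R$ would give $R=R^h$ and the proposition would be vacuous). Concretely, take $R=k[t]$, $I=(t)$, and $S=B=k[t]\times k[t,t^{-1}]$; then $S\in\mc{C}_R$, and the identity and the map $(a,b)\mapsto(a,a)$ are two distinct $R$-algebra maps $S\to B$ that agree modulo $t$. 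Thus the forgetful functor $\mc{C}_S\to\mc{C}_R$ is not full and your passage from $\colim_{\mc{D}}$ to $\colim_{\mc{C}_S}$ is unjustified.

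The repair is to bypass the equivalence and prove cofinality of the forgetful functor $F:\mc{C}_S\to\mc{C}_R$ directly. Nonemptiness of $B\downarrow F$ you already have via $B\to B\otimes_R S$. For connectedness, note that \emph{every} object $(C,\,f:B\to C)$ of $B\downarrow F$ receives a morphism from $(B\otimes_R S,\,b\mapsto b\otimes 1)$: the $R$-map $f$ and the $S$-structure on $C$ combine to the $S$-algebra map $B\otimes_R S\to C,\ b\otimes s\mapsto f(b)\cdot s$, which is the unique such morphism. Hence $B\downarrow F$ has an initial object, so $F$ is cofinal and $S^h=\colim_{\mc{C}_S}C\simeq\colim_{\mc{C}_R}B=R^h$. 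This is precisely the content of the paper's (terse) proof, which records only that objects of $\mc{C}_S$ lie in $\mc{C}_R$ and that $R'\otimes_R S\in\mc{C}_S$ for every $R'\in\mc{C}_R$. Your alternative argument via the universal property is correct as written: there the target $(T,\mathfrak{b})$ is genuinely Henselian, so $\mathfrak{b}\subseteq\Jac(T)$ and the clopen-locus argument goes through.
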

\begin{proof} 

We recall from Definition~\ref{def:henselization} that $R^h$ is the filtered colimit of the category $\mf{C}_R$ of \Et ring maps $R \to R'$ such that $R/I \to R'/IR'$ is an isomorphism. Similarly, $S^h$ is the filtered colimit of the category $\mf{C}_S$ of \Et ring maps $S \to S'$ such that $S/IS \to S'/IS'$ is an isomorphism. 

By hypothesis, $S \in \mf{C}_R$. We see by basic properties of \Et maps that any $S' \in \mf{C}_S$ is also an object of $\mf{C}_R$, and $R' \tens_R S \in \mf{C}_S$ for any $R' \in \mf{C}_R$. Thus we see that the filtered colimit of $\mf{C}_S$ is canonically isomorphic to the filtered colimit of $\mf{C}_R$.\end{proof}

If $A \to B$ is a module-finite ring homomorphism and $(A,I)$ is Henselian, then $(B,IB)$ is also a Henselian pair \spcite{09XK}{Lemma}. Furthermore, Henselization is commutes with base change by a module-finite ring homomorphism:

\begin{lemma}\label{lem:hensmodfin}If $(R,I) \to (S,J)$ is a module-finite map of pairs with $J=IS$ (such as a surjection $R \twoheadrightarrow S$) we have $R^h \tens_R S \simeq S^h$ for $R^h,S^h$ the $I$-adic  Henselizations of $R$ and $S$ respectively.
\end{lemma}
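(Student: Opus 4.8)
The plan is to show that $R^h \tens_R S$, equipped with the natural ring map $S \to R^h \tens_R S$, satisfies the universal property of the Henselization of the pair $(S, IS)$; the asserted isomorphism $R^h \tens_R S \simeq S^h$ then follows from the uniqueness part of the universal property of Henselization.

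The module-finiteness hypothesis is used at exactly one point. Since $R \to S$ is module-finite, the base change $R^h \to R^h \tens_R S$ is module-finite, so because $(R^h, IR^h)$ is a Henselian pair, \spcite{09XK}{Lemma} shows that $(R^h \tens_R S,\, I(R^h \tens_R S))$ is a Henselian pair (here $IR^h \cdot (R^h \tens_R S) = I(R^h \tens_R S)$). Next, using that $R \to R^h$ induces $R/I \simto R^h/IR^h$, one computes $$(R^h \tens_R S)/I(R^h \tens_R S) \;\simeq\; (R^h/IR^h)\tens_{R/I}(S/IS) \;\simeq\; (R/I)\tens_{R/I}(S/IS) = S/IS$$ compatibly with the map from $S$. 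Thus $R^h \tens_R S$ is a Henselian pair over $(S,IS)$ that induces an isomorphism on quotients modulo $I$, hence is a candidate to be $S^h$.

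The substantive step is verifying the universal property. Given a map of pairs $\theta\colon (S,IS) \to (T,K)$ with $(T,K)$ Henselian, the composite $(R,I) \to (S,IS) \xrightarrow{\theta} (T,K)$ factors uniquely through $(R^h, IR^h)$, giving an $R$-algebra map $R^h \to T$; pairing this with $\theta$ via the universal property of the tensor product over $R$ produces a unique $R$-algebra map $g\colon R^h \tens_R S \to T$ restricting to $R^h \to T$ and to $\theta$ on $S$, and $g$ respects the pair structures. For uniqueness of the factorization of $\theta$ through $S \to R^h \tens_R S$: any $S$-algebra map $R^h \tens_R S \to T$ is in particular an $R$-algebra map, hence determined by its restriction to $S$ (which must equal $\theta$) and its restriction to $R^h$, and the latter is an $R$-algebra map $R^h \to T$ whose precomposition with $R \to R^h$ is the fixed map $R \to S \xrightarrow{\theta} T$, hence is forced by the uniqueness clause in the universal property of $R^h$. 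Therefore $R^h \tens_R S$ has the universal property of $S^h$, giving the canonical $S$-isomorphism $R^h \tens_R S \simeq S^h$.

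I do not anticipate a genuine obstacle; the argument is a routine manipulation of universal properties, and the role of the hypothesis is isolated (it is what makes $R^h \tens_R S$ Henselian). The two things that need care are: checking that the ideal data match up, so that $(R^h \tens_R S,\, I(R^h \tens_R S))$ is really a pair over $(S,IS)$ with the correct quotient; and the uniqueness half of the universal-property verification, which I would reduce---as above---to the uniqueness clause in the universal property of $R^h$ together with the universal property of the tensor product. Alternatively, one could construct mutually inverse maps $R^h \tens_R S \rightleftarrows S^h$ directly and check that the two composites are the identity using these same uniqueness principles, but this is no cleaner than the route above.
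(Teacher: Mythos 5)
Your proof is correct. The paper itself gives no argument here---its ``proof'' is a one-line citation of \spcite{0DYE}{Lemma}---and your universal-property argument is essentially the standard proof of that Stacks Project lemma: use module-finiteness together with \spcite{09XK}{Lemma} to see that $(R^h \tens_R S, I(R^h\tens_R S))$ is Henselian, then check initiality among maps of pairs from $(S,IS)$ to Henselian pairs, with uniqueness reduced (via generation of $R^h \tens_R S$ by the images of $R^h$ and $S$) to the uniqueness clause for $R^h$. The one point worth making explicit in the uniqueness step is that the restriction to $R^h$ of a competing map is automatically a morphism of pairs $(R^h,IR^h)\to(T,K)$, since $IR^h$ is generated by the image of $I$; with that observation your argument is complete.
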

\begin{proof} This is \spcite{0DYE}{Lemma}. \end{proof}

A finitely presented $A$-algebra is not generally $I$-adically Henselian when $(A,I)$ is. Therefore in order to define a notion of ``\hfp'' in Section~\ref{sec:lfp}:, we need to use the Henselization of polynomial rings.

\begin{definition}\label{def:henspoly}
For a Henselian pair $(A,I)$, the {\bf Henselized polynomial ring} $$A\{X_1,\dots,X_n\}$$ is the Henselization of the pair $(A[X_1,\dots,X_n],IA[X_1,\dots,X_n])$.

The Henselized polynomial ring over $A$ in $n$ variables $B=A\{X_1,\dots,X_n\}$ can be described by the following universal property: for any map of Henselian pairs $(A,I) \to (C,J)$ and $n$ elements $c_1,\dots,c_n \in C$, there exists a unique $A$-algebra map $f: B \to C$ such that $f(X_i)=c_i$ for all $i$. This follows from the universal property of polynomial rings and the universal property of Henselization.
 \end{definition}

\begin{remark}\label{rmk:hfpex}
For $(A,I)$ a Henselian pair and a finitely presented $A$-algebra $$B = A[X_1,\dots,X_n]/(g_1,\dots,g_m),$$ by Lemma~\ref{lem:hensmodfin}, we have $$B^h \simeq A\{X_1,\dots,X_n\} \tens_{A[X_1,\dots,X_n]} B \simeq A\{X_1,\dots,X_n\}/(g_1,\dots,g_m).$$
\end{remark}

Henselized polynomial rings behave like usual polynomial rings with regard to adjoining more variables, as shown in the following lemma:

\begin{lemma}\label{lem:polyonpoly} Let $(A,I)$ be a Henselian pair, and let $B=A\{X_1,\dots,X_n\}$ be the Henselized polynomial ring over $A$ in $n$ variables. Then $B\{Y_1,\dots,Y_m\}= A\{x_1,\dots,x_n,y_1,\dots,y_m\}$ is the Henselized polynomial ring over $A$ in $n+m$ variables. 
\end{lemma}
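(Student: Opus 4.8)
The plan is to reduce the claim to a universal property computation, using the fact (already established in Definition~\ref{def:henspoly} via the universal property of polynomial rings composed with the universal property of Henselization) that a Henselized polynomial ring $A\{X_1,\dots,X_n\}$ over a Henselian pair $(A,I)$ represents, among Henselian pairs over $(A,I)$, the functor sending $(C,J)$ to the set of $n$-tuples of elements of $C$. Concretely, I would first observe that by Remark~\ref{rmk:rad} (or Remark~\ref{rmk:radideal}) it is harmless to take the ideal of definition to be the extension of $I$ throughout, so all the pairs in sight are of the form $(R,IR)$ and the morphisms are just $A$-algebra maps which automatically respect the ideals. In particular $B = A\{X_1,\dots,X_n\}$ is a Henselian pair with ideal $IB$, so the Henselized polynomial ring $B\{Y_1,\dots,Y_m\}$ makes sense.

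The key step is then to check that $B\{Y_1,\dots,Y_m\}$ satisfies the universal property that characterizes $A\{X_1,\dots,X_n,Y_1,\dots,Y_m\}$. Given a Henselian pair $(C,IC)$ over $(A,I)$ together with elements $c_1,\dots,c_n,d_1,\dots,d_m \in C$, the data of the $c_i$ determines a unique $A$-algebra map $B = A\{X_1,\dots,X_n\} \to C$ sending $X_i \mapsto c_i$ (universal property of $B$), and then the data of the $d_j$ determines a unique $B$-algebra map $B\{Y_1,\dots,Y_m\} \to C$ sending $Y_j \mapsto d_j$ (universal property of the Henselized polynomial ring over the base $B$). Conversely any $A$-algebra map $B\{Y_1,\dots,Y_m\} \to C$ restricts to an $A$-algebra map on $B$ and records images of the $Y_j$, so these assignments are mutually inverse: $B\{Y_1,\dots,Y_m\}$ corepresents the functor $(C,IC) \mapsto C^{n+m}$ on Henselian pairs over $(A,I)$. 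Since $A\{X_1,\dots,X_n,Y_1,\dots,Y_m\}$ corepresents the same functor, Yoneda gives a unique $A$-algebra isomorphism $B\{Y_1,\dots,Y_m\} \simeq A\{X_1,\dots,X_n,Y_1,\dots,Y_m\}$ compatible with the canonical maps from $A[X_1,\dots,X_n,Y_1,\dots,Y_m]$, which is exactly the assertion of the lemma.

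Alternatively — and this may be cleaner to write up — one can argue directly with colimits of étale algebras as in the proof of Proposition~\ref{prop:hensetexp}: write $A[X_1,\dots,X_n,Y_1,\dots,Y_m] = A[X_1,\dots,X_n][Y_1,\dots,Y_m]$, note that Henselization is insensitive to replacing the base by any intermediate algebra that is a filtered colimit of étale maps inducing an isomorphism mod the ideal (this is the content of Proposition~\ref{prop:hensetexp} and the fact that Henselization commutes with filtered colimits, \spcite{0A04}{Lemma}), and conclude that the Henselization of $A[X_1,\dots,X_n][Y_1,\dots,Y_m]$ along $I$ may be computed in two stages, first Henselizing the $X$-variables and then the $Y$-variables over that. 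Either way, the only real subtlety is bookkeeping: making sure that all the ideals of definition involved are the expected extensions of $I$ (so that ``map of pairs'' reduces to ``$A$-algebra map''), and that ``Henselization over $B$'' in the sense used for $B\{Y_1,\dots,Y_m\}$ agrees with the global Henselization of $A[X,Y]$ along $I$; both follow from the compatibilities already recorded in Remarks~\ref{rmk:rad} and~\ref{rmk:radideal} and Proposition~\ref{prop:hensetexp}. I expect no genuine obstacle here — the statement is a formal consequence of universal properties — so the main care is simply to phrase the two-stage universal property argument precisely.
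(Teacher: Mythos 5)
Your proof is correct and is exactly the paper's approach: the paper's entire proof is the single sentence that the lemma ``follows from the universal property described after Definition~\ref{def:henspoly},'' and your two-stage universal-property/Yoneda argument is precisely the intended expansion of that sentence. The alternative colimit argument you sketch is also fine but unnecessary.
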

\begin{proof} This follows from the universal property described after Definition~\ref{def:henspoly}.\end{proof}

When defining \hsmooth and \hetale maps of Henselian rings in Section~\ref{sec:etsm}, it is useful to be able to ``descend'' certain nice properties from the Henselization of a ring to an \Et algebra over it. We formalize this idea in the following lemma and corollary.

\begin{lemma}\label{lem:getbiggeret} Let $\mathbf{P}$ be a property of maps of rings. We say that a morphism $f: A \to A'$ has $\mathbf{P}$ at a point $x' \in \spec(A')$ if $\spec(f)$ restricted to an affine open neighborhood of $x'$ has property $\mathbf{P}$. Assume all of the following:
\begin{enumerate}[(i)]
\item for a finitely presented morphism $f: B \to B'$, the open set $E=\{\mf{p}' \in \spec(B'): \;$f$ \text{ has } \mathbf{P} \text{ at } \mf{p}'\}$ is ind-constructible, and if $E=\spec(B')$, the map $f$ has property $\mathbf{P}$;
\item \Et maps have $\mathbf{P}$;
\item for two morphisms $g: C \to C', h: C' \to C''$, if $h$ has property $\mathbf{P}$ at a prime $\mf{q}$ of $C''$ and $g$ also has property $\mathbf{P}$ at the prime $h^{-1}(\mf{q})$ of $C'$, then the map $hg$ has property $\mathbf{P}$ at $\mf{q}$. 
\end{enumerate}

Let $(A,I)$ be a Henselian pair and $R$ a finitely presented $A$-algebra, and assume that the map $A \to R$ has property $\mathbf{P}$ at each prime in $V(IR) \subset \spec(R)$. Then we can find some $R'$ \Et over $R$ such that $R'/IR' \simeq R/IR$ and the map $A \to R'$ has property $\mathbf{P}$. 
\end{lemma}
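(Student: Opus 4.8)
The plan is to identify the locus in $\spec(R)$ where $A\to R$ has property $\mathbf{P}$, note that this locus is open and contains $V(IR)$, and then shrink $\spec(R)$ to a principal open sitting between $V(IR)$ and that locus; the corresponding localization of $R$ will be the sought-after \Et algebra. Concretely, I would first set $E=\{\mathfrak{p}\in\spec(R):\,A\to R\text{ has }\mathbf{P}\text{ at }\mathfrak{p}\}$. Since $R$ is finitely presented over $A$, hypothesis (i) guarantees that $E$ is open in $\spec(R)$, and by assumption $V(IR)\subseteq E$. Write the closed complement $\spec(R)\setminus E$ as $V(J)$ for an ideal $J\subseteq R$. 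The inclusion $V(IR)\subseteq E$ says exactly that $V(J)\cap V(IR)=V(J+IR)=\emptyset$, hence $J+IR=R$, so there are $a\in J$ and $i\in IR$ with $a+i=1$. Then $a\equiv 1\pmod{IR}$, so $a$ is a unit in $R/IR$; in particular $D(a)\supseteq V(IR)$. On the other hand $a\in J$ forces $V(a)\supseteq V(J)$, i.e.\ $D(a)\subseteq E$. Thus $V(IR)\subseteq D(a)\subseteq E$.

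Next I would take $R':=R_a$. This is a localization of $R$ at a single element, hence an \Et (indeed standard \Et) and finitely presented $R$-algebra, and therefore finitely presented over $A$. Because $a$ is a unit modulo $IR$, the map $R/IR\to R'/IR'=(R/IR)_{\overline a}$ is an isomorphism, so $R'/IR'\simeq R/IR$ as required. It then remains to verify that $A\to R'$ has property $\mathbf{P}$. Fix any prime $\mathfrak{q}\subseteq R'$; its contraction $\mathfrak{q}\cap R$ lies in $D(a)\subseteq E$, so $A\to R$ has $\mathbf{P}$ at $\mathfrak{q}\cap R$. The map $R\to R'$ is \Et, hence has $\mathbf{P}$ by (ii), hence has $\mathbf{P}$ at $\mathfrak{q}$, so by the composition axiom (iii) the composite $A\to R'$ has $\mathbf{P}$ at $\mathfrak{q}$. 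Since $\mathfrak{q}$ was arbitrary, the good locus of the finitely presented morphism $A\to R'$ is all of $\spec(R')$, and the second clause of (i) upgrades this to the statement that $A\to R'$ has property $\mathbf{P}$.

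The step that genuinely uses the hypotheses in an essential way is the very first reduction: we need $E$ to be \emph{open} in $\spec(R)$ — not merely ind-constructible — since it is openness that lets us separate the closed set $V(IR)$ from the closed set $\spec(R)\setminus E$ by a single equation $a=0$ with $a$ automatically a unit modulo $IR$, via the identity $J+IR=R$. Everything after that is formal manipulation of the axioms. Worth noting as well (and it explains why Corollary~\ref{cor:niceprops} will follow immediately by specializing $\mathbf{P}$): the Henselian hypothesis on $(A,I)$ is not actually invoked anywhere in this argument — this is a pure ``spreading out near $V(I)$'' statement — and is included only because that is the setting in which the lemma is later applied.
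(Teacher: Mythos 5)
Your proof is correct, but it takes a genuinely different and more elementary route than the paper's. The paper passes to the Henselization $R^h=\colim S$ of $(R,IR)$ over \Et $R$-algebras $S$ with $S/IS\simeq R/IR$, shows the union of the pullbacks of the loci $V_S$ is all of $\spec(R^h)$ (using $IR^h\subseteq\Jac(R^h)$), and then descends through the cofiltered limit via \cite[Corollary 8.3.4]{ega43} to find a single $S$ with $V_S=\spec(S)$ --- this is exactly where the ind-constructibility in hypothesis (i) is consumed. You instead observe that the good locus $E\subseteq\spec(R)$ is open (which, as you could note, is automatic from the pointwise definition of ``has $\mathbf{P}$ at a point'' rather than something supplied by (i)), and then separate $V(IR)$ from the closed complement $V(J)$ by writing $1=a+i$ with $a\in J$, $i\in IR$, giving $V(IR)\subseteq D(a)\subseteq E$ and letting $R'=R_a$. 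This is a complete and correct argument: $R_a$ is standard \Et and finitely presented over $R$, $\bar a=1$ in $R/IR$ so $R_a/IR_a\simeq R/IR$, and (ii), (iii) and the second clause of (i) combine exactly as you say. What your approach buys is a much shorter proof, an explicit $R'$ (a principal localization rather than an abstractly descended \Et algebra), and the observation --- which you correctly make --- that neither the ind-constructibility in (i) nor the Henselian hypothesis on $(A,I)$ is actually needed; the paper's proof likewise only uses Henselianness of $(R^h,IR^h)$, which holds by construction. What the paper's limit argument would buy is robustness in settings where the good locus is merely ind-constructible rather than open, but since the paper's own proof also invokes openness of each $V_S$, that extra generality is not exploited there either.
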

\begin{proof} 

We can write the Henselization $R^h$ of the pair $(R,IR)$ as $R = \colim S$ where the colimit is taken over \Et $R$-algebras $S$ such that $R/IR \simeq S/IS$. 

We now define $V_R \subset \spec(R)$ as the set of points at which $\spec(R) \to \spec(A)$ has property $\mathbf{P}$; for each $S$ we define $V_S \subset \spec(S)$ similarly. Any map $g: S \to S'$ in the system of the colimit is necessarily \Et. Then if $\spec(g): \spec(S') \to \spec(S)$ is the map of spectra corresponding to $g$, we can apply (ii) and (iii) to see that $\spec(g)^{-1}(V_S) \subseteq V_{S'}$ and $V(IS) \subset V_S$ for all $S$. By (i), each $V_S$ is ind-constructible and open.

If we define $V \subset \spec(R^h)$ to be the union of the pullbacks of all of the $V_S$'s, then $V$ is also necessarily open, with $V \supset V(IR^h)$ since for all $S$ we have $V_S \supset V(IS)$. 

However since $(R^h,IR^h)$ is Henselian, $IR^h\subseteq \Jac(R^h)$. Therefore $V$, since it is open and contains $V(IR^h)$, must  be equal to all of $\spec(R^h)$.

Hence can find some $R_{\mathbf{P}}$ which is \Et over $R$, with $R/IR \simeq R_{\mathbf{P}}/IR_{\mathbf{P}}$, so that $V_{R_{\mathbf{P}}}=\spec(R_{\mathbf{P}})$, by applying \cite[Corollary 8.3.4]{ega43} to $V_S$ and $V$. Then $R_{\mathbf{P}}$ is the $R'$ we desired to find (again by (i)).\end{proof}

\corniceprops
\begin{proof} 

We first verify the first half of condition (i) of Lemma~\ref{lem:getbiggeret} for the properties of being quasi-finite, flat, or smooth. 

In \cite[Section 6.2]{ega2}, a finite type morphism $f$ is defined to be quasi-finite at a point $x$ if it is isolated in its fiber $f^{-1}(f(x))$. The openness of the locus of such points is shown in \cite[Corollary 13.1.4]{ega43}, and the local constructibility of this locus is shown in \cite[Proposition 9.6.1]{ega43}.

The openness of the flat locus of a finitely presented morphism (i.e., the locus in the source where the map of local rings is flat) is shown in \cite[Theorem 11.3.1]{ega43}, and the local constructibility of this locus is shown in \cite[Proposition 11.2.8]{ega43}.

Finally, the smooth locus of a morphism is defined to be open in \cite[Definition 17.3.7]{ega44}, and this locus is shown to be locally constructible in \cite[Proposition 17.7.11]{ega44} .

It is then not difficult to verify that the second half of condition (i) and conditions (ii) and (iii) of Lemma~\ref{lem:getbiggeret} are true if $\mathbf{P}$ is the property of being quasi-finite, flat, or smooth.\end{proof}

\printbibliography \end{document}